\newcolumntype{L}{>{$}l<{$}}
\newcolumntype{R}{>{$}r<{$}}
\newtheorem{thm}{Theorem}[section]
\newtheorem{lemma}[thm]{Lemma}
\newtheorem{conjecture}[thm]{Conjecture}
\newtheorem{cor}[thm]{Corollary}
\newtheorem{prop}[thm]{Proposition}
\newtheorem*{prop*}{Proposition}
\theoremstyle{remark}
\newtheorem*{remark}{Remark}
\theoremstyle{definition}
\newtheorem{defn}[thm]{Definition}
\newtheorem{question}[thm]{Question}
\newtheorem{example}[thm]{Example}
\newtheorem{assumption}[thm]{Assumption}
\def\BQ{\mathbb{Q}}
\def\BP{\mathbb{P}}
\def\BA{\mathbb{A}}
\def\BZ{\mathbb{Z}}
\def\BC{\mathbb{C}}
\def\BQbar{\overline{\mathbb{Q}}}
\def\BN{\mathbb{N}}
\def\BR{\mathbb{R}}
\def\BF{\mathbb{F}}
\def\CR{\mathcal{R}}
\def\CL{\mathcal{L}}
\def\CM{\mathcal{M}}
\def\CO{\mathcal{O}}
\def\CN{\mathcal{N}}
\def\CS{\mathcal{S}}
\def\SP{\mathscr{P}}
\def\Spec{\mathrm{Spec}}
\def\preper{{\mathrm{PrePer}}}
\def\deg{\mathrm{deg}}
\def\Pic{\mathrm{Pic}}
\def\inf{\mathrm{inf}}
\def\exp{\mathrm{exp}}
\def\Re{\mathrm{Re}}
\def\Hom{\mathrm{Hom}}
\def\vol{\mathrm{Vol}}
\def\res{\mathrm{res}}
\def\genus{\mathrm{genus}}
\def\preper{\mathrm{Preper}}
\def\val{\mathrm{val}}
\def\div{\mathrm{div}}
\def\disc{\mathrm{disc}}
\def\rank{\mathrm{rank}}
\def\covol{\mathrm{covol}}
\def\ev{\mathrm{ev}}
\def\det{\mathrm{det}\,}
\def\multi#1#2{\ensuremath{\left(\kern-.3em\left(\genfrac{}{}{0pt}{}{#1}{#2}\right)\kern-.3em\right)}}
\def\into{\hookrightarrow}
\newcommand{\Sym}{\mathrm{Sym}}
\newcommand{\supp}{\mathrm{Supp}}
\numberwithin{equation}{section}
\begin{document}

\title{On the number of quadratic polynomials with a given portrait}

\author{Ho Chung Siu}
\email{soarersiuhc@gmail.com}

\maketitle

\begin{abstract}
Let $F$ be a number field. Given a quadratic polynomial $f_c(z) = z^2 + c \in F[z]$, we can construct a directed graph $\preper(f_c, F)$ (also called a portrait), whose vertices are $F$-rational preperiodic points for $f_c$, with an edge $\alpha \to \beta$ if and only if $f_c(\alpha) = \beta$. Poonen \cite{Poonen} and Faber \cite{Faber} classified the portraits that occur for infinitely many $c$'s. 

Given a portrait $P$, we prove an asymptotic formula for counting the number of $c \in F$'s by height, such that $\preper(f_c, F) \cong P$. We also prove an asymptotic formula for the analogous counting problem, where $\preper(f_c, K) \cong P$ for some quadratic extension $K/F$. These results are conditioned on Morton-Silverman conjecture \cite{MortonSilverman}.
\end{abstract}

\section{Introduction}
Let $F$ be a number field. For $c \in F$, the quadratic polynomial 
$$f_c(z) = z^2 + c$$ 
is an endomorphism of the affine line $\BA^1_F$. Via iteration, we may view it as a dynamical system on $\BA^1(F) = F$. The set of $F$-rational preperiodic points - those with finite forward orbit under $f_c$, will be denoted by $\preper(f_c, F)$. We equip this set with the structure of a directed graph (called the \textbf{preperiodic portrait} or just \textbf{portrait}) by drawing an arrow from $x \to f_c(x)$ for each $x \in \preper(f_c, F)$.

In this setup, a special case of Morton-Silverman conjecture \cite{MortonSilverman} is as follows.
\begin{conjecture}
\label{Conjecture_Morton_Silverman}
Fix a number field $F$. For a given integer $d \ge 1$, there is a uniform bound $N = N(F, d)$ on the size of preperiodic points 
$$\# \,\preper(f_c, K) \leq N$$
across all number fields $K/F$ of degree $d$, and polynomials $f_c(z) = z^2 + c \in K[z]$.
\end{conjecture}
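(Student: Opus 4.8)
The plan is to follow the blueprint that Morton and Silverman set up, in analogy with the Mazur--Merel theorem on torsion of elliptic curves. Since $F$ and $d$ are fixed, so is $e := [K:\QQ] = d\,[F:\QQ]$, and it suffices to produce a bound $N$ depending only on $e$. As a directed graph, $\preper(f_c,K)$ is a disjoint union of components, each a single cycle with a rooted forest hanging off its vertices; because $f_c$ has degree $2$, every vertex has at most two $f_c$-preimages, so the tree of strictly preperiodic points feeding into a given periodic point $p$ --- the $\alpha$ with $f_c^{\circ m}(\alpha)=p$ and $f_c^{\circ(m-1)}(\alpha)$ not periodic, placed at depth $m$ --- has at most $2^{m_{\max}+1}-1$ vertices, where $m_{\max}$ is the largest tail length that occurs. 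Hence
\[
\#\,\preper(f_c,K)\ \le\ (\text{number of }K\text{-rational periodic points})\cdot 2^{m_{\max}+1},
\]
and since a point of period dividing $n$ is a root of $f_c^{\circ n}(z)-z$, a polynomial of degree $2^n$, there are fewer than $2^{n_0+1}$ periodic points of period $\le n_0$. So it is enough to bound, uniformly in $e$, (i) the lengths of $K$-rational cycles and (ii) the maximal tail length $m_{\max}$.

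Both (i) and (ii) are statements about rational points of degree $\le e$ over $\QQ$ on dynatomic curves: the pairs $(c,\alpha)$ for which $\alpha$ has preperiodic type exactly $(m,n)$ under $f_c$ form an affine plane curve $Y_1(m,n)\subset\AA^2$, and a $K$-rational preperiodic point of type $(m,n)$ is precisely a point of $Y_1(m,n)$ of degree $\le e$ over $\QQ$. The gonality of $Y_1(m,n)$ grows without bound as $m+n\to\infty$ --- a theorem of Doyle and Poonen for the purely periodic curves $Y_1(0,n)$, and the expected behaviour in general --- so by Faltings' theorem together with Frey's bound (building on Abramovich--Harris), once the gonality exceeds $2e$ the curve carries only finitely many points of degree $\le e$. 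Thus, for all but finitely many pairs $(m,n)$, preperiodic points of type $(m,n)$ over degree-$e$ fields are at least qualitatively constrained.

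The obstruction is that this is not yet enough. To bound $\#\,\preper(f_c,K)$ one really needs (A) that all but finitely many of the curves $Y_1(m,n)$ have \emph{no} point of degree $\le e$ at all --- a single such point for arbitrarily large $m+n$ would already produce preperiodic points of unbounded period or tail --- and (B) a bound on the number of degree-$\le e$ points on each of the remaining curves that does not depend on the field $K$; moreover, the finitely many dynatomic curves of small $(m,n)$ and low gonality can genuinely have infinitely many degree-$\le e$ points and must be handled individually (over $\QQ$ this is Morton's exclusion of period $4$, Flynn--Poonen--Schaefer's of period $5$, and Stoll's conditional exclusion of period $6$, whose analogues over general number fields of bounded degree remain open). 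Statement (A) would follow from a bound on the number of low-degree points on a curve in terms of its gonality alone, a conjecture of Lang--Vojta flavour that is wide open, while (B) is a form of uniform Mordell with no dependence on the Mordell--Weil rank, also unavailable in the generality needed, since the known uniform height inequalities still carry a rank term. This is the step where I expect any attempt at a proof to stall; for this reason Conjecture~\ref{Conjecture_Morton_Silverman} is adopted as a hypothesis in the rest of the paper rather than proved.
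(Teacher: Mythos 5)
You have correctly recognized that the statement is the Morton--Silverman conjecture, which the paper does not prove and cannot prove: it is stated as a conjecture, cited from \cite{MortonSilverman}, and assumed as a hypothesis in Theorems~\ref{Theorem_Main_Degree1} and~\ref{Theorem_Main_Degree2}. There is no ``paper's own proof'' to compare against, so the right thing to do is exactly what you did --- explain the shape of an attack and identify where it breaks down.

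Your analysis of the obstruction is sound. The reduction to cycle lengths and tail lengths via the tree structure of preperiodic portraits is correct, and the observation that a $K$-rational preperiodic point of type $(m,n)$ is a degree-$\leq e$ point (over $\QQ$) on the dynatomic curve $Y_1(m,n)$ is the right reformulation. The Doyle--Poonen gonality theorem together with the Frey/Abramovich--Harris bound does give finiteness of degree-$\leq e$ points on $Y_1(m,n)$ once gonality exceeds $2e$, and your two missing ingredients --- (A) absence rather than mere finiteness of low-degree points for all but finitely many $(m,n)$, and (B) uniformity in $K$ of the count on the exceptional small curves, i.e.\ a version of uniform Mordell with no rank dependence --- are precisely the gaps that keep this conjecture open even for $F=\QQ$, $d=1$. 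Your aside about Morton's exclusion of period $4$ (really: of a rational $4$-cycle over $\QQ$), Flynn--Poonen--Schaefer's conditional exclusion of period $5$, and Stoll's conditional exclusion of period $6$ correctly illustrates that even the small dynatomic curves require genuine case-by-case arithmetic input. You are right to stop; the conclusion should be read as a justification for the paper's choice to assume the conjecture, not as a claimed proof of it.
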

A consequence of Morton-Silverman conjecture is that there are only finitely many possible portraits $\preper(f_c, K)$ across all number fields $K/F$ of degree $d$.

Although the conjecture asserts uniformity over all number fields $K/F$ of the same degree $d$, Conjecture \ref{Conjecture_Morton_Silverman}
 is not known even for the simplest case of $d = 1$ and $F = \BQ$. However, we do know what portraits $\preper(f_c, \BQ)$ can show up for infinitely many $c \in \BQ$'s, as recorded in the following theorem.

\begin{figure}[h!]
\label{Picture_Preperiodic_Portrait_Q_infinite}
\includegraphics[scale=0.7]{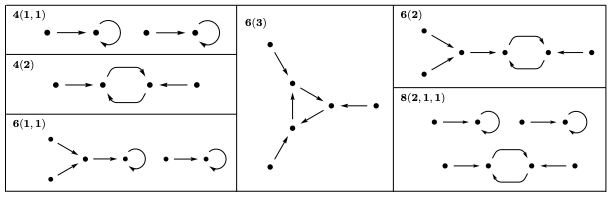}
\caption{Portraits of $\preper(f_c, \BQ)$ occurring infinitely often, excluding the empty portrait (Picture from \cite{Faber}).}
\end{figure}

\begin{thm}[{Poonen \cite{Poonen}, Faber \cite{Faber}, Doyle \cite[Proposition 4.2]{DoyleCyclotomic}}]
\label{Theorem_Base_Degree1}
The portraits that occur as $\preper(f_c, \BQ)$ for infinitely many $c \in \BQ$'s are exactly
$$\emptyset, 4(1,1), 4(2), 6(1,1), 6(2), 6(3), 8(2,1,1).$$
See Figure \ref{Picture_Preperiodic_Portrait_Q_infinite} for pictures of these graphs.

More generally, fix a number field $F$. If there are infinitely many $c \in F$'s such that $\preper(f_c, F) \cong P$, then $P$ is isomorphic to one of the following graphs in $\SP_1 := \Gamma_0 \cup \Gamma_1$, where
$$\Gamma_0 := \{\emptyset, 4(1, 1), 4(2), 6(1, 1), 6(2), 6(3), 8(2, 1, 1)\},$$
$$\Gamma_1 := \{8(1, 1)a, 8(1, 1)b, 8(2)a, 8(2)b, 10(2, 1, 1)a, 10(2, 1, 1)b\}.$$
See Appendix \ref{Appendix_Explicit_Presentation_piG} for pictures of these graphs.
\end{thm}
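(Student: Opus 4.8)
The plan is to recast the question in terms of rational points on \emph{dynamical modular curves} and then invoke Faltings's theorem. To a portrait $P$ --- a finite graph in which every vertex has out-degree $1$ and in-degree at most $2$ --- attach the affine curve $Y_1(P)$ in the variables $c$ and $(z_v)_{v\in P}$ cut out by the equations $z_v^2+c=z_{v'}$ for each edge $v\to v'$ of $P$, together with the inequations $z_v\ne z_w$ for $v\ne w$. By construction an $F$-point of $Y_1(P)$ is exactly a $c\in F$ equipped with an embedding of $P$ as a subgraph of $\preper(f_c,F)$. Hence if $\preper(f_c,F)\cong P$ for infinitely many $c\in F$, then $Y_1(P)(F)$ is infinite, so some geometric component of $Y_1(P)$ has genus $\le 1$; in the genus-$1$ case the Jacobian of that component must moreover have positive Mordell--Weil rank over $F$. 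The theorem then reduces to (i) classifying the portraits $P$ for which $Y_1(P)$ has a genus-$\le 1$ component, and (ii) a saturation/sieving step ensuring that for infinitely many of the resulting $c$ the portrait is \emph{exactly} $P$ rather than something strictly larger.

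For (i), the curves $Y_1(P)$ are built out of the dynatomic curves $Y_1(n)$ (points of exact period $n$ under $f_c$), their fiber products over the $c$-line (portraits with several cycles), and iterated degree-$2$ preimage covers (the trees feeding the cycles). A Riemann--Hurwitz analysis of this tower shows that enlarging $P$ in any of its finitely many ``growth directions'' --- a longer cycle, an extra cycle, a deeper or bushier tree --- eventually raises the genus, and by a definite amount once $P$ is large; for instance $Y_1(n)$ is rational for $1\le n\le 3$ and has genus $\ge 2$ for $n\ge 4$ (genus exactly $2$ when $n=4$). Thus only finitely many isomorphism types $P$ admit a genus-$\le 1$ component, and a finite enumeration --- carried out by Poonen for cycle lengths $\le 3$ and completed through the genus computations of Bousch, Faber, and Doyle --- identifies them as precisely the sub-portraits of the graphs in $\SP_1=\Gamma_0\cup\Gamma_1$. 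The dichotomy is arithmetic: for $P\in\Gamma_0$ the relevant component is $\PP^1_\BQ$, whereas for $P\in\Gamma_1$ it is a conic without a $\BQ$-point or a genus-$1$ curve of rank $0$ over $\BQ$, acquiring infinitely many points only over a suitable extension of the ground field.

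Granting (i), the forward direction follows: $\preper(f_c,F)\cong P$ for infinitely many $c$ forces $Y_1(P)(F)$ infinite, hence $P$ embeds in some $Q\in\SP_1$. To upgrade ``embeds in'' to ``equals one of,'' note that certain enlargements are \emph{automatic}, so a proper sub-portrait $P\subsetneq Q$ cannot be the exact portrait for all but finitely many $c$: a rational fixed point $\gamma$ of $f_c$ forces the rational preimage $-\gamma$ (since $\gamma-c=\gamma^2$) and forces the second fixed point as well (the two fixed points have rational sum $1$), and a rational $n$-cycle likewise forces prescribed extra preimages. A short pass through the poset of realizable portraits shows each $Q\in\SP_1$ is already saturated in this sense, so none of its proper sub-portraits is realized exactly infinitely often. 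Conversely, for $Q\in\Gamma_0$ one pulls back the rational parametrization of $Y_1(Q)\cong\PP^1_\BQ$ and discards the $c$ that also realize some strictly larger realizable portrait $Q'\supsetneq Q$; since $Q$ is saturated, each such map $Y_1(Q')\to Y_1(Q)$ has degree $\ge 2$ (or $Y_1(Q')$ has genus $\ge 2$), so these $c$ form a thin subset of $Y_1(Q)(\BQ)=\PP^1(\BQ)$, whose complement is still infinite and --- invoking Northcott's theorem to know $\preper(f_c,\BQ)$ is finite --- consists of $c$ with $\preper(f_c,\BQ)\cong Q$ exactly. No $Q\in\Gamma_1$ occurs infinitely often over $\BQ$, since then $Y_1(Q)(\BQ)$ is finite; over a number field $F$ that splits the conic or makes the genus-$1$ curve positive-rank the same sieving produces infinitely many $c$, which is why the general list is confined to $\SP_1$.

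The principal obstacle is step (i): establishing the \emph{uniform} Riemann--Hurwitz lower bound that rules out genus $\le 1$ once $P$ grows in any direction, and then pushing through the finitely many surviving small portraits --- computing each genus and, in the genus-$1$ cases, checking the Mordell--Weil rank over $\BQ$ vanishes --- which is exactly the combined content of the work of Morton, Bousch, Poonen, Faber, and Doyle. A secondary difficulty is the sieving in the converse: one must enumerate \emph{all} minimal realizable enlargements $Q'\supsetneq Q$, verify that $Q$ is saturated (so the discarded set is thin, not cofinite), and --- over the auxiliary field in the $\Gamma_1$ cases --- ensure that a positive-rank Mordell--Weil group is not itself swallowed by the removed thin set. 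Finally, I note that, in contrast to the classification of which portraits occur \emph{at all} (which is conditional on Morton--Silverman-type bounds excluding rational cycles of period $\ge 4$), this ``infinitely often'' statement is unconditional: a rational cycle of period $\ge 4$ lies on a curve of genus $\ge 2$, hence is realized by only finitely many $c\in F$ in any case.
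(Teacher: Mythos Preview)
The paper does not prove this theorem at all: it is stated with attributions to Poonen, Faber, and Doyle and used as a black box throughout. So there is no ``paper's own proof'' to compare against; your outline is essentially a sketch of the arguments in those cited references.

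That said, a few corrections to your sketch. First, you write that ``some geometric component of $Y_1(P)$ has genus $\le 1$,'' but in fact $X_1(P)$ is geometrically irreducible over $\BQ$ for every strongly admissible $P$ (this is Doyle's theorem, recorded in the paper as Proposition~\ref{Proposition_Properties_Of_X1G}(a)); there is only one component to worry about. Second, your description of the $\Gamma_1$ curves as ``a conic without a $\BQ$-point or a genus-$1$ curve of rank $0$'' is off: all six $X_1(P)$ for $P\in\Gamma_1$ are genuine elliptic curves over $\BQ$ (they have explicit Cremona labels, listed in Appendix~\ref{Appendix_Explicit_Presentation_piG}), each of Mordell--Weil rank $0$ over $\BQ$. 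No conics enter. Third, for the converse over $\BQ$ you invoke a thin-set argument; this is morally right, but the actual proofs in Poonen and Faber are more explicit, using the known rational parametrizations of the $X_1(P)$ for $P\in\Gamma_0$ together with case-by-case verification that the finitely many strictly larger realizable portraits correspond to curves with only finitely many $\BQ$-points (genus $\ge 2$, or genus $1$ with rank $0$). Your sieving sketch also quietly assumes a uniform bound on the size of $\preper(f_c,\BQ)$ to make the enumeration of enlargements finite, which is exactly the Morton--Silverman input the paper flags elsewhere; in the cited works this is circumvented by showing directly that all sufficiently large enlargements land on high-genus curves.
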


The purpose of this paper is to understand how often each portrait occurs. To make things precise, we will use the Weil height: if $c = \frac{a}{b} \in \BQ$ where $a, b$ are coprime integers, we define the (absolute, multiplicative) Weil height $H(c)$ to be
$$H(c) = \max\{|a|, |b|\}$$
This can be extended to $H: \BQbar \to [1, \infty)$.

\begin{thm} \label{Theorem_Main_Degree1}
Fix a number field $F$ and let $H: \BQbar \to [1, \infty)$ be the absolute multiplicative Weil height. Consider the portraits $\SP_1$ defined in Theorem \ref{Theorem_Base_Degree1}.

For each $P \in \SP_1$, define 
$$S_{F,1}(P, B) := \#\{c \in F: \preper(f_c, F) \cong P, H(c) \leq B\}$$   
Assume the Morton-Silverman conjecture for the number field $F$. Then for each $P \in \SP_1$, we have an asymptotic formula of the form
$$S_{F,1}(P, B) \sim c_{F, 1}(P) B^{a_{F, 1}(P)} (\log B)^{b_{F, 1}(P)}$$
as $B \to \infty$. 

(The constants $a_{F, 1},b_{F, 1},c_{F, 1}$ are defined in Definition \ref{Definition_Degree1_abc_constants}. The theorem with an explicit error term is restated and proved at Theorem \ref{Theorem_Main_Degree1_Details}.)
\end{thm}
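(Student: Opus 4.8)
The plan is to convert the condition $\preper(f_c,F)\cong P$ into a counting problem for $F$-rational points of bounded height on a dynamical modular curve, whose geometry then dictates the shape $cB^a(\log B)^b$ of the answer. For a portrait $Q$, set $N(Q,B):=\#\{c\in F:\preper(f_c,F)\text{ contains a subgraph isomorphic to }Q,\ H(c)\le B\}$. Inclusion--exclusion over the portraits strictly containing $P$ gives $S_{F,1}(P,B)=N(P,B)-\#\{c\in F:\preper(f_c,F)\supsetneq P,\ H(c)\le B\}$, and the subtracted quantity is a finite $\ZZ$-linear combination of terms $N(Q,B)$ with $Q\supsetneq P$. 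The Morton--Silverman conjecture enters precisely here: it guarantees that only finitely many portraits occur over degree-$1$ points at all, so this inclusion--exclusion is a finite expression. It then remains to (i) find the asymptotics of each $N(Q,B)$ and (ii) check that the corrections are of strictly lower order than $N(P,B)$.

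\textbf{Counting $N(Q,B)$.} To each portrait $Q$ one attaches a curve $Y(Q)/F$ (a portrait/dynatomic modular curve) together with a finite morphism $\pi_Q:Y(Q)\to\BA^1_c$, characterized by: for $y\in Y(Q)(\bar F)$, one has $\pi_Q(y)=c$ and $Q$ embeds, dynamics-compatibly, into $\preper(f_c,\bar F)$, the embedding being encoded by $y$. Then $\preper(f_c,F)\supseteq Q$ iff $c\in\pi_Q(Y(Q)(F))$, so $N(Q,B)=\#\{c\in\pi_Q(Y(Q)(F)):H(c)\le B\}$, which differs by a lower-order amount from $\frac{1}{m_Q}\#\{y\in Y(Q)(F):H(\pi_Q(y))\le B\}$, where $m_Q$ is the generic number of $F$-rational points in a fibre of $\pi_Q$ (controlled by the relevant portrait automorphisms). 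For $P\in\SP_1$ the relevant geometrically irreducible component of $Y(P)$ has genus $0$ or $1$ --- this is the content of Theorem \ref{Theorem_Base_Degree1} together with the explicit analysis of Poonen and Faber. In the genus-$0$ case, if $P$ is realized over $F$ at all (otherwise $S_{F,1}(P,B)$ is eventually constant and there is nothing to prove), the component is $\cong\PP^1_F$ and $\pi_P$ becomes a rational map of some degree $n_P\ge2$; counting $F$-points of $\PP^1$ of bounded pullback height, i.e. in the homogeneous region $\{H(\pi_P)\le B\}$ (whose defining inequality scales with exponent $1/n_P$, the $\gcd$-defects being bounded by a resultant), one obtains via Schanuel's theorem and geometry of numbers $N(P,B)\sim\kappa_{F,P}\,B^{2[F:\BQ]/n_P}$. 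In the genus-$1$ case, writing $E_P$ for the component, $N(P,B)\sim\kappa_{F,P}\,(\log B)^{\rank E_P(F)/2}$ by the classical asymptotic for points of bounded height on an elliptic curve (the N\'eron--Tate height being a positive-definite quadratic form on $E_P(F)\otimes\RR$). In both cases $N(P,B)\sim\kappa_{F,P}\,B^{a_{F,1}(P)}(\log B)^{b_{F,1}(P)}$, with $(a_{F,1}(P),b_{F,1}(P))$ equal to $(2[F:\BQ]/n_P,\,0)$ or $(0,\,\frac{1}{2}\rank E_P(F))$, matching Definition \ref{Definition_Degree1_abc_constants}.

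\textbf{The corrections are lower order.} If $Q\supsetneq P$ then over $\bar F$ the larger portrait still embeds into $\preper(f_c,\bar F)$, giving a dominant finite morphism $Y(Q)\to Y(P)$ of degree $>1$ (adjoining each new preperiodic point multiplies the degree of the map to $\BA^1_c$ by $2$, the number of square roots), whence $\deg\pi_Q>\deg\pi_P$; consequently $N(Q,B)=o\!\left(N(P,B)\right)$ --- in the genus-$0$ versus genus-$0$ case because $n_Q>n_P$, in the genus-$1$ versus genus-$0$ case because $\log B=o(B^{a})$ for $a>0$, and if $Y(Q)$ has genus $\ge2$ then $Y(Q)(F)$ is finite by Faltings, so $N(Q,B)=O(1)$. (The degenerate possibility that some $Q\supsetneq P$ shares $P$'s leading term does not arise for the twelve portraits of $\SP_1$; were it to occur one would still separate the contributions by comparing Mordell--Weil ranks, which cannot increase along an isogeny-type morphism $Y(Q)\to Y(P)$.) Summing the finitely many corrections and combining with the previous step yields $S_{F,1}(P,B)\sim c_{F,1}(P)\,B^{a_{F,1}(P)}(\log B)^{b_{F,1}(P)}$ with $c_{F,1}(P)=\kappa_{F,P}/m_P$.

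\textbf{Main difficulty.} The substantive work is carrying out the modular step concretely for each $P\in\SP_1$: identifying $Y(P)$ and its genus, and --- in the genus-$0$ cases --- an explicit rational parametrization of $\pi_P$ (hence the integer $n_P$, the fibre size $m_P$, and the constant $\kappa_{F,P}$), and --- in the genus-$1$ cases --- a Weierstrass model together with enough information about $E_P(F)$ to read off its rank. The portraits in $\Gamma_1$ are the delicate ones: it is precisely there that genus-$1$ behaviour can occur, producing a $(\log B)^{b_{F,1}(P)}$ factor with $b_{F,1}(P)=\frac12\rank E_P(F)$ depending on $F$ through a Mordell--Weil rank (and then $a_{F,1}(P)=0$). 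Two further technical points remain: extracting a power-saving error term uniform in $B$ (needed for the refined Theorem \ref{Theorem_Main_Degree1_Details}), for which the genus-$0$ count requires the Schanuel-type estimate with error term and the genus-$1$ count requires an effective form of the elliptic point count; and verifying, portrait by portrait, that the inclusion--exclusion in the first step never cancels the leading term.
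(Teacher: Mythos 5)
Your proposal follows essentially the same route as the paper: inclusion--exclusion over portraits strictly containing $P$ (with finiteness supplied by Morton--Silverman), reduction to counting $F$-rational points of bounded $\pi_P$-height on the dynamical modular curve $X_1(P)$, Schanuel-type estimates for genus $0$, N\'eron's theorem for genus $1$, Faltings for genus $\ge 2$, and the observation that for $Q\supsetneq P$ the degree of $\pi_Q$ at least doubles, so the correction terms are of strictly lower order. The constants $(a_{F,1}, b_{F,1}, c_{F,1})$ you read off match the paper's Definition \ref{Definition_Degree1_abc_constants}.

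There is, however, one place where your sketch papers over a genuine technical issue. You pass from $N(Q,B)=\#\{c\in\pi_Q(Y(Q)(F)):H(c)\le B\}$ to $\frac{1}{m_Q}\#\{y\in Y(Q)(F):H(\pi_Q(y))\le B\}$ ``up to a lower-order amount,'' where $m_Q=|\Aut(\pi_Q)|$ is the generic fibre size. The nontrivial content is showing that the set of $c\in\pi_P(X_1(P)(F))$ with $|\pi_P^{-1}(c)\cap X_1(P)(F)|>|\Aut(\pi_P)|$ is rare at the scale of the final error term. Such $c$ arise exactly from $F$-points on components of $X_1(P)\times_{\BP^1}X_1(P)$ that are not graphs of automorphisms; one has to verify that these ``exceptional'' components have large enough genus to make their $F$-point count subdominant. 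This is precisely what the paper's Lemma \ref{Lemma_Main_Degree1_Fiber_Count} establishes, and it requires case-by-case computation (in the paper, done in Sage, recorded in Appendix \ref{Appendix_Explicit_Presentation_piG}: for $P\in\Gamma_0$ the exceptional components have genus $\ge 1$; for $P\in\Gamma_1$ genus $\ge 4$). Without this the fibre-counting reduction is incomplete, and since it directly controls the leading constant (the factor $1/|\Aut(\pi_P)|$), it cannot be waved away as a detail.

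A second, minor issue: your fallback observation that ``Mordell--Weil ranks cannot increase along an isogeny-type morphism $Y(Q)\to Y(P)$'' is not sound as stated --- the finite morphism between the two genus-$1$ curves need not respect any group structure, and even an isogeny preserves (rather than decreases) rank, so the inequality you would want does not follow from the argument given. Fortunately this case never arises: the paper settles it by observing that the six genus-$1$ portraits in $\Gamma_1$ have no containment relations among themselves, so any $Q\supsetneq P$ with $P\in\Gamma_1$ already has genus $\ge 2$. You should either drop the isogeny claim or replace it with this combinatorial observation.
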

An example of this theorem for the portrait $8(2, 1, 1)$ can be found at Section \ref{SectionProofMainThmDegree1Example}.

\begin{remark}
The shape of the main term and the error term in the asymptotic formula would depend on the geometry (e.g. genus) of the dynamical modular curve $X_1(P)$. Hence we defer the precise result to Definition \ref{Definition_Degree1_abc_constants} and Theorem \ref{Theorem_Main_Degree1_Details}.

In particular, we can give an interpretation of $a_{F, 1}, b_{F, 1}, c_{F, 1}$ in terms of the geometry/number theory of $X_1(P)$ and the natural forgetful map $\pi_P: X_1(P) \to \BP^1$.
\end{remark}

By the definition of $a_{F, 1}$, it is immediate that $a_{F,1}(\emptyset) > a_{F, 1}(P)$ for any $P \neq \emptyset$. We hence obtain the following corollary.
\begin{cor}
Fix a number field $F/\BQ$, and assume the Morton-Silverman conjecture for the number field $F$. 

When ordering $c \in F$ by height, 100\% of quadratic polynomials $f_c(z) = z^2 + c$ has no $F$-rational preperiodic point.
\end{cor}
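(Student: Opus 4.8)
The plan is to compare $S_{F,1}(\emptyset,B)$ with the total count $N_F(B):=\#\{c\in F: H(c)\le B\}$ of quadratic parameters of bounded height and to show their ratio tends to $1$; this is a short deduction from Theorem \ref{Theorem_Main_Degree1}, so the argument is mostly bookkeeping. For every $c\in F$ the set $\preper(f_c,F)$ is finite (Northcott) and hence defines a portrait, so I would first partition $\{c\in F: H(c)\le B\}$ according to the isomorphism class of $\preper(f_c,F)$:
$$N_F(B) \;=\; S_{F,1}(\emptyset,B)\;+\;\sum_{P\neq\emptyset}S_{F,1}(P,B),$$
where $P$ runs over the nonempty portraits occurring for at least one $c\in F$. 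The Morton--Silverman conjecture (Conjecture \ref{Conjecture_Morton_Silverman}), applied to $F$ with $d=1$, ensures that this index set is finite, so the sum on the right has only finitely many terms.

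Next I would bound each term. If $P\notin\SP_1$, then Theorem \ref{Theorem_Base_Degree1} says that $\preper(f_c,F)\cong P$ for only finitely many $c\in F$, so $S_{F,1}(P,B)=O(1)$. If $P\in\SP_1\setminus\{\emptyset\}$, then Theorem \ref{Theorem_Main_Degree1} gives $S_{F,1}(P,B)\sim c_{F,1}(P)\,B^{a_{F,1}(P)}(\log B)^{b_{F,1}(P)}$, and, as recorded just before this corollary, the definition of $a_{F,1}$ forces $a_{F,1}(P)<a_{F,1}(\emptyset)$. Since $a_{F,1}(\emptyset)>0$ (it equals $2[F:\BQ]$, the Schanuel exponent for $\BP^1(F)$), every one of the finitely many terms above is $o\!\left(B^{a_{F,1}(\emptyset)}\right)$, and hence so is the whole sum $\sum_{P\neq\emptyset}S_{F,1}(P,B)$.

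Finally, Theorem \ref{Theorem_Main_Degree1} applied to $P=\emptyset$ itself yields $S_{F,1}(\emptyset,B)\sim c_{F,1}(\emptyset)\,B^{a_{F,1}(\emptyset)}(\log B)^{b_{F,1}(\emptyset)}$, which dominates the error just bounded, so
$$\frac{S_{F,1}(\emptyset,B)}{N_F(B)} \;=\; \left(1+\frac{\sum_{P\neq\emptyset}S_{F,1}(P,B)}{S_{F,1}(\emptyset,B)}\right)^{-1} \longrightarrow 1 \qquad(B\to\infty),$$
which is precisely the assertion. The one genuinely necessary ingredient beyond the main theorem is the Morton--Silverman conjecture: it is invoked to keep the collection of sporadic portraits (those outside $\SP_1$, each contributing $O(1)$) finite in number, and without some such input the sum over nonempty portraits need not even be finite, so those $O(1)$ contributions could fail to be summable. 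The strict inequality $a_{F,1}(\emptyset)>a_{F,1}(P)$ for $P\neq\emptyset$ — which holds because the forgetful map $\pi_P:X_1(P)\to\BP^1$ has degree greater than $1$ for every nonempty portrait — is what makes the empty portrait the generic one. Beyond that I do not anticipate any real obstacle.
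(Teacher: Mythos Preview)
Your argument is correct and is exactly the deduction the paper has in mind: the paper records only the one-line observation that $a_{F,1}(\emptyset)>a_{F,1}(P)$ for every nonempty $P\in\SP_1$ and declares the corollary immediate, and your write-up simply unpacks that by partitioning $N_F(B)$ over portraits, invoking Morton--Silverman to make the sum finite, and comparing growth exponents. Nothing further is needed.
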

This corollary is already known unconditionally by Sadek \cite[Theorem 4.8]{Sadek} and Le Boudec-Mavraki \cite[Theorem 1.1]{LeBoudecMavraki}. Our results here can be seen as a refinement of their results, since we can compare the frequency of occurrence for portraits of density 0 (although we need to assume Morton-Silverman conjecture). We also remark that Olechnowicz computed $c_{\BQ, 1}(P)$ for $P = 4(1, 1), 4(2)$ in a related line of investigation \cite[Theorem 8.3]{Olechnowicz}, using methods similar to those in Section 4.

\begin{remark}
The analogous problem for elliptic curves: counting by height the number of elliptic curves with a prescribed rational torsion group, was studied extensively in recent years. We refer readers to \cite{HarronSnowden}, \cite[Corollary 1.3.6]{CullinanKenneyVoight} for results over $\BQ$, and \cite{BruinNajman} \cite{ImKim} \cite{Philips} for results over general number fields. See also \cite{EllenbergSatrianoZureickBrown} for the general setup and conjectures when counting points on stacks by height.

Our situation is nicer: there is an analogue of modular curves in our situation (dynamical modular curves), which are smooth projective curves instead of stacks. Existing results in counting rational points on smooth projective curves by height would apply. As a result, we can give a geometric interpretation of the leading coefficient $c_{F, 1}(P)$ in the asymptotics formula.
\end{remark}

For a fixed degree $d \ge 1$, Morton-Silverman's conjecture is uniform across all number fields $K/F$ of degree $d$. Hence when counting points by height, it is natural to consider all number fields $K/F$ of degree $d$ altogether. We ask
\begin{question}
Fix a number field $F$, an integer $d \ge 1$, and a portrait $P$. When is 
$$\{c \in \overline{F}: \preper(f_c, K) \cong P \text{ for some $K/F$ of degree $d$ containing $c$}\}$$
an infinite set? When this set is infinite, how quickly does
$$\{c \in \overline{F}: \preper(f_c, K) \cong P \text{ for some $K/F$ of degree $d$ containing $c$}, H(c) \leq B\}$$
grow, as $B \to \infty$?
\end{question}

The case of $d = 1$ was answered by Theorem \ref{Theorem_Base_Degree1} and Theorem \ref{Theorem_Main_Degree1}. 

For $d = 2$, the analogue of Theorem \ref{Theorem_Base_Degree1} was recently obtained by Doyle-Krumm \cite{DoyleKrumm}. 
\begin{thm}[{Doyle-Krumm, \cite[Theorem 1.5]{DoyleKrumm}}]
\label{Theorem_Base_Degree2}
Fix a number field $F$. If there are infinitely many $c \in \overline{F}$'s such that
$$\preper(f_c, K) \cong P$$
for some $K/F$ of degree 2 containing $c$, then $P$ is isomorphic to one of the following graphs in $\SP_2 := \Gamma_0 \cup \Gamma_1 \cup \Gamma_2$.
$$\Gamma_0 := \{\emptyset, 4(1, 1), 4(2), 6(1, 1), 6(2), 6(3), 8(2, 1, 1)\}$$
$$\Gamma_1 := \{8(1, 1)a, 8(1, 1)b, 8(2)a, 8(2)b, 10(2, 1, 1)a, 10(2, 1, 1)b\}$$
$$\Gamma_2 := \{8(3), 8(4), 10(3, 1, 1), 10(3, 2)\}$$
See Appendix \ref{Appendix_Explicit_Presentation_piG} for pictures of these graphs.
\end{thm}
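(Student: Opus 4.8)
The plan is to convert the statement into a finiteness result for quadratic points on dynamical modular curves, extending the template used for the degree-$1$ classification. For each portrait $P$ realizable by some $z^2+c$ there is a curve $X_1(P)$ over $\QQ$, smooth and projective on each geometric component, together with a finite forgetful morphism $\pi_P\colon X_1(P)\to\BP^1$ onto the $c$-line, characterized (away from a finite bad set) by the property that the fibre $\pi_P^{-1}(c)$ is the set of dynamics-preserving labelled embeddings of $P$ into $\preper(f_c,\overline{\QQ})$. In particular, if $\preper(f_c,K)\cong P$ with $c\in K$, then $X_1(P)$ has a point $x$ above $c$ with $\QQ(x)\subseteq K$, hence of degree $\le 2$ over $F$. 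These curves, and the inputs below, build on the work of Poonen \cite{Poonen} and Doyle--Poonen underlying \cite{DoyleKrumm}.

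The first reduction is immediate: since $\pi_P$ has finite degree, infinitely many $c$ as in the statement force infinitely many points of $X_1(P)$ of degree $\le 2$ over $F$, so it suffices to show that $P\notin\SP_2$ implies that every geometric component of $X_1(P)$ has only finitely many points of degree $\le 2$ over every number field. Two tools handle this. First, the theorem of Harris--Silverman (via Faltings' theorem applied to the symmetric square): a curve of genus $\ge 2$ with infinitely many quadratic points over some number field is geometrically hyperelliptic or geometrically bielliptic; equivalently, a curve of genus $\ge 2$ that is geometrically neither has only finitely many quadratic points over \emph{every} number field. Second, a monotonicity remark: if $Q$ is a subportrait of $P$, then $X_1(P)\to X_1(Q)$ is a dominant finite morphism, so degree-$\le 2$ points of $X_1(P)$ map to degree-$\le 2$ points of $X_1(Q)$; hence if the latter are finite in number over every number field, so are the former.

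The argument then splits into a ``large portrait'' case and a finite case analysis. For large portraits I would use the growth of the $\overline{\QQ}$-gonality of the dynatomic curves $X_1(N)$ and of their tail-extended analogues (Doyle--Poonen and related computations): a curve of gonality $\ge 5$ is neither hyperelliptic (gonality $2$) nor bielliptic (gonality $\le 4$), and has genus $\ge 2$, so as soon as a subportrait $Q$ --- a pure cycle of large enough length, or a periodic point carrying a deep enough tail --- satisfies $\mathrm{gon}_{\overline{\QQ}}(X_1(Q))\ge 5$, every $P$ containing $Q$ is excluded by the two tools above. This leaves an explicit finite list of candidate portraits (bounded cycle lengths, bounded tails); for each $P$ on the list with $P\notin\SP_2$ one computes a model of each component of $X_1(P)$ and verifies directly that it has genus $\ge 2$ and is geometrically non-hyperelliptic and non-bielliptic, and concludes again by Harris--Silverman. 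Combining the two cases proves the stated implication; to know in addition that $\SP_2$ is sharp, one would exhibit the infinite families realizing each $P\in\SP_2$ and check, via the covers $X_1(P')\to X_1(P)$ for $P\subsetneq P'$, that the generic such $f_c$ has portrait exactly $P$.

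The main obstacle is the explicit curve geometry on the borderline: assembling gonality and genus lower bounds sharp enough to cut the infinite family of portraits down to a finite list, and then, for the remaining finitely many curves, computing genera and --- the delicate step --- deciding non-hyperellipticity and especially non-biellipticity. Non-biellipticity is precisely what makes the exclusion uniform in $F$: unlike the hyperelliptic alternative, having infinitely many quadratic points through a bielliptic map depends on the Mordell--Weil rank of the elliptic quotient over $F$, so one must genuinely certify that no portrait outside $\SP_2$ has a curve admitting a degree-$2$ map to a curve of positive genus. The ``portrait is exactly $P$'' bookkeeping for the members of $\SP_2$, while more routine, is also needed for the sharpness statement.
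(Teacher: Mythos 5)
The paper does not itself prove this statement: it is cited from Doyle--Krumm \cite[Theorem 1.5]{DoyleKrumm}, with only a remark that their argument (stated over $\QQ$) goes through verbatim over any number field $F$, precisely because the inputs you identify --- the Doyle--Poonen gonality-growth bounds for dynatomic curves and the Harris--Silverman criterion that a genus $\ge 2$ curve which is geometrically non-hyperelliptic and non-bielliptic has finitely many quadratic points over \emph{every} number field --- are uniform in the base field. Your blind reconstruction matches that strategy (reduce to quadratic points on $X_1(P)$, invoke Harris--Silverman/Abramovich--Harris, cut the infinite family down via gonality $\ge 5$ of deep subportraits plus the monotonicity remark, and finish by explicit genus/hyperellipticity/biellipticity checks on the remaining finite list), and your observation that non-biellipticity is the key to uniformity in $F$ is the right one; the only slip is a typo, $\QQ(x)\subseteq K$ should read $F(x)\subseteq K$.
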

\begin{remark}
Although the theorem was only stated for $\BQ$ in \cite[Theorem 1.5]{DoyleKrumm}, the proof there works verbatim for any number field $F$.
\end{remark}
\begin{remark}
$\Gamma_g$ are exactly the portraits $P$ whose dynamical modular curve $X_1(P)$ has genus $g$. In particular, a consequence of the theorem is that if $X_1(P)$ has genus $\ge 3$, then $X_1(P)$ has finitely many quadratic points over $F$.
\end{remark}

Conditional on Morton-Silverman conjecture, our second theorem counts these portraits for $d = 2$.

\begin{thm}\label{Theorem_Main_Degree2}
Fix a number field $F/\BQ$ and let $H: \BQbar \to [1, \infty)$ be the absolute multiplicative Weil height. Consider the portraits $\SP_2$ defined in Theorem \ref{Theorem_Base_Degree2}.

For each $P \in \SP_2$, define 
$$S_{F,2}(P, B) := \#\{c \in \overline{F}: \preper(f_c, K) \cong P \text{ for some $K/F$ of degree $2$ containing $c$}, H(c) \leq B\}.$$   
Assume the Morton-Silverman conjecture for ground field $F$ and $d = 2$. Then for each $P \in \SP_2$, we have an asymptotic formula of the form
$$S_{F,2}(P, B) \sim c_{F, 2}(P) B^{a_{F, 2}(P)} (\log B)^{b_{F, 2}(P)}$$
as $B \to \infty$. 

(The constants $a_{F, 2},b_{F, 2},c_{F, 2}$ are defined in Definition \ref{Definition_Degree2_abc_constants}. The theorem with an explicit error term is restated and proved at Theorem \ref{Theorem_Main_Degree2_Details}.)
\end{thm}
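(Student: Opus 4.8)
The plan is to translate the count, through the theory of dynamical modular curves, into a problem of counting algebraic points of degree at most $2$ on a finite list of smooth projective $F$-curves --- ordered by the height of their image on the $c$-line --- and then to evaluate that count by examining the geometry of the relevant forgetful maps.

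\textbf{Reduction to modular curves.} For each portrait $Q$ one has a smooth projective curve $X_1(Q)$ over $F$ together with a forgetful morphism $\pi_Q\colon X_1(Q)\to\BP^1$ onto the $c$-line, characterized by the property that, for any field extension $L/F$, the $L$-points of $X_1(Q)$ over $c$ correspond, up to the finite group $\aut(Q)$, to the faithful realizations of $Q$ as a subportrait of $\preper(f_c,L)$. By Theorem \ref{Theorem_Base_Degree2} the genus of $X_1(Q)$ equals $g$ precisely when $Q\in\Gamma_g$, and, as recorded in the remark there, once this genus is at least $3$ the curve has only finitely many points of degree $\le 2$ over $F$. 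Granting Morton--Silverman for $(F,2)$, only finitely many portraits are realizable over quadratic extensions of $F$, so the poset of such portraits is finite; hence only $Q\in\SP_2=\Gamma_0\cup\Gamma_1\cup\Gamma_2$ can contribute a non-$O(1)$ amount, and for a fixed $P$ the passage from ``$\preper(f_c,K)$ contains $P$'' to ``$\preper(f_c,K)\cong P$'' is governed by a finite inclusion--exclusion over the portraits $Q\supsetneq P$.

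\textbf{From ``$\cong P$'' to counting quadratic points.} A point $x\in X_1(P)(\overline F)$ with $[F(x):F]\le 2$ yields $c=\pi_P(x)$ with $[F(c):F]\le 2$ and realizes $P$ inside $\preper(f_c,F(x))$; conversely, every $c$ counted by $S_{F,2}(P,B)$ arises this way (if the realizing field is $F$ itself, enlarge to any of the infinitely many quadratic $K/F$ disjoint from the finitely many quadratic preperiodic coordinates of $f_c$). Imposing that the portrait be \emph{exactly} $P$ deletes from the degree-$\le 2$ points of $X_1(P)$ precisely those $x$ that, already over $F(x)$, lift to some $X_1(Q)$ with $Q\supsetneq P$; for each such $Q$ this locus is the image in $X_1(P)$ of the $F(x)$-rational points of the finite cover $X_1(Q)\to X_1(P)$, hence a thin subset in Serre's sense, whose points of bounded $\pi_P$-height contribute a strictly smaller power. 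Similarly each $X_1(Q)\to X_1(P)$ occurring in the inclusion--exclusion is a nontrivial cover, so its own count is of strictly lower order. Therefore the leading asymptotics of $S_{F,2}(P,B)$ agree with those of $N_P(B):=\#\{\,c=\pi_P(x):x\in X_1(P)(\overline F),\ [F(x):F]\le 2,\ H(c)\le B\,\}$, the ``diagonal'' contributions ($x$ itself $F$-rational, or $\pi_P$ collapsing a conjugate pair) being absorbed into the degree-$1$ count of Theorem \ref{Theorem_Main_Degree1} or of smaller order.

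\textbf{Evaluating $N_P(B)$ by genus.} If $g(X_1(P))=0$, then $X_1(P)\cong\BP^1_F$, and the degree-$\le 2$ points are parametrized by $\BP^1(F)$ and by $\Sym^2\BP^1=\BP^2$; using $h\circ\pi_P=(\deg\pi_P)\,h+O(1)$ for the logarithmic Weil height $h=\log H$ on $\BP^1$ and the analogous relation for $\Sym^2\pi_P$ on $\BP^2$, Schanuel's estimates --- applied to the $F$-rational points of $X_1(P)$, to the $F$-rational $c$ whose fibre carries a quadratic point, and to the genuinely quadratic points --- combine to give $N_P(B)\sim c_{F,2}(P)B^{a_{F,2}(P)}$ with $a_{F,2}(P)$ the largest of the resulting exponents and $b_{F,2}(P)=0$. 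If $g(X_1(P))=1$, then $X_1(P)$ is an elliptic curve $E/F$ (of positive rank, since $P\in\Gamma_1\subset\SP_1$ already occurs for infinitely many $c\in F$): its $F$-rational points alone contribute only the lower-order $(\log B)^{\rank E(F)/2}$, and the dominant term comes from the genuinely quadratic points, which form the $\BP^1$-bundle $E^{(2)}\to E$; transporting the height along $\Sym^2\pi_P\colon E^{(2)}\to\BP^2$ and counting fibrewise by Schanuel --- the sum over the base $E(F)$ converging against the exponential decay coming from the fibration --- yields again $N_P(B)\sim c_{F,2}(P)B^{a_{F,2}(P)}$ with $b_{F,2}(P)=0$. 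If $g(X_1(P))=2$, then $X_1(P)$ is hyperelliptic and has only finitely many $F$-points (Faltings); by the structure of quadratic points on curves (Abramovich--Harris, Harris--Silverman) together with Theorem \ref{Theorem_Base_Degree2}, its degree-$\le 2$ points consist of the fibres of the hyperelliptic map $X_1(P)\to\BP^1$, possibly the fibres of degree-$2$ maps onto positive-rank elliptic quotients, and finitely many sporadic points; the hyperelliptic family dominates and, pushed through $\Sym^2\pi_P$, reduces the count to a genus-$0$-type Schanuel estimate on its image curve in $\BP^2$. In each case the constant $c_{F,2}(P)$ of Definition \ref{Definition_Degree2_abc_constants} is obtained by following the leading constant of the relevant Schanuel (or, where it enters, Néron--Tate) asymptotic through $\pi_P$ and through $\aut(P)$.

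\textbf{The main obstacle.} The hardest point is the genus-$2$ analysis: for each of the four portraits in $\Gamma_2$ one must determine the full list of degree-$2$ maps from $X_1(P)$ to $\BP^1$ and to elliptic curves --- equivalently, the isogeny decomposition of $\mathrm{Jac}\,X_1(P)$ and which elliptic factors have positive rank over $F$ --- and bound the finitely many sporadic quadratic points, so as to identify the dominant family and its leading constant. Two secondary difficulties are the ``conic bundle over an elliptic curve'' count entering the genus-$1$ case, and checking that the thin discrepancy between ``contains $P$'' and ``$\cong P$'' is of strictly lower order uniformly in $B$; both are handled by standard, if delicate, height machinery over number fields.
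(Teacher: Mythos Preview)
Your overall architecture matches the paper's: reduce to counting degree-$\le 2$ points on $X_1(P)$ via $\Sym^2 X_1(P)$, use inclusion--exclusion over larger portraits (finite by Morton--Silverman), and handle each genus separately. The genus-$0$ case is the same. But two points deserve comment.

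\textbf{Genus 2: your route is more complicated than necessary.} You invoke the Abramovich--Harris / Harris--Silverman classification of quadratic points (hyperelliptic fibres, degree-$2$ maps to elliptic quotients, sporadic points), and list as the main obstacle determining the isogeny decomposition of $\mathrm{Jac}\,X_1(P)$ and the ranks of its elliptic factors, case by case for the four portraits in $\Gamma_2$. The paper avoids all of this. It uses directly that $\Sym^2 C$ is the blowup of $\mathrm{Jac}(C)$ at one point, with exceptional divisor $\nabla=\{\{Q,-Q\}\}$. Points off $\nabla$ are then counted by N\'eron's theorem applied to the full Jacobian (an abelian surface), giving $O((\log B)^{r/2})$ regardless of whether $\mathrm{Jac}(C)$ splits; points on $\nabla\cong\BP^1$ give the main term via a single Schanuel estimate on $\BP^1$, since $\pi_P$ factors through the hyperelliptic quotient. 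No isogeny decomposition, no elliptic quotients, no sporadic-point analysis.

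\textbf{The difficulty assessment is inverted.} You call genus $2$ the ``hardest point'' and genus $1$ a ``secondary difficulty''. In the paper it is the reverse: genus $2$ is a page, while genus $1$ takes an entire appendix. The reason is exactly the step you summarize as ``the sum over the base $E(F)$ converging against the exponential decay'': in the $\BP^1$-bundle $\Sym^2 E\to E$, the Schanuel leading constant on each fibre over $R\in E(F)$ depends on $R$, and so does the implied constant in the error term. One needs both that the sum of main terms converges and that the error constants are uniform in $R$. The paper establishes this via Arakelov intersection theory on the minimal regular model, Faltings--Riemann--Roch to compute the fibrewise covolumes, and Gromov's algebraic lemma to get uniform Lipschitz parametrizations of the counting regions. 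This is not ``standard height machinery''.

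A minor error: the parenthetical ``of positive rank, since $P\in\Gamma_1\subset\SP_1$ already occurs for infinitely many $c\in F$'' is false. Membership in $\Gamma_1$ means only that $X_1(P)$ has genus $1$; over a given $F$ the rank may well be zero (e.g.\ both $10(2,1,1)a$ and $10(2,1,1)b$ over $\BQ$). This does not damage the argument, since $\Sym^2 E$ has many $F$-points regardless of $\mathrm{rank}\,E(F)$, but the justification you gave is wrong.
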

An example of this theorem for the portrait $8(2, 1, 1)$ can be found at Section \ref{SectionProofMainThmDegree2Example}.

\begin{remark}
The shape of the main term and the error term in the asymptotic formula would depend on the geometry (e.g. genus) of the dynamical modular curve $X_1(P)$. Hence we defer the precise result to Definition \ref{Definition_Degree2_abc_constants} and Theorem \ref{Theorem_Main_Degree2_Details}.

In particular, we can give an interpretation of $a_{F, 2}, b_{F, 2}, c_{F, 2}$ in terms of the geometry/number theory of $X_1(P)$ and the natural forgetful map $\pi_P: X_1(P) \to \BP^1$.
\end{remark}

\subsection{Overview of the proof}
The proofs of the main theorems rely on the notion of dynamical modular curve. For a portrait $P$, we can define a smooth projective curve $X_1(P)$ that parametrizes the pair $(c, (z_1, \cdots, z_n))$, where $c \in \BA^1$, and $(z_1, \cdots, z_n)$ transforms under $f_c$ as prescribed by $P$, without degeneracy. We call $X_1(P)$ a dynamical modular curve. It comes with a natural forgetful map 
$$\pi_P: X_1(P) \to \BP^1$$ 
that only "remembers" $c$ and forgets the prescribed points.
\begin{remark}
The situation is entirely analogous to classical modular curves $X_1^{ell}(N)$ that parametrizes elliptic curves with a $N$-torsion point. It comes with a forgetful map $X_1^{ell}(N) \to X_1^{ell}(1)$, which remembers the elliptic curve but forgets the torsion point.
\end{remark}

Rational points on $X_1(P)$ encodes $f_c$'s whose preperiodic portrait \textbf{contains} $P$. Assuming Morton-Silverman conjecture, there are only finitely many portraits $P'$ containing $P$ where $X_1(P')$ has a rational point. By the principle of inclusion-exclusion, our theorems amount to understanding:
\begin{itemize}
    \item (Case of $d = 1$) For a fixed number field $F$, the growth rate of 
    $$N_{F, 1}(P, B) := \#\{x \in X_1(P)(F): H(\pi_P(x)) \leq B\}$$
    as $B \to \infty$.
    \item (Case of $d = 2$) For a fixed number field $F$, the growth rate of
    $$N_{F, 2}(P, B) := \#\{x \in X_1(P)(\overline{F}): [F(x): F] \leq 2, H(\pi_P(x)) \leq B\}$$
    as $B \to \infty$.
\end{itemize}

In the case of $d = 1$, Faltings' theorem tells us that if genus of $X_1(P)$ is at least 2, the number of $F$-rational points is finite, hence bounded independent of $B$. We thus focus our attention to the cases where $X_1(P)$ has genus 0 or 1. In both cases $N_{F, 1}(P, B)$ can be computed: for genus 0, this is a known case of Batyrev-Manin-Peyre Conjectures for $\BP^1$, proved by Franke-Manin-Tschinkel \cite{FrankeManinTschinkel}; for genus 1, this is due to N\'{e}ron \cite{Neron}. 

In the case of $d = 2$, we use a similar strategy on the symmetric product $\Sym^2 X_1(P)$. Faltings' theorem has an analogue in this situation \cite{FaltingsBig}, and one can show that if genus of $X_1(P)$ is at least 3, the number of quadratic points is finite, hence bounded independent of $B$ \cite[Theorem 1.5]{DoyleKrumm}. We thus focus on the portraits $P$ where $X_1(P)$ has genus $\leq 2$, and we are tasked to count rational points on $\Sym^2 X_1(P)$ by height. Again we can compute $N_{F, 2}(P, B)$: our tools are Franke-Manin-Tschinkel's theorem \cite{FrankeManinTschinkel} applied to $\BP^1$ and $\BP^2$, some calculus on arithmetic surfaces, and N\'{e}ron's theorem \cite{Neron} applied to abelian surfaces.

\subsection{Outline of the paper}
In Section 2, we recall basic facts of dynamical modular curve and set up our notations.

In Section 3, we recall the basic theory of heights via metrized line bundles, and set up our notations/conventions around absolute values and heights. We then discuss the Batyrev-Manin-Peyre conjectures \cite{BatyrevManin}\cite{Peyre}, which will allow us to interpret the leading constants $c_{F, i}$ in our asymptotic formulas, in terms of the degree of the forgetful map $\pi_P$ and a Tamagawa measure on $X_1(P)$. Finally, we document two known results of counting rational points by height: Franke-Manin-Tschinkel \cite{FrankeManinTschinkel} and N\'{e}ron \cite{Neron}.

In Section 4, we illustrate Theorem \ref{Theorem_Main_Degree1} and Theorem \ref{Theorem_Main_Degree2} for the portrait $8(2, 1, 1)$ over $\BQ$. We will also prove Theorem \ref{Theorem_Main_Degree1} directly for this portrait. This makes it clear how general point counting result comes in, as well as where we would need to assume Morton-Silverman conjecture. Readers are suggested to read Section 4 first before reading the rest of the paper.

In Section 5, we prove Theorem \ref{Theorem_Main_Degree1} (see Theorem \ref{Theorem_Main_Degree1_Details}). The main input is an asymptotic formula for $N_{F, 1}(P, B)$ as $B \to \infty$ (Theorem \ref{Theorem_Main_Degree1_CountingResult_Summary}), using Franke-Manin-Tschinkel \cite{FrankeManinTschinkel} and N\'{e}ron \cite{Neron}.

In Section 6, we prove Theorem \ref{Theorem_Main_Degree2} (see Theorem \ref{Theorem_Main_Degree2_Details}). The main input is an asymptotic formula for $N_{F, 2}(P, B)$ as $B \to \infty$ (Theorem \ref{Theorem_Main_Degree2_CountingResult_Summary}). Specifically for the case of symmetric square of genus 1 curves, we will need some calculus on arithmetic surfaces; the full details in that case would be in Appendix \ref{Appendix_Proof_Sym2_EC}.

\subsection{Notations}
We will use $f = O_{X, Y, Z}(g)$ or $f \ll_{X, Y, Z} g$ to mean that
$$|f| \leq C_{X, Y, Z} |g|$$
for some constant $C_{X, Y, Z} > 0$ depending on $X, Y, Z$.

We will use $f \asymp_{X, Y, Z} g$ to mean $f \ll_{X, Y, Z} g$ and $g \ll_{X, Y, Z} f$.

\subsection{Acknowledgements}
We thank Trevor Hyde and Simon Rubinstein-Salzedo for feedback on the draft of this paper. Special thanks to Trevor Hyde for introducing the author to arithmetic dynamics and various discussions.

\section{Preliminaries on dynamical modular curves}
We recollect the definition of dynamical modular curves and their basic properties. This sets up our notations for dynamical modular curves and their forgetful maps. Our main reference is \cite{Doyle}.

\subsection{Dynatomic modular curves}
Let $F$ be a number field, and $N$ be a positive integer. Let $f_c(z) \in F[z]$ be defined by
$$f_c(z) = z^2 + c.$$
Suppose $c, x \in F$ are such that $x$ has period $N$ for $f_c(z)$, i.e. 
$$f_c^N(x) = x$$
and for all $k < N$, $f_c^k(x) \neq x$. To pick out the $x$'s with exactly period $N$, we define the \textbf{$N$-th dynatomic polynomial} to be
$$\Phi_N(c, z) := \prod_{n | N} \left(f_c^n(z) - z\right)^{\mu(N/n)}.$$
That $\Phi_N$ is actually a polynomial is shown in \cite[Theorem 4.5]{SilvermanDynamicalSystem}. There is a natural factorization,
$$f_c^N(z) - z = \prod_{n | N} \Phi_N(c, z).$$

More generally, let $M, N \ge 1$ be integers, and suppose $c, x \in F$ are such that $x$ has preperiod $M$ and eventual period $N$ for $f_c(z)$, i.e.
$$f_c^{M + N}(x) = f_c^M(x)$$
and for $1 \leq k \leq M, 1 \leq l \leq N$, 
$$f_c^{k + l}(x) \neq f_c^{k}(x),$$ 
unless $k = M$ and $l = N$. Note that $(c, x)$ satisfies
$$\Phi_N(c, f_c^M(x)) = 0,$$
but points with preperiod $\leq M$ and eventual period $N$ also satisfy the same equation. To pick out the $z$'s with preperiod exactly $M$ and eventual period exactly $N$, we define the \textbf{generalized dynatomic polynomial} to be
$$\Phi_{M,N}(c, z) := \frac{\Phi_{N} (c, f_c^M(z))}{\Phi_{N} (c, f_c^{M-1}(z))}.$$
That $\Phi_{M, N}$ is actually a polynomial is shown in \cite{Hutz}. 

The vanishing locus of $\Phi_N$ (resp. $\Phi_{M, N}$) defines an affine curve $Y_1(N)$ (resp. $Y_1(M, N)$), which we refer to as a dynatomic modular curve. We denote $X_1(\cdot)$ to be the smooth projective model of the curve $Y_1(\cdot)$, and we also refer to $X_1(\cdot)$ as a dynatomic modular curve. These modular curves are the key examples of dynamical modular curves, that we will introduce in the next section.

\subsection{Dynamical modular curve associated to directed graphs}
Let $G$ be a finite directed graph. Fix an ordering of vertices $V(G)$, denoted as $v_1, \cdots, v_n$. We can then define \textbf{the dynamical modular curve associated to $G$}, denoted $Y_1(G)$, as follows. This is the affine curve that parametrizes the tuple $(c, z_1, \cdots, z_n)$, where $c \in \BA^1$, $(z_1, \cdots, z_n) \in \BA^n$, and that $f_c(z_i) = z_j$ if and only if $v_i$ has an edge towards to $v_j$, and $z_i$'s do not "degenerate"; see \cite[Section 2.4]{Doyle} for a precise definition of $Y_1(G)$. We denote $X_1(G)$ to be the smooth projective model of the curve $Y_1(G)$.

For $Y_1(G)$ to be non-empty, one needs to impose restrictions on $G$.
\begin{enumerate}
    \item Each vertex has at most one outward edge, since $f_c(x)$ is well-defined for any given $x$; with our focus on preperiodic points, it does not hurt to assume each vertex has exactly one outward edge. 
    \item Each vertex has at most two inward edges: $f_c(z)$ is quadratic in $z$, so for any given $c, a$, $f_c(z) = a$ has at most two solutions. 
    
    Generically, we should expect each vertex to have either no inward edge or exactly two inward edges, as long as no double roots occur.
    \item For any $N \ge 1$, the number of cycles of length $N$ is bounded. Indeed, any period $N$ point for $f_c$ is a root of $\Phi_N(c, z) = 0$; moreover, if $z$ is a period $N$ point, so is $f_c^k(z)$ for $0 < k < N$. $N$-cycles either coincide or do not overlap at all. Hence if we set
    $$D(N) := \deg_z \Phi_N(c, z) = \sum_{n|N} \mu(N/n) 2^n,$$
    and
    $$R(N) := \frac{D(N)}{N},$$
    then there are $\leq R(N)$ cycles of length $N$.
    \item There are at most 2 fixed points (vertex with an edge to itself). This is because $f_c(z)$ is quadratic in $z$, so for any given $c$, $f_c(z) = z$ has at most two solutions. 
    
    Generically, we should expect either no or exactly two fixed points, as long as no double roots occur.
\end{enumerate}

With these restrictions in mind, we define
\begin{defn}
A finite directed graph is called \textbf{strongly admissible} if it satisfies the following properties:
\begin{enumerate}[label=(\alph*)]
    \item Each vertex of $G$ has out-degree 1 and in-degree either 0 or 2.
    \item For each $N \ge 2$, $G$ contains at most $R(N)$ $N$-cycles.
    \item $G$ contains either 0 or 2 fixed points.
\end{enumerate}
\end{defn}

\begin{remark}
The definition of strong admissibility is taken from \cite[Definition 2.3]{Doyle}. We sometimes quote results from \cite{DoyleKrumm}, where "generic quadratic" is used synonymously as "strongly admissible".
\end{remark}
\begin{remark}
Given any directed graph $G_0$ where each vertex has out-degree 1, there is a unique, minimal, strongly admissible graph $G$ that contains $G_0$, with a canonical isomorphism $X_1(G) \cong X_1(G_0)$. Hence it does not hurt to only focus on strongly admissible graphs; see \cite{Doyle} for more details. 

Even if a directed graph $G_0$ is not strongly admissible, we may abuse notation and refer to dynamical modular curve $X_1(G_0)$, to really mean $X_1(G)$.
\end{remark}

\begin{remark}
We only need dynamical modular curve associated to quadratic polynomials in this paper, whose theory was developed in \cite{Doyle}. More generally one can define dynamical moduli space associated to morphisms on $\BP^n$; see \cite{DoyleSilverman}.
\end{remark}

\begin{example}
The cyclic graph $C_4$ of four vertices is not strongly admissible, because the in-degree of each vertex is 1. The minimal, strongly admissible graph $C_4'$ that contains $C_4$, can be obtained by adding vertices and edges, so that the original four vertices of $C_4$ have in-degree 2. These graphs are depicted in Figure \ref{FigureExampleCyclicGraph}.

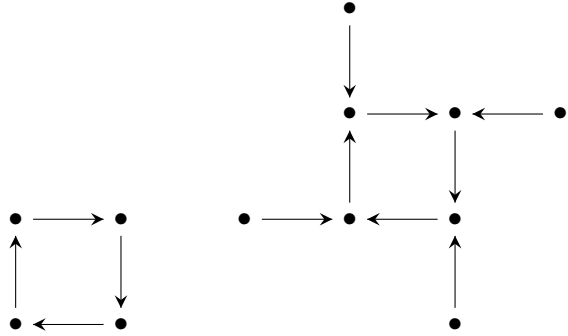
\begin{figure}[h!]
    \centering
    \begin{tikzpicture}[scale=1.4]
    \tikzset{vertex/.style = {}}
    \tikzset{every loop/.style={min distance=10mm,in=45,out=-45,->}}
    \tikzset{edge/.style={decoration={markings,mark=at position 1 with %
        {\arrow[scale=1.5,>=stealth]{>}}},postaction={decorate}}}
    \node[vertex] (a) at (0, 0) {$\bullet$};
    \node[vertex] (b) at (0, 1) {$\bullet$};
    \node[vertex] (c) at (1, 1) {$\bullet$};
    \node[vertex] (d) at (1, 0) {$\bullet$};
    
    \draw[edge] (a) to (b);
    \draw[edge] (b) to (c);
    \draw[edge] (c) to (d);
    \draw[edge] (d) to (a);
    
    \end{tikzpicture}
    \hspace{1cm}
    \begin{tikzpicture}[scale=1.4]
    \tikzset{vertex/.style = {}}
    \tikzset{every loop/.style={min distance=10mm,in=45,out=-45,->}}
    \tikzset{edge/.style={decoration={markings,mark=at position 1 with %
        {\arrow[scale=1.5,>=stealth]{>}}},postaction={decorate}}}
    \node[vertex] (a) at (0, 0) {$\bullet$};
    \node[vertex] (b) at (0, 1) {$\bullet$};
    \node[vertex] (c) at (1, 1) {$\bullet$};
    \node[vertex] (d) at (1, 0) {$\bullet$};

    \node[vertex] (a2) at  (-1,0) {$\bullet$};
    \node[vertex] (b2) at  (0,2) {$\bullet$};
    \node[vertex] (c2) at  (2,1) {$\bullet$};
    \node[vertex] (d2) at  (1,-1) {$\bullet$};
    
    \draw[edge] (a) to (b);
    \draw[edge] (b) to (c);
    \draw[edge] (c) to (d);
    \draw[edge] (d) to (a);

    \draw[edge] (a2) to (a);
    \draw[edge] (b2) to (b);
    \draw[edge] (c2) to (c);
    \draw[edge] (d2) to (d);
    \end{tikzpicture}

    \caption{Left: $C_4$, the cyclic graph of 4 vertices. Right: $C_4$', the minimal strongly admissible graph containing $C_4$.}
    \label{FigureExampleCyclicGraph}
\end{figure}
As in previous remark, $X_1(C_4') \cong X_1(C_4) \cong X_1(4)$, which recovers the dynatomic modular curve defined by $\Phi_4(c, z)$. 

More generally, 
\begin{itemize}
    \item $X_1(N)$ is the dynamical modular curve associated to the directed cyclic graph $C_N$ of $N$ vertices.
    \item $X_1(M, N)$ is the dynamical modular curve associated to the directed graph $C_{M, N}$ with an $N$-cycle and a tail of length $M$. We show an example of $C_{2, 4}$ in Figure \ref{FigureExampleCyclicGraphWithTail}.
\end{itemize}
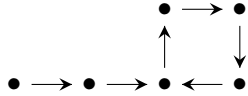
\begin{figure}[h!]
    \centering
    \begin{tikzpicture}
    \tikzset{vertex/.style = {}}
    \tikzset{every loop/.style={min distance=10mm,in=45,out=-45,->}}
    \tikzset{edge/.style={decoration={markings,mark=at position 1 with %
        {\arrow[scale=1.5,>=stealth]{>}}},postaction={decorate}}}
    \node[vertex] (a) at (0, 0) {$\bullet$};
    \node[vertex] (b) at (0, 1) {$\bullet$};
    \node[vertex] (c) at (1, 1) {$\bullet$};
    \node[vertex] (d) at (1, 0) {$\bullet$};

    \node[vertex] (a-1) at  (-1,0) {$\bullet$};
    \node[vertex] (a-2) at  (-2,0) {$\bullet$};
    \draw[edge] (a) to (b);
    \draw[edge] (b) to (c);
    \draw[edge] (c) to (d);
    \draw[edge] (d) to (a);

    \draw[edge] (a-1) to (a);
    \draw[edge] (a-2) to (a-1);
    
    \end{tikzpicture}
    \caption{The graph $C_{2, 4}$, with a 4-cycle and a tail of length 2.}
    \label{FigureExampleCyclicGraphWithTail}
\end{figure}
\end{example}

We summarize properties of dynamical modular curves in the following proposition.
\begin{prop}\label{Proposition_Properties_Of_X1G}
Let $G$ be a strongly admissible directed graph.
\begin{enumerate}[label=(\alph*)]
    \item \cite[Theorem 1.7]{Doyle} $X_1(G)$ is geometrically irreducible over $\BQ$.
    \item \cite[Proposition 3.3]{Doyle} If $H$ is another strongly admissible directed graph contained in $G$, then there is a canonical finite map $\pi_{G, H}: X_1(G) \to X_1(H)$ of degree $\ge 2$.

    In particular, if $H$ is the empty graph, $X_1(\emptyset) \cong \BP^1$, and we have the canonical forgetful map
    $$\pi_G: X_1(G) \to \BP^1$$
    that only "remembers" $c$ and forgets the prescribed points.
\end{enumerate}
\end{prop}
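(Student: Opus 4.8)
This is Doyle's \cite[Theorem 1.7, Proposition 3.3]{Doyle}; the plan is to recall how one argues it, noting that (b) is largely formal once (a) is available. For (a), geometric irreducibility of $X_1(G)$ is the statement that the field
$$L_G := \BQbar(c)(z_v : v \in V(G)),$$
generated over $\BQbar(c)$ by symbols $z_v$ subject to $z_v^2 + c = z_w$ for each edge $v \to w$ of $G$, is a domain (equivalently, $Y_1(G)$ is geometrically connected; smoothness of the projective model $X_1(G)$ is separate and is handled in \cite{Doyle}). I would prove this by induction on the combinatorial build-up of $G$: first its ``skeleton'' of cycles, then the trees of preimages hanging off it.

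\emph{Base case.} Every strongly admissible $G$ is built from a disjoint union of directed cycles $C_{N_1}, \dots, C_{N_k}$ by attaching binary trees of $f_c$-preimages. Irreducibility of a single $X_1(N)$ over $\BQbar$ is the classical irreducibility of the dynatomic polynomial $\Phi_N(c,z)$ over $\BQbar(c)$ (see \cite{Doyle} and the references there; the analogue for $X_1(M,N)$ is also in \cite{Doyle}). For a disjoint union one must moreover show the compositum of the fields $\BQbar(X_1(N_i))$ over $\BQbar(c)$ has the full degree $\prod_i \deg \pi_{N_i}$, i.e. the covers $\pi_{N_i}$ are pairwise ``independently ramified'': I would do this by exhibiting, for each $i$, a branch point of $\pi_{N_i}$ (a zero of the discriminant of $\Phi_{N_i}$ in $c$) over which every $\pi_{N_j}$, $j \ne i$, is unramified, or alternatively by checking that the geometric monodromy on the set of markings of the skeleton is the full product of the wreath-type monodromy groups of the individual periodic-point covers. \emph{Inductive step.} Suppose $H$ is a strongly admissible subgraph of $G$ with $X_1(H)$ geometrically irreducible, and we enlarge $H$ by attaching more tree vertices. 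Attaching the unique tree-child of a cycle vertex $w$ adds nothing, since it equals $-z_{w'}$ for $w'$ the cycle-predecessor of $w$, which is already present (the ``admissible completion'' phenomenon). The genuine step attaches the two preimages of a vertex $w$ that currently has in-degree $0$; since they are $\pm\sqrt{z_w - c}$, the step amounts to showing $z_w - c$ is not a square in $\BQbar(X_1(H))$. Here $z_w - c$ is nonconstant on $X_1(H)$ (were $z_w = c$ identically, the critical point $0$ would be an $f_c$-preimage of the $w$-point for all $c$, which is false), so it has a zero or a pole, and it suffices to locate a point $P \in X_1(H)$ with $\mathrm{ord}_P(z_w - c)$ odd. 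The natural place to look is over a value $c_0$ with $z_w(c_0) = c_0$ — where $0$ becomes an $f_{c_0}$-preimage of the $w$-point, which is exactly where the two sheets $z_v = \pm\sqrt{z_w - c}$ collide — checking that this zero is simple; an alternative is a valuation computation over $c = \infty$, where preperiodic points have size $\asymp |c|^{1/2}$ while $c$ is larger. Strong admissibility is used crucially to ensure $w$ has in-degree $0$ in $H$: this rules out the degenerate case in which $w$'s other preimage is already present and $z_w - c$ is a perfect square, as for $C_2 \subsetneq C_2'$ where $z_{v_1} - c = (z_{v_1} + 1)^2$. Iterating over all attached trees finishes (a).

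For part (b), fix strongly admissible $H \subsetneq G$. Forgetting the coordinates $z_v$ for $v \in V(G) \setminus V(H)$ is a morphism $Y_1(G) \to Y_1(H)$ of affine curves, extending uniquely to $\pi_{G,H} : X_1(G) \to X_1(H)$ because $X_1(G)$ is smooth (hence normal) and $X_1(H)$ is proper. It is dominant — over $\BQbar$ any $H$-marking of a map $f_c$ extends to a $G$-marking by taking $f_c$-preimages and completing cycles, and $X_1(H)$ is irreducible by (a) — and a dominant morphism of smooth projective curves is finite. For the degree bound: since $H, G$ are strongly admissible with $H \subsetneq G$, the new vertices either include a complete pair of preimages of some in-degree-$0$ vertex $w$ of $H$, whence $\deg\pi_{G,H} = [L_G : L_H] \ge [\BQbar(X_1(H))(\sqrt{z_w - c}) : \BQbar(X_1(H))] = 2$ by the inductive step, or else constitute one or more entirely new connected components, each a cycle-with-trees whose forgetful map to $\BP^1$ has degree at least $\deg_z \Phi_N \ge 2$; either way $\deg \pi_{G,H} \ge 2$. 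When $H = \emptyset$, $Y_1(\emptyset)$ is the $c$-line $\BA^1$, so $X_1(\emptyset) \cong \BP^1$ and $\pi_G := \pi_{G,\emptyset}$ is the forgetful map.

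The one genuinely non-formal ingredient is the heart of (a): deciding exactly when adjoining a pair of preimages — or forming a fiber product of dynatomic curves over $\BP^1_c$ — fails to enlarge the function field. This forces one to combine the irreducibility of dynatomic polynomials with a careful valuation (or monodromy) computation, and it is precisely where strong admissibility enters; the rest — extension of morphisms to smooth models, quasi-finiteness, the $H = \emptyset$ case — is routine.
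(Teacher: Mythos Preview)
The paper does not prove this proposition at all; it is stated with citations to \cite[Theorem 1.7, Proposition 3.3]{Doyle} and no further argument, so there is no ``paper's own proof'' to compare against. Your proposal is a reasonable outline of Doyle's actual argument --- building $G$ from its cycle skeleton via successive preimage attachments, reducing irreducibility to the non-squareness of $z_w - c$ in the function field, and deriving the degree bound from a single quadratic extension or a new dynatomic factor --- and is consistent with the cited source; for the purposes of this paper, simply citing Doyle (as the authors do) is sufficient.
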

We will also encounter the automorphisms of $\pi_G$ in our theorems.
\begin{defn}
Let $G$ be a strongly admissible directed graph, and $\pi_G: X_1(G) \to \BP^1$ be the forgetful map. We define the automorphism group of $\pi_G$ to be
$$Aut(\pi_G) := \{\sigma: X_1(G) \to X_1(G): \text{$\sigma$ invertible morphism of curves / $\BQ$, and } \pi_G \circ \sigma = \sigma\}.$$ 
\end{defn}

\subsection{When $X_1(G)$ has small genus}
\begin{prop} \cite[Proposition 4.2]{DoyleCyclotomic}
\label{ClassificationForGenus01:AdmissibleGraphs}
Let $G$ be a strongly admissible directed graph. Then $X_1(G)$ has genus $\leq 2$ if and only if $G$ is isomorphic to one of the following graphs in $\Gamma_0 \cup \Gamma_1 \cup \Gamma_2$:
$$\Gamma_0 := \{\emptyset, 4(1, 1), 4(2), 6(1, 1), 6(2), 6(3), 8(2, 1, 1)\}$$
$$\Gamma_1 := \{8(1, 1)a, 8(1, 1)b, 8(2)a, 8(2)b, 10(2, 1, 1)a, 10(2, 1, 1)b\}$$
$$\Gamma_2 := \{8(3), 8(4), 10(3, 1, 1), 10(3, 2)\}$$
Here $\Gamma_g$ is the set of strongly admissible graphs $G$, where $X_1(G)$ has genus $g$.

Furthermore,
\begin{enumerate}[label=(\alph*)]
    \item If $G \in \Gamma_0$, then $X_1(G)$ is rational over $\BQ$.
    \item If $G \in \Gamma_1$, then $X_1(G)$ is an elliptic curve over $\BQ$.
    \item If $G \in \Gamma_2$, then $X_1(G)$ is a smooth projective genus 2 curve over $\BQ$.
\end{enumerate}
For each of the graphs above, see Appendix \ref{Appendix_Explicit_Presentation_piG} for a model of $X_1(G)$, their Cremona label/LMFDB label, a presentation of the forgetful map $\pi_G: X_1(G) \to \BP^1$, and the automorphism group $Aut(\pi_G)$.
\end{prop}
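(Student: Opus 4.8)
The plan is to reduce the classification to a finite combinatorial enumeration by exploiting the tower structure of dynamical modular curves, and then to compute the genus of each graph that survives. The organizing principle is monotonicity of genus: if $H\subseteq G$ are both strongly admissible, then Proposition \ref{Proposition_Properties_Of_X1G}(b) supplies a nonconstant morphism $\pi_{G,H}\colon X_1(G)\to X_1(H)$, and since both curves are smooth, projective and geometrically irreducible by Proposition \ref{Proposition_Properties_Of_X1G}(a), Riemann--Hurwitz gives $\genus(X_1(G))\ge\genus(X_1(H))$. Consequently the property ``$\genus\le 2$'' is inherited by all strongly admissible subgraphs, so it suffices to (i) exhibit enough strongly admissible graphs whose $X_1$ has genus $\ge 3$, and (ii) enumerate the strongly admissible graphs that contain none of them.

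For (i) and the finiteness in (ii) I would feed in the genus computations for dynatomic modular curves available in the literature (see \cite{Doyle} and the references therein): $X_1(N)$ has genus $0$ for $N\le 3$, genus $2$ for $N=4$, and genus $\ge 3$ for all $N\ge 5$ (indeed genus $14$ already for $N=5$); similarly the generalized dynatomic curves $X_1(M,N)$, and more generally the curves attached to graphs with several cycles or with deeper preimage-trees, acquire genus $\ge 3$ once the underlying graph is large enough, with explicit thresholds. Now a strongly admissible graph is completely determined by a finite multiset of cycles, constrained by the rules that there are at most $R(N)$ cycles of length $N$ (so here all cycles have length $\le 4$) and at most two fixed points, together with a rooted preimage-forest attached at each cycle vertex, whose depth is again bounded once $\genus\le 2$ is imposed. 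Ruling out the large configurations therefore leaves only finitely many candidate graphs, and carrying out the enumeration under the in-degree-$\{0,2\}$ constraint produces precisely the graphs of $\Gamma_0\cup\Gamma_1\cup\Gamma_2$ together with a short list of slightly larger graphs that get discarded in the next step.

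For each candidate graph $G$ I would write down an explicit affine model of $Y_1(G)$ from the polynomials $\Phi_N$, $\Phi_{M,N}$ (whose degrees in $z$ are controlled by $D(N)$), or, for the disconnected graphs such as $4(1,1)$, from the fibre-product description of $Y_1(G)$, pass to the smooth projective model $X_1(G)$, and compute its genus either by Riemann--Hurwitz applied to the forgetful map $\pi_G$ — whose branch locus is the discriminant locus of the relevant polynomials in $c$ — or directly with a computer algebra system. This confirms $\genus = 0,1,2$ for the members of $\Gamma_0,\Gamma_1,\Gamma_2$ respectively and $\genus\ge 3$ for every other graph, which simultaneously proves the equivalence and the refinement into the sets $\Gamma_g$. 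The field-of-definition statements then follow from the standard facts that a genus-$0$ (resp. genus-$1$) curve possessing a rational point is $\BP^1$ (resp. an elliptic curve): for $G\in\Gamma_0\cup\Gamma_1$ one exhibits a $\BQ$-rational point on $X_1(G)$ — for instance a cusp lying over $c=\infty$, or a point recording one of the portraits of Theorem \ref{Theorem_Base_Degree1} that occurs over $\BQ$ — and for $G\in\Gamma_1$ one identifies the resulting elliptic curve by its Cremona label; for $G\in\Gamma_2$, geometric irreducibility together with genus $2$ already gives a smooth projective genus-$2$ curve over $\BQ$. The explicit models, forgetful maps, and automorphism groups are recorded in Appendix \ref{Appendix_Explicit_Presentation_piG}.

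The main obstacle is the completeness of the enumeration in (ii): one must show that every strongly admissible graph not on the list genuinely has genus $\ge 3$. This requires either a clean monotone lower bound for $\genus(X_1(G))$ in terms of the combinatorial data of $G$ — so that only a short list of minimal forbidden graphs needs to be verified — or an honest case analysis over the finitely many borderline graphs (of roughly $8$ to $12$ vertices), and it is exactly the genus-$2$-versus-genus-$3$ cases where an explicit, possibly machine-assisted, genus computation seems unavoidable. By comparison, producing the rational points needed to upgrade ``genus $0$'' and ``genus $1$'' to ``$\BP^1$ over $\BQ$'' and ``elliptic curve over $\BQ$'' is routine, though it must be carried out curve by curve.
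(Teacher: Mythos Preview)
The paper does not prove this proposition at all: it is stated with the citation \cite[Proposition 4.2]{DoyleCyclotomic} and no proof follows, so the result is simply imported from the literature. There is therefore nothing in the present paper to compare your proposal against.

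That said, your sketch is a reasonable outline of how such a classification is obtained in \cite{DoyleCyclotomic} and the earlier sources it draws on (Poonen \cite{Poonen}, Morton \cite{Morton4Cycle}, Doyle \cite{Doyle}). The monotonicity-of-genus argument via Proposition~\ref{Proposition_Properties_Of_X1G}(b) and Riemann--Hurwitz is exactly the right reduction, and the explicit models in Appendix~\ref{Appendix_Explicit_Presentation_piG} are indeed how the individual genera and $\BQ$-rationality statements are verified. Your identification of the main difficulty --- the completeness of the enumeration, requiring one to check that every strongly admissible graph outside the list has genus $\ge 3$ --- is accurate; in practice this is handled in \cite{DoyleCyclotomic} by a case analysis combined with explicit genus bounds for the dynatomic curves $X_1(N)$ and $X_1(M,N)$.
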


\section{Preliminaries on counting rational points by height}
\subsection{Weil heights}
We recollect some basic facts about Weil heights, and record the convention around absolute values and heights in this paper. More details can be found in \cite[Chapter 1]{BombieriGruber}.

By Ostrowski's theorem, the only non-trivial absolute values (up to equivalence) on $\BQ$ are:
\begin{itemize}
    \item $|\cdot|_p$, where $p$ is a prime. This is defined as: if $x = p^k \frac{a}{b} \in \BQ$ for coprime integers $a, b$ satisfying $gcd(p, ab) = 1$, we define
    $$|x|_p = p^{-k}.$$
    \item $|\cdot|_{\infty}$, the usual absolute value on $\BR$ restricted to $\BQ$.
\end{itemize}
We denote the set of equivalence classes of non-trivial absolute values on $\BQ$, also called places of $\BQ$, as $M_{\BQ}$.

Now let $F$ be a number field over $\BQ$ of degree $d = [F: \BQ]$. A place $v$ on $F$ is an absolute value $F \to [0, \infty)$ whose restriction to $\BQ$ is a power of either $|\cdot|_p$ or $|\cdot|_{\infty}$. Denote $F_v$ as the completion of $F$ with respect to $v$. An absolute value is normalized if
$$|x|_v = |N_{F_v/\BQ_p}(x)|_p,$$
if $v$ lies above $|\cdot|_p$. These normalized absolute values satisfy the product formula: for $x \in F^{\times}$,
$$\prod_{v \in M_F} |x|_v = 1,$$
where we denote $M_F$ as the set of places on $F$. In this paper, we may use "place" and "normalized absolute value" synonymously. We use $M_{F, finite}$ to denote the non-archimedean places lying over $|\cdot|_p$, and use $M_{F, \infty}$ to denote the archimedean places lying over $|\cdot|_{\infty}$.

Let $v$ be a place, and $F_v$ be the completion of $F$ with respect to $v$. Sometimes we need to extend $|\cdot|_v$ to $\overline{F_v}$, defined as follows. For $y \in \overline{F_v}$, define
$$|y|_v := |N_{F_v(y)/F_v}(y)|_v^{1/[F_v(y):F_v]}.$$
When restricted to a finite extension of $F_v$, this gives an unnormalized extension of the absolute value $|\cdot|_v$.

A height on $\BP^n(F)$ can be defined by
$$H_{F/\BQ}([x_0, \cdots, x_n]) = \prod_{v \in M_F} \max\{|x_0|_v, \cdots, |x_n|_v\}^{1/[F:\BQ]}.$$
One can check that if $F'/F$ is a finite extension of number fields, and $x \in \BP^n(F) \subset \BP^n(F')$, then
$$H_{F/\BQ}(x) = H_{F'/\BQ}(x).$$
Hence $H_{\cdot/\BQ}$ glues to the absolute multiplicative Weil height $H: \BP^n(\BQbar) \to [1, \infty)$.

\subsection{Haar measures}
Our choice of Haar measure is consistent with that in Peyre \cite{Peyre}.

Let $F$ be a number field, and $v \in M_F$ be a place. Let $F_v$ be the completion of $F$ at $v$, $O_{F_v}$ be the ring of integers of $F_v$, and $O_{F_v}^{\times}$ be the units of $O_{F_v}$.
\begin{itemize}
    \item The additive Haar measure $dx_v$ on $F_v$ satisfies
    $$dx_v(aU) = |a|_v dx_v(U),$$
    for a measurable set $U$ and $a \in F_v$. The measure is normalized such that
    \begin{itemize}
        \item If $F_v \cong \BR$, $dx_v$ is the usual Lebesgue measure.
        \item If $F_v \cong \BC$, $dx_v$ is twice the Lebesgue measure.
        \item If $v$ is non-archimedean,
        $$dx_v(O_{F_v}) = 1.$$
    \end{itemize}
    \item The multiplicative Haar measure $d^{\times} x_v$ on $F_v^{\times}$ is a multiple of $\dfrac{dx_v}{|x_v|_v}$. We normalize it such that
    \begin{itemize}
        \item If $v$ is archimedean, 
        $$d^{\times}x_v = \frac{dx_v}{|x_v|_v}.$$
        \item If $v$ is non-archimedean, 
        $$d^{\times}x_v (O_{F_v}^{\times}) = 1.$$
    \end{itemize}
\end{itemize}
\subsection{Heights and metrized line bundles}
We recall the definition of metrized line bundles for a projective variety over a number field $F$, and recall how heights can be defined via metrized line bundles. We generally follow the exposition of \cite{ChambertLoir}; more details can be found in \cite[Chapter 2.7]{BombieriGruber}.

Let $F$ be a number field. Let $V$ be a projective variety over $F$, and let $L$ be a line bundle on $V$.

\begin{defn}[$v$-adic metric on $L$]
Fix a place $v \in M_F$. A \textbf{$v$-adic metric} on $L$ is a choice of $v$-adic norms $\|\cdot\|_v$ on each fiber $L(x)$ that varies continuously over $x \in V(\overline{F_v})$, i.e. for each $x \in V(\overline{F_v})$, $s_x \in L(x)$ and $\lambda \in \overline{F_v}$, we have 
$$\|\lambda s_x\|_v = |\lambda|_v \|s_x\|_v.$$
Moreover, if $U \subset V$ is an open subset and $s \in \Gamma(U, L)$ is any section of $L$ over $U$, we require
$$x \to \|s(x)\|_v$$
to be a continuous function on $U(\overline{F_v})$.
\end{defn}

\begin{defn}[Adelic metric on $L$]
An \textbf{adelic metric} on $L$ is a family of compatible $v$-adic metrics for each $v \in M_F$. 

Compatibility is defined as follows: let $U \subset V$ be an affine open subset, and pick any identification of $U$ as a Zariski-closed subset of $\BA^n$ over $F$. For each non-archimedean place $v \in M_F$, we can consider the integral points $U(O_{F_v}) = U(F) \cap O_{F_v}^n$. One can check that for any two identifications of $U$ as Zariski closed subsets of $\BA^n$, $U(O_{F_v})$ is the same up to finitely many exceptions of $v$'s.

The adelic compatibility condition we need is then: for any affine open $U$, any non-vanishing section $s \in \Gamma(U, L)$, we have for all but finitely many $v \in M_F$,
$$\|s(x)\|_v \equiv 1$$
for all $x \in U(O_{F_v})$.
\end{defn}

\begin{defn}[Metrized line bundles]
A \textbf{metrized line bundle} $\CL = (L, \|\cdot\|)$ on a projective variety $V$ over $F$, is a line bundle $L$ with a specified adelic metric $\|\cdot\|$.

$\CL$ is said to be \textbf{ample} if the underlying line bundle $L$ is ample.
\end{defn}

\begin{defn}[Height on $V(F)$ induced by metrized line bundle]
Let $\CL = (L, \|\cdot\|)$ be a metrized line bundle on a projective variety $V$ over $F$.

For any $x \in V(F)$, take any non-vanishing section $s \in \Gamma(U, L)$ over some open subset $U$ containing $x$. We define the height (relative to $F$) to be
$$H_{\CL, F, s}(x) := \prod_{v \in M_{F}} \|s(x)\|_v^{-1}.$$
\end{defn}
We record the properties of heights in the following proposition.
\begin{prop}(\cite[Section 2.4]{ChambertLoir})
The height induced by metrized line bundles satisfies:
\begin{enumerate}[label=(\alph*)]
    \item Height is well-defined: it does not depend on the open subset $U$ and the section $s \in \Gamma(U, L)$ chosen. We hence use $H_{\CL, F}(x)$ to denote this relative height.
    \item If $\CL, \CL'$ are two metrized line bundles on $V$ over $F$, with the same underlying line bundle, then there exists constants $c_1, c_2 > 0$ such that for all $x \in V(F)$,
    $$c_1 H_{\CL, F}(x) \leq H_{\CL', F}(x) \leq c_2 H_{\CL, F}(x).$$
    \item (Northcott property) Let $\CL = (L, \|\cdot\|)$ be an ample metrized line bundle on $V$ over $F$. The set
    $$\{x \in V(F): H_{\CL, F}(x) \leq B\}$$
    is finite, for any given $B$.
    \item (Product) If $\CL_1 = (L_1, \|\cdot\|_1)$, $\CL_2 = (L_2, \|\cdot\|_2)$ are two metrized line bundles on $V$  over $F$, we can form their tensor product of the metrized line bundle as follows. 
    
    On the line bundle $L = L_1 \otimes L_2$, there is a unique adelic metric $\|\cdot\|$ such that: for any $v \in M_F$, any open $U \subset V$, any $x \in U(F_v)$, and sections $s_1 \in \Gamma(U, L_1)$, $s_2 \in \Gamma(U, L_2)$, we have
    $$\|(s_1 \otimes s_2)(x)\|_v = \|s_1(x)\|_{1, v} \|s_2(x)\|_{2, v}.$$
    Moreover, the induced height satisfies:
    $$H_{\CL_1 \otimes \CL_2, F}(x) = H_{\CL_1, F}(x) H_{\CL_2, F}(x)$$
    for any $x \in V(F)$.
    \item (Pullback) Let $V'$ be another variety over $F$, and $f: V' \to V$ be a morphism over $F$. Let $\CL = (L, \|\cdot\|_V)$ be a metrized line bundle on $V$.
    
    We can define the metrized line bundle $f^*L$ on $V'$ as follows. On the pullback line bundle $f^*L$, there is a unique adelic metric $\|\cdot\|_{V'}$ such that: for any $v \in M_F$, any open $U \subset V$, any $x \in f^{-1}(U)(F_v)$, and section $s \in \Gamma(U, L)$, we have
    $$\|f^*s(x)\|_{V', v} = \|s(f(x))\|_{V, v}.$$
    Moreover, the induced height satisfies:
    $$H_{f^*\CL, F}(x) = H_{\CL, F}(f(x))$$
    for any $x \in V'(F)$.
    \item If $F'/F$ is a finite extension of number fields, one may consider the base change ${\CL_{F'} = (L_{F'}, \|\cdot\|_{L, F'})}$. The $\|\cdot\|_{L, F'}$ is defined as follows: if $w \in M_{F'}$ lying over $v \in M_F$, let $d_{w|v} := [F'_w: F_v]$, and define
    $$\|\cdot\|_{L, F', w} := \|\cdot\|_v^{d_{w|v}}.$$
    For $x \in V(F) \subset V(F')$, the base change satisfies
    $$H_{\CL_{F'}, F'}(x) = H_{\CL, F}(x)^{[F': F]}.$$
    Hence we can define the \textbf{absolute multiplicative height} for $x \in V(\overline{F})$ by
    $$H_{\CL}(x) := H_{\CL_{F'}, F'}(x)^{1/[F': \BQ]}$$
    if $F' / F$ is any number field such that $x \in V(F')$.
\end{enumerate}
\end{prop}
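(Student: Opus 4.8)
The plan is to verify the six items in order, reducing each to the product formula on $F$ together with, for the Northcott property, the classical statement on $\BP^n$.

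For (a), choose an affine open $U \ni x$ with a non-vanishing section $s \in \Gamma(U, L)$, and let $s' \in \Gamma(U', L)$ be a second such choice with $x \in U'$. On $U \cap U'$ we may write $s' = g\,s$ with $g$ a nowhere-vanishing regular function, so $g(x) \in F^{\times}$, and the homogeneity of the $v$-adic norms gives $\|s'(x)\|_v = |g(x)|_v\,\|s(x)\|_v$ for every $v \in M_F$. Since $x \in V(F)$ lies in $U(O_{F_v})$ for all but finitely many $v$, the adelic compatibility condition forces $\|s(x)\|_v = \|s'(x)\|_v = 1$ for all but finitely many $v$, so the defining products converge; dividing them and applying $\prod_v |g(x)|_v = 1$ shows the two values of the height coincide.

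For (b), cover $V$ by finitely many affine opens each carrying a non-vanishing section of $L$. On each such open, the quotient $x \mapsto \|s(x)\|_v/\|s(x)\|_v'$ is a continuous, strictly positive function on the compact space $V(F_v)$ — here projectivity of $V$ is essential — hence bounded above and below by positive constants depending on $v$ only; adelic compatibility of both metrics makes this quotient identically $1$ outside a finite set of $v$, and multiplying the finitely many surviving local bounds yields uniform $c_1, c_2$. For (c), pick $m \ge 1$ with $L^{\otimes m}$ very ample, giving a closed immersion $\iota\colon V \hookrightarrow \BP^n$ with $\iota^*\OO(1) \cong L^{\otimes m}$; equipping $\OO(1)$ with its standard metric and combining parts (d), (e), (b), the quantity $H_{\CL,F}(x)^m = H_{\CL^{\otimes m},F}(x)$ differs from the standard projective height of $\iota(x)$ by bounded multiplicative constants for $x \in V(F)$, so the classical Northcott property on $\BP^n(F)$ forces the sublevel set to be finite.

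Items (d), (e), (f) are formal. In (d) and (e) one first checks on trivializing opens that the prescribed tensor-product, respectively pullback, metric is well-defined, continuous and adelically compatible; the height identities then follow directly, since choosing local sections $s_1, s_2$ near $x$ (resp. $s$ near $f(x)$) gives $\|(s_1\otimes s_2)(x)\|_v = \|s_1(x)\|_{1,v}\|s_2(x)\|_{2,v}$ (resp. $\|f^*s(x)\|_{V',v} = \|s(f(x))\|_{V,v}$) at each $v$, and one multiplies over $M_F$. For (f), regrouping the places of $F'$ over those of $F$ gives $H_{\CL_{F'},F'}(x) = \prod_{v\in M_F}\prod_{w\mid v}\|s(x)\|_v^{-d_{w\mid v}} = \prod_{v\in M_F}\|s(x)\|_v^{-[F':F]} = H_{\CL,F}(x)^{[F':F]}$, using $\sum_{w\mid v} d_{w\mid v} = [F':F]$, and the absolute height is then well-defined by this compatibility. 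The only points that genuinely require care are the convergence/finiteness of the products over $M_F$ — which is exactly what the adelic compatibility axiom is designed to supply — and, in (c), the passage to a very ample power of $L$ in order to invoke the classical Northcott property; everything else is bookkeeping with the product formula.
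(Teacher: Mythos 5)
Your proof is correct, and it matches the standard textbook treatment. The paper itself supplies no proof of this proposition; it simply cites \cite[Section 2.4]{ChambertLoir} (and points to \cite[Chapter 2.7]{BombieriGruber} for details), so there is no in-paper argument to compare against. Your reconstruction, which proves (a) from the product formula plus the adelic finiteness condition, (b) by compactness of $V(F_v)$ and a finite affine cover, (c) by passing to a very ample power $L^{\otimes m}$ and reducing to Northcott on $\BP^n$ via (b), (d), (e), and treats (d), (e), (f) as formal verifications, is exactly the argument those references give. One small remark: the logical order is fine since (d), (e) are used in (c) but are themselves proved independently, but a reader might appreciate you flagging this forward reference explicitly rather than parenthetically.
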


\begin{example}[Standard height on $\BP^n$]
Let $F$ be a number field, and consider $\BP^n$ over $F$ with the line bundle $O(1)$. Let $s_0, \cdots, s_n$ be the standard basis of global sections $\Gamma(\BP^n, O(1))$: on open set $U_j := \{x_j \neq 0\}$, the section $s_i$ is defined by 
$$s_i(x) := \frac{x_i}{x_j}.$$
One can define an adelic metric on $O(1)$, such that for any $x \in \BP^n(F)$, and $s \in \Gamma(\BP^n, O(1))$,
$$\|s(x)\|_v = \frac{|s(x)|_v}{\max\{|s_0(x)|_v, \cdots, |s_n(x)|_v\}}.$$
We will call this the standard metric on $O(1)$, and denote the metrized line bundle as $\mathbf{O(1)}$.

One can check that the absolute height induced from this metric is exactly the absolute multiplicaive Weil height: for $x \in \BP^n(\overline{F})$,
$$H_{\mathbf{O(1)}}(x) = \prod_{v \in M_F} \max\{|x_0|_v, \cdots, |x_n|_v\}^{1/[F:\BQ]}.$$
Abusing notation, we may omit $\mathbf{O(1)}$ and just use $H(x)$ to denote the absolute Weil height on $\BP^n$, as before.
\end{example}

\begin{example}[Standard height on $\Sym^2 \BP^1$]
\label{Example_Standard_Height_Sym2P1}
We first start off with some properties of $\Sym^2 \BP^1$. A reference for this example is le Rudulier's thesis \cite[Section 4 and Section 6]{leRudulier}.

Let $F$ be a number field, and consider $\BP^1$ over $F$. $\BZ/2\BZ$ acts on $(\BP^1)^2$ by swapping coordinates. There is a natural quotient map,
$$\pi: (\BP^1)^2 \to \Sym^2 \BP^1.$$
On the other hand, there is a map $\sigma: (\BP^1)^2 \to \BP^2$ that sends
$$\sigma([a_1, b_1], [a_2, b_2]) = [a_1a_2, a_1b_2 + a_2b_1, b_1b_2].$$

\begin{lemma}[Properties of $\Sym^2 \BP^1$]\cite[Section 4]{leRudulier}
\label{Lemma_Properties_Sym2_P1}
\begin{enumerate}[label=(\alph*)]
    \item There is a unique isomorphism
    $$\epsilon: \Sym^2 \BP^1 \to \BP^2$$
    defined over $F$, such that $\sigma = \epsilon \circ \pi$.
    \item Let $p_i: (\BP^1)^2 \to \BP^1$ is the i-th coordinate projection map, and let
    $$O(1, 1) = p_1^*O(1) \otimes p_2^*O(1)$$
    be the line bundle on $(\BP^1)^2$. We denote $O(1, 1)$ metrized with the pullback/product metric as $\mathbf{O(1, 1)}$.

    There is a unique metrized line bundle $\CL$ on $\Sym^2 \BP^1$ such that 
    $$\pi^*\CL = \mathbf{O(1, 1)},$$
    with the underlying line bundle $L = \epsilon^* O_{\BP^2}(1)$. Moreover, the induced height satisfies
    $$H_{(\BP^1)^2, \mathbf{O(1, 1)}} \left(([a_1, b_1], [a_2, b_2])\right) = H_{\BP^1, \mathbf{O(1)}}([a_1, b_1])H_{\BP^1, \mathbf{O(1)}}([a_2, b_2]).$$
\end{enumerate}
\end{lemma}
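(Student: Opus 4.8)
The plan is to exhibit the isomorphism $\epsilon$ and the metrized line bundle $\CL$ by descending the map $\sigma$ and the metric $\mathbf{O(1,1)}$ through the quotient $\pi$, and then to check uniqueness and the height identity. First I would verify part (a): the map $\sigma\colon (\BP^1)^2\to\BP^2$ is visibly invariant under the coordinate swap, since $(a_1a_2,\ a_1b_2+a_2b_1,\ b_1b_2)$ is symmetric in the two points, so it factors through $\pi$ as $\sigma=\epsilon\circ\pi$ for a unique morphism $\epsilon\colon\Sym^2\BP^1\to\BP^2$ (this uses the universal property of the GIT/categorical quotient, which is standard since $\BZ/2\BZ$ is finite and $(\BP^1)^2$ is projective). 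To see $\epsilon$ is an isomorphism over $F$, I would check it is bijective on geometric points — an unordered pair $\{[a_1,b_1],[a_2,b_2]\}$ is recovered from the binary quadratic form $a_1a_2 X^2 + (a_1b_2+a_2b_1)XY + b_1b_2 Y^2$ up to scalar, i.e. from its roots — and that both source and target are smooth projective of dimension $2$, so a bijective morphism between them is an isomorphism (e.g. by Zariski's main theorem, or since $\Sym^2\BP^1\cong\BP^2$ is classical). One could alternatively just write down the inverse explicitly on the charts. This part is essentially a citation to \cite[Section 4]{leRudulier}; I would include the verification for completeness but not belabor it.

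For part (b), I would first establish that $\mathbf{O(1,1)}$ descends. The key point is that the $\BZ/2\BZ$-action on $(\BP^1)^2$ lifts to an action on the line bundle $O(1,1)=p_1^*O(1)\otimes p_2^*O(1)$ compatible with the swap $p_1\leftrightarrow p_2$, and this lift is compatible with the product/pullback metric because the standard metric on $O(1)$ is intrinsic (it does not depend on an ordering of anything). By descent for metrized line bundles along the finite flat map $\pi$, there is a metrized line bundle $\CL$ on $\Sym^2\BP^1$ with $\pi^*\CL=\mathbf{O(1,1)}$, and it is unique because $\pi$ is surjective (two metrics pulling back to the same metric agree, since the norm of a section at a point $\bar x$ can be tested after pulling back to any preimage). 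To identify the underlying line bundle as $\epsilon^*O_{\BP^2}(1)$: pull back by $\pi$ and note $\sigma^*O_{\BP^2}(1)$ is the line bundle whose sections are spanned by $a_1a_2$, $a_1b_2+a_2b_1$, $b_1b_2$, which are exactly the $\BZ/2\BZ$-invariant sections of $O(1,1)$, i.e. $\pi^*$ of the descended bundle; so $\pi^*(\epsilon^*O_{\BP^2}(1))\cong\pi^*\CL$, and since $\pi^*$ is injective on Picard groups (it has a section up to the degree-$2$ factor, or: $\pi$ is a branched double cover and one checks directly on generators) we get $L\cong\epsilon^*O_{\BP^2}(1)$.

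Finally the height identity $H_{(\BP^1)^2,\mathbf{O(1,1)}}(([a_1,b_1],[a_2,b_2])) = H_{\BP^1,\mathbf{O(1)}}([a_1,b_1])\,H_{\BP^1,\mathbf{O(1)}}([a_2,b_2])$ is immediate from the properties of heights already recorded: $\mathbf{O(1,1)}=p_1^*\mathbf{O(1)}\otimes p_2^*\mathbf{O(1)}$ by construction, so by the Product and Pullback properties of heights induced by metrized line bundles,
$$H_{(\BP^1)^2,\mathbf{O(1,1)}}(x) = H_{(\BP^1)^2,p_1^*\mathbf{O(1)}}(x)\cdot H_{(\BP^1)^2,p_2^*\mathbf{O(1)}}(x) = H_{\BP^1,\mathbf{O(1)}}(p_1(x))\cdot H_{\BP^1,\mathbf{O(1)}}(p_2(x)),$$
which is the claim. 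The one place requiring genuine care — the main obstacle — is the descent of the \emph{metric} (not just the line bundle) through $\pi$: I need to make sure the $v$-adic norms on fibers of $\mathbf{O(1,1)}$ are genuinely constant on $\pi$-orbits (automatic by swap-symmetry of the formula defining the standard metric) and that the adelic compatibility condition is preserved under descent. Both are routine given the explicit formula for the standard metric on $O(1)$, but this is where one must be precise rather than hand-wave; everything else is either a citation to le Rudulier or a formal consequence of the height machinery in the preceding proposition.
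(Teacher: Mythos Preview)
Your proposal is correct and well-organized, but note that the paper does not actually prove this lemma: it is stated with a citation to \cite[Section 4]{leRudulier} and no argument is given. So there is nothing in the paper to compare your approach against; you have supplied a proof where the paper simply defers to the reference.

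That said, your sketch is sound. The factorization in (a) via the universal property of the quotient, together with bijectivity on geometric points and smoothness, is the standard route. For (b), descent of the metrized line bundle through $\pi$ is exactly the right mechanism, and you correctly flag the only point requiring care (that the $v$-adic norms are swap-invariant, which is immediate from the symmetric form of the standard metric). The height identity at the end is, as you say, a formal consequence of the Product and Pullback properties already recorded in the paper. One minor remark: your argument that $\pi^*$ is injective on Picard groups is a bit hand-wavy as written; it would be cleaner to just observe that $\mathrm{Pic}(\Sym^2\BP^1)\cong\mathrm{Pic}(\BP^2)\cong\BZ$, so the underlying line bundle is determined by its degree, and $\deg(\pi^*L)=2$ forces $L\cong\epsilon^*O_{\BP^2}(1)$.
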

The line bundle (resp. metric, height) in the lemma would be called the \textbf{standard line bundle (resp. metric, height)} on $\Sym^2 \BP^1$; by abuse of notation, we may denote this line bundle on $\Sym^2 \BP^1$ also as $\mathbf{O(1, 1)}$. When we consider the pushforward of this line bundle to $\BP^2$, we will call the line bundle (resp. metric, height) the \textbf{product line bundle (resp. metric, height)} on $\BP^2$.
    
To make things concrete, we write down how the product metric on $\BP^2$ is defined in a special case \cite[Section 6]{leRudulier}. Let $x_0, x_1, x_2$ be the coordinates of $\BP^2$. Let $U_0 := \{x_0 \neq 0\}$, and $s_0 = \frac{x_0}{x_0}$ be the section $\equiv 1$ on $U_0$.

For a place $v$ and a point $x \in U_0(F)$, the product metric is
$$\|s_0(x)\|_v = \prod_{\stackrel{a \in \overline{F_v}}{x_0 a^2 - x_1 a + x_2 = 0}} \max\{1, |a|_v\}^{-1}.$$
\end{example}

\begin{example}[Heights on $X_1(G)$]
Let $F$ be a number field. Let $G$ be a strongly admissible directed graph, and $X_1(G)$ be the associated dynamical modular curve defined over $\BQ$ (hence $F$).

Recall from Proposition \ref{Proposition_Properties_Of_X1G} that we have a forgetful map
$$\pi_G: X_1(G) \to \BP^1.$$

Consider the line bundle $\mathbf{O(1)}$ on $\BP^1 / F$ with standard metric, and pull this back to the metrized line bundle $\pi_G^* \mathbf{O(1)}$ on $X_1(G)$. This induces a height on $X_1(G)(\overline{F})$:
$$H_{\pi_G^* \mathbf{O(1)}} (x) = H_{\mathbf{O(1)}} (\pi_G (x)) = H (\pi_G (x)),$$
for any $x \in X_1(G)(\overline{F})$, where $H$ is the absolute multiplicative Weil height on $\BP^1$.
\end{example}

\begin{example}[Heights on $\Sym^2 X_1(G)$]
Let $F$ be a number field. Let $G$ be a strongly admissible directed graph, and $X_1(G)$ be the associated dynamical modular curve.

From Proposition \ref{Proposition_Properties_Of_X1G}, we have a forgetful map:
$$\pi_G: X_1(G) \to \BP^1.$$
This induces a corresponding map on the symmetric square:
$$\Sym^2 \pi_G: \Sym^2 X_1(G) \to \Sym^2 \BP^1.$$
Consider the standard metrized line bundle $\mathbf{O(1, 1)}$ on $\Sym^2 \BP^1$ from Example \ref{Example_Standard_Height_Sym2P1}, and pulls it back to $\pi_G^* \mathbf{O(1, 1)}$ on $\Sym^2 X_1(G)$. This induces a height on $\Sym^2 X_1(G)(\overline{F})$.

Concretely, if $P = \{Q_1, Q_2\} \in \Sym^2 X_1(G)(\overline{F})$ corresponds to the degree 2 divisor $[Q_1] + [Q_2]$ on $X_1(G)$ with $Q_1, Q_2 \in X_1(G)(\overline{F})$, then
$$H_{\Sym^2 X_1(G), \pi_G^*\mathbf{O(1, 1)}} (P) = H_{\BP^1, \mathbf{O(1)}}(\pi_G(Q_1))H_{\BP^1, \mathbf{O(1)}}(\pi_G(Q_2)).$$
\end{example}

\subsection{Batyrev-Manin-Peyre conjectures}
Let $F$ be a number field of degree $d$ over $\BQ$. Let $V$ be a smooth projective variety over number field $F$, and $\CL$ an ample, metrized line bundle on $V$. This induces the relative height $H_{\CL, F}$ on $V(F)$; Northcott property implies that
$$\#\{x \in V(F): H_{\CL, F}(x) \leq B\}$$
is finite, for any given $B$. Let
$$N(V, \CL, B) := \#\{x \in V(F): H_{\CL, F}(x) \leq B\}.$$
When $V$ has many $F$-rational points, it is natural to understand how quickly $N(V, \CL, B)$ grows as $B \to \infty$.

For smooth projective Fano varieties $V$ over $F$, Batyrev-Manin's conjectures \cite{BatyrevManin} (refined by Peyre \cite{Peyre} for anticanonical line bundle and Batyrev-Tschinkel \cite{BatyrevTschinkel} for more general $\CL$'s) predict that
$$N(V, \CL, B) = c_V(\CL) B^{\alpha_V(\CL)}(\log B)^{\beta_V(\CL) - 1} ( 1 + o(1))$$
for $B \to \infty$, for some constants $\alpha, \beta, c$ depending only on $(V, \CL)$. We will review their conjectures below; see surveys  \cite{Tschinkel} and \cite{ChambertLoir} for more information.

\begin{remark}
Since the absolute height satisfies
$$H_{\CL}(x) = H_{\CL, F}(x)^{1/d}$$
for any $x \in V(F)$, we have 
\begin{align*}
\#\{x \in V(F): H_{\CL}(x) \leq B\} = \,& \#\{x \in V(F): H_{\CL, F}(x) \leq B^d\} \\
= \, & c_V(\CL) B^{d\alpha_V(\CL)}(d\log B)^{\beta_V(\CL) - 1} ( 1 + o(1)).
\end{align*}
\end{remark}

The Batyrev-Manin-Peyre conjectures is known for many kinds of varieties, in particular for $\BP^n$ with any metrized line bundle \cite{FrankeManinTschinkel}. For non-Fano varieties, asymptotic result of this type is sometimes available as well; an example is N\'{e}ron's theorem for abelian varieties \cite{Neron}.

In the rest of this section, we define the constants $\alpha_V(\CL)$, $\beta_V(\CL)$ and $c_V(\CL)$ that shows up in the conjectural asymptotic formula. We will use them to intrepret the constants that show up in Theorem \ref{Theorem_Main_Degree1} and \ref{Theorem_Main_Degree2}. In the next section, we will document known results of Batyrev-Manin-Peyre conjectures and N\'{e}ron's theorem. These results will be used to deduce asymptotic formulas for $N_{F, 1}(P, B)$ and $N_{F, 2}(P, B)$ mentioned in the introduction.

\subsubsection{Constants $\alpha_V(\CL)$ and $\beta_V(\CL)$.}
Let $V$ be a smooth projective variety over number field $F$, and $\CL = (L, \|\cdot\|)$ an ample metrized line bundle on $V$.

Define:
\begin{itemize}
    \item $\Pic(V)_{\BR} := \Pic(V) \otimes_{\BZ} \BR$.
    \item $\Lambda_{eff} \subset \Pic(V)_{\BR}$ to be the effective cone, i.e. the closed cone in $\Pic(V)_{\BR}$ generated by effective divisors on $V$.
    \item $K_V$ to be the canonical line bundle on $V$.
\end{itemize}
Then we can define
$$\alpha_V(\CL) := \inf\{\alpha \in \BR: \alpha L + K_V \in \Lambda_{eff}\},$$
and $\beta_V(\CL)$ be the co-dimension of the minimal face of $\Lambda_{eff}$ containing $\alpha_V(\CL) L + K_V$.
\begin{remark}
Note that $\alpha_V(\CL)$ and $\beta_V(\CL)$ only depends on $L$, and does not depend on the chosen metric on $L$.
\end{remark}
\begin{example}
\label{Example_Pn_alpha_beta}
For $V = \BP^n$, it is well-known that 
$$Pic(V) \cong \BZ,$$ 
generated by the line bundle $O(1)$. Hence $Pic(V)_{\BR} \cong \BR$; under this isomorphism $O(1)$ corresponds to 1, and $\Lambda_{eff}$ corresponds to $[0, \infty)$.

Moreover, $K_V \cong O(-n-1)$. Hence for $\CL = O(1)$ with any metric, we see that 
$$\alpha_V(\CL) = \inf\{\alpha \in \BR: \alpha \cdot 1 - (n+1) \ge 0\} = n+1.$$
Since $\alpha_V(\CL)L + K_V = 0$, the minimal face of $\Lambda_{eff}$ containing $0$ is $0$ itself. Thus the co-dimension $\beta_V(\CL) = 1$.
\end{example}

\begin{example}
\label{Example_Pn_pullback_alpha_beta}
Consider a morphism 
$$f: \BP^n \to \BP^n$$ 
of degree $k$, and consider the metrized line bundle $\CL := f^*\mathbf{O(1)}$ in the source $V = \BP^n$. Note that the underlying bundle $L \cong O(k)$.

As in last example, $Pic(V)_{\BR} \cong \BR$, where $O(1)$ corresponds to 1, and $\Lambda_{eff}$ corresponds to $[0, \infty)$. Moreover, $K_V \cong O(-n - 1)$. Hence,
$$\alpha_V(\CL) = \inf\{\alpha \in \BR: \alpha \cdot k - (n+1) \ge 0\} = \frac{n+1}{k}.$$
Again since $\alpha_V(\CL)L + K_V = 0$, the minimal face of $\Lambda_{eff}$ containing $0$ is $0$ itself, and the co-dimension $\beta_V(\CL) = 1$.
\end{example}

\begin{example} 
\label{Example_Sym2P1_pullback_alpha_beta}
Consider a morphism 
$$f: \BP^1 \to \BP^1$$ 
of degree $k$. This induces a map
$$\Sym^2 f: \Sym^2 \BP^1 \to \Sym^2 \BP^1,$$
also of degree $k$. Consider the metrized line bundle $\CL := (\Sym^2 f)^* \mathbf{O(1, 1)}$ in the source $V = \Sym^2 \BP^1$.

Since $Sym^2 \BP^1 \cong \BP^2$ with $O(1, 1)$ corresponding to $O(1)$, we see that 
\begin{itemize}
    \item $Pic(V)_{\BR} \cong \BR$ with $O(1, 1)$ corresponds to 1, and $\Lambda_{eff}$ corresponds to $[0, \infty)$.
    \item The canonical line bundle $K_V$ corresponds to $-3$ (since it is $O(-3)$ on $\BP^2$).
    \item $\CL$ corresponds to $k$ (since $\Sym^2 f$ has degree $k$)
\end{itemize}
Hence,
$$\alpha_V(\CL) = \inf\{\alpha \in \BR: \alpha \cdot k - 3 \ge 0\} = \frac{3}{k}.$$
Again since $\alpha_V(\CL)L + K_V = 0$, the minimal face of $\Lambda_{eff}$ containing $0$ is $0$ itself, and the co-dimension $\beta_V(\CL) = 1$.
\end{example}

\subsubsection{Tamagawa measure induced by metrized line bundle}
\label{Subsection_Batyrev_Manin_Tamgawa_measure}
The definition of $c_V(\CL)$ will involve the volume of (closure of) $V(F)$ in the adelic space $V(\BA_F)$. Hence we need to first define a measure on $V(\BA_F)$. In this section, we would define such Tamagawa measure. More details can be found in \cite[Section 3]{BatyrevTschinkel} and \cite[Section 3.3]{ChambertLoir}.

\begin{defn}[$\CL$-primitive variety]
Let $V$ be a smooth projective variety over number field $F$, and $\CL = (L, \|\cdot\|)$ be an ample metrized line bundle on $V$.

$V$ is called $\CL$-primitive if $\alpha_V(\CL) > 0$ is rational, and if there exists $k \in \BN$ such that 
$$D := k\left(\alpha_V(\CL) L + K_V\right)$$
is a \textit{rigid} effective divisor, i.e. an effective divisor satisfying $\dim H^0(V, O_V(vD)) = 1$ for all $v > 0$.
\end{defn}
\begin{example}
Let $V$ be a Fano variety over number field $F$, whose anticanonical bundle $-K_V$ is ample. Let $\CL$ be $-K_V$, equipped with the pullback metric from $\mathbf{O(1)}$ via the anticanonical embedding.

Now
$$\alpha_V(\CL) = \alpha_V(-K_V) = 1.$$ 
Moreover, $L + K_V = 0$, and for all $v > 0$,
$$\dim H^0(V, O_V(v \cdot 0)) = \dim H^0(V, O_V) = 1.$$
Hence $V$ is $\CL$-primitive.
\end{example}
\begin{example}
Consider a morphism $f: \BP^n \to \BP^n$ of degree $k$, and consider $\CL := f^*\mathbf{O(1)}$ in the source $V = \BP^n$. Note that the underlying bundle $L \cong O(k)$.

We showed in Example \ref{Example_Pn_pullback_alpha_beta} that $\alpha_V(\CL) = \frac{n+1}{k}$. Again, 
$$\alpha_V(\CL) L + K_V = 0.$$ 
For all $v > 0$,
$$\dim H^0(V, O_V(v \cdot 0)) = \dim H^0(V, O_V) = 1.$$
Hence $\BP^n$ is $\CL$-primitive.
\end{example}

\begin{example} 
Consider a morphism 
$$f: \BP^1 \to \BP^1$$ 
of degree $k$. This induces a map
$$\Sym^2 f: \Sym^2 \BP^1 \to \Sym^2 \BP^1,$$
also of degree $k$. Consider the metrized line bundle $\CL := (\Sym^2 f)^* \mathbf{O(1, 1)}$ on the source $V = \Sym^2 \BP^1$.

We showed in Example \ref{Example_Sym2P1_pullback_alpha_beta} that $\alpha_V(\CL) = \frac{3}{k}$. Again, 
$$\alpha_V(\CL) L + K_V = 0.$$ 
Again for all $v > 0$,
$$\dim H^0(V, O_V(v \cdot 0)) = \dim H^0(V, O_V) = 1.$$
Hence $\Sym^2 \BP^1$ is $\CL$-primitive.
\end{example}

\textit{Local Tamagawa measures.} Let $V$ be a $\CL$-primitive variety over $F$, and let $v \in M_F$ be a place. We now define a local measure on $V(F_v)$. 

By definition of $\CL$-primitivity, there is $k \in \BN$ such that
$$D = k(\alpha_V(\CL) L + K_V)$$
is a rigid effective divisor. By scaling up $k$, we can further assume that $k\alpha_V(\CL)$ is also an integer. Choose a $F$-rational section $g \in H^0(V, O_V(D))$; by rigidity, $g$ is unique up to a multiple of $F^{\times}$. Choose local analytic coordinates $dx_{1,v}, \cdots, dx_{n, v}$ in a neighborhood $U_x$ of $x \in V(F_v)$. We can then write $g$ as
$$g = c s^{k \alpha_V(L)} \left(dx_{1, v} \wedge \cdots \wedge dx_{n, v}\right)^k$$
where $s \in \Gamma(U_x, L)$ and $c \in \Gamma(U_x, O_V^{\times})$. This defines a local $v$-adic measure
$$|c|_v^{1/k} \|s\|_v^{\alpha_V(L)} |dx_{1, v} \cdots dx_{n,v}|$$
on $U_x$; here $|dx_{1,v} \cdots dx_{n,v}|$ is the additive Haar measure on $U_x$. One checks that this measure is well-defined under change of coordinates, hence it glues to a measure $\omega_{V, L, v, g}$ on the entire $V(F_v)$.

\begin{example}
\label{Example_P1_Local_Tamagawa_Measure}
Consider $\BP^1$ defined over $\BQ$, and consider the line bundle $\CL = \mathbf{O(1)}$ on $\BP^1$ with the standard metric. We will write down a local Tamagawa measure, and compute the total volume of $\BP^1(\BQ_v)$ for each place $v \in M_{\BQ}$.

We saw in Example \ref{Example_Pn_alpha_beta} that $\BP^1$ is $\CL$-primitive with $\alpha_V(\CL) = 2$. Since 
$$\alpha_V(\CL) L + K_V = 0,$$ 
we can take $k = 1$, $D = 0$, and $g \in H^0(V, O_V)$ be the constant section $g \equiv 1$. 

Let $x_0, x_1$ be the homogeneous coordinates of $\BP^1$, and let $U_i := \{ x_i \neq 0\}$. Let $w = \frac{x_1}{x_0}$ be the local coordinate on $U_0$ and $z = \frac{x_0}{x_1}$ be the coordinate on $U_1$. On $U_0 \cap U_1$, the differentials satisfy
$$dz = - \frac{dw}{w^2}.$$
Consider the section $s$ of $O(1)$ corresponding to $x_0$, i.e. $s(w) \equiv 1$ on $U_0$, and $s(z) = z$ on $U_1$. Then the constant global section $1$ can be written as 
$$1 = \begin{cases}
1 \otimes dw & \text{on $U_0$, } \\
z^2 \otimes \left(-\frac{1}{z^2} dz\right) = -\frac{1}{z^2} \left(z^2 \otimes dz\right) & \text{on $U_1$.}
\end{cases}$$
Hence in local coordinates, the local Tamagawa measure is 
\begin{align*}
w_{V, \CL, v, g} = & \begin{cases}
\|1\|_v^2 |dw| & \text{on $U_0$,} \\
\left|-\frac{1}{z^2}\right|_v \|z\|_v^2 |dz| = \|1\|_v^2 |dz|  & \text{on $U_1$.}
\end{cases} \\
= & \begin{cases}
\frac{1}{\max\{|w|_v, 1\}^2} |dw| & \text{on $U_0$,} \\
\frac{1}{\max\{|z|_v, 1\}^2} |dz| & \text{on $U_1$.}
\end{cases}
\end{align*}

For each place $v$, let us calculate the total volume of $\BP^1$ with this Tamagawa measure. Since the point $[0:1] \in \BP^1$ has measure 0, it suffices to calculate the volume of $U_0$.

For $v = \infty$, we have
$$
w_{V, \CL, \infty, g}(\BP^1(\BR)) = \int_{\BR} \frac{1}{\max\{|w|, 1\}^2} |dw| = 4.
$$
For each finite prime $p$, recall that the Haar measure is normalized such that $\vol(\BZ_p) = 1$. Now,
\begin{align*}
w_{V, \CL, p, g}(\BP^1(\BQ_p)) =&  \int_{\BQ_p} \frac{1}{\max\{|w|_p, 1\}^2} |dw| \\
=& \sum_{i=-\infty}^{\infty} \int_{|w|_p = p^i} \frac{1}{\max\{|w|_p, 1\}^2} |dw| \\
=& \sum_{i > 0} \vol(|w|_p = p^i) \cdot \frac{1}{p^{2i}} + \sum_{i \leq 0} \vol(|w|_p = p^i) \\ 
=& \vol(|w|_p = 1) \sum_{i > 0} \frac{1}{p^i} + \vol(\BZ_p) \\
=& \left(1 - \frac{1}{p}\right) \cdot \frac{1 / p}{1 - 1/p} + 1 \\
=& 1 + \frac{1}{p}.
\end{align*}
Note that the "adelic volume" of $\BP^1$, i.e. the infinite product 
$$\prod_{v} w_{V, \CL, v, g} (\BP^1(\BQ_v)) = 4 \prod_{p} \left(1 + \frac{1}{p}\right)$$
diverges. By rewriting it as
$$4 \prod_{p} \left(1 + \frac{1}{p}\right) = 4 \prod_p \frac{1 - \frac{1}{p^2}}{1 - \frac{1}{p}} = \lim_{s \to 1} 4 \frac{\zeta(s)}{\zeta(2s)},$$
and taking residue at $s = 1$, we can assign the value $\frac{4}{\zeta(2)}$ to the diverging product.

In a similar way, we will interpret (divergent) adelic volume when we discuss global Tamagawa measures.
\end{example}

\begin{example}
\label{Example_Pn_pullback_Local_Tamagawa_Measure}
Let $F$ be a number field. Consider a morphism $f: \BP^n \to \BP^n$ over $F$ and of degree $k$, and consider $\CL := f^*\mathbf{O(1)}$ in the source $V = \BP^n$.

If $x_0, x_1, \cdots, x_n$ are the homogeneous coordinates of $\BP^n$, let $U_i = \{ x_i \neq 0\}$. As an example, we calculate the measure on $U_0$.

Let $u_i = \frac{x_i}{x_0}$, and let $u_1, \cdots, u_n$ be the local coordinates on $U_0$. Write 
$$f(u_1, \cdots, u_n) = [G_0(1, u_1, \cdots, u_n), \cdots, G_n(1, u_1, \cdots, u_n)]$$ 
where $G_0, \cdots, G_n \in F[x_1, \cdots, x_n]$ are relatively prime, homogeneous polynomials of degree $k$, and that $G_0, \cdots, G_n$ has empty common vanishing locus. 

As in the last example, we can take $D = 0$, and take $g \in H^0(V, O_V)$ to be the constant section 1. A similar calculation shows that on $U_0$, 
$$w_{V, \CL, v, g} |_{U_0} = 
\max\{|G_0(1, u_1, \cdots, u_n)|_v, \cdots |G_n(1, u_1, \cdots, u_n)|_v\}^{-(n+1)/k} |du_1 \cdots du_n|
$$
For each place $v$, let us calculate the total volume of $\BP^n$ with this Tamagawa measure. One checks that the hyperplane $x_0 = 0$ has measure 0, so it suffices to compute volume of $U_0$. Hence,
$$w_{V, \CL, v, g}(\BP^n(F_v)) = \int_{F_v^n} \max\{|G_0(1, u_1, \cdots, u_n)|_v, \cdots |G_n(1, u_1, \cdots, u_n)|_v\}^{-(n+1)/k} |du_1 \cdots du_n|.$$

Note that for all but finitely many non-archimedean places $v$, we have 
$$\max\{|G_0(1, u_1, \cdots, u_n)|_v, \cdots,  |G_n(1, u_1, \cdots, u_n)|_v\}^{(n+1) / k} = \max\{1, |u_1|_v, \cdots, |u_n|_v\}^{n+1}.$$
It follows that for all but finitely many non-archimedean places $v = v_{\frak{p}}$ with corresponding prime $\frak{p}$, we have 
$$w_{V, \CL, v, g}(\BP^n(F_v)) = \int_{F_v^n} \max\{1, |u_1|_v, \cdots, |u_n|_v\}^{-(n+1)} |du_1 \cdots du_n| = \frac{1 - N_{F/\BQ}(\frak{p})^{-(n+1)}}{1 - N_{F/\BQ}(\frak{p})^{-1}}.$$
\end{example}

\begin{example} 
\label{Example_Sym2P1_pullback_Local_Tamagawa_Measure}
Let $F$ be a number field. Consider a morphism 
$$f: \BP^1 \to \BP^1$$ 
over $F$ of degree $k$. This induces a map
$$\Sym^2 f: \Sym^2 \BP^1 \to \Sym^2 \BP^1,$$
also of degree $k$. Consider the metrized line bundle $\CL := (\Sym^2 f)^* \mathbf{O(1, 1)}$ in the source $V = \Sym^2 \BP^1$.

As in Example \ref{Example_Standard_Height_Sym2P1}, we identify $\Sym^2 \BP^1 \cong \BP^2$. Let $x_0, x_1, x_2$ be the coordinates of $\BP^2$, and let $U_0 = \{x_0 \neq 0\}$. Suppose for the place $v$, $\Sym^2 f$ takes the form
$$\Sym^2 f([x_0, x_1, x_2]) = [F_0(x_0, x_1, x_2), F_1(x_0, x_1, x_2), F_2(x_0, x_1, x_2)]$$
where $F_0, F_1, F_2 \in O_{F_v}[x_0, x_1, x_2]$ are relatively prime, homogeneous polynomials of degree $k$, with empty common vanishing locus.

As in previous examples, we can take $D = 0$, and $g \in H^0(V, O_V)$ to be the constant section 1.
One checks that the local Tamagawa measure on $U_0$ is: 
$$w_{\Sym^2 \BP^1, \CL, v, g} |_{U_0} = |F_0(1, x_1, x_2)|_v^{-3/k} \prod_{\stackrel{a \in \overline{F_v}}{a^2 - x_1 a + x_2 = 0}} \max\{1, |f(a)|_v\}^{-3 / k} |dx_1dx_2|.$$
(See \cite[Section 6]{leRudulier} in the case of $f$ being identity.) The integrand can be seen as the $(-3/k)$-th power of the $v$-adic Mahler measure of the polynomial $F_0 z^2 - F_1 z + F_2$. We recall the $v$-adic Mahler measure (\cite[Section 1.6]{BombieriGruber}) as follows: if $f(x) \in \overline{F_v}[x]$ factorizes as 
$$f(x) = a (x - \alpha_1) \cdots (x - \alpha_n),$$
the $v$-adic Mahler measure of $f$ is defined as
$$M_{v}(f) := |a|_v \prod_{i=1}^n \max\{1, |\alpha_i|_v\}.$$

When $v$ is non-archimedean, one can write
$$w_{\Sym^2 \BP^1, \CL, v, g} |_{U_0} = \max\{|F_0(1, x_1, x_2)|_v, |F_1(1, x_1, x_2)|_v, |F_2(1, x_1, x_2)|_v\}^{-3/k} |dx_1dx_2|,$$
by Gauss lemma \cite[Lemma 1.6.3]{BombieriGruber}. One deduces that for all but finitely many non-archimedean places $v$, we have
$$w_{\Sym^2 \BP^1, \CL, v, g} |_{U_0} = \max\{1, |x_1|_v, |x_2|_v\}^{-3} |dx_1dx_2|.$$

We now compute the volume of $\Sym^2 \BP^1$ using the local Tamagawa measure. The Tamagawa measure on the hyperplane $x_0 = 0$ is 0, so it suffices to compute the volume of $U_0$, thus
$$w_{\Sym^2 \BP^1, \CL, v, g} (\Sym^2 \BP^1 (F_v)) = \int_{F_v^2} |F_0(1, x_1, x_2)|_v^{-3/k} \prod_{\stackrel{a \in \overline{F_v}}{a^2 - x_1 a + x_2 = 0}} \max\{1, |f(a)|_v\}^{-3 / k} |dx_1dx_2|.$$

Hence for all but finitely many non-archimedean places $v = v_{\frak{p}}$ with corresponding prime $\frak{p}$, we have
$$w_{\Sym^2 \BP^1, \CL, v, g} (\Sym^2 \BP^1 (F_v)) = \int_{F_v^2} \max\{1, |x_1|_v, |x_2|_v\}^{-3} |dx_1dx_2| = \frac{1 - N_{F/\BQ}(\frak{p})^{-3}}{1 - N_{F/\BQ}(\frak{p})^{-1}}.$$
\end{example}

\textit{Global Tamagawa measures.} Given a $\CL$-primitive variety $V$, we now want to define the global Tamagawa measure on $V(\BA_F)$. Ideally we can just take the product measure of $V(F_v)$ at all places $v \in M_F$. However, as we saw in Example \ref{Example_P1_Local_Tamagawa_Measure}, the infinite product diverges.

Nonetheless, we can make sense of the infinite divergent product similar to Example \ref{Example_P1_Local_Tamagawa_Measure}. Let $k \in \BN$ be such that $k\alpha_V(\CL)$ is also an integer and that 
$$D = k(\alpha_V(\CL) L + K_V)$$
is a rigid effective divisor. Think of $D$ as a Weil divisor, and let $D_1, \cdots, D_l$ be the irreducible components of $D$. Define
$$\Pic(V, \CL) := \Pic(V - \bigcup_{i=1}^l D_l)$$
The Galois group $Gal(\overline{F}/F)$ acts on $\Pic(V, \CL)$. Let $S \subset M_F$ be a finite set of places where the datum $(V, (D_1, \cdots, D_l))$ has bad reduction, including the archimedean places \cite[Definition 3.3.5]{BatyrevTschinkel}. Let $L_v(s, \Pic(X, \CL))$ be the local factor for the Artin $L$-function, and let
$$L_S^*(1, \Pic(X, \CL)) = \res_{s=1}\prod_{v \not\in S} L_v(s, \Pic(X, \CL)).$$
Finally let
$$\lambda_v = 
\begin{cases}
L_v(1, \Pic(X, \CL)) & \text{if $v \notin S$,} \\
1 & \text{if $v \in S$.}
\end{cases}.$$
We can now define the normalized measure $\omega_{V, \CL}$ on $V(\BA_F)$ by
$$\omega_{V, \CL} := |disc(F)|^{-n/2} L_S^*(1, \Pic(X, \CL)) \prod_v \lambda_v^{-1} \omega_{V, \CL, v},$$
where $n = \dim(V)$.

\begin{remark}
While the local Tamagawa measure depends on the choice of a $F$-rational section $g \in H^0(V, O_V(D))$, the global Tamagawa measure does not have this dependency. This follows from the product formula of $F$, as $g$ is unique up to $F^{\times}$. 

We may hence suppress mentioning $g$ in the following examples.
\end{remark}

\begin{example}
\label{Example_P1_Global_Tamagawa_Measure}
Consider $\BP^1 / \BQ$, and $\CL = \mathbf{O(1)}$ with the standard metric. We saw in Example  \ref{Example_Pn_alpha_beta} and Example \ref{Example_P1_Local_Tamagawa_Measure} that $\BP^1$ is $\CL$-primitive with $\alpha_V(\CL) = 2$, and we can choose $k = 1$, $D = 0$.

In this case, $\Pic(V, \CL) \cong \BZ$. All finite places have good reduction, i.e. $S = \{\infty\}$. For each prime $p$, the local $L$-factor at $p$ is exactly $1 + \frac{1}{p}$, and 
$$L_S^*(1, \BZ) = \res_{s=1} \prod_p \left(1 + \frac{1}{p}\right) = \res_{s=1} \frac{\zeta(s)}{\zeta(2s)} = \frac{1}{\zeta(2)}.$$
The normalized Tamagawa measure on $\BP^1(\BA_{\BQ})$ is hence
$$w_{V, \CL} = \frac{1}{\zeta(2)} w_{V, \CL, \infty} \prod_p \frac{w_{V, \CL, v_p}}{1 + 1/p}.$$
In particular, the total volume of $\BP^1(\BA_{\BQ})$ is $\frac{4}{\zeta(2)}$.
\end{example}

\begin{example}
\label{Example_Pn_pullback_Global_Tamagawa_Measure}
Let $F$ be a number field. Consider a morphism $f: \BP^n \to \BP^n$ over $F$ and of degree $k$, and consider $\CL := f^*\mathbf{O(1)}$ in the source $V = \BP^n$. We saw in Example  \ref{Example_Pn_pullback_alpha_beta} and Example \ref{Example_Pn_pullback_Local_Tamagawa_Measure} that $\BP^n$ is $\CL$-primitive with $\alpha_V(\CL) = \frac{n+1}{k}$. Again we can choose $D = 0$, and all finite places have good reduction.

It follows from Example \ref{Example_Pn_pullback_Local_Tamagawa_Measure} that
$$w_{V, \CL} = \frac{\res_{s=1}\zeta_F(s)}{\zeta_F(n+1)} \prod_{v \in M_F} \frac{w_{V, \CL, v}}{\lambda_v},$$
where 
$$\lambda_v = \begin{cases}
    \frac{1 - N_{F/\BQ}(\frak{p})^{-(n+1)}}{1 - N_{F/\BQ}(\frak{p})^{-1}} & \text{if $v < \infty$ corresponds to prime ideal $\frak{p}$,} \\
    1 & \text{if $v \in M_{F, \infty}$.}
\end{cases}$$
\end{example}

\begin{example} 
\label{Example_Sym2P1_pullback_Global_Tamagawa_Measure}
Let $F$ be a number field. Consider a morphism 
$$f: \BP^1 \to \BP^1$$ 
over $F$ of degree $k$. This induces a map
$$\Sym^2 f: \Sym^2 \BP^1 \to \Sym^2 \BP^1,$$
also of degree $k$. Consider the metrized line bundle $\CL := (\Sym^2 f)^* \mathbf{O(1, 1)}$ in the source $V = \Sym^2 \BP^1$.

It follows from Example \ref{Example_Sym2P1_pullback_Local_Tamagawa_Measure} that
$$w_{V, \CL} = \frac{\res_{s=1}\zeta_F(s)}{\zeta_F(3)} 
\prod_{v \in M_F} \frac{w_{V, \CL, v}}{\lambda_v},$$
where 
$$\lambda_v = \begin{cases}
    \frac{1 - N_{F/\BQ}(\frak{p})^{-3}}{1 - N_{F/\BQ}(\frak{p})^{-1}} & \text{if $v < \infty$ corresponds to prime ideal $\frak{p}$,} \\
    1 & \text{if $v \in M_{F, \infty}$.}
\end{cases}$$
\end{example}

\subsubsection{Constant $c_V(\CL)$.} 
\label{Subsection_Batyrev_Manin_c_V}
We will finally define $c_V(\CL)$ in this section. More details can be found in \cite[Section 3]{BatyrevTschinkel}.

Consider an $\CL$-primitive variety $V$ over number field $F$. We defined the relative Picard group $\Pic(V, \CL)$, and the global Tamagawa measure $\omega_{V, \CL}$ on $V(\BA_F)$. We further define a few constants:
\begin{itemize}
    \item $\gamma_V(\CL)$. Define 
    $$\Pic(V, \CL)_{\BR} := \Pic(V, \CL) \otimes_{\BZ} \BR.$$
    There is a natural restriction map
    $$\Pic(V) \to \Pic(V, \CL).$$
    Let $\Lambda_{eff}(V, \CL)$ be the image of the effective cone $\Lambda_{eff}(V)$ under this restriction map. We hence have a triple
    $$(\Pic(V, \CL), \Pic(V, \CL)_{\BR}, \Lambda_{eff}(V, \CL)).$$
    
    Consider the dual $\Pic(V, \CL)_{\BR}^* := \Hom_{\BR} (\Pic(V, \CL)_{\BR}, \BR)$, which comes with a natural pairing 
    $$\Pic(V, \CL)_{\BR} \times \Pic(V, \CL)_{\BR}^* \to \BR$$
    by $(v, y) \to y(v)$. Let 
    $$\Pic(V, \CL)^* := \Hom_{\BZ} (\Pic(V, \CL), \BZ)$$ 
    be the lattice dual to $\Pic(V, \CL)$ under this natural pairing, and let $dy$ be the Lebesgue measure on $\Pic(V, \CL)_{\BR}^*$, normalized by $\vol(\Pic(V, \CL)^*) = 1$.
    
   Consider the dual cone to $\Lambda_{eff}(V, \CL)$ under this natural pairing, i.e.
   $$\Lambda^* := \{f \in \Pic(V, \CL)_{\BR}^*: f(\Lambda_{eff}(V, \CL)) \subset \BZ\}.$$
   Then for $\mathbf{s} \in \Pic(V, \CL)_{\BC}$, where $\Re(\mathbf{s})$ in the interior of $\Lambda_{eff}(V, \CL)$, we can define
   $$\chi_{\Lambda_{eff}(V, \CL)} (\mathbf{s}) = \int_{\Lambda^*} e^{- \langle \mathbf{s}, y \rangle} dy.$$

   Finally we define \footnote{In \cite{BatyrevTschinkel}, $[-K_V]$ was used instead of $[\CL]$ when evaluating $\chi_{\Lambda_{eff}(V, \CL)}$, which was likely a typo. See also this Math Overflow question \cite{SawinMOAnswer}, which shows that using $[\CL]$ is necessary to make the conjecture compatible between $\CL$ and $\CL^{\otimes k}$.}
   $$\gamma_V(\CL) := \chi_{\Lambda_{eff}(V, \CL)} ([\CL]).$$
   \item $\delta_V(\CL) := |H^1(Gal(\overline{F}/F), Pic(V, \CL))|$.
   \item $\tau_V(\CL) := w_{V, \CL}(\overline{V(F)})$, where $\overline{V(F)}$ is the closure of $V(F)$ in $V(\BA_F)$. 
\end{itemize}
We are now ready to define
$$c_V(\CL) := \frac{\gamma_V(\CL)}{\alpha_V(\CL) (\beta_V(\CL) - 1)!}\delta_V(\CL) \tau_V(\CL).$$

\begin{example} 
\label{Example_P1_Peyre_Constant}
Consider $\BP^1$ defined over $\BQ$, and consider the line bundle $\CL = \mathbf{O(1)}$ on $\BP^1$ with the standard metric.
\begin{itemize}
    \item $\alpha_V(\CL) = 2$, $\beta_V(\CL) = 1$ from Example \ref{Example_Pn_alpha_beta}.
    \item $\gamma_V(\CL) = 1$.
    \begin{itemize}
        \item Since $D$ can be taken as 0 in Example \ref{Example_P1_Local_Tamagawa_Measure}, we have 
        $$Pic(V, \CL) = Pic(V) \cong \BZ,$$ 
        under which $L = O(1) \to 1$. Hence 
        $$i: Pic(V, \CL)_{\BR} \cong \BR,$$ 
        where we pick the isomorphism so that $i(O(1)) = 1$.
        \item $\Lambda_{eff}(V, \CL) = \Lambda_{eff}(V)$ maps to $[0, \infty)$ under $i$. The dual cone $\Lambda^*$ is still $[0, \infty)$, under the natural pairing $\langle -, - \rangle$ via multiplication map
        $$\BR \times \BR \to \BR$$
        \item Hence 
        $$\chi_{\Lambda_{eff}(V, \CL)} (s) = \int_0^{\infty} e^{-sy} dy$$
        and 
        $$\gamma_V(\CL) = \chi_{\Lambda_{eff}(V, \CL)} ([\CL]) = \int_0^{\infty} e^{-1 \cdot y} dy = 1.$$
    \end{itemize}
    \item $Pic(V, \CL) = Pic(V) \cong \BZ$ with trivial Galois action. Hence
    $$\delta_V(\CL) = |H^1(Gal(\overline{\BQ}/\BQ), \BZ)| = 1.$$
    \item Since $\BP^1$ satisfies strong approximation, we have $\overline{V(\BQ)} = V(\BA_{\BQ})$, hence 
    $$\tau_V(\CL) = w_{V, \CL}(V(\BA_{\BQ})) = \frac{4}{\zeta(2)}$$
    as calculated in Example \ref{Example_P1_Global_Tamagawa_Measure}.
\end{itemize}
Hence 
$$c_{V}(\CL) = \frac{1}{2 \cdot (1-1)!} \cdot 1 \cdot \frac{4}{\zeta(2)} = \frac{2}{\zeta(2)}.$$
\end{example}

\begin{example}
\label{Example_Pn_pullback_Peyre_Constant}
Let $F$ be a number field. Consider a morphism $f: \BP^n \to \BP^n$ over $F$ and of degree $k$, and consider $\CL := f^*\mathbf{O(1)}$ on the source $V = \BP^n$. 

\begin{itemize}
    \item $\alpha_V(\CL) = \frac{n+1}{k}, \beta_V(\CL) = 1$ from Example \ref{Example_Pn_pullback_alpha_beta}.
    \item $\gamma_V(\CL) = \frac{1}{k}$, from a similar calculation as in Example \ref{Example_P1_Peyre_Constant}, and that 
    $$\int_0^{\infty} e^{-k \cdot y} dy = \frac{1}{k}.$$
    \item $\delta_V(\CL) = 1$ as in Example \ref{Example_P1_Peyre_Constant}.
\end{itemize}
Hence
$$c_{V}(\CL) = \frac{\frac{1}{k}}{\frac{n+1}{k} \cdot (1-1)!} \cdot 1 \cdot \tau_V(\CL) = \frac{1}{n+1} \tau_V(\CL),$$
where the global Tamagawa measure $\tau_V(\CL)$ has the shape in Example \ref{Example_Pn_pullback_Global_Tamagawa_Measure}.
\end{example}

\begin{example} 
\label{Example_Sym2P1_pullback_Peyre_Constant}
Let $F$ be a number field. Consider a morphism 
$$f: \BP^1 \to \BP^1$$ 
over $F$ of degree $k$. This induces a map
$$\Sym^2 f: \Sym^2 \BP^1 \to \Sym^2 \BP^1,$$
also of degree $k$. Consider the metrized line bundle $\CL := (\Sym^2 f)^* \mathbf{O(1, 1)}$ on the source $V = \Sym^2 \BP^1$.
\begin{itemize}
    \item $\alpha_V(\CL) = \frac{3}{k}, \beta_V(\CL) = 1$ from Example \ref{Example_Sym2P1_pullback_alpha_beta}.
    \item $\gamma_V(\CL) = \frac{1}{k}$, from a similar calculation as in Example \ref{Example_P1_Peyre_Constant}, and that 
    $$\int_0^{\infty} e^{-k \cdot y} dy = \frac{1}{k}.$$
    \item $\delta_V(\CL) = 1$ as in Example \ref{Example_P1_Peyre_Constant}.
\end{itemize}
Hence
$$c_{V}(\CL) = \frac{\frac{1}{k}}{\frac{3}{k} \cdot (1-1)!} \cdot 1 \cdot \tau_V(\CL) = \frac{1}{3} \tau_V(\CL),$$
where the global Tamagawa measure $\tau_V(\CL)$ has the shape in Example \ref{Example_Sym2P1_pullback_Global_Tamagawa_Measure}.
\end{example}

\subsubsection{The Batyrev-Manin-Peyre conjectures}
\begin{defn}
Let $V$ be a $\CL$-primitive, smooth projective variety over number field $F$, and let $H_{\CL, F}$ be the induced height relative to $F$. For any Zariski open subset $U \subset V$, let
$$N(U, \CL, B) = \{x \in U(F): H_{\CL, F}(x) \leq B\}.$$
We say that $V$ is \textbf{strongly $\CL$-saturated} if $|N(V, \CL, B)| \to \infty$ as $B \to \infty$, and for any Zariski open subset $U$,
$$\lim_{B \to \infty} \frac{|N(U, \CL, B)|}{|N(V, \CL, B)|} = 1.$$
In other words, we don't have a closed subvariety of $V$ where many $F$-rational points accumulates.
\end{defn}
\begin{conjecture}\cite[Section 3.4 Step 4]{BatyrevTschinkel}
Let $F$ be a number field. Let $V$ be a strongly $\CL$-saturated, smooth projective variety over number field $F$. Let $H_{\CL, F}$ be the induced relative height on $V$.

then
$$
N(V, \CL, B) := \#\{x \in V(F): H_{\CL, F}(x) \leq B\} = c_V(\CL) B^{\alpha_V(\CL)} (\log B)^{\beta_V(\CL) - 1} (1 + o(1)),
$$
as $B \to \infty$.
\end{conjecture}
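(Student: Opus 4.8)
Since the statement is a conjecture that is open in general --- the paper invokes it only as an organizing principle, and in the cases it actually needs it replaces it by the unconditional theorems of Franke--Manin--Tschinkel \cite{FrankeManinTschinkel} and N\'eron \cite{Neron} --- a complete proof is out of reach. What I would present instead is the one uniform strategy, the \emph{height zeta function} method of Manin's program, which does establish the conjecture for $V = \BP^n$ with an arbitrary ample metrized line bundle, hence also for $V = \Sym^2 \BP^1 \cong \BP^2$ through the isomorphism $\epsilon$ of Lemma \ref{Lemma_Properties_Sym2_P1}; these are exactly the instances used in the later sections.

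The plan is to study the Dirichlet series
$$Z_V(\CL, s) := \sum_{x \in V(F)} H_{\CL, F}(x)^{-s},$$
which converges for $\Re(s)$ large by the Northcott property. By a standard Tauberian theorem (Delange, or the one recorded in \cite{ChambertLoir}), the desired asymptotic $N(V,\CL,B) = c_V(\CL)B^{\alpha_V(\CL)}(\log B)^{\beta_V(\CL)-1}(1+o(1))$ follows once one knows: (i) $Z_V(\CL,s)$ continues meromorphically slightly past $\Re(s) = \alpha_V(\CL)$; (ii) on that vertical line its only pole is at $s = \alpha_V(\CL)$ and has order exactly $\beta_V(\CL)$; and (iii) the leading Laurent coefficient there equals $\alpha_V(\CL)\,c_V(\CL)$. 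So the whole content is the analytic continuation of $Z_V$ together with the computation of its principal part.

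For $V = \BP^n$ I would make $Z_V$ adelic: writing the height through its local factors as in Examples \ref{Example_Pn_pullback_Local_Tamagawa_Measure} and \ref{Example_Pn_pullback_Global_Tamagawa_Measure}, the series $Z_V(\CL,s)$ becomes an Eisenstein-type integral over $\BP^n(\BA_F)$, and Poisson summation on the adeles of $F^{n+1}$ --- this is precisely the argument of Franke--Manin--Tschinkel \cite{FrankeManinTschinkel} --- produces the meromorphic continuation, shows the rightmost pole comes entirely from the simple pole of $\zeta_F(s)$ at $s=1$ (so $\beta_V(\CL)=1$ and the pole sits at $s=\alpha_V(\CL)=(n+1)/k$ when $\CL=f^*\mathbf{O(1)}$ with $\deg f = k$), and evaluates the residue as a product of the local volumes $\omega_{V,\CL,v}(\BP^n(F_v))$ against the archimedean and $L$-function normalizations of Section \ref{Subsection_Batyrev_Manin_c_V}; matching this against the definitions of $\gamma_V(\CL)$, $\delta_V(\CL)$, $\tau_V(\CL)$, and using $(\beta_V(\CL)-1)!=1$, recovers $c_V(\CL)$ on the nose, as sanity-checked in Examples \ref{Example_P1_Peyre_Constant}--\ref{Example_Sym2P1_pullback_Peyre_Constant}. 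For $\Sym^2\BP^1$ one transports this through $\epsilon$, noting that $\mathbf{O(1,1)}$ pushes forward to a metric on $O_{\BP^2}(1)$ by Example \ref{Example_Standard_Height_Sym2P1}, so it is again an instance of the $\BP^2$ case.

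The main obstacle is exactly the step with no general solution: meromorphic continuation of $Z_V$ past its first pole, with control on the order of that pole and on the error term. This is available for projective space (Eisenstein series / Poisson summation), for toric varieties and equivariant compactifications of algebraic groups (harmonic analysis on the relevant adelic groups), and for a few other families, but not for an arbitrary strongly $\CL$-saturated variety; indeed the strong saturation hypothesis is built in precisely to rule out the thin accumulating subvarieties and the ``wrong power of $\log B$'' phenomena that would otherwise falsify the naive statement. Accordingly, in the body of the paper I would invoke this conjecture only through its proven $\BP^n$ (hence $\Sym^2\BP^1$) case, and for the genus-one dynamical modular curves --- where, after passing to $\Sym^2$, the ambient object is an abelian surface --- I would bypass it entirely in favour of N\'eron's unconditional count \cite{Neron}.
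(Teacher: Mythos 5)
Your proposal correctly identifies that the statement is an open conjecture, cited in the paper for conceptual framing rather than proved, and that the paper's actual work relies only on the unconditional special cases---Franke--Manin--Tschinkel for $\BP^n$ (hence $\Sym^2\BP^1\cong\BP^2$) and N\'eron for abelian varieties---which is exactly how the paper proceeds. Your sketch of the height-zeta-function method for those known cases is accurate and matches the references the paper draws on, so this is essentially the same reading of the situation as the paper's.
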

\begin{remark}
Suppose $[F: \BQ] = d$. Since the absolute height satisfies
$$H_{\CL}(x) = H_{\CL, F}(x)^{1/d}$$
for $x \in V(F)$, we see that 
$$\#\{x \in V(F): H_{\CL}(x) \leq B\} = c_V(\CL) B^{d\alpha_V(\CL)} (d\log B)^{\beta_V(\CL) - 1} (1 + o(1)).$$
\end{remark}

\subsection{Two results on counting rational points by height}
We document two known results of counting rational points by height: Franke-Manin-Tschinkel \cite{FrankeManinTschinkel} and N\'{e}ron \cite{Neron}. They will be the main tools for our theorems.

\subsubsection{Counting rational points on $\BP^n$.}
\begin{thm}
\label{Theorem_GeneralCountingResult_Pn}
Let $F/\BQ$ be a number field of degree $d$, and consider a metrized bundle $\CL' = (O(1), \|\cdot\|)$ on $\BP^n$ over $F$ (not necessarily the standard metric).

Let $f: \BP^n \to \BP^n$ be a morphism over $F$ of degree $k$, and consider the metrized line bundle $\CL := f^*(\CL')$ on the source $V = \BP^n$. Let $H_{\CL}: V(F) \to [1, \infty)$ be the induced (absolute, multiplicative) Weil height. Then
$$\#\{x \in \BP^n(F): H_{\CL}(x) \leq B\} = \frac{1}{n+1}\tau_{\BP^n} (\CL) B^{d(n+1)/k} + O_{\CL, F} (B^{(d(n+1)-1)/k} \log B),$$
as $B \to \infty$.
\end{thm}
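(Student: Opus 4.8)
The plan is to deduce this from the known case of the Batyrev--Manin--Peyre conjectures for $\BP^n$, due to Franke--Manin--Tschinkel \cite{FrankeManinTschinkel}, applied to the metrized line bundle $\CL = f^{*}\CL'$, and then to match the leading term and exponent with the invariants of $\CL$ computed earlier in this section. Since $f$ is a morphism of degree $k$, it is given by $n+1$ homogeneous forms of degree $k$ with no common zero, so the line bundle underlying $\CL$ is $O(k)$ and $\CL$ is an ample adelically metrized line bundle on $\BP^n$.

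First I would collect the relevant constants. By Examples~\ref{Example_Pn_pullback_alpha_beta}, \ref{Example_Pn_pullback_Global_Tamagawa_Measure} and \ref{Example_Pn_pullback_Peyre_Constant}, $\BP^n$ is $\CL$-primitive with
$$\alpha_{\BP^n}(\CL) = \frac{n+1}{k}, \qquad \beta_{\BP^n}(\CL) = 1, \qquad c_{\BP^n}(\CL) = \frac{1}{n+1}\,\tau_{\BP^n}(\CL),$$
the metric-dependence entering only through $\tau_{\BP^n}(\CL)$ (whose shape is the one recorded in Example~\ref{Example_Pn_pullback_Global_Tamagawa_Measure}). One also needs that $\BP^n$ is strongly $\CL$-saturated: this is standard, since for any proper closed subvariety $W \subsetneq \BP^n$ a linear change of coordinates together with Schanuel's theorem applied to the irreducible components of $W$ bounds $\#\{x \in W(F): H_{\CL, F}(x) \le B'\}$ by $O\big((B')^{\alpha_{\BP^n}(\CL) - c}\big)$ for some $c > 0$, which is of strictly smaller order than the expected main term, so no proper subvariety accumulates rational points.

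Next I would invoke \cite{FrankeManinTschinkel} for $\BP^n$ over $F$ with the adelically metrized ample line bundle $\CL$, which yields the Batyrev--Manin--Peyre asymptotic with a power-saving error term; since $\beta_{\BP^n}(\CL) = 1$, for the height $H_{\CL,F}$ relative to $F$ it reads
$$\#\{x \in \BP^n(F): H_{\CL, F}(x) \le B'\} = c_{\BP^n}(\CL)\,(B')^{(n+1)/k} + O_{\CL, F}\!\left((B')^{(n+1)/k - 1/(kd)}\log B'\right),$$
where the factor $\log B'$ is genuinely needed only in the exceptional case $n = 1$, $F = \BQ$ and is inserted for uniformity otherwise. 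Finally, using $H_{\CL}(x) = H_{\CL, F}(x)^{1/d}$ we have $\{x : H_{\CL}(x) \le B\} = \{x : H_{\CL, F}(x) \le B^{d}\}$; substituting $B' = B^{d}$ turns the previous display into
$$\#\{x \in \BP^n(F): H_{\CL}(x) \le B\} = c_{\BP^n}(\CL)\, B^{d(n+1)/k} + O_{\CL, F}\!\left(B^{(d(n+1)-1)/k}\log B\right),$$
and inserting $c_{\BP^n}(\CL) = \frac{1}{n+1}\tau_{\BP^n}(\CL)$ is precisely the assertion.

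The step requiring the most care is the appeal to \cite{FrankeManinTschinkel}: one needs not merely the leading-order asymptotic but a power-saving error term, and one needs it for the \emph{non-standard} metric $f^{*}\|\cdot\|$ on $O(k)$ rather than the standard metric on $O(1)$. This rests on the robustness of the underlying geometry-of-numbers count over $F$ (as in Schanuel's argument) when the standard metric is replaced by an arbitrary adelic metric and when $O(1)$ is replaced by $O(k)$ — the latter merely rescaling the height parameter by a $k$-th root. The remaining work is routine bookkeeping of the error exponent: tracking the factor $1/k$ coming from $O(k)$ and the factor $1/d$ coming from the relative-to-absolute conversion, and checking that absorbing the exceptional logarithm into the general case costs nothing.
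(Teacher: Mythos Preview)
Your overall strategy---reduce to the known count for $\BP^n$ with an arbitrary adelic metric, identify the constants via the examples in the section, and convert relative to absolute height---matches the paper's. The arithmetic of exponents and the handling of the $\log B$ factor are correct.

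There is, however, a misattribution that the paper is careful to separate out. Franke--Manin--Tschinkel, via the Eisenstein series for $GL_{n+1}$, gives the \emph{main term} with only an $o(B^{d(n+1)/k})$ error; it does not by itself yield the power-saving error term you write down. The paper obtains the main term and its Peyre-constant interpretation from \cite{FrankeManinTschinkel} and \cite{Peyre}, but for the explicit error $O_{\CL,F}(B^{(d(n+1)-1)/k}\log B)$ it appeals instead to a modification of the Masser--Vaaler geometry-of-numbers argument, adapted to allow the non-standard local norms coming from $f^*\|\cdot\|$. The paper even remarks that the closest available reference (Widmer's adelic Lipschitz systems) is too restrictive to cover the norms $N_v(x) = \max_i |G_i(x)|_v^{1/k}$ directly, so ``routine bookkeeping'' undersells what is needed. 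Your final paragraph gestures at Schanuel-type arguments, which is the right place to look, but you should not fold this into the citation of \cite{FrankeManinTschinkel}; the power saving is a separate input and requires genuine (if not difficult) modification of Masser--Vaaler rather than a black-box citation.
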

\begin{proof}
For $O(1)$ with the standard metric, Franke-Manin-Tschinkel \cite{FrankeManinTschinkel} proved this theorem with $o(B^{d(n+1)/k})$ error, by observing that the associated height zeta function is an Eisenstein series on $GL_{n+1}$; the asymptotics then follows from analytic properties of the Eisenstein series. Peyre \cite[Corollary 6.2.16]{Peyre} verified that the leading constant takes the form $\frac{1}{n+1} \tau_{\BP^n}(\CL)$; in particular, the asymptotics formula is consistent with the predictions of Batyrev-Manin-Peyre (Example \ref{Example_Pn_pullback_Peyre_Constant}). The result for general adelic metric follows from the case of standard metric \cite[Proposition 5.0.1(c), Corollary 6.2.18]{Peyre}.

To get an explicit power-saving error term, one can modify Masser-Vaaler's work \cite{MasserVaaler} to allow more flexibility at finite places. This leads to an error term of shape $O_{\CL, F}(B^{(d(n+1)-1)/k} E)$, where
$$E = \begin{cases}
    \log B & \text{ when $(d, n) = (1, 1)$,} \\
    1 & \text{ otherwise.}
\end{cases}$$
Clearly, this error term is $O_{\CL, F} (B^{(d(n+1)-1)/k} \log B)$.
\end{proof}
\begin{remark}
To the best of author's knowledge, Widmer \cite[Theorem 3.1]{Widmer} is the closest to the generalization of Masser-Vaaler's result that we need. Unfortunately his notion of adelic Lipschitz system is too restrictive for our purpose. 

More precisely, suppose $f = [F_0, \cdots, F_n]$, where $F_0, \cdots, F_n$ are relatively prime homogeneous polynomials of degree $k$, with no common vanishing locus. For each place $v \in M_F$, we want to consider the norms
$$N_v([x_0, \cdots, x_n]) = \max\{|F_0(x_0, \cdots, x_n)|_v, \cdots, |F_n(x_0, \cdots, x_n)|_v\}^{1/k}$$
These norms satisfy condition (i) - (iii) of adelic Lipschitz system \cite[Definition 2.2]{Widmer}, but may not satisfy condition (iv). Nonetheless, a similar approach can still be used to prove the theorem.
\end{remark}
\begin{remark}
The error term can be strengthened to $O_{\CL, F}(B^{(d(n+1) - 1)/k}E')$, where 
$$E' = \begin{cases}
    \log B & \text{ when $(d, n, k) = (1, 1, 1),$} \\
    1 & \text{ otherwise.}
\end{cases}$$
When $(d, n) \neq (1, 1)$, Masser-Vaaler's argument gives the desired result.

When $d = n = 1$ but $k > 1$, we can use Huxley's version for principle of Lipschitz \cite{Huxley} (instead of Masser-Vaaler's version \cite{MasserVaaler}), to get the stronger error term.
\end{remark}

\subsubsection{Counting rational points on abelian varieties}
\begin{thm}
\label{Theorem_GeneralCountingResult_AV}
Let $F/\BQ$ be a number field of degree $d$. Let $A$ be an abelian variety of dimension $n$ over $F$. Let $\CL$ be a metrized line bundle on $A$ over $F$, and let $H_{\CL}: A(F) \to [1, \infty)$ be the induced (absolute, multiplicative) height. Let $r$ be rank of the Mordell-Weil group $A(F)$. 

Assume furthermore that $\CL$, considered as an element in $Pic(A)$, is divisible by 2.
\begin{enumerate}[label=(\alph*)]
    \item The (absolute, logarithmic) N\'{e}ron-Tate height $\widehat{h_{\CL}}: A(F) \to [0, \infty)$, defined by
$$\widehat{h_{\CL}}(x) := \lim_{n \to \infty} \frac{\log H_{\CL}(2^n x)}{4^n},$$
descends to a quadratic form on $A(F)/A(F)_{tor}$. By extension of scalars, it extends to a quadratic form on $\widehat{h_{\CL}}: A(F) \otimes \BR \to [0, \infty)$.
    \item Equip $A(F) \otimes \BR \cong \BR^r$ with a measure via the Lebesgue measure of $\BR$, such that $\covol(A(F)) = 1$. Then we have the asymptotics formula
\begin{align*}
& \#\{x \in A(F): H_{\CL}(x) \leq B\} \\
= & \, |A(F)_{tor}| \cdot \vol\left(\{\mathbf{x} \in A(F) \otimes \BR: \widehat{h_{\CL}}(\mathbf{x}) \leq 1\}\right) \cdot (\log B)^{r/2} + O_{\CL, F}( (\log B)^{(r-1)/2}).
\end{align*}
\end{enumerate}
\end{thm}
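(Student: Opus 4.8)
The plan is to trade the Weil height $H_\CL$ for the N\'eron--Tate canonical height and reduce the problem to a lattice-point count inside a dilate of a fixed ellipsoid. The divisibility-by-$2$ hypothesis causes no loss of generality (applying the statement to $\CL^{\otimes 2}$ recovers it for $\CL$, since $\log H_{\CL^{\otimes 2}} = 2\log H_\CL + O(1)$ and $\widehat{h_{\CL^{\otimes 2}}} = 2\widehat{h_\CL}$), and I would use it only to streamline part (a); note also that for $\{x\in A(F):H_\CL(x)\le B\}$ to be finite at all, $\CL$ must be ample, so I take $\CL$ ample throughout.

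\emph{Part (a).} I would run Tate's telescoping argument for $[2]\colon A\to A$. For any line bundle one has $[2]^*\CL \cong \CL^{\otimes 4}\otimes\eta$ with $\eta\in\Pic^0(A)$, and the Weil height of $\eta$ satisfies $h_\eta(2^n x)=O(2^n)$, so it dies after dividing by $4^n$; a geometric-series estimate then shows that $4^{-n}\log H_\CL(2^n x)$ is Cauchy, that $\widehat{h_\CL}$ is well defined, and that $\log H_\CL = \widehat{h_\CL} + \ell_\CL + O_{\CL,F}(1)$ for a linear form $\ell_\CL$ on $A(F)$ (the N\'eron--Tate pairing against $\CL\otimes[-1]^*\CL^{-1}\in\Pic^0(A)$, which vanishes precisely when $\CL$ is symmetric, which the divisibility hypothesis lets us arrange). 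Passing the parallelogram law $h_\CL(x+y)+h_\CL(x-y)=2h_\CL(x)+2h_\CL(y)+O(1)$ (theorem of the cube) to the limit shows $\widehat{h_\CL}$ is a quadratic form; it is $\ge 0$ and vanishes on $A(F)_{tor}$ by the usual positivity of canonical heights attached to ample classes. Hence $\widehat{h_\CL}$ descends to $A(F)/A(F)_{tor}\cong\ZZ^r$ and, extending scalars, to a positive-definite quadratic form $\widehat{h_\CL}\colon A(F)\otimes\RR\to[0,\infty)$ (positive-definiteness being the non-degeneracy of the N\'eron--Tate pairing for the ample class $\CL\otimes[-1]^*\CL$).

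\emph{Part (b).} Write $h:=\log H_\CL$, $\widehat h:=\widehat{h_\CL}$, so $|h-\widehat h-\ell_\CL|\le C$ for some $C=C_{\CL,F}$. Both $\widehat h$ (a quadratic form) and $\ell_\CL$ (a linear form) vanish on $A(F)_{tor}$, so $\widehat h + \ell_\CL$ is constant on each coset of $A(F)_{tor}$ in $A(F)$, and each coset has exactly $|A(F)_{tor}|$ elements. Thus, writing $\Lambda$ for the image of $A(F)$ in $A(F)\otimes\RR\cong\RR^r$ — a rank-$r$ lattice isomorphic to $A(F)/A(F)_{tor}$, carrying the Lebesgue measure normalized by $\covol(\Lambda)=1$ — one gets
\begin{align*}
|A(F)_{tor}|\cdot\#\{v\in\Lambda:\widehat h(v)+\ell_\CL(v)\le\log B-C\}
&\le \#\{x\in A(F):H_\CL(x)\le B\}\\
&\le |A(F)_{tor}|\cdot\#\{v\in\Lambda:\widehat h(v)+\ell_\CL(v)\le\log B+C\}.
\end{align*}
The region $\{v:\widehat h(v)+\ell_\CL(v)\le T\}$ is, by completing the square, a translate of $\sqrt{T-c_0}\,\mathcal{E}$, where $\mathcal{E}:=\{v\in A(F)\otimes\RR:\widehat h(v)\le 1\}$ is a bounded ellipsoid with smooth boundary and $c_0$ is a constant. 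The principle of Lipschitz (the number of points of a lattice $\Lambda$ in a dilate $R\,\mathcal{E}$ of a bounded body with rectifiable boundary is $\covol(\Lambda)^{-1}\vol(\mathcal{E})R^{r}+O(R^{r-1})$, and this is translation-insensitive to leading order) therefore yields $\#\{v\in\Lambda:\widehat h(v)+\ell_\CL(v)\le T\}=\vol(\mathcal{E})\,T^{r/2}+O_{\CL,F}(T^{(r-1)/2})$. Substituting $T=\log B\pm C$ and using $(\log B\pm C)^{r/2}=(\log B)^{r/2}+O((\log B)^{r/2-1})$ with $r/2-1\le(r-1)/2$, both bounds collapse to $|A(F)_{tor}|\,\vol(\mathcal{E})\,(\log B)^{r/2}+O_{\CL,F}((\log B)^{(r-1)/2})$, which is the stated asymptotic.

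The main obstacle I expect is part (a): carefully establishing the comparison between the Weil height and the canonical quadratic form $\widehat{h_\CL}$ — in particular that they differ only by a linear term (absorbed into a translate of the ellipsoid) plus a bounded term, and that $\widehat{h_\CL}$ is positive definite — so that the leading term and the torsion factor are unaffected. Once this is in hand, part (b) is a routine marriage of Northcott finiteness with the principle of Lipschitz, the only real bookkeeping being the appearance of $|A(F)_{tor}|$.
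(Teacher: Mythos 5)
Your proposal is correct, but the comparison with the paper is a bit one-sided: the paper does not prove this theorem, it simply cites Lang (Chapter~5, Theorem~3.1) for part (a) and N\'eron (Th\'eor\`eme~6) for part (b). What you have written out is essentially the standard proof that those references give — Tate's telescoping argument for the existence and quadraticity of the canonical height, the comparison $\log H_\CL = \widehat{h_\CL} + \ell_\CL + O_{\CL,F}(1)$, and then a lattice-point count inside a dilated (and translated, to absorb $\ell_\CL$) ellipsoid via the Lipschitz principle. So you are not taking a different route; you are supplying the proof the paper chose to omit. One small imprecision: divisibility of $[\CL]$ by $2$ in $\Pic(A)$ does \emph{not} by itself force $\CL$ symmetric (that would require $\CL\otimes[-1]^*\CL^{-1}$ to be trivial, which $[\CL]=2[\CM]$ does not guarantee), so the parenthetical "which the divisibility hypothesis lets us arrange" is off. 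Fortunately you do not rely on it — your completing-the-square step handles a nonzero linear term $\ell_\CL$ in any case, with the translation absorbed into the $O((\log B)^{(r-1)/2})$ error, so the argument stands. The divisibility hypothesis in the statement is really there to match the normalization of the cited results (and to make $\widehat{h_\CL}$ precisely the quadratic canonical height rather than a multiple), not to kill the linear part, and your opening remark correctly observes the hypothesis costs nothing since one can always pass to $\CL^{\otimes 2}$.
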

\begin{proof}
For (a), this follows from the theory of N\'{e}ron-Tate heights. See \cite[Chapter 5, Theorem 3.1]{Lang} for a reference.

For (b), this is due to N\'{e}ron \cite[Theorem 6]{Neron}. See also \cite[Chapter 5, Theorem 7.5]{Lang} for a reference.
\end{proof}

\section{Examples of main theorems for $8(2, 1, 1)$ over $\BQ$}
In this section, we illustrate Theorem \ref{Theorem_Main_Degree1} and Theorem \ref{Theorem_Main_Degree2} for the portrait $P = 8(2, 1, 1)$ over $\BQ$. We will also prove Theorem \ref{Theorem_Main_Degree1} for this portrait with bare hands, to illustrate the ideas of proof in the general case.

\begin{figure}[h!]
\includegraphics[scale=1]{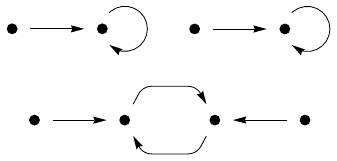}
\caption{The portrait $8(2, 1, 1)$.}
\end{figure}

We first collect some known facts about the dynamical modular curve $X_1(P)$ and the canonical forgetful map $\pi_P$. 

\begin{prop}[{\cite[Theorem 2.1]{Poonen}}]
\label{Proposition_Portrait_8_211_Properties}
Let $P$ be the portrait $8(2, 1, 1)$. 
\begin{enumerate}[label=(\alph*)]
    \item $X_1(P) \cong \BP^1$ over $\BQ$.
    \item The forgetful map $\pi_P: X_1(P) \cong \BP^1 \to \BP^1$ is of degree 4, and has a presentation
    $$\pi_P([x, y]) = [G_0(x, y), G_1(x,y)],$$
    where
    $$G_0(x,y) := -(3x^4 + 10x^2y^2 + 3y^4), \, G_1(x,y) := 4(x^2-y^2)^2 = 4(x-y)^2(x+y)^2.$$
\end{enumerate}
\end{prop}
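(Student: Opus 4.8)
\textit{Proof proposal.} Both assertions follow from writing out explicit defining equations for $Y_1(P)$ and parametrizing the resulting curve; this is the content of \cite[Theorem 2.1]{Poonen}, and the plan is to reconstruct that computation. The portrait $P = 8(2,1,1)$ consists of the two fixed points of $f_c$, the (unique) $2$-cycle of $f_c$, and the ``other'' preimage of each of these four points — the label $(2,1,1)$ recording that its cyclic part is one $2$-cycle and two $1$-cycles, for eight vertices in all.

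First I would turn the portrait into equations. Write the fixed points as the roots $\gamma_\pm$ of $z^2 - z + c$ and the $2$-cycle as $\{\alpha,\beta\}$; the relations $\alpha^2+c=\beta$, $\beta^2+c=\alpha$, together with $\alpha\neq\beta$, force $\alpha+\beta=-1$ and $c = \alpha\beta - 1$, so $\alpha,\beta$ are the roots of $z^2+z+(c+1)$. A preimage $z$ of a point $w$ satisfies $z^2 = w-c$; since $\gamma_\pm^2 + c = \gamma_\pm$ the preimages of $\gamma_\pm$ are $\{\gamma_\pm, -\gamma_\pm\}$, and likewise the preimages of $\alpha$ (resp.\ $\beta$) are $\{\beta,-\beta\}$ (resp.\ $\{\alpha,-\alpha\}$). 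Hence all eight vertices are $\BQ$-rational precisely when $\sqrt{1-4c}$ and $\sqrt{-3-4c}$ lie in $\BQ$, and $Y_1(P)$ is the non-degeneracy locus (all eight points distinct, with the prescribed periods — a cofinite open condition) inside
$$\{(c,u,v):\ u^2 = 1 - 4c,\quad v^2 = -3 - 4c\}.$$

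Next I would eliminate $c$. Because $c$ appears linearly in each equation, substituting $c = \tfrac{1-u^2}{4}$ collapses the system to the single affine conic $v^2 = u^2 - 4$. This conic is smooth and has the $\BQ$-rational point at infinity $[U:V:W]=[1:1:0]$, so its smooth projective model — which is $X_1(P)$ — is $\BP^1$ over $\BQ$; that gives (a). Projecting from the rational point $(u,v)=(2,0)$ along the lines $v = m(u-2)$ produces the inverse isomorphism $[x:y]\mapsto(u,v)$ with $m=x/y$, namely $u = \tfrac{2(m^2+1)}{m^2-1}$ and $v = \tfrac{4m}{m^2-1}$. Substituting into $c = \tfrac{1-u^2}{4}$ and clearing denominators yields
$$\pi_P([x:y]) = \bigl[\,-(3x^4 + 10x^2y^2 + 3y^4)\ :\ 4(x^2-y^2)^2\,\bigr].$$
Since $4(x^2-y^2)^2 = 4(x-y)^2(x+y)^2$ while $3x^4+10x^2y^2+3y^4$ does not vanish at $[1:\pm1]$, the two quartics are coprime and define a degree-$4$ morphism $\BP^1\to\BP^1$, proving (b). (The degree $4$ is also the size of the generic fibre of $\pi_P$: two orderings of the fixed-point pair times two orderings of the $2$-cycle.)

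The one genuinely delicate point is the first step: one must notice that the ``extra'' preimages $-\gamma_\pm,-\alpha,-\beta$ impose \emph{no} further square conditions — they are automatically rational once the fixed points and the $2$-cycle are — so that $Y_1(P)$ is cut out by exactly the two displayed quadrics. Because each is linear in $c$, elimination reduces their intersection to a single conic, so $X_1(P)$ is rational rather than the genus-$1$ curve one might naively expect from an intersection of two quadric surfaces. Everything downstream is routine algebra, which can be cross-checked against \cite[Theorem 2.1]{Poonen}.
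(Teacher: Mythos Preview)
Your proposal is correct and is exactly the computation underlying the cited result; the paper itself does not give a proof but simply quotes \cite[Theorem~2.1]{Poonen}, and your reconstruction matches that argument (fixed points give $u^2=1-4c$, the $2$-cycle gives $v^2=-3-4c$, elimination yields the conic $v^2=u^2-4$, and projection from $(2,0)$ produces the displayed quartics).
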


Let $H: \BQbar \to [1, \infty)$ be the absolute multiplicative Weil height on $\BP^1$.

\subsection{Illustration of Theorem \ref{Theorem_Main_Degree1}}
\label{SectionProofMainThmDegree1Example}
Consider
$$S_{\BQ, 1} (P, B) := \#\{c \in \BQ: \preper(f_c, \BQ) \cong P, H(c) \leq B\}.$$
Our goal is an asymptotic formula for $S_{\BQ, 1} (P, B)$ as $B \to \infty$.
\begin{thm}[Special case of Theorem \ref{Theorem_Main_Degree1} for $8(2, 1, 1)$ over $\BQ$] 
\label{Theorem_Main_Q_Example_8_211_Degree1}
Assuming the Morton-Silverman conjecture for $\BQ$, we have
$$S_{\BQ, 1} (P, B) = \frac{1}{4 \zeta(2)} \cdot area(\CR(1)) \cdot B^{2/4} + O_P(B^{1/4}),$$
as $B \to \infty$. Here
$$\CR(1) := \{(x, y) \in \BR^2: \max\{|G_0(x, y)|, |G_1(x, y)|\} \leq 1\},$$
and the area is computed under the Lebesgue measure of $\BR^2$.
\end{thm}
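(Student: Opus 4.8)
The portrait $8(2,1,1)$ has $X_1(P) \cong \BP^1$, so counting $c \in \BQ$ with $\preper(f_c, \BQ) \cong P$ amounts to counting $x \in X_1(P)(\BQ) = \BP^1(\BQ)$ with the right portrait, then translating to a height count on $\BP^1$ via $\pi_P$. The first step is the \emph{inclusion-exclusion / Morton-Silverman reduction}: a rational point $x \in X_1(P)(\BQ)$ records an $f_c$ whose portrait \emph{contains} $P$, so
$$S_{\BQ,1}(P,B) = \frac{1}{|\Aut(\pi_P)|}\#\{x \in X_1(P)(\BQ) : H(\pi_P(x)) \le B\} - (\text{contributions from portraits } P' \supsetneq P).$$
Assuming Morton-Silverman for $\BQ$, there are only finitely many portraits $P'$ strictly containing $P$ with $X_1(P')(\BQ) \neq \emptyset$; for each such $P'$, $X_1(P')$ has genus $\ge 2$ (one checks $8(2,1,1)$ is maximal among the small-genus graphs in $\Gamma_0 \cup \Gamma_1 \cup \Gamma_2$, or more precisely that no genus $\le 2$ graph properly contains it with a nontrivial extra point structure over $\BQ$), so by Faltings $\#X_1(P')(\BQ) < \infty$ and these contribute $O_P(1)$. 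One must also divide by $|\Aut(\pi_P)|$ to account for the fact that several tuples $(z_1,\dots,z_n)$ give the same $c$ — here $\Aut(\pi_P) \cong \ZZ/2\ZZ$ (the symmetry swapping the two tails), which is why the main constant carries a factor $\tfrac14$ rather than $\tfrac12$; I would pin this down from the explicit presentation of $\pi_P$ in Proposition \ref{Proposition_Portrait_8_211_Properties} and Appendix \ref{Appendix_Explicit_Presentation_piG}.

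\textbf{The second step is the height count on $\BP^1$.} We have reduced to estimating $\#\{x \in \BP^1(\BQ) : H(\pi_P(x)) \le B\}$ where $\pi_P\colon \BP^1 \to \BP^1$ is the degree-$4$ morphism $[x,y]\mapsto[G_0,G_1]$. The height $H(\pi_P(x))$ is exactly $H_{\CL}(x)$ for $\CL = \pi_P^*\mathbf{O(1)}$, a metrized line bundle with underlying bundle $O(4)$. Now apply Theorem \ref{Theorem_GeneralCountingResult_Pn} with $n = 1$, $d = 1$, $k = 4$: this gives
$$\#\{x \in \BP^1(\BQ) : H_{\CL}(x) \le B\} = \tfrac12 \tau_{\BP^1}(\CL)\, B^{2/4} + O_{P}(B^{1/4}\log B).$$
Here I should double-check the error exponent: Theorem \ref{Theorem_GeneralCountingResult_Pn} gives $O(B^{(d(n+1)-1)/k}\log B) = O(B^{1/4}\log B)$, and the remark after it notes that for $(d,n,k)=(1,1,4)$, i.e. $k>1$, Huxley's principle of Lipschitz removes the log, giving the clean $O_P(B^{1/4})$ claimed in the theorem. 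Dividing by $|\Aut(\pi_P)| = 2$ and absorbing the $O_P(1)$ from Step 1 into the error term yields
$$S_{\BQ,1}(P,B) = \tfrac14 \tau_{\BP^1}(\CL)\, B^{1/2} + O_P(B^{1/4}).$$

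\textbf{The third step is identifying $\tau_{\BP^1}(\CL)$ with $\tfrac{1}{\zeta(2)}\operatorname{area}(\CR(1))$.} By Example \ref{Example_Pn_pullback_Global_Tamagawa_Measure} with $F = \BQ$, $n = 1$, $k = 4$,
$$\tau_{\BP^1}(\CL) = \frac{\res_{s=1}\zeta(s)}{\zeta(2)} \prod_{v} \frac{w_{\BP^1,\CL,v}(\BP^1(\QQ_v))}{\lambda_v} = \frac{1}{\zeta(2)} \prod_v \frac{w_{\BP^1,\CL,v}(\BP^1(\QQ_v))}{\lambda_v}.$$
For the archimedean place, Example \ref{Example_Pn_pullback_Local_Tamagawa_Measure} gives $w_{\BP^1,\CL,\infty}(\BP^1(\RR)) = \int_{\RR}\max\{|G_0(1,u)|,|G_1(1,u)|\}^{-2/4}\,|du|$; the finite places contribute the Euler factors $\lambda_p = (1-p^{-2})/(1-p^{-1})$ for good $p$ and genuine local integrals for the finitely many bad $p$, and these reassemble with the $1/\zeta(2)$ in front. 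The content here is a change of variables: the definition $\CR(1) = \{(x,y)\in\RR^2 : \max\{|G_0(x,y)|,|G_1(x,y)|\}\le 1\}$ is a \emph{two-dimensional affine cone} description, and passing to the projective/affine-chart integral $\int_\RR \max\{|G_0(1,u)|,|G_1(1,u)|\}^{-1/2}\,du$ via polar-type coordinates $(x,y) = (r\cos\theta, r\sin\theta)$ and the homogeneity $G_i(rx,ry) = r^4 G_i(x,y)$ produces exactly the factor matching $w_{\BP^1,\CL,\infty}(\BP^1(\RR))$, together with the bad finite Euler factors if one sets up $\CR(1)$ adelically. In this $\QQ$-illustrative section the cleanest route is: verify directly that $\tau_{\BP^1}(\CL) = \frac{1}{\zeta(2)}\operatorname{area}(\CR(1))$ by comparing the two integral expressions after the cone-to-chart change of variables, noting that all finite primes are good here (the $G_i$ have unit content and no common zero mod $p$ for all $p$, since $\disc$ of $G_0 z^2\cdots$ is a fixed integer — one checks the resultant of $G_0, G_1$), so $\prod_v \lambda_v^{-1}w_{\BP^1,\CL,v} = \prod_p (1+p^{-1}+\cdots)^{-1}(\cdots)$ collapses and the global volume is literally $\frac{1}{\zeta(2)}$ times the real cone volume.

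\textbf{Expected main obstacle.} The genuinely delicate point is Step 1: verifying unconditionally (given only Morton-Silverman) that \emph{every} portrait $P'$ strictly containing $8(2,1,1)$ that has a $\QQ$-point on $X_1(P')$ forces $X_1(P')$ to have genus $\ge 2$ — equivalently, that $8(2,1,1)$ is not properly contained in any of the genus-$\le 1$ graphs in $\Gamma_0 \cup \Gamma_1$. This is a finite combinatorial check on directed graphs (which strongly admissible graphs contain $8(2,1,1)$), but it has to be done carefully, and it is the only place the argument is not a black-box invocation of Theorem \ref{Theorem_GeneralCountingResult_Pn}. A secondary fussy point is the exact value $|\Aut(\pi_P)| = 2$ and making sure the inclusion-exclusion bookkeeping (each $c$ with portrait exactly $P$ lifting to exactly $|\Aut(\pi_P)|$ points of $X_1(P)(\QQ)$, each with $H(\pi_P(x)) = H(c)$) is right; the explicit formulas for $G_0, G_1$ make this mechanical. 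The remaining steps are routine applications of the cited counting theorems and Tamagawa-measure computations already worked out in the Examples of Section 3.
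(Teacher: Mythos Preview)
Your overall strategy---invoke Theorem \ref{Theorem_GeneralCountingResult_Pn} as a black box, identify the Tamagawa constant, then do inclusion-exclusion---is sound and is in fact exactly what the paper does in Section 5 for the general Theorem \ref{Theorem_Main_Degree1_Details}. The paper's proof in Section 4, by contrast, is a deliberately bare-hands computation (local gcd analysis, M\"obius inversion, Huxley's Lipschitz principle) in the style of Harron--Snowden; so your route is different but legitimate. However, your execution contains three concrete errors, two of which happen to cancel.

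First, $|\Aut(\pi_P)| = 4$, not $2$. From the explicit presentation $G_0 = -(3x^4+10x^2y^2+3y^4)$, $G_1 = 4(x^2-y^2)^2$, both polynomials are invariant under $[x,y]\mapsto[x,-y]$ and $[x,y]\mapsto[y,x]$, giving the Klein four-group $\{t,-t,1/t,-1/t\}$ in affine coordinate $t=x/y$ (see the Appendix table for $8(2,1,1)$). Second, your claim in Step 1 that no genus $\le 1$ strongly admissible graph properly contains $8(2,1,1)$ is false: the graphs $10(2,1,1)a$ and $10(2,1,1)b$ in $\Gamma_1$ both do. The paper handles these not by Faltings but by observing that the corresponding elliptic curves (Cremona labels 17a4 and 15a8) have Mordell--Weil rank $0$ over $\BQ$, hence finitely many rational points. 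Third, your claim in Step 3 that all finite primes are good is false: Lemma \ref{Lemma_Rational_Points_X1G_8_211_GCD} shows $\gcd(G_0(a,b),G_1(a,b))=16$ when $a,b$ are both odd, so $p=2$ is bad, and the local Tamagawa factor at $2$ is $2$, not $1$. Consequently $\tau_{\BP^1}(\CL) = \tfrac{2}{\zeta(2)}\,\mathrm{area}(\CR(1))$, not $\tfrac{1}{\zeta(2)}\,\mathrm{area}(\CR(1))$ (see the Remark after the proof of Theorem \ref{Theorem_Main_Q_Example_8_211_Degree1}). Your errors in $|\Aut(\pi_P)|$ and in the $p=2$ factor cancel, which is why you land on the correct constant $\tfrac{1}{4\zeta(2)}\,\mathrm{area}(\CR(1))$; but the reasoning as written is not correct.
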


The main workhorse is an asymptotic formula for 
$$N_{\BQ, 1}(P, B) := \#\{x \in X_1(P)(\BQ): H(\pi_P(x)) \leq B\},$$
as $B \to \infty$, provided by the next proposition.
\begin{prop}\label{RationalPointsX1G_Genus0_Q_Example_8_211}
We have
$$N_{\BQ, 1}(P, B) = \frac{1}{\zeta(2)} \cdot area(\CR(1)) \cdot B^{2 / 4} + O_P(B^{1 / 4}).$$
as $B \to \infty$. Here
$$\CR(1) := \{(x, y) \in \BR^2: \max\{|G_0(x, y)|, |G_1(x, y)|\} \leq 1\},$$
and the area is computed under the Lebesgue measure of $\BR^2$.
\end{prop}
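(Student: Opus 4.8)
The plan is to recognize $N_{\BQ,1}(P,B)$ as a count of rational points on $\BP^1$ for a pulled-back metric, apply Theorem \ref{Theorem_GeneralCountingResult_Pn}, and then match the resulting leading constant with $\tfrac1{\zeta(2)}\,area(\CR(1))$ by unwinding the Tamagawa measure.

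First I would invoke Proposition \ref{Proposition_Portrait_8_211_Properties}: $X_1(P)\cong\BP^1$ over $\BQ$, so $X_1(P)(\BQ)=\BP^1(\BQ)$, and under this identification the forgetful map $\pi_P\colon\BP^1\to\BP^1$ is the degree $k=4$ morphism $[G_0,G_1]$. Put $V:=\BP^1$ and $\CL:=\pi_P^{*}\mathbf{O(1)}$, the pullback of the standard metrized bundle on the target. Then $H(\pi_P(x))=H_{\CL}(x)$ for all $x\in V(\BQ)$, so $N_{\BQ,1}(P,B)=\#\{x\in V(\BQ):H_{\CL}(x)\le B\}$, and Theorem \ref{Theorem_GeneralCountingResult_Pn} with $n=1$, $k=4$, $F=\BQ$ gives
$$N_{\BQ,1}(P,B)=\tfrac12\,\tau_{V}(\CL)\,B^{2/4}+O_P\!\left(B^{1/4}\log B\right).$$
Since $d=n=1$ and $k=4>1$, the strengthened error in the remark after that theorem (Huxley's form of the Lipschitz principle) removes the $\log B$, so what remains is to prove $\tau_{V}(\CL)=\tfrac{2}{\zeta(2)}\,area(\CR(1))$.

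For this I would use Example \ref{Example_Pn_pullback_Global_Tamagawa_Measure}: since $\BP^1$ satisfies strong approximation, $\tau_V(\CL)=\tfrac1{\zeta(2)}\prod_v w_{V,\CL,v}(V(\BQ_v))/\lambda_v$. At the archimedean place, Example \ref{Example_Pn_pullback_Local_Tamagawa_Measure} gives $w_{V,\CL,\infty}(V(\BR))=\int_{\BR}\max\{|G_0(1,t)|,|G_1(1,t)|\}^{-1/2}\,dt$, and the substitution $(x,y)\mapsto(x,\,t=y/x)$ on $\CR(1)$, using that $\max\{|G_0|,|G_1|\}$ is homogeneous of degree $4$, identifies this integral with $area(\CR(1))$. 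For the finite places I would compute $\mathrm{Res}(G_0,G_1)$: the factorizations $G_0=-(3x^2+y^2)(x^2+3y^2)$ and $G_1=4(x-y)^2(x+y)^2$ give $\mathrm{Res}(G_0,G_1)=\pm2^{24}$, so $w_{V,\CL,v_p}(V(\BQ_p))=\lambda_{v_p}$ for every $p\neq2$ by Example \ref{Example_Pn_pullback_Local_Tamagawa_Measure}, leaving only $p=2$. A direct evaluation of $\int_{\BQ_2}\max\{|G_0(1,t)|_2,|G_1(1,t)|_2\}^{-1/2}\,dt$, split according to whether $t\in2\BZ_2$, $t\in\BZ_2^{\times}$, or $t\notin\BZ_2$, yields $w_{V,\CL,v_2}(V(\BQ_2))=3$, whereas $\lambda_{v_2}=(1-2^{-2})/(1-2^{-1})=3/2$. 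Hence $\prod_v w_{V,\CL,v}(V(\BQ_v))/\lambda_v=area(\CR(1))\cdot2$, so $\tau_V(\CL)=\tfrac{2}{\zeta(2)}\,area(\CR(1))$, completing the proof.

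I expect the $2$-adic local-volume computation — and, more subtly, keeping every normalization consistent (the Haar-measure conventions of Section 3.2, the factor $\tfrac1{n+1}$ in Theorem \ref{Theorem_GeneralCountingResult_Pn}, and the correction factors $\lambda_v$) so that the identities above actually close up — to be the only real obstacle; the archimedean change of variables and the resultant computation are routine. As a cross-check, and an alternative that avoids adelic language altogether, one can argue by hand: writing $x=[a:b]$ with $a,b$ coprime integers, $H(\pi_P(x))=\max\{|G_0(a,b)|,|G_1(a,b)|\}/\gcd(G_0(a,b),G_1(a,b))$, and a congruence analysis at $2$ (using $G_0\equiv(x+y)^4$ and $G_1\equiv0\pmod2$, and that $\mathrm{Res}(G_0,G_1)$ is a power of $2$) shows the denominator equals $16$ when $a$ and $b$ are both odd and equals $1$ otherwise; the count then splits into lattice-point estimates for $\{\max\{|G_0|,|G_1|\}\le T\}$ with a congruence condition modulo a fixed power of $2$, each asymptotic to $area(\CR(1))\,T^{1/2}$ with power-saving error by the Lipschitz principle, and summing these (with the coprimality sieve at odd primes) recovers the constant $\tfrac1{\zeta(2)}\,area(\CR(1))$.
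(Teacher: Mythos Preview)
Your proposal is correct, but takes a genuinely different route from the paper. The paper's Section~4 is deliberately a ``bare hands'' argument (as announced in its introduction): it writes $N_{\BQ,1}(P,B)$ as a count of coprime integer pairs $(a,b)$ with $H([G_0(a,b),G_1(a,b)])\le B$, proves by an elementary congruence argument (Lemma~\ref{Lemma_Rational_Points_X1G_8_211_GCD}) that $\gcd(G_0(a,b),G_1(a,b))$ is $1$ or $16$ according to the parity of $a,b$, splits the count accordingly, and then applies Huxley's Lipschitz principle together with M\"obius inversion over odd moduli to obtain the asymptotic with the stated constant. Your primary route instead invokes Theorem~\ref{Theorem_GeneralCountingResult_Pn} directly and identifies the leading constant by computing the Tamagawa volume place by place (the archimedean change of variables giving $area(\CR(1))$, the resultant $\mathrm{Res}(G_0,G_1)=2^{24}$ localizing the nontrivial finite contribution to $p=2$, and the explicit $2$-adic integral giving the factor $2$). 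This is more conceptual and foreshadows how the general case is handled in Section~5; the paper's approach is more elementary and self-contained, which is precisely its pedagogical purpose. Amusingly, the ``cross-check'' you sketch at the end \emph{is} the paper's proof.
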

\begin{proof}
We follow the general strategy of Harron-Snowden \cite{HarronSnowden} to get the asymptotic formula.

We want to compute
$$N_{\BQ, 1}(P, B) = \#\{[a, b] \in \BP^1(\BQ): H([G_0(a,b), G_1(a,b)]) \leq B\}.$$
This equals
$$N_{\BQ, 1}(P, B) = \frac{1}{2}\#\{(a, b) \in \BZ^2: gcd(a,b) = 1, H([G_0(a,b), G_1(a,b)]) \leq B\}.$$
where the factor of $2$ comes from the units $\pm 1$ of $\BZ$.

\textit{Step 1: Local conditions.} Since the calculation of height requires simplification of $[G_0(a,b), G_1(a,b)]$ to lowest terms, we first analyze $gcd(G_0(a, b), G_1(a, b))$ for specific values of $a, b$. 

\begin{lemma} \label{Lemma_Rational_Points_X1G_8_211_GCD}
If $(a,b) \in \BZ^2$ satisfies $gcd(a,b) = 1$, then $$gcd(G_0(a,b), G_1(a,b)) = 
\begin{cases}
1 & \text{ if $a \not \equiv b \mod 2$,} \\
16 & \text{ if $a,b$ are both odd.}
\end{cases}
$$
\end{lemma}
\begin{proof}
We repeatedly use the observations that for any integers $m, m', n$, 
$$gcd(mm', n) \mid gcd(m,n) \gcd(m', n)$$
and 
$$gcd(m + n, n) = gcd(m, n).$$
In our case,
$$gcd(G_0(a,b), G_1(a,b)) \mid gcd(G_0(a,b), 4) gcd(G_0(a,b), a-b)^2 gcd(G_0(a,b), a+b)^2.$$

When $a \not\equiv b \mod 2$, we now show that $gcd(G_0(a,b), G_1(a,b)) = 1$, by showing that each component on in right hand side is 1.
\begin{itemize}
    \item If $a,b$ have different parity, it is clear that $G_0(a,b)$ is odd. Hence $gcd(G_0(a,b), 4) = 1$.
    \item Since $G_0(a,b) = Q(a,b) (a-b) + G_0(b, b)$ for some $Q(x,y) \in \BZ[x,y]$, we see that
    $$gcd(G_0(a,b), a-b) = gcd(G_0(b, b), a-b) = gcd(-16b^4, a-b).$$
    If $a,b$ have different parity, $gcd(16, a-b) = 1$. If $a,b$ are relatively prime, $gcd(b^4, a-b) = 1$. Therefore $gcd(G_0(a,b), a-b) = 1$.

    By the same argument, $gcd(G_0(a,b), a + b) = 1$.
\end{itemize}
When $a, b$ are both odd, the same argument implies that $gcd(G_0(a,b), G_1(a,b))$ must be a power of 2. Now note that when $a,b$ are odd, we have
$$G_0(a,b) = -(3a^2 + b^2)(b^2 + 3a^2) \equiv 16 \mod 32,$$
$$G_1(a,b) = 4(a-b)^2(a+b)^2 \equiv 0 \mod 32.$$
Hence $gcd(G_0(a,b), G_1(a,b)) = 16$.
\end{proof}

Let
$$N_{\BQ, 1}^1(P, B) := \frac{1}{2}\#\{(a, b) \in \BZ^2: gcd(a,b) = 1, a \not\equiv b \mod 2, |G_0(a,b)| \leq B, |G_1(a,b)| \leq B\}, $$
$$N_{\BQ, 1}^2(P, B) := \frac{1}{2}\#\{(a, b) \in \BZ^2: gcd(a,b) = 1, a \equiv b \equiv 1 \mod 2, |G_0(a,b)| \leq 16B, |G_1(a,b)| \leq 16B\}. $$
By the lemma, we see that 
$$N_{\BQ, 1}(P, B) = N_{\BQ, 1}^1(P, B) + N_{\BQ, 1}^2(P, B).$$

\textit{Step 2: Principle of Lipschitz.}
We will evaluate $N_{\BQ, 1}^1(P, B)$ and $N_{\BQ, 1}^2(P, B)$ by Huxley's version of principle of Lipschitz \cite{Huxley}, which approximates the number of integral points in a compact region by its area.

We first show that the region
$$\CR(B) = \{(x, y) \in \BR^2: \max\{|G_0(x, y)|, |G_1(x, y)|\} \leq B\}$$
is bounded. Since $G_0(x, 1), G_1(x, 1) \in \BZ[x]$ are relatively prime polynomials of the same degree, we have 
$$\max\{|G_0(x, 1)|, |G_1(x, 1)|\} \gg_{G_0, G_1} 1 > 0$$
for all $x \in \BR$. As $G_0(x,y), G_1(x,y)$ are homogeneous of degree 4, we have
$$B \ge \max\{|G_0(x,y)|, |G_1(x,y)|\}  \gg_{G_0, G_1} |y|^4$$
so $|y| \ll_{G_0, G_1} B^{1/4}$. A symmetric argument shows that $|x| \ll_{G_0, G_1} B^{1/4}$ as well.

Being closed and bounded, $\CR(B)$ is compact. Moreover $\CR(B)$ has a rectifiable boundary defined by polynomials. By the Principle of Lipschitz (Huxley's version, \cite{Huxley}), the number of integral points in the region $\CR(B)$ is given by its area up to an error proportional to $(1 - \delta)$-th power of the length of its boundary for some small $\delta > 0$.

Conveniently, $\CR(B)$ is homogeneous in $B$: since $G_0, G_1$ are homogeneous polynomials of degree 4, we have
$$\CR(B) = B^{1 / 4} \CR(1).$$
Hence by principle of Lipschitz,
\begin{align*}
\#\CR(B) \cap \BZ^2 & = area(\CR(B)) + O(len(boundary(\CR(B)))^{1 - \delta}) \\
& = B^{2 / 4} area (\CR(1)) + O_P \left(B^{(1 - \delta)/4}\right)
\end{align*}
for some small $\delta > 0$.

One can get similar results for counting integral points with local conditions (via translating and rescaling $\CR$, so that the local conditions are gone). In our cases, we have
\begin{equation}
\label{Equation_Lipschitz_Principle_Example_1}
\#\{(a, b) \in \BZ^2: a \not\equiv b \mod 2, |G_0(a,b)| \leq B, |G_1(a,b)| \leq B\} = \frac{1}{2} B^{2/4} area(\CR(1)) + O_P \left(B^{(1 - \delta)/4}\right), 
\end{equation}
\begin{equation}
\label{Equation_Lipschitz_Principle_Example_2}
\#\{(a, b) \in \BZ^2: a \equiv b \equiv 1 \mod 2, |G_0(a,b)| \leq B, |G_1(a,b)| \leq B\} = \frac{1}{4} B^{2/4} area(\CR(1)) + O_P \left(B^{(1 - \delta)/4}\right).
\end{equation}

We now compute $N_{\BQ, 1}^1(P, B)$. By M\"obius inversion,
\begin{align*}
N_{\BQ, 1}^1(P, B) &:= \frac{1}{2}\#\{(a, b) \in \BZ^2: gcd(a,b) = 1, a \not\equiv b \mod 2, |G_0(a,b)| \leq B, |G_1(a,b)| \leq B\}\\
&= \frac{1}{2}\sum_{\stackrel{d=1}{gcd(d,2) = 1}}^{B^{1/4}} \mu(d) \#\{(a, b) \in \BZ^2: a \not\equiv b \mod 2, |G_0(a,b)| \leq d^{-4}B, |G_1(a,b)| \leq d^{-4}B\} \\
&= \frac{1}{2} \sum_{\stackrel{d=1}{gcd(d,2) = 1}}^{B^{1/4}} \mu(d) \left(\frac{1}{2} \left(d^{-4}B\right)^{2/4} area(\CR(1)) + O_P\left(\left(d^{-4}B\right)^{(1 - \delta)/4}\right)\right),
\end{align*}
by equation \ref{Equation_Lipschitz_Principle_Example_1}. Simplifying, we get
\begin{align*}
N_{\BQ, 1}^1(P, B) &= \frac{1}{2} \cdot \left(\frac{1}{2} \cdot \sum_{gcd(d,2) = 1} \frac{\mu(d)}{d^2}\right) \cdot area(\CR(1)) \cdot B^{2/4} + O_P(B^{1/4}) \\ 
&= \frac{1}{2} \cdot \left(\frac{1}{2} \cdot \frac{1}{1 - 2^{-2}} \cdot \frac{1}{\zeta(2)}\right) \cdot area(\CR(1)) \cdot B^{2/4} + O_P(B^{1/4}).
\end{align*}

Similarly we compute $N_{\BQ, 1}^2(P, B)$.
\begin{align*}
N_{\BQ, 1}^2(P, B) &:= \frac{1}{2}\#\{(a, b) \in \BZ^2: gcd(a,b) = 1, a \equiv b \equiv 1 \mod 2, |G_0(a,b)| \leq 16B, |G_1(a,b)| \leq 16B\}\\
&= \frac{1}{2}\sum_{\stackrel{d=1}{gcd(d,2) = 1}}^{B^{1/4}} \mu(d) \#\{(a, b) \in \BZ^2: a \equiv b \equiv 1 \mod 2, |G_0(a,b)| \leq 16d^{-4}B, |G_1(a,b)| \leq 16d^{-4}B\} \\
&= \frac{1}{2} \sum_{\stackrel{d=1}{gcd(d,2) = 1}}^{B^{1/4}} \mu(d) \left(\frac{1}{4} \left(16d^{-4}B\right)^{2/4} area(\CR(1)) + O_P\left(\left(16d^{-4}B\right)^{(1 - \delta)/4}\right)\right) \\
&= \frac{1}{2} \cdot \left(\sum_{gcd(d,2) = 1} \frac{\mu(d)}{d^2}\right) \cdot area(\CR(1)) \cdot B^{2/4} + O_P(B^{1/4}) \\ 
&= \frac{1}{2} \cdot \left(\frac{1}{1 - 2^{-2}} \cdot \frac{1}{\zeta(2)}\right) \cdot area(\CR(1)) \cdot B^{2/4} + O_P(B^{1/4}).
\end{align*}
Therefore,
\begin{align*}
N_{\BQ, 1}(P, B) &= N_{\BQ, 1}^1(P, B) + N_{\BQ, 1}^2(P, B) \\
&= \frac{1}{2} \cdot \left(\frac{3}{2} \cdot \frac{1}{1 - 2^{-2}} \cdot \frac{1}{\zeta(2)}\right) \cdot area(\CR(1)) \cdot B^{2/4} + O_P(B^{1/4}) \\
&= \frac{1}{\zeta(2)} \cdot area(\CR(1)) \cdot B^{2/4} + O_P(B^{1/4}).
\end{align*}
\end{proof}

\begin{proof}[Proof of Theorem \ref{Theorem_Main_Q_Example_8_211_Degree1}]
For any strongly admissible graph $G$, let $\pi_G: X_1(G) \to \BP^1$ be the forgetful map. The following are equivalent:
\begin{itemize}
    \item $c \in \BQ$ satisfies $\preper(f_c, \BQ) \supset P$,
    \item $c \in \pi_P (X_1(P)(\BQ))$, $c \neq \infty$.
\end{itemize}
Let $\{G_i\}_{i \in I}$ be the minimal strongly admissible graphs that strictly contains $P$, where $X_1(G_i)$ has a $\BQ$-rational point. Assuming Morton-Silverman conjecture over $\BQ$, this set of graphs is finite; enumerate them as $G_1, \cdots, G_m$.

By inclusion-exclusion principle, we see that
\begin{align*}
& \, \#\{c \in \pi_P(X_1(P) (\BQ)): H(c) \leq B\} - \sum_{i=1}^m \#\{c \in \pi_{G_i}(X_1(G_i) (\BQ)): H(c) \leq B\} \\
\leq & \, S_{\BQ, 1} (P, B) \\
\leq & \, \#\{c \in \pi_P(X_1(P) (\BQ)): H(c) \leq B\}
\end{align*}
From Proposition \ref{ClassificationForGenus01:AdmissibleGraphs}, among $G_i$'s only $G = 10(2, 1, 1)a$ or $G = 10(2, 1, 1)b$ satisfies $\genus(X_1(G)) = 1$; other dynamical modular curves have genus at least 2. It is also known that these two elliptic curves have rank $0$ over $\BQ$ \cite[17.a4, 15.a7]{LMFDB} and hence have finitely many $\BQ$-rational points. By Faltings' theorem, each of the genus $\ge 2$ dynamical modular curves has finitely $\BQ$-rational points as well.

Hence
$$\sum_{i=1}^m \#\{c \in \pi_{G_i}(X_1(G_i) (\BQ)): H(c) \leq B\} = O_P(1)$$
as $B \to \infty$, and 
$$S_{\BQ, 1} (P, B) = \#\{c \in \pi_P(X_1(P) (\BQ)): H(c) \leq B\} + O_P(1).$$
Finally, notice that for each $c \in \pi_P(X_1(P)(\BQ))$, each fiber $\pi_P^{-1}(c)$ has size 4 with finitely many exceptions. This is because each fiber has at most size 4 (as $\deg(\pi_P) = 4$), and whenever there is an $[x, y] \in X_1(P)(\BQ)$ in the fiber, so are $[x, -y], [y, x], [y, -x]$; these points are distinct except for finitely many $c$'s. Hence,
\begin{align*}
S_{\BQ, 1} (P, B) = & \, \#\{c \in \pi_P(X_1(P) (\BQ)): H(c) \leq B\} + O_P(1) \\
= & \, \frac{1}{4 \zeta(2)} \cdot area(\CR(1)) \cdot B^{2/4} + O_P(B^{1/4}).
\end{align*}
\end{proof}

\begin{remark}
The leading coefficient for $S_{\BQ, 1}(P, B)$ matches up with the expectation from Batyrev-Manin-Peyre conjectures calculated in Example \ref{Example_Pn_pullback_Peyre_Constant}.

More precisely, the leading coefficient equals 
$$\frac{1}{4} \cdot \frac{1}{2} \cdot \tau_V (\CL),$$
where $\dfrac{1}{4} = \dfrac{1}{|Aut(\pi_P)|}$, and $\dfrac{1}{2} = \dfrac{1}{\dim(\BP^1) + 1}$. The global Tamagawa measure 
$$\tau_V (\CL) = \frac{1}{\zeta(2)} \cdot area(\CR(1)) \cdot \prod_{p} area(\CR_p(1)),$$ 
can be further broken up into three pieces:
\begin{itemize}
    \item $\frac{1}{\zeta(2)}$ corresponds to the regularization factor $\dfrac{\res_{s=1} \zeta_{\BQ}(s)}{\zeta_{\BQ}(2)}$,
    \item $area(\CR(1))$ corresponds to local Tamagawa measure of $\BP^1$ at the infinite place.
    \item For each finite prime $p$, define
    $$\CR_p(1) := \{(x, y) \in \BQ_p^2: \max\{|G_0(x, y)|_p, |G_1(x, y)|_p\} \leq 1\}.$$
    Consider the Haar measure on $\BZ_p^2$ normalized so that $area\left(\BZ_p^2\right) = 1$. Then $area(\CR_p(1))$ corresponds to the local Tamagawa measure of $\BP^1$ at the finite place $p$.

    One can compute the local Tamagawa measure to find
    $$
    area(\CR_p(1)) = \begin{cases}
        2 & \text{if $p = 2$,} \\
        1 & \text{otherwise.}
    \end{cases}
    $$
    Hence the local Tamagawa measure at finite primes contribute 
    $$\prod_{p} area \left(\CR_p(1)\right) = 2.$$
\end{itemize} 
\end{remark}

\subsection{Illustration of Theorem \ref{Theorem_Main_Degree2}}
\label{SectionProofMainThmDegree2Example}
Consider
$$S_{\BQ, 2}(P, B) := \#\{c \in \BQbar: c \in K \text{ where $K/\BQ$ is of degree 2}, H(c) \leq B, PrePer(f_c, K) \cong P\}.$$
This counts all the $c$'s that has a preperiodic portrait isomorphic to $P$, when considered over some quadratic extension of $\BQ$. Our goal is an asymptotic formula for $S_{\BQ, 2}(P, B)$ as $B \to \infty$.

\begin{lemma}
Using the presentation of $\pi_P$ in Proposition \ref{Proposition_Portrait_8_211_Properties}, and the natural isomorphism $\Sym^2 \BP^1 \cong \BP^2$ in Lemma \ref{Lemma_Properties_Sym2_P1}, we have a presentation $\Sym^2 \pi_P: \Sym^2 X_1(P) \cong \BP^2 \to \BP^2$ of the form
$$\Sym^2 \pi_P ([x_0, x_1, x_2]) = [H_0(x_0, x_1, x_2), H_1(x_0, x_1, x_2), H_2(x_0, x_1, x_2)]$$
where
$$H_0(x_0,x_1,x_2) := 16 x_0^4 - 32 x_0^2 x_1^2 + 16 x_1^4 + 64 x_0^3 x_2 - 64x_0x_1^2x_2 + 96x_0^2x_2^2 - 32x_1^2x_2^2 + 64x_0x_2^3 + 16x_2^4$$
$$H_1(x_0,x_1,x_2) := 24 x_0^4 + 16 x_0^2 x_1^2 + 24 x_1^4 - 32x_0^3 x_2 - 96x_0x_1^2x_2 - 112 x_0^2 x_2^2 + 16x_1^2x_2^2 - 32x_0x_2^3 + 24x_2^4$$
$$H_2(x_0, x_1, x_2) := 9x_0^4 + 30x_0^2x_1^2 + 9x_1^4 - 60x_0^3x_2 - 36x_0x_1^2x_2 + 118x_0^2x_2^2 + 30x_1^2x_2^2 - 60x_0x_2^3 + 9x_2^4$$
are relatively prime, homogeneous polynomials of degree 4.
\end{lemma}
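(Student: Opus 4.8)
The plan is to pin down $\Sym^2 \pi_P$ from the commutative square relating it to $\pi_P \times \pi_P$, and then extract the three quartics by a symmetric-function computation. Recall from Lemma \ref{Lemma_Properties_Sym2_P1} that the quotient $\pi \colon (\BP^1)^2 \to \Sym^2 \BP^1$ and the map $\sigma \colon (\BP^1)^2 \to \BP^2$, $\sigma([a_1,b_1],[a_2,b_2]) = [a_1 a_2,\ a_1 b_2 + a_2 b_1,\ b_1 b_2]$, satisfy $\sigma = \epsilon \circ \pi$ for the isomorphism $\epsilon \colon \Sym^2 \BP^1 \xrightarrow{\sim} \BP^2$. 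The morphism $\pi_P \times \pi_P$ is $\BZ/2\BZ$-equivariant, so it descends through $\pi$ to $\Sym^2\pi_P$; after transporting source and target along $\epsilon$, this says that $\Sym^2\pi_P$ is the unique morphism with $\Sym^2\pi_P \circ \sigma = \sigma \circ (\pi_P \times \pi_P)$. Hence, writing a general point of $\BP^2$ as $[x_0, x_1, x_2] = \sigma([a_1,b_1],[a_2,b_2])$, so $x_0 = a_1 a_2$, $x_1 = a_1 b_2 + a_2 b_1$, $x_2 = b_1 b_2$, we are forced to have $\Sym^2\pi_P([x_0,x_1,x_2])$ equal to
$$\bigl[\,G_0(a_1,b_1)G_0(a_2,b_2),\ \ G_0(a_1,b_1)G_1(a_2,b_2) + G_1(a_1,b_1)G_0(a_2,b_2),\ \ G_1(a_1,b_1)G_1(a_2,b_2)\,\bigr].$$

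First I would check that each of these three entries is actually a polynomial in $x_0,x_1,x_2$: each is a polynomial in $(a_1,b_1),(a_2,b_2)$ that is invariant under swapping the two pairs and bihomogeneous of bidegree $(4,4)$, and since $\sigma$ realizes $\BP^2$ as the quotient $(\BP^1)^2 / (\BZ/2\BZ)$ we have $\sigma^{*}\OO_{\BP^2}(m) = \OO(m,m)$ and $\sigma^{*}$ identifies $H^0(\BP^2,\OO(m))$ with the swap-invariant part of $H^0((\BP^1)^2,\OO(m,m))$; so each entry above is $\sigma^{*}$ of a uniquely determined homogeneous quartic in $x_0,x_1,x_2$. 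Producing those quartics explicitly is then a finite, mechanical computation: expand the three products using $G_0(x,y) = -(3x^4 + 10 x^2 y^2 + 3 y^4)$, $G_1(x,y) = 4(x^2-y^2)^2$, and re-express each resulting symmetric polynomial in $x_0,x_1,x_2$ (equivalently: dehomogenize with $b_i = 1$, compute $\phi(t_1)\phi(t_2)$ and $\phi(t_1)+\phi(t_2)$ for $\phi(t) = G_0(t,1)/G_1(t,1)$ with $t_1,t_2$ the roots of $x_2 T^2 - x_1 T + x_0$, clear denominators, rehomogenize). This yields exactly the stated $H_0,H_1,H_2$, and the identity can be confirmed by direct symbolic verification.

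For the remaining claims — that $H_0,H_1,H_2$ are relatively prime, homogeneous of degree $4$, with empty common zero locus in $\BP^2$ — the cleanest route is to note that $\Sym^2\pi_P$ is a \emph{morphism} $\BP^2 \to \BP^2$ (it descends the morphism $\pi_P\times\pi_P$ through the finite quotient $\pi$) and that $(\Sym^2\pi_P)^{*}\OO_{\BP^2}(1) = \OO_{\BP^2}(4)$, which one reads off by pulling back along $\pi$ using $\pi^{*}\OO(m) = \OO(m,m)$ and $\pi_P^{*}\OO(1) = \OO(4)$. Thus $\Sym^2\pi_P$ is given by a base-point-free triple of homogeneous quartics; the triple $(H_0,H_1,H_2)$ obtained above is visibly of degree $4$ and defines the same (a priori rational, hence in fact everywhere-defined) map $\BP^2 \to \BP^2$, so it can only differ from a base-point-free defining triple by a nonzero scalar, whence $H_0,H_1,H_2$ are coprime with no common zero in $\BP^2(\BQbar)$. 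As a cross-check, relative primality also follows by hand from $G_0 = -(3x^2+y^2)(x^2+3y^2)$, $G_1 = 4(x-y)^2(x+y)^2$, which are coprime and not divisible by $y$, so that $G_0(a_1,b_1)G_0(a_2,b_2)$ and $G_1(a_1,b_1)G_1(a_2,b_2)$ have disjoint irreducible factors; emptiness of the common zero locus can likewise be confirmed by a resultant or Gr\"obner computation. The only substantive work is the quartic expansion and keeping the $\Sym^2 \BP^1 \cong \BP^2$ conventions straight; there is no conceptual obstacle, and the ``hard part'' is really just bookkeeping.
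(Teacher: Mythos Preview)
Your proposal is correct and follows essentially the same approach as the paper, which simply states ``This follows from a direct computation in Sage.'' You have spelled out what that computation actually is (descend $\pi_P\times\pi_P$ through $\sigma$ and rewrite the symmetric bidegree-$(4,4)$ entries in $x_0,x_1,x_2$) and supplied a clean conceptual argument for coprimality via $(\Sym^2\pi_P)^*\OO(1)=\OO(4)$, which the paper does not make explicit.
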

\begin{proof}
This follows from a direct computation in Sage.
\end{proof}

We also recall the Mahler measure, which will be used to state the theorem. If $f(x) \in \BQ[x]$ factorizes as 
$$f(x) = a (x - \alpha_1) \cdots (x - \alpha_n)$$
over $\BC$, the Mahler measure is defined as
$$M_{\infty}(f) := |a| \prod_{i=1}^n \max\{1, |\alpha_i|\}.$$
(Here $|\cdot|$ is the usual Euclidean norm on $\BC$.)

\begin{thm}[Special case of Theorem \ref{Theorem_Main_Degree2} for $8(2, 1, 1)$ over $\BQ$] 
\label{Theorem_Main_Q_Example_8_211_Degree2}
Assume the Morton-Silverman conjecture for $\BQ$ with $d = 2$. Then we have
$$S_{\BQ, 2}(P, B) = \frac{1}{8\zeta(3)} \vol\left(\CS(1)\right) B^{3/2} + O(B)$$
as $B \to \infty$. Here
$$\CS(1) := \{(x_0, x_1, x_2) \in \BR^3: M_{\infty}(H_0 z^2 - H_1 z + H_2) \leq 1\}$$
and the volume is computed under the Lebesgue measure of $\BR^3$.
\end{thm}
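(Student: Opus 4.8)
The plan is to run the argument of Theorem~\ref{Theorem_Main_Q_Example_8_211_Degree1} on the symmetric square: replace $X_1(P)$ by $\Sym^2 X_1(P)\cong\BP^2$ and $\pi_P$ by $\Sym^2\pi_P=[H_0,H_1,H_2]$, a self-morphism of $\BP^2$ given by relatively prime forms of degree $4$. First I would reduce $S_{\BQ,2}(P,B)$ to a count on $\Sym^2 X_1(P)$. Exactly as in the degree~$1$ case, a point $c\in\overline{\BQ}$ with $[\BQ(c):\BQ]\le2$ satisfies $\preper(f_c,K)\supseteq P$ for some quadratic $K\ni c$ if and only if $c$ is the $\pi_P$-image of a closed point of $X_1(P)$ of degree $\le2$; such a closed point is the same datum as a $\BQ$-point of $\Sym^2 X_1(P)$ once one removes the \emph{split} divisors $[x]+[y]$ with $x\ne y$ in $X_1(P)(\BQ)$ and the \emph{diagonal} divisors $2[x]$. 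Over a generic quadratic $c$ there are exactly $|\Aut(\pi_P)|=4$ conjugate-pair divisors lying above it (the $\Aut(\pi_P)$-orbit of any $K$-rational point of $X_1(P)$ over $c$), and $H_{\Sym^2\pi_P}(\{x,\bar x\})=H(\pi_P(x))H(\pi_P(\bar x))=H(c)^2$; hence, up to inclusion--exclusion over strongly admissible $G\supsetneq P$, $S_{\BQ,2}(P,B)$ is $\tfrac14$ times the number of conjugate-pair $\BQ$-points of $\Sym^2 X_1(P)$ of $\Sym^2\pi_P$-height $\le B^2$.

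For the main term I would apply Theorem~\ref{Theorem_GeneralCountingResult_Pn} with $n=2$, $d=1$, $k=4$ (so $(d,n,k)\ne(1,1,1)$ and the power-saving error holds) to $\CL:=(\Sym^2\pi_P)^*\mathbf{O(1,1)}$ on the source $\BP^2$, getting $\#\{q\in\Sym^2 X_1(P)(\BQ):H_{\Sym^2\pi_P}(q)\le X\}=\tfrac13\tau_{\BP^2}(\CL)X^{3/4}+O(X^{1/2})$; with $X=B^2$ this gives main term $\tfrac13\tau_{\BP^2}(\CL)B^{3/2}$ and error $O(B)$. From this total I would subtract the diagonal contribution, which is $O(\#\{x\in X_1(P)(\BQ):H(\pi_P(x))\le B\})=O(B^{1/2})$ by Proposition~\ref{RationalPointsX1G_Genus0_Q_Example_8_211}; the $\BQ$-points of $(\Sym^2\pi_P)^{-1}(\Delta)$ on the discriminant conic, which $\Sym^2\pi_P$ covers with degree $\ge4$, so $O(B)$; and the split contribution $\#\{\{x,y\}:x\ne y\in X_1(P)(\BQ),\ H(\pi_P(x))H(\pi_P(y))\le B^2\}$, which I would control by partial summation against Proposition~\ref{RationalPointsX1G_Genus0_Q_Example_8_211}. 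This last piece is delicate and is where the precise error term of Theorem~\ref{Theorem_Main_Degree2_Details} is extracted.

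For the correction terms I would invoke the Morton--Silverman conjecture for $F=\BQ$, $d=2$: only finitely many strongly admissible $G\supsetneq P$ carry a quadratic point on $X_1(G)$, and by Theorem~\ref{Theorem_Base_Degree2} they lie in $\SP_2$. Those with $g(X_1(G))\ge3$ have finitely many quadratic points, contributing $O(1)$. For the finitely many $G$ of genus $1$ or $2$ I would use $\deg\pi_G\ge2\deg\pi_P=8$ together with a fibration analysis of $\Sym^2 X_1(G)$ (a $\BP^1$-bundle over $X_1(G)$ in genus~$1$, and birational to $\mathrm{Jac}(X_1(G))$ in genus~$2$): the pullback of $\mathbf{O(1,1)}$ restricts to each fibre with degree $\ge8$, so, combined with the rank-$0$ property of the relevant elliptic curves and N\'eron's theorem (Theorem~\ref{Theorem_GeneralCountingResult_AV}) in the abelian-surface case, the quadratic points of bounded $\pi_G$-height number $O(B)$; this is carried out in Section~6 and Appendix~\ref{Appendix_Proof_Sym2_EC}. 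Hence the total correction is $O(B)$.

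It remains to identify the constant. Unwinding $\tau_{\BP^2}(\CL)$ via Example~\ref{Example_Sym2P1_pullback_Global_Tamagawa_Measure}, its archimedean factor is $\int_{\BR^2}M_\infty(H_0z^2-H_1z+H_2)^{-3/4}$, which the substitution $(x_0,x_1,x_2)=r\,(1,s_1,s_2)$ ($M_\infty(H_0z^2-H_1z+H_2)$ being homogeneous of degree $4$ in $(x_0,x_1,x_2)$) converts into $\tfrac32\vol(\CS(1))$; the finite local volumes are $1$ at good primes, and a short gcd computation for $[H_0,H_1,H_2]$ in the spirit of Lemma~\ref{Lemma_Rational_Points_X1G_8_211_GCD} shows the bad ones multiply to $1$. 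Together with the factors $\tfrac{1}{|\Aut(\pi_P)|}=\tfrac14$, $\tfrac{1}{\dim\BP^2+1}=\tfrac13$ and the $\zeta(3)$-regularization, this gives leading coefficient $\tfrac14\cdot\tfrac13\cdot\tfrac1{\zeta(3)}\cdot\tfrac32\vol(\CS(1))=\tfrac1{8\zeta(3)}\vol(\CS(1))$, as claimed. I expect the main obstacle to be exactly this lower-order bookkeeping: the split $\BQ$-divisors on $\Sym^2 X_1(P)$ are Zariski-dense and a priori far more numerous than the quadratic points one actually wants, so isolating the conjugate-pair points cleanly --- while simultaneously controlling $(\Sym^2\pi_P)^{-1}(\Delta)$ and the images of quadratic points on the finitely many larger curves $X_1(G)$ --- is the crux of the argument.
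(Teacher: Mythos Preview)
Your approach is correct and mirrors the paper's treatment: the paper does not prove this theorem directly in Section~4, instead remarking that it ``should be provable directly in a similar way as Theorem~\ref{Theorem_Main_Q_Example_8_211_Degree1}'' and deferring to the general machinery of Section~6. Your outline --- count $\BQ$-points on $\Sym^2 X_1(P)\cong\BP^2$ via Franke--Manin--Tschinkel, peel off split and diagonal contributions, divide by $|\Aut(\pi_P)|$, and control larger graphs via Morton--Silverman plus the degree doubling $\deg\pi_G\ge 2\deg\pi_P$ --- is exactly the structure of Theorem~\ref{Theorem_Main_Degree2_CountingResult_Summary} and the proof of Theorem~\ref{Theorem_Main_Degree2_Details} specialized to $P=8(2,1,1)$, $F=\BQ$. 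Your identification of the leading constant via Example~\ref{Example_Sym2P1_pullback_Global_Tamagawa_Measure} and the remark following the theorem is also on the mark.

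One point worth flagging: the split contribution $\#\{\{x,y\}:x,y\in X_1(P)(\BQ),\ H(\pi_P(x))H(\pi_P(y))\le B^2\}$ is genuinely of order $B\log B$, not $O(B)$. The dyadic argument in the proof of Theorem~\ref{Theorem_Main_Degree2_CountingResult_Summary}(a) gives this, and partial summation does no better, since $N_{\BQ,1}(P,T)\asymp T^{1/2}$ forces $\sum_{H(\pi_P(x))\le B^2} H(\pi_P(x))^{-1/2}\asymp\log B$. Hence the general theorem delivers error $O_{P}(B\log B)$ here rather than the $O(B)$ written in the statement; you correctly sense this by calling the step ``delicate'' and pointing to Theorem~\ref{Theorem_Main_Degree2_Details}. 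This is not a gap in your plan but a minor imprecision in the paper's informal example.
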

We will not prove this now, although it should be provable directly in a similar way as Theorem \ref{Theorem_Main_Q_Example_8_211_Degree1}.

\begin{remark}
The leading coefficient of $S_{\BQ, 2}(P, B)$ admits a local-global interpretation as in the previous theorem, which matches up with the expectation from Batyrev-Manin-Peyre conjectures calculated in Example \ref{Example_Sym2P1_pullback_Peyre_Constant}.

More precisely, the leading coefficient equals
$$\frac{1}{4} \cdot \frac{1}{3} \cdot \tau_V (\CL),$$
where $\dfrac{1}{4} = \dfrac{1}{|Aut(\pi_P)|}$, and $\frac{1}{3} = \dfrac{1}{\dim(\Sym^2 \BP^1) + 1}$. The global Tamagawa measure
$$\tau_V(\CL) = \frac{1}{\zeta(3)} \cdot \left(\frac{3}{2} \vol\left(\CS(1)\right)\right) \cdot \prod_p \vol\left(\CS_p(1)\right)$$
can be further broken up into three pieces where
\begin{itemize}
    \item $\frac{1}{\zeta(3)}$ corresponds to the regularization factor $\frac{\res_{s=1} \zeta_{\BQ}(s)}{\zeta_{\BQ}(3)}$,
    \item $\frac{3}{2}\vol\left(\CS(1)\right)$ corresponds to local Tamagawa measure of $\Sym^2 \BP^1$ at the infinite place. Here the $\frac{3}{2}$ factor comes from change of variables: when calculating $\vol(\CS(1))$, we can first integrate in $x_0$-axis, then relate the remaining integral to $w_{\Sym^2 \BP^1, \CL, \infty, g}$ in Example \ref{Example_Sym2P1_pullback_Local_Tamagawa_Measure}. 
    
    The factor $\frac{3}{2}$ should be interpreted as $\dfrac{\dim(\Sym^2 \BP^2) + 1}{|\BZ^{\times}|}$.
    \item Analogously for each finite prime $p$, we can define the $p$-adic Mahler measure $M_p$, and define
    $$\CS_{p}(1) := \{(x_0, x_1, x_2) \in \BQ_p^3: M_{p}(H_0 z^2 - H_1 z + H_2) \leq 1\}.$$
    Consider the Haar measure on $\BZ_p^3$ normalized so that $\vol\left(\BZ_p^3\right) = 1$. Then $\vol(\CS_p(1))$ corresponds to the local Tamagawa measure of $\Sym^2 \BP^1$ at the finite place $p$. In this specific example, one can compute the local Tamagawa measure to find that $\vol(\CS_p(1)) = 1$ for all finite primes.

    Again, when calculating $\vol(\CS_p(1))$ we can first integrate along the $x_0$-axis, then relate the remaining integral to $w_{\Sym^2 \BP^1, \CL, p, g}$ in Example \ref{Example_Sym2P1_pullback_Local_Tamagawa_Measure}. There will be a factor $\frac{1 - p^{-3}}{1 - p^{-1}}$ analogous to $\frac{3}{2}$, but this factor is cancelled out when we regularize the divergent infinite product (Example \ref{Example_Sym2P1_pullback_Global_Tamagawa_Measure}). Hence there is no extra factor at finite places.
\end{itemize}

Perhaps the leading coefficient of $S_{\BQ, 2}(P, B)$ can be interpreted more naturally as follows: it equals
$$\frac{1}{4} \cdot \frac{1}{2} \cdot \frac{1}{\zeta(3)} \cdot  \vol\left(\CS(1)\right) \cdot \prod_p \vol\left(\CS_p(1)\right),$$
where
\begin{itemize}
    \item $\frac{1}{4} = \frac{1}{|Aut(\pi_P)|}$,
    \item $\frac{1}{2} = \frac{1}{|\BZ^{\times}|}$,
    \item $\frac{1}{\zeta(3)}$ corresponds to the regularization factor $\frac{\res_{s=1} \zeta_{\BQ}(s)}{\zeta_{\BQ}(3)}$,
    \item $\vol\left(\CS(1)\right)$ and $\vol\left(\CS_p(1)\right)$ are the volumes of analogously defined $\CS(1) \subset \BR^3$ and $\CS_p(1) \subset \BQ_p^3$, 
    $$\CS(1) := \{(x_0, x_1, x_2) \in \BR^3: M_{\infty}(H_0 z^2 - H_1 z + H_2) \leq 1\},$$
    $$\CS_{p}(1) := \{(x_0, x_1, x_2) \in \BQ_p^3: M_{p}(H_0 z^2 - H_1 z + H_2) \leq 1\},$$
    under Haar measure.
\end{itemize}
\end{remark}

\section{Proof of Theorem \ref{Theorem_Main_Degree1}}
\subsection{Precise statement of Theorem \ref{Theorem_Main_Degree1}}
Let $F / \BQ$ be a number field of degree $d$, and let $H: \BQbar \to [1, \infty)$ be the absolute multiplicative Weil height. 

Let $\SP_1 = \Gamma_0 \cup \Gamma_1$ be a set of portraits, where
$$\Gamma_0 := \{\emptyset, 4(1, 1), 4(2), 6(1, 1), 6(2), 6(3), 8(2, 1, 1)\}, $$
$$\Gamma_1 := \{8(1, 1)a, 8(1, 1)b, 8(2)a, 8(2)b, 10(2, 1, 1)a, 10(2, 1, 1)b\}.$$
(See Appendix \ref{Appendix_Explicit_Presentation_piG} for pictures of these graphs.) By Proposition \ref{ClassificationForGenus01:AdmissibleGraphs}, $P \in \Gamma_g$ is equivalent to the dynamical modular curve $X_1(P)$ having genus $g$.

By Theorem \ref{Theorem_Base_Degree1}, if there are infinitely many $c \in F$'s such that $\preper(f_c, F) \cong P$, then $P \in \SP_1$. 

\begin{defn}[Constants $a_{F,1}, b_{F,1}, c_{F,1}$]
\label{Definition_Degree1_abc_constants}
Let $P \in \SP_1$ be a portrait. Let $X_1(P)$ be the dynamical modular curve associated to $P$, and let $\pi_P: X_1(P) \to \BP^1$ be the natural forgetful map of degree $k$.
\begin{itemize}
    \item If $P \in \Gamma_0$, then $X_1(P)$ is rational over $F$.
    
    We define $a_{F, 1}(P) = \frac{2d}{k}$; $b_{F, 1}(P) = 0$. For $c_{F, 1}$, suppose $\pi_P: X_1(P) \cong \BP^1 \to \BP^1$ has a presentation
    $$\pi_P(x) = \frac{g_0(x)}{g_1(x)}$$
    for relatively prime $g_0(x), g_1(x) \in F[x]$ (see examples in Appendix \ref{Appendix_Explicit_Presentation_piG}, Table \ref{TableModelOfForgetfulMapGenus0}), then we define
    $$c_{F, 1}(P) = \frac{1}{2 |Aut(\pi_P)|}\frac{\res_{s=1} \zeta_F(s)}{\zeta_F(2)} \prod_{v \in M_F} \frac{w_{\BP^1, \pi_P^* O(1), v}(\BP^1(F_v))}{\lambda_v},$$
    where
    \begin{itemize}
        \item $M_F$ is the set of places of $F$.
        \item $\zeta_F(s)$ is the Dedekind zeta function of $F$.
        \item For each place $v$, $w_{\BP^1, \pi_P^*O(1), v}$ is a local Tamagawa measure of $\BP^1(F_v)$,
        $$w_{\BP^1, \pi_P^*O(1), v}(\BP^1(F_v)) = \int_{F_v} \max\{|g_0(u)|_v, |g_1(u)|_v\}^{-2 / k} du.$$
        \item For each place $v$, $\lambda_v$ is a regularization factor to make sure the infinite product converges,
        $$\lambda_v = \begin{cases}
        1 + \frac{1}{N_{F/\BQ}(\frak{p})} & \text{ if $v < \infty$ corresponds to prime ideal $\frak{p}$,} \\
        1 & \text{if $v \in M_{F, \infty}$.}
        \end{cases}$$
    \end{itemize}
    \item If $P \in \Gamma_1$, then $X_1(P)$ is an elliptic curve over $F$. 
    
    We define $a_{F, 1}(P) = 0$; $b_{F, 1}(P) = \frac{r}{2}$, where $r$ is the rank of the Mordell-Weil Group $X_1(P)(F)$; and
    $$c_{F, 1}(P) = \frac{1}{|Aut(\pi_P)|} \cdot \frac{|X_1(P)(F)_{tor}|V_{r}}{k^{r/2}Reg(X_1(P)/F)^{1/2}}$$
    where $V_{r} := \frac{\pi^{r/2}}{\Gamma\left(\frac{r}{2} + 1\right)}$ is the volume of unit ball in $\BR^r$, and $Reg(X_1(P)/F)$ is the regulator of $X_1(P)$ over $F$.
\end{itemize}
\end{defn}
With $a_{F, 1}, b_{F, 1}, c_{F, 1}$ defined, we now restate Theorem \ref{Theorem_Main_Degree1} with error terms.

\begin{thm}
\label{Theorem_Main_Degree1_Details}
Let $F/\BQ$ be a number field of degree $d$. For each $P \in \SP_1$, define
$$S_{F,1}(P, B) := \#\{c \in F: \preper(f_c, F) \cong P, H(c) \leq B\}.$$

Assume the Morton-Silverman conjecture for the number field $F$. Then for each $P \in \SP_1$, We have the asymptotic formula
$$S_{F,1}(P, B) = c_{F, 1}(P) B^{a_{F, 1}(P)} (\log B)^{b_{F, 1}(P)} + O_{P, F}(Error_{P, F, 1}(B))$$
as $B \to \infty$. Here
$$
Error_{P, F, 1}(B) = \begin{cases}
B^{(2d-1)/k} \log B & \text{when $P \in \Gamma_0$,} \\
(\log B)^{(r-1)/2} + 1 & \text{when $P \in \Gamma_1$.}
\end{cases}
$$
\end{thm}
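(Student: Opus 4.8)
The plan is to reduce the count $S_{F,1}(P,B)$ to counting rational points on the dynamical modular curve $X_1(P)$ weighted by the height of their image under the forgetful map $\pi_P$, exactly as in the example of Theorem~\ref{Theorem_Main_Q_Example_8_211_Degree1}, and then invoke the appropriate point-counting theorem depending on the genus. First I would establish the dictionary: for $c \in F$ with $c \neq \infty$, the condition $\preper(f_c,F) \supseteq P$ is equivalent to $c \in \pi_P(X_1(P)(F))$ (using Proposition~\ref{Proposition_Properties_Of_X1G} and the moduli interpretation of $Y_1(P)$ from Section~2). To upgrade $\supseteq$ to $\cong$, I would use inclusion--exclusion over the finite set $\{G_i\}$ of minimal strongly admissible graphs strictly containing $P$ with $X_1(G_i)(F) \neq \emptyset$; finiteness of this set is exactly where the Morton--Silverman conjecture for $F$ enters. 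Any such $G_i$ has $X_1(G_i)$ of genus $\geq 1$ (since $P$ already has genus $\geq 0$ and strictly larger portraits have strictly larger, or at least not smaller, genus by Proposition~\ref{ClassificationForGenus01:AdmissibleGraphs}); in fact by the classification any $G_i$ strictly containing a $P \in \SP_1$ either has $X_1(G_i)$ of genus $\geq 2$, hence finitely many $F$-points by Faltings, or is one of the genus-$1$ curves which will contribute only lower-order terms after the analysis below. So $\sum_i \#\{c \in \pi_{G_i}(X_1(G_i)(F)) : H(c) \leq B\} = O_{P,F}(\text{lower order})$, and it remains to understand $N_{F,1}(P,B) = \#\{x \in X_1(P)(F): H(\pi_P(x)) \leq B\}$ together with the generic fiber size of $\pi_P$.

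Next I would handle the two genus cases separately. If $P \in \Gamma_0$, then $X_1(P) \cong \BP^1$ over $F$ and $\pi_P$ is a degree-$k$ morphism $\BP^1 \to \BP^1$; the height $H \circ \pi_P$ on $X_1(P)(F)$ is precisely the height $H_{\CL}$ attached to the pulled-back metrized line bundle $\CL = \pi_P^*\mathbf{O(1)}$, so Theorem~\ref{Theorem_GeneralCountingResult_Pn} with $n=1$ gives
$$
N_{F,1}(P,B) = \frac{1}{2}\tau_{\BP^1}(\CL)\, B^{d(n+1)/k} + O_{\CL,F}(B^{(d(n+1)-1)/k}\log B) = \frac{1}{2}\tau_{\BP^1}(\CL)\, B^{2d/k} + O_{P,F}(B^{(2d-1)/k}\log B).
$$
The Tamagawa volume $\tau_{\BP^1}(\CL)$ unpacks, via Examples~\ref{Example_Pn_pullback_Local_Tamagawa_Measure} and~\ref{Example_Pn_pullback_Global_Tamagawa_Measure}, into the regularized Euler product $\frac{\res_{s=1}\zeta_F(s)}{\zeta_F(2)}\prod_v w_{\BP^1,\pi_P^*O(1),v}(\BP^1(F_v))/\lambda_v$ written in Definition~\ref{Definition_Degree1_abc_constants}. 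If $P \in \Gamma_1$, then $X_1(P)$ is an elliptic curve over $F$ of Mordell--Weil rank $r$; here $H \circ \pi_P$ is the height $H_{\CL}$ for $\CL = \pi_P^*\mathbf{O(1)}$, whose underlying class has degree $k$ on the elliptic curve. After possibly replacing $\CL$ by $\CL^{\otimes 2}$ to meet the divisibility-by-$2$ hypothesis (which only rescales the Néron--Tate quadratic form by $4$ and $B$ by $B^2$, a bookkeeping adjustment), Theorem~\ref{Theorem_GeneralCountingResult_AV}(b) yields
$$
N_{F,1}(P,B) = |X_1(P)(F)_{tor}|\cdot \vol\{\mathbf{x}: \widehat{h_{\CL}}(\mathbf{x}) \leq 1\}\cdot (\log B)^{r/2} + O_{P,F}((\log B)^{(r-1)/2}),
$$
and the volume of the ellipsoid $\{\widehat{h_{\CL}} \leq 1\}$ equals $V_r / (k^{r/2}\,\mathrm{Reg}(X_1(P)/F)^{1/2})$ since $\widehat{h_{\CL}}$ is $k$ times the canonical height pairing whose Gram determinant with respect to a lattice of covolume $1$ (after the covolume normalization) is governed by the regulator.

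Finally I would pass from $N_{F,1}(P,B)$ to $S_{F,1}(P,B)$ by dividing by the generic fiber cardinality of $\pi_P$. For all but finitely many $c$ in the image, $\pi_P^{-1}(c)$ consists of $|\Aut(\pi_P)|$ distinct $F$-points permuted freely by $\Aut(\pi_P)$ (as in the $8(2,1,1)$ example where the four points $[x,\pm y],[y,\pm x]$ are distinct outside a finite set); the finitely many degenerate fibers contribute $O_{P,F}(1)$. Hence $S_{F,1}(P,B) = \frac{1}{|\Aut(\pi_P)|} N_{F,1}(P,B) + O_{P,F}(1)$ (subtracting also the inclusion--exclusion correction, which is $O_{P,F}$ of a lower-order term), and substituting the two displayed asymptotics gives exactly $c_{F,1}(P)B^{a_{F,1}(P)}(\log B)^{b_{F,1}(P)} + O_{P,F}(Error_{P,F,1}(B))$ with the constants as defined. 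I expect the main obstacles to be two: (i) verifying that the generic fiber of $\pi_P$ really is a free $\Aut(\pi_P)$-torsor of the expected size for every $P \in \SP_1$, which requires the explicit models of $\pi_P$ and $\Aut(\pi_P)$ collected in Appendix~\ref{Appendix_Explicit_Presentation_piG} and a case check that the ``coincidence locus'' where fiber points collide is finite; and (ii) identifying the abstract Tamagawa/Néron-Tate constants produced by Theorems~\ref{Theorem_GeneralCountingResult_Pn} and~\ref{Theorem_GeneralCountingResult_AV} with the concrete expressions in Definition~\ref{Definition_Degree1_abc_constants}, i.e.\ tracking the metric on $\pi_P^*\mathbf{O(1)}$ through the local integrals and confirming the regularization factors $\lambda_v$ match. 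The inclusion--exclusion step itself is routine once Morton--Silverman supplies finiteness, since every strictly larger portrait contributes only $O_{P,F}(1)$ (genus $\geq 2$, plus the known rank-$0$ genus-$1$ curves) or, in the worst case, a term of strictly smaller order than the main term for $P$.
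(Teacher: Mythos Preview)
Your overall architecture matches the paper's: moduli interpretation, inclusion--exclusion over strictly larger portraits (finiteness via Morton--Silverman), then Franke--Manin--Tschinkel for $\Gamma_0$ and N\'eron for $\Gamma_1$, divided by the generic fiber size $|\Aut(\pi_P)|$. Two of your steps have genuine gaps.

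The serious one is the fiber-count. You assert that for all but finitely many $c$, the $F$-rational fiber $\pi_P^{-1}(c)\cap X_1(P)(F)$ has exactly $|\Aut(\pi_P)|$ points, and that the obstacle is checking the ``coincidence locus where fiber points collide''. That is the easy direction (fiber too small, $\sigma(x)=\sigma'(x)$); the hard direction is fiber too \emph{large}. When $\deg\pi_P>|\Aut(\pi_P)|$, an extra rational point in the fiber gives an $F$-point on an irreducible component of $X_1(P)\times_{\BP^1}X_1(P)$ that is not the graph of any $\sigma\in\Aut(\pi_P)$. For $P\in\{6(1,1),\,6(2)\}$ these exceptional components have genus $1$ (Appendix~\ref{Appendix_Explicit_Presentation_piG}), so over a field where they have positive rank there are infinitely many bad $c$'s, and your $O_{P,F}(1)$ is false. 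The paper's Lemma~\ref{Lemma_Main_Degree1_Fiber_Count} instead bounds the bad $c$'s by the point-count on these exceptional curves (genus $\ge 1$ for $P\in\Gamma_0$, genus $\ge 4$ for $P\in\Gamma_1$, verified in Sage), which is $O_{P,F}(Error_{P,F,1}(B))$ --- absorbed by the error term, not by a constant.

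The minor gap is in the inclusion--exclusion. You claim any $G_i\supsetneq P$ has $X_1(G_i)$ of genus $\ge 1$, but this fails already for $4(1,1)\subsetneq 8(2,1,1)$, both genus $0$. The paper's argument for $P\in\Gamma_0$ instead uses Proposition~\ref{Proposition_Properties_Of_X1G}(b): $\deg\pi_{G_i}\ge 2\deg\pi_P=2k$, so even a genus-$0$ $G_i$ contributes $\asymp B^{2d/\deg\pi_{G_i}}\le B^{d/k}\le B^{(2d-1)/k}$, which lands in the error term. For $P\in\Gamma_1$ the paper observes the six genus-$1$ portraits are pairwise incomparable under containment, forcing $X_1(G_i)$ to have genus $\ge 2$ and hence $O_{P,F}(1)$ points by Faltings.
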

\subsection{Asymptotic formula for $N_{F, 1}(P, B)$}
The main input to Theorem \ref{Theorem_Main_Degree1}/Theorem \ref{Theorem_Main_Degree1_Details} is an asymptotic formula for
$$N_{F, 1}(P, B) := \#\{x \in X_1(P)(F): H(\pi_P(x)) \leq B\},$$
as $B \to \infty$. It is our goal to obtain such formula in this section (Theorem \ref{Theorem_Main_Degree1_CountingResult_Summary}).

\begin{prop}[Counting rational points on $\BP^1$]
\label{Theorem_Main_Degree1_CountingResult_P1}
Let $F/\BQ$ be a number field of degree $d$, and consider $\mathbf{O(1)}$ on $\BP^1$ over $F$ with the standard metric. 

Let $f: \BP^1 \to \BP^1$ be a morphism over $F$ of degree $k$, and consider the metrized line bundle $\CL := f^*\mathbf{O(1)}$ on the source $V = \BP^1$. Let $H_{\CL}: V(F) \to [1, \infty)$ be the induced (absolute, multiplicative) Weil height. Then
$$\#\{x \in \BP^1(F): H_{\CL}(x) \leq B\} = \frac{1}{2}\tau_{\BP^1} (\CL) B^{2d/k} + O_{\CL, F} (B^{(2d-1)/k} \log B),$$
as $B \to \infty$. Here $\tau_{\BP^1}(\CL)$ is the volume of $\BP^1$ with respect to a global Tamagawa measure, defined in Section \ref{Subsection_Batyrev_Manin_c_V} and calculated in Example \ref{Example_Pn_pullback_Local_Tamagawa_Measure},  \ref{Example_Pn_pullback_Global_Tamagawa_Measure} and \ref{Example_Pn_pullback_Peyre_Constant}.
\end{prop}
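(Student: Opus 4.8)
The plan is to obtain this statement as the special case $n = 1$ of Theorem \ref{Theorem_GeneralCountingResult_Pn}, applied with $\CL' = \mathbf{O(1)}$ the standard metric on the target $\BP^1$. Taking $n = 1$ there, the main term $\frac{1}{n+1}\tau_{\BP^n}(\CL)B^{d(n+1)/k}$ specializes to $\frac{1}{2}\tau_{\BP^1}(\CL)B^{2d/k}$, and the error term $O_{\CL,F}(B^{(d(n+1)-1)/k}\log B)$ specializes to $O_{\CL,F}(B^{(2d-1)/k}\log B)$, which is precisely the claimed formula. So at the formal level nothing new is required beyond recording this specialization.

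For completeness, I would also indicate the direct argument, which mirrors the proof of Proposition \ref{RationalPointsX1G_Genus0_Q_Example_8_211} but over a general number field $F$. Writing $f = [g_0, g_1]$ with $g_0, g_1 \in O_F[x,y]$ relatively prime homogeneous of degree $k$ and empty common zero locus, a point of $\BP^1(F)$ is counted iff, after choosing an integral representative $(a,b)$ primitive with respect to a fixed set of ideal-class representatives and dividing the image coordinates by the ideal $(g_0(a,b), g_1(a,b))$, all archimedean absolute values of the resulting coordinates are $\le B$. One parametrizes $\BP^1(F)$ by such primitive vectors modulo units, performs a M\"obius inversion over the ideal $\gcd(g_0(a,b),g_1(a,b))$ --- which, as in Lemma \ref{Lemma_Rational_Points_X1G_8_211_GCD}, is supported on the finitely many primes of bad reduction of $(g_0,g_1)$ --- and counts the remaining $O_F$-lattice points in the region cut out at the archimedean places by $\max_v|g_i(a,b)|_v \le B$. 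Since $g_0,g_1$ are coprime of the same degree this region is bounded, and since they are homogeneous of degree $k$ it scales like $B^{1/k}$; the principle of Lipschitz (Masser--Vaaler \cite{MasserVaaler}, or Huxley \cite{Huxley} / Widmer \cite{Widmer}) then produces the Euclidean volume as the main term with a power-saving error of the asserted size. Assembling the archimedean local volumes, the regularized Euler product over the finite places coming from the M\"obius/gcd computation, and the factor $\res_{s=1}\zeta_F(s)/\zeta_F(2)$ reconstitutes exactly $\frac{1}{2}\tau_{\BP^1}(\CL)$ via Examples \ref{Example_Pn_pullback_Local_Tamagawa_Measure}, \ref{Example_Pn_pullback_Global_Tamagawa_Measure}, \ref{Example_Pn_pullback_Peyre_Constant}.

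The main obstacle is the error term rather than the main term: obtaining the explicit power saving $B^{(2d-1)/k}\log B$ instead of a mere $o(B^{2d/k})$ requires a form of the Lipschitz principle that is uniform over the finitely many congruence conditions at bad primes and that applies to the norm functions $\max_v|g_i|_v^{1/k}$, which --- as noted in the remark after Theorem \ref{Theorem_GeneralCountingResult_Pn} --- need not satisfy the adelic Lipschitz axioms of \cite{Widmer} verbatim, so one adapts the Masser--Vaaler argument directly. Verifying that the leading constant is exactly $\frac{1}{2}\tau_{\BP^1}(\CL)$, in this normalization of the Tamagawa volume, is the other point needing care, but this is covered by Peyre's verification already cited in the proof of Theorem \ref{Theorem_GeneralCountingResult_Pn}.
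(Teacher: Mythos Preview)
Your proposal is correct and matches the paper's own proof, which is simply the one-line observation that this is the special case $n=1$ of Theorem \ref{Theorem_GeneralCountingResult_Pn}. The additional ``for completeness'' sketch you give is not in the paper but is consistent with the direct argument worked out over $\BQ$ in Proposition \ref{RationalPointsX1G_Genus0_Q_Example_8_211}.
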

\begin{proof}
This is special case of Theorem \ref{Theorem_GeneralCountingResult_Pn} when $n = 1$.
\end{proof}

\begin{prop}[Counting rational points on elliptic curve]
\label{Theorem_Main_Degree1_CountingResult_EC}
Let $F/\BQ$ be a number field of degree $d$, and consider $\mathbf{O(1)}$ on $\BP^1$ over $F$ with the standard metric. 

Let $E$ be an elliptic curve over $F$. Let $f: E \to \BP^1$ be a morphism over $F$ of degree $k$ such that $f(x) = f(-x)$; in particular, $k$ is even.

Consider the metrized line bundle $\CL = f^*\mathbf{O(1)}$ on $E$, and let $H_{\CL}: E(F) \to [1, \infty)$ be the induced (absolute, multiplicative) height. Then
$$\#\{x \in E(F): H_{\CL}(x) \leq B\} = \frac{|E(F)_{tor}|V_{r}}{k^{r/2}Reg(E/F)^{1/2}} (\log B)^{r/2} + O_{\CL, F}((\log B)^{(r-1)/2})$$
as $B \to \infty$. Here 
\begin{itemize}
    \item $r = r(E/F)$ is the rank of the Mordell-Weil group $E(F)$,
    \item $V_{r} := \frac{\pi^{r/2}}{\Gamma\left(\frac{r}{2} + 1\right)}$ is the volume of unit ball in $\BR^r$,
    \item $Reg(E/F)$ is the regulator of $E$ over $F$.
\end{itemize}
\end{prop}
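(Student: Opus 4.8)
The plan is to reduce the statement to N\'{e}ron's theorem for abelian varieties (Theorem~\ref{Theorem_GeneralCountingResult_AV}) applied to $A = E$ with $\dim A = 1$. The one hypothesis there that must be checked is that the underlying line bundle $L = f^*O(1) \in \Pic(E)$ is divisible by $2$, and this is precisely where the assumption $f(x) = f(-x)$ is used.

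First I would exploit the symmetry of $f$. Since $f \circ [-1] = f$, the morphism $f$ is $\langle[-1]\rangle$-invariant and hence factors as $f = g \circ q$, where $q : E \to E/\langle[-1]\rangle$ is the quotient map (defined over $F$, since $[-1]$ is) and $g$ is a morphism on the quotient. A Riemann--Hurwitz computation — the four $2$-torsion points are the ramification of $q$, each of index $2$ — gives $2\cdot 1 - 2 = 2(2g'-2)+4$, so the quotient has genus $g' = 0$; carrying the rational point $q(O)$, it is $\BP^1$ over $F$, and under this identification $q$ is the usual degree-$2$ $x$-coordinate map. In particular $k = \deg f = 2\deg g$ is even, as claimed. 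Moreover, for any degree-$1$ class on $E/\langle[-1]\rangle$ the pullback under $q$ is a divisor of degree $2$ whose group-law sum is $P + (-P) = O$, hence linearly equivalent to $2(O)$; so $q^*O(1) \cong \OO_E(2(O))$, and therefore
$$L = f^*O(1) = q^*\bigl(g^*O(1)\bigr) \cong q^*\OO_{\BP^1}(k/2) \cong \bigl(q^*O(1)\bigr)^{\otimes k/2} \cong \OO_E\bigl((O)\bigr)^{\otimes k}.$$
Since $k$ is even, $L = \bigl(\OO_E((k/2)(O))\bigr)^{\otimes 2}$ is divisible by $2$ in $\Pic(E)$, with the halving bundle defined over $F$; thus Theorem~\ref{Theorem_GeneralCountingResult_AV} applies.

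Applying Theorem~\ref{Theorem_GeneralCountingResult_AV}(b) then gives
$$\#\{x \in E(F) : H_\CL(x) \le B\} = |E(F)_{tor}| \cdot \vol\bigl(\{\mathbf{x} \in E(F)\otimes\BR : \widehat{h_\CL}(\mathbf{x}) \le 1\}\bigr)(\log B)^{r/2} + O_{\CL,F}\bigl((\log B)^{(r-1)/2}\bigr),$$
the volume being with respect to the Lebesgue measure on $E(F)\otimes\BR \cong \BR^r$ normalized so $\covol(E(F)) = 1$. To finish I would evaluate this volume: the N\'{e}ron--Tate height attached to a line bundle depends only on its class in $\Pic(E)$ and is additive in that class, so the computation $L \cong \OO_E((O))^{\otimes k}$ yields $\widehat{h_\CL} = k\cdot\widehat{h}$ with $\widehat{h} := \widehat{h_{\OO_E((O))}}$ the canonical height attached to $(O)$; by definition $\mathrm{Reg}(E/F)$ is the Gram determinant of $\widehat h$ on a $\BZ$-basis of $E(F)/E(F)_{tor}$. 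Hence $\{\widehat{h_\CL} \le 1\} = \{\widehat h \le 1/k\}$ is an ellipsoid of volume $(1/k)^{r/2}\,V_r\,\mathrm{Reg}(E/F)^{-1/2}$ relative to the unit-covolume measure, and substituting gives the asserted formula and error term.

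The two places where real care is needed: verifying divisibility by $2$ — short once the factorization through $E/\langle[-1]\rangle$ is in hand, but this is the only place the hypothesis $f(x)=f(-x)$ is genuinely used — and, the main obstacle, the normalization bookkeeping in the last paragraph, namely checking that the canonical height implicit in N\'{e}ron's theorem via $\lim_n 4^{-n}\log H_\CL(2^n x)$ is normalized compatibly with the regulator, and tracking the factor $k$ correctly as it passes from the line bundle through the height to the volume of the ellipsoid.
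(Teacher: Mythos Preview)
Your proof is correct and follows essentially the same approach as the paper: apply Theorem~\ref{Theorem_GeneralCountingResult_AV} to $E$, identify $\widehat{h_\CL} = k\,\widehat{h_E}$, and compute the ellipsoid volume as $V_r/(k^{r/2}\mathrm{Reg}(E/F)^{1/2})$. You additionally verify the divisibility-by-$2$ hypothesis of that theorem via the factorization $f = g \circ q$ through $E/\langle[-1]\rangle \cong \BP^1$ and the identification $f^*O(1) \cong \OO_E((O))^{\otimes k}$, a check the paper leaves implicit.
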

\begin{proof}
From Theorem \ref{Theorem_GeneralCountingResult_AV}, we see that
\begin{align*}
& \#\{x \in E(F): H_{\CL}(x) \leq B\} \\
= & \, |E(F)_{tor}| \cdot \vol\left(\{\mathbf{x} \in E(F) \otimes \BR: \widehat{h_{\CL}}(\mathbf{x}) \leq 1\}\right) \cdot (\log B)^{r/2} + O_{\CL, F}( (\log B)^{(r-1)/2}).
\end{align*}
Here $r = r(E/F)$ is the rank of the Mordell-Weil group $E(F)$, and the volume of $E(F) \otimes \BR$ is induced from Lebesgue measure on $\BR$, normalized such that $\covol(E(F)) = 1$. We fix this measure by taking an integral basis $P_1, \cdots, P_r$ of $E(F) / E(F)_{tor}$, and identify $\BR^r \cong E(F) \otimes \BR$ via
$$(a_1, \cdots, a_r) \to a_1P_1 + \cdots + a_r P_r.$$

Let $\widehat{h_E}$ be the canonical height of $E/F$. For $x \in E(F)$, 
$$\widehat{h_{\CL}}(x) = \lim_{n \to \infty} \frac{\log H(f(2^n x))}{4^n} = k \cdot \widehat{h_E}(x)$$
by the defining property of canonical height \cite[Proposition VIII.9.1]{SilvermanEllipticCurve}. The discriminant of the quadratic form $\widehat{h_E}$,
$$\disc(\widehat{h_E}) = \det\left(\langle P_i, P_j \rangle\right)_{i,j},$$
is exactly the regulator $Reg(E/F)$ by definition. Hence
\begin{align*}
\vol\left(\{\mathbf{x} \in E(F) \otimes \BR: \widehat{h_{\CL}}(\mathbf{x}) \leq 1\}\right) =& \, \vol\left(\left\{\mathbf{x} \in E(F) \otimes \BR: \widehat{h_E}(\mathbf{x}) \leq \frac{1}{k}\right\}\right) \\
=& \, \frac{1}{k^{r/2}} \vol\left(\left\{\mathbf{x} \in E(F) \otimes \BR: \widehat{h_E}(\mathbf{x}) \leq 1\right\}\right) \\
=& \, \frac{V_{r}}{k^{r/2} Reg(E/F)^{1/2}},
\end{align*}
where $V_{r}$ is the volume of unit ball in $\BR^r$.
\end{proof}

With the last two results at hand, we can compute the asymptotics for $N_{F, 1}(P, B)$.

\begin{thm}
\label{Theorem_Main_Degree1_CountingResult_Summary}
Let $P$ be a portrait. Let $X_1(P)$ be the dynamical modular curve associated to $P$, and let $\pi_P: X_1(P) \to \BP^1$ be the natural forgetful map of degree $k$.

Let $H: \BP^1(\BQbar) \to [1, \infty)$ be the absolute, multiplicative Weil height. For a fixed number field $F/\BQ$ of degree $d$, denote
$$N_{F, 1}(P, B) := \#\{x \in X_1(P)(F): H(\pi_P(x)) \leq B\},$$
\begin{enumerate}[label=(\alph*)]
    \item Suppose $P \in \Gamma_0$, then $X_1(P)$ is rational over $F$. We have
    $$N_{F, 1}(P, B) = |Aut(\pi_P)| \cdot c_{F, 1}(P) B^{2d/k} + O_{P, F}(B^{(2d-1)/k} \log B)$$
    as $B \to \infty$. 
    \item Suppose $P \in \Gamma_1$, then $X_1(P)$ is an elliptic curve over $F$. We have
    $$N_{F, 1}(P, B) = |Aut(\pi_P)| \cdot c_{F, 1}(P) (\log B)^{r/2} + O_{P, F}((\log B)^{(r-1)/2})$$
    as $B \to \infty$. Here $r = r(X_1(P)/F)$ is the rank of the Mordell-Weil Group $X_1(P)(F)$.
    \item Suppose $P \notin \Gamma_0 \cup \Gamma_1$, then $X_1(P)$ is a curve of genus $\ge 2$, and 
    $$N_{F, 1}(P, B) = O_{P, F}(1).$$
\end{enumerate}
\end{thm}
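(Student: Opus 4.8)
The plan is to handle the three cases according to the genus of $X_1(P)$, which by Proposition \ref{ClassificationForGenus01:AdmissibleGraphs} is $0$, $1$, or $\geq 2$ exactly when $P \in \Gamma_0$, $P \in \Gamma_1$, or $P \notin \Gamma_0 \cup \Gamma_1$. In each case the key observation is that $H(\pi_P(x)) = H_{\pi_P^*\mathbf{O(1)}}(x)$ by the pullback property of heights, so that counting $x \in X_1(P)(F)$ with $H(\pi_P(x)) \leq B$ is the same as counting $F$-rational points on $X_1(P)$ with respect to the pulled-back height $H_{\CL}$ where $\CL = \pi_P^*\mathbf{O(1)}$.

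For case (c), $X_1(P)$ is a smooth projective geometrically irreducible curve over $\BQ$ (Proposition \ref{Proposition_Properties_Of_X1G}(a)) of genus $\geq 2$, so Faltings' theorem gives that $X_1(P)(F)$ is finite; hence $N_{F,1}(P,B) \leq \#X_1(P)(F) = O_{P,F}(1)$ independently of $B$. For case (a), $X_1(P)$ is rational over $F$, so fix an isomorphism $X_1(P) \cong \BP^1$ over $F$; under this identification $\pi_P$ becomes a morphism $f: \BP^1 \to \BP^1$ over $F$ of degree $k$, and $\CL = f^*\mathbf{O(1)}$. Proposition \ref{Theorem_Main_Degree1_CountingResult_P1} then gives
$$N_{F,1}(P,B) = \frac{1}{2}\tau_{\BP^1}(\CL)\,B^{2d/k} + O_{\CL,F}(B^{(2d-1)/k}\log B).$$
It remains to identify $\frac{1}{2}\tau_{\BP^1}(\CL)$ with $|Aut(\pi_P)| \cdot c_{F,1}(P)$; but this is immediate from unwinding Definition \ref{Definition_Degree1_abc_constants}, since $c_{F,1}(P) = \frac{1}{2|Aut(\pi_P)|}\cdot\frac{\res_{s=1}\zeta_F(s)}{\zeta_F(2)}\prod_v \frac{w_{\BP^1,\pi_P^*O(1),v}(\BP^1(F_v))}{\lambda_v}$ and the latter product is exactly $\tau_{\BP^1}(\CL)$ by the computations in Examples \ref{Example_Pn_pullback_Local_Tamagawa_Measure}, \ref{Example_Pn_pullback_Global_Tamagawa_Measure}, and \ref{Example_Pn_pullback_Peyre_Constant} with $n = 1$.

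For case (b), $X_1(P)$ is an elliptic curve $E$ over $F$, and the forgetful map $\pi_P: E \to \BP^1$ has degree $k$. Here I would invoke the automorphism group structure: among the generators of $Aut(\pi_P)$ one has the hyperelliptic-type involution $x \mapsto -x$ (inversion on $E$), and in all cases in $\Gamma_1$ one checks from Appendix \ref{Appendix_Explicit_Presentation_piG} that $\pi_P$ factors through $E/\{\pm 1\}$, i.e. $\pi_P(x) = \pi_P(-x)$; this forces $k$ even, which is the hypothesis needed for Proposition \ref{Theorem_Main_Degree1_CountingResult_EC}. Applying that proposition with $f = \pi_P$ yields
$$N_{F,1}(P,B) = \frac{|E(F)_{tor}|V_r}{k^{r/2}Reg(E/F)^{1/2}}(\log B)^{r/2} + O_{\CL,F}((\log B)^{(r-1)/2}),$$
and comparing with Definition \ref{Definition_Degree1_abc_constants} (where $c_{F,1}(P) = \frac{1}{|Aut(\pi_P)|}\cdot\frac{|X_1(P)(F)_{tor}|V_r}{k^{r/2}Reg(X_1(P)/F)^{1/2}}$) gives the claimed leading constant $|Aut(\pi_P)|\cdot c_{F,1}(P)$. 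The main obstacle is the bookkeeping in case (b): verifying that $\pi_P$ genuinely factors through $[-1]$ for each of the six graphs in $\Gamma_1$ (so that Proposition \ref{Theorem_Main_Degree1_CountingResult_EC} applies with the correct $k$), and checking that the factor $|Aut(\pi_P)|$ lines up correctly — one must make sure that $\langle[-1]\rangle \subseteq Aut(\pi_P)$ and that no spurious extra automorphisms cause a mismatch; this is settled case-by-case using the explicit models in Appendix \ref{Appendix_Explicit_Presentation_piG}.
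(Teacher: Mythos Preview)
Your proposal is correct and follows essentially the same approach as the paper: apply Proposition \ref{Theorem_Main_Degree1_CountingResult_P1} for part (a), Proposition \ref{Theorem_Main_Degree1_CountingResult_EC} for part (b), and Faltings' theorem for part (c), then match the resulting leading constants to $|Aut(\pi_P)|\cdot c_{F,1}(P)$ via Definition \ref{Definition_Degree1_abc_constants}. You are in fact slightly more careful than the paper in explicitly checking the evenness hypothesis $\pi_P(x)=\pi_P(-x)$ needed to invoke Proposition \ref{Theorem_Main_Degree1_CountingResult_EC}, which the paper leaves implicit (it is indeed visible from the models in Appendix \ref{Appendix_Explicit_Presentation_piG}, where each $\pi_G$ for $G\in\Gamma_1$ depends only on the $x$-coordinate).
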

\begin{proof}
For part (a), we use Proposition \ref{Theorem_Main_Degree1_CountingResult_P1}. The constant $\tau_{\BP^1}(\CL)$ was calculated in Example \ref{Example_Pn_pullback_Local_Tamagawa_Measure},  \ref{Example_Pn_pullback_Global_Tamagawa_Measure} and \ref{Example_Pn_pullback_Peyre_Constant}. We defined $c_{F, 1}(P)$ such that 
$$\frac{1}{2} \tau_{\BP^1}(\CL) = |Aut(\pi_P)| \cdot c_{F, 1}(P),$$
so part (a) follows from Proposition \ref{Theorem_Main_Degree1_CountingResult_P1} directly.

For part (b), we use Proposition \ref{Theorem_Main_Degree1_CountingResult_EC}. We defined $c_{F, 1}(P)$ such that 
$$\frac{|X_1(P)(F)_{tor}|V_{r}}{k^{r/2}Reg(X_1(P)/F)^{1/2}} = |Aut(\pi_P)| \cdot c_{F, 1}(P),$$
so part (b) follows from Proposition \ref{Theorem_Main_Degree1_CountingResult_EC} directly.

Part (c) follows from Faltings' theorem.
\end{proof}

\subsection{Proof of Theorem \ref{Theorem_Main_Degree1}/Theorem \ref{Theorem_Main_Degree1_Details}}
\begin{lemma}
\label{Lemma_Main_Degree1_Fiber_Count}
With the notations in Theorem \ref{Theorem_Main_Degree1_CountingResult_Summary}, we have
$$\#\{c \in \pi_P(X_1(P)(F)): H(c) \leq B\} = \frac{1}{|Aut(\pi_P)|} N_{F, 1}(P, B) + O_{P, F} (Error_{P, F, 1}(B)).$$
Here 
$$
Error_{P, F, 1}(B) = \begin{cases}
B^{(2d-1)/k} \log B & \text{when $P \in \Gamma_0$,} \\
(\log B)^{(r-1)/2} + 1 & \text{when $P \in \Gamma_1$.}
\end{cases}
$$
\end{lemma}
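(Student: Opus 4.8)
The plan is to evaluate $N_{F,1}(P,B)$ by fibering $X_1(P)$ over $\BP^1$ along $\pi_P$ and identifying the generic fibre size over the image. Writing $k=\deg\pi_P$, one has
$$N_{F,1}(P,B)=\sum_{c\in\pi_P(X_1(P)(F)),\ H(c)\le B}\#\big(\pi_P^{-1}(c)\cap X_1(P)(F)\big)+O_{P,F}(1),$$
the error absorbing the finitely many rational points over $\infty$. Let $\Sigma\subset\BP^1(\overline{F})$ be the finite set made up of the branch locus of $\pi_P$ together with the $\pi_P$-images of the fixed points of the non-identity elements of $Aut(\pi_P)$; it is finite because $\pi_P$ has finitely many critical points and a non-identity automorphism of a curve has only finitely many fixed points. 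For $c\notin\Sigma$ the fibre $\pi_P^{-1}(c)$ has $k$ distinct points, and $Aut(\pi_P)$ — being defined over $\BQ\subseteq F$, hence commuting with $Gal(\overline{F}/F)$ — acts freely on it; thus the $F$-rational points of $\pi_P^{-1}(c)$ split into $Aut(\pi_P)$-orbits, so $\#(\pi_P^{-1}(c)\cap X_1(P)(F))=|Aut(\pi_P)|\,m_c$ with $m_c$ the number of rational orbits, and $m_c\ge 1$ exactly when $c\in\pi_P(X_1(P)(F))$.

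Substituting this in, the lemma reduces to controlling $\sum_{H(c)\le B}(m_c-1)$; since $m_c-1\le k/|Aut(\pi_P)|$ is bounded, it is enough to bound $\#\{c:H(c)\le B,\ m_c\ge 2\}$, the finitely many $c\in\Sigma$ contributing only $O_{P,F}(1)$. The key point is that $m_c\ge 2$ yields two $F$-rational embeddings $\iota_1,\iota_2\colon P\hookrightarrow\preper(f_c,F)$ lying in distinct $Aut(\pi_P)$-orbits — here I use that a graph automorphism of $P$ induces an automorphism of $X_1(P)$ over $\BP^1$, so two embeddings with the same image give points in one orbit — and hence the subgraph $\iota_1(P)\cup\iota_2(P)\subseteq\preper(f_c,F)$ strictly contains a copy of $P$. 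Since two copies of the finite graph $P$ overlap in only finitely many ways, there are only finitely many possibilities for the strongly admissible closure of such a union; call them $G_1,\dots,G_m$, each strictly containing $P$. Thus, up to finitely many exceptional $c$, any bad $c$ lies in $\pi_{G_i}(X_1(G_i)(F))$ for some $i$, and by Proposition~\ref{Proposition_Properties_Of_X1G}(b) applied to the factorization $\pi_{G_i}=\pi_P\circ\pi_{G_i,P}$ we have $\deg\pi_{G_i}\ge 2k$.

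It then remains to bound $\#\{c\in\pi_{G_i}(X_1(G_i)(F)):H(c)\le B\}\le\#\{x\in X_1(G_i)(F):H(\pi_{G_i}(x))\le B\}$ with the counting inputs already recorded. If $P\in\Gamma_0$: when $\genus(X_1(G_i))=0$, Proposition~\ref{Theorem_Main_Degree1_CountingResult_P1} gives $O_{P,F}(B^{2d/\deg\pi_{G_i}})=O_{P,F}(B^{d/k})$; genus $1$ gives a power of $\log B$ by Proposition~\ref{Theorem_Main_Degree1_CountingResult_EC}; genus $\ge 2$ gives $O_{P,F}(1)$ by Faltings. As $d\ge 1$, each of these is $O_{P,F}(B^{(2d-1)/k})=O_{P,F}(Error_{P,F,1}(B))$. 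If $P\in\Gamma_1$: the degree-$\ge 2$ map $X_1(G_i)\to X_1(P)$ onto a genus-$1$ curve forces $\genus(X_1(G_i))\ge 1$ by Riemann--Hurwitz, and genus exactly $1$ would put $G_i\in\Gamma_1$, which is impossible since — by inspection of the explicit list of Proposition~\ref{ClassificationForGenus01:AdmissibleGraphs} — no portrait of $\Gamma_1$ strictly contains another; hence $\genus(X_1(G_i))\ge 2$ and the count is $O_{P,F}(1)=O_{P,F}(Error_{P,F,1}(B))$. Putting everything together gives $N_{F,1}(P,B)=|Aut(\pi_P)|\cdot\#\{c\in\pi_P(X_1(P)(F)):H(c)\le B\}+O_{P,F}(Error_{P,F,1}(B))$, and dividing by the constant $|Aut(\pi_P)|$ proves the lemma.

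The main obstacle is the middle step: showing that the $c$ with $m_c\ge 2$ — those above which more than one $Aut(\pi_P)$-orbit of rational points appears — are few. The mechanism is to recognize such $c$ as coming from rational points on the finitely many covering curves $X_1(G_i)$ with $G_i\supsetneq P$, and the delicate part is that the available bounds there are only just small enough: for $\Gamma_0$ the estimate hinges on $\deg\pi_{G_i}\ge 2k$ being sufficient precisely because $d\ge 1$, and for $\Gamma_1$ on the finite check that no genus-$1$ dynamical modular curve in the classification properly dominates another, so that every relevant $G_i$ has genus at least $2$.
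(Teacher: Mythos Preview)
Your argument is correct, but it takes a different route from the paper's. The paper bounds the ``bad'' $c$ (those with more than one $Aut(\pi_P)$-orbit of rational points in the fiber) by working on the fiber product $X_1(P)\times_{\BP^1}X_1(P)$: a pair $(x,z)$ of rational points in distinct orbits lands on an irreducible component $C$ that is not the graph of any $\sigma\in Aut(\pi_P)$, and the paper then verifies, case by case in Sage (recorded in Appendix~\ref{Appendix_Explicit_Presentation_piG}), that every such exceptional $C$ has genus $\ge 1$ when $P\in\Gamma_0$ and genus $\ge 4$ when $P\in\Gamma_1$, so Faltings/N\'eron bounds the rational points directly. You instead read a bad $c$ dynamically, extracting from the two orbits an $F$-rational embedding of a strictly larger portrait $G_i\supsetneq P$, and then bound rational points on the finitely many $X_1(G_i)$ via $\deg\pi_{G_i}\ge 2k$ together with the classification of low-genus dynamical modular curves. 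Your route is more conceptual and avoids the explicit Sage computation; in fact it is essentially the mechanism the paper itself uses in the proof of Theorem~\ref{Theorem_Main_Degree1_Details}, so you have effectively merged the two steps. The price is that a few details are left implicit---that points of $X_1(P)\setminus Y_1(P)$ contribute only $O_P(1)$, and that passing to the strongly admissible closure of $\iota_1(P)\cup\iota_2(P)$ inside $\preper(f_c,F)$ only adds negations of existing $F$-rational preperiodic points (hence stays $F$-rational) and coincides with the abstract closure for all but finitely many $c$---but these are routine. The paper's fiber-product approach, by contrast, sidesteps this graph-theoretic bookkeeping at the cost of the explicit genus computations.
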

\begin{proof}
Suppose $c = \pi_P(x)$ for some $x \in X_1(P)(F)$. We want to show that for generic $c \in \pi_P(X_1(P)(F))$,
$$\pi_P^{-1}(c) \cap X_1(P)(F) = \{\sigma(x): \sigma \in Aut(\pi_P)\}.$$
By the definition of $Aut(\pi_P)$, we see that right hand side is a subset of the left. 

First, note that right hand side has size $|Aut(\pi_P)|$ with $O_P(1)$ exceptions of $c$. This is because if right hand side is smaller than expected, it must mean that an unexpected coincidence
$$\sigma(x) = \sigma'(x)$$ 
occured for some $\sigma \neq \sigma' \in Aut(\pi_P)$. Since $\sigma = \sigma'$ is a proper Zariski-closed subset of the curve $X_1(P)$, it is of dimension 0; so there are finitely many exceptional $x$'s, hence finitely many exceptional $c$'s as well. To recap, with $O_P(1)$ exceptions of $c \in \pi_P(X_1(P)(F))$, we have
$$|\pi_P^{-1}(c) \cap X_1(P)(F)| \ge |Aut(\pi_P)|.$$

Finally, we need to show that 
$$|\pi_P^{-1}(c) \cap X_1(P)(F)| > |Aut(\pi_P)|$$
happens rarely. Suppose we have such an exceptional $c \in \pi_P(X_1(P)(F))$, where there exists $x, z \in X_1(P)(F)$ not in the same $Aut(\pi_P)$-orbit, such that
$$\pi_P(x) = \pi_P(z) = c.$$
Then $(x, z)$ is a $F$-rational point on some irreducible component $C$ of
$$X_1(P) \times_{\BP^1} X_1(P) := \{(x, z) \in X_1(P) \times X_1(P): \pi_P(x) = \pi_P(z)\},$$
which is not the graph of any $\sigma \in Aut(\pi_P)$; we call such curve $C$ exceptional. By computing these exceptional curves for each portrait $P \in \Gamma_0 \cup \Gamma_1$ in Sage (see Appendix \ref{Appendix_Explicit_Presentation_piG}), we find that 
\begin{itemize}
    \item If $P \in \Gamma_0$, any exceptional $C$ is smooth and has genus $\ge 1$.
    \item If $P \in \Gamma_1$, any exceptional $C$ is smooth and has genus $\ge 4$.
\end{itemize}
The number of $F$-rational points on these exceptional curves are few; by Theorem \ref{Theorem_Main_Degree1_CountingResult_Summary} applied to genus 1 and genus $\ge 2$ curves, we see that
\begin{align*}
& \, \#\{c \in \pi_P(X_1(P)(F)): |\pi_P^{-1}(c) \cap X_1(P)(F)| > |Aut(\pi_P)|, H(c) \leq B\} \\
\ll_P & \, \#\left\{(x, z) \in (X_1(P) \times_{\BP^1} X_1(P))(F): 
    \begin{aligned}
    & (x, z) \text{ lies in an exceptional curve } \\
    & \text{of $X_1(P)$, $H(\pi_P(x)) \leq B$}
    \end{aligned}
\right\} \\
= & \, O_P(Error_{P, F, 1}(B))
\end{align*}

This shows the rarity of $c \in \pi_P(X_1(P)(F))$ satisfying 
$$|\pi_P^{-1}(c) \cap X_1(P)(F)| > |Aut(\pi_P)|,$$
and finish the proof of lemma.
\end{proof}

\begin{proof}[Proof of Theorem \ref{Theorem_Main_Degree1}/Theorem \ref{Theorem_Main_Degree1_Details}]
Recall the definition of $S_{F, 1}(P, B)$: for each $P \in \SP_1$,
$$S_{F,1}(P, B) := \#\{c \in F: \preper(f_c, F) \cong P, H(c) \leq B\}.$$
Our goal is to get an asymptotic formula for $S_{F, 1}(P, B)$ as $B \to \infty$.

By definition of $X_1(P)$, the following are equivalent:
\begin{itemize}
    \item $c \in F$ satisfies $\preper(f_c, F) \supset P$,
    \item $c \in \pi_P (X_1(P)(F))$, $c \neq \infty$.
\end{itemize}
Let $\{G_i\}_{i \in I}$ be the minimal strongly admissible graphs that strictly contains $P$, where $X_1(G_i)$ has a $F$-rational point. Assuming Morton-Silverman conjecture over $F$, this set of graphs is finite; enumerate them as $G_1, \cdots, G_m$.

By inclusion-exclusion principle, we see that
\begin{align*}
& \, \#\{c \in F: \preper(f_c, F) \supset P, H(c) \leq B\} - \sum_{i=1}^m \#\{c \in F: \preper(f_c, F) \supset G_i, H(c) \leq B\} \\
\leq & \, S_{F, 1} (P, B) \\
\leq & \, \#\{c \in F: \preper(f_c, F) \supset P, H(c) \leq B\}.
\end{align*}
Using the above equivalence, we have
\begin{align*}
& \, \#\{c \in \pi_P(X_1(P) (F)): H(c) \leq B\} - \sum_{i=1}^m \#\{c \in \pi_{G_i}(X_1(G_i) (F)): H(c) \leq B\} \\
\leq & \, S_{F, 1} (P, B) \\
\leq & \, \#\{c \in \pi_P(X_1(P) (F)): H(c) \leq B\}.
\end{align*}
Now note that
\begin{align*}
\#\{c \in \pi_P(X_1(P)(F)): H(c) \leq B\} = & \, \frac{1}{|Aut(\pi_P)|} N_{F, 1}(P, B) + O_{P, F} (Error_{P, F, 1}(B))   \\
= & \, c_{F, 1}(P) B^{a_{F, 1}(P)} (\log B)^{b_{F, 1}(P)} + O_{P, F} (Error_{P, F, 1}(B))   
\end{align*}
by Theorem \ref{Theorem_Main_Degree1_CountingResult_Summary} and Lemma \ref{Lemma_Main_Degree1_Fiber_Count}.

Finally, we show that
$$\sum_{i=1}^m \#\{c \in \pi_{G_i}(X_1(G_i) (F)): H(c) \leq B\}$$
only contributes to the error term. For each $G_i \supset P$,
\begin{itemize}
    \item If $a_{F, 1}(P) > 0$, by Proposition \ref{Proposition_Properties_Of_X1G}(b), $\deg \pi_{G_i} \ge 2 \deg \pi_P$, so either $X_1(G_i)$ has genus 0 and
    $$a_{F, 1}(G_i) = \frac{2d}{\deg \pi_{G_i}} \leq \frac{d}{\deg \pi_P} \leq \frac{2d-1}{\deg \pi_P}, $$
    or $X_1(G_i)$ has genus $\ge 1$ and $a_{F, 1}(G_i) = 0$. In either case, 
    $$\#\{c \in \pi_{G_i}(X_1(G_i) (F)): H(c) \leq B\} = O\left(B^{(2d-1) / \deg \pi_P}\right) = O (Error_{P, F, 1}(B)).$$
    \item If $a_{F, 1}(P) = 0$, then $X_1(P)$ has genus 1. Since the 6 such possible $P$'s (Table \ref{TableModelOfForgetfulMapGenus1}, Appendix \ref{Appendix_Explicit_Presentation_piG}) has no containment relationship, we see that $X_1(G_i)$ has genus $\ge 2$. Faltings' theorem thus implies that 
    $$\#\{c \in \pi_{G_i}(X_1(G_i) (F)): H(c) \leq B\} = O_{G_i, F}(1) = O(Error_{P, F, 1}(B)).$$
\end{itemize}
Hence
\begin{align*}
S_{F, 1}(P, B) = & \, \#\{c \in \pi_P(X_1(P)(F)): H(c) \leq B\} + O_{P, F}(Error_{P, F, 1}(B)) \\
= & \,  c_{F, 1}(P) B^{a_{F, 1}(P)} (\log B)^{b_{F, 1}(P)} + O_{P, F} (Error_{P, F, 1}(B)).
\end{align*}
\end{proof}

\section{Proof of Theorem \ref{Theorem_Main_Degree2}}
\subsection{Precise statement of Theorem \ref{Theorem_Main_Degree2}}
Let $F / \BQ$ be a number field of degree $d$, and let $H: \BQbar \to [1, \infty)$ be the absolute multiplicative Weil height. 

Let $\SP_2 = \Gamma_0 \cup \Gamma_1 \cup \Gamma_2$ be a set of portraits, where
$$\Gamma_0 := \{\emptyset, 4(1, 1), 4(2), 6(1, 1), 6(2), 6(3), 8(2, 1, 1)\}, $$
$$\Gamma_1 := \{8(1, 1)a, 8(1, 1)b, 8(2)a, 8(2)b, 10(2, 1, 1)a, 10(2, 1, 1)b\},$$
$$\Gamma_2 := \{8(3), 8(4), 10(3, 1, 1), 10(3, 2)\}.$$
(See Appendix \ref{Appendix_Explicit_Presentation_piG} for pictures of these graphs.) By Proposition \ref{ClassificationForGenus01:AdmissibleGraphs}, $P \in \Gamma_g$ is equivalent to the dynamical modular curve $X_1(P)$ having genus $g$.

By Theorem \ref{Theorem_Base_Degree2}, if there are infinitely many $c \in \overline{F}$ such that $[F(c): F] \leq 2$ and $\preper(f_c, K) \cong P$ for some $K/F$ of degree 2, then $P \in \SP_2$. 

\begin{defn}[Constants $a_{F,2}, b_{F,2}, c_{F,2}$]
\label{Definition_Degree2_abc_constants}
Let $P \in \SP_2$ be a portrait. Let $X_1(P)$ be the dynamical modular curve associated to $P$, and let $\pi_P: X_1(P) \to \BP^1$ be the natural forgetful map of degree $k$.

\begin{itemize}
    \item If $P \in \Gamma_0$, then $X_1(P)$ is rational over $F$.
    
    We define $a_{F, 2}(P) = \frac{6d}{k}$; $b_{F, 2}(P) = 0$; and
    $$c_{F, 2}(P) = \frac{2}{3|Aut(\pi_P)|}\frac{\res_{s=1} \zeta_F(s)}{\zeta_F(3)} \prod_{v \in M_F} \frac{w_{\Sym^2 \BP^1, \pi_P^* O(1), v}(\Sym^2 \BP^1(F_v))}{\lambda_v},$$
    where
    \begin{itemize}
        \item $M_F$ is the set of places of $F$.
        \item $\zeta_F(s)$ is the Dedekind zeta function of $F$.
        \item For each place $v$, $w_{\Sym^2 \BP^1, \pi_P^*O(1), v}$ is a local Tamagawa measure of $\Sym^2 \BP^1(F_v)$. Suppose $\Sym^2 \pi_P$ takes the form
        $$\Sym^2 \pi_P ([x_0, x_1, x_2]) = [F_0(x_0, x_1, x_2), F_1(x_0, x_1, x_2), F_2(x_0, x_1, x_2)]$$
        where $F_0, F_1, F_2 \in O_{F_v}[x_0, x_1, x_2]$ are relatively prime, homogeneous polynomials of degree $k$. Then
        \begin{align*}
        & w_{\Sym^2 \BP^1, \pi_P^*O(1), v}(\Sym^2 \BP^1(F_v)) \\
        = & \int_{F_v^2} |F_0(1, x_1, x_2)|^{-3/k} \prod_{\stackrel{a \in \overline{F_v}}{a^2 - x_1 a + x_2 = 0}} \max\{1, |\pi_P(a)|_v\}^{-3 / k} |dx_1dx_2|.
        \end{align*}
        By Gauss lemma \cite[Lemma 1.6.3]{BombieriGruber}, if $v$ is non-archimedean, this is the same as
        \begin{align*}
        & w_{\Sym^2 \BP^1, \pi_P^*O(1), v}(\Sym^2 \BP^1(F_v)) \\
        = & \int_{F_v^2} \max\{|F_0(1, x_1, x_2)|, |F_1(1, x_1, x_2)|, |F_2(1, x_1, x_2)|\}^{-3/k} |dx_1dx_2|.
        \end{align*}
        \item For each place $v$, $\lambda_v$ is a regularization factor to make sure the infinite product converges,
        $$\lambda_v = \begin{cases}
        \frac{1 - N_{F/\BQ}(\frak{p})^{-3}}{1 - N_{F/\BQ}(\frak{p})^{-1}} & \text{ if $v < \infty$ corresponds to prime ideal $\frak{p}$,} \\
        1 & \text{if $v \in M_{F, \infty}$.}
        \end{cases}$$
    \end{itemize}
    \item If $P \in \Gamma_1$, then $X_1(P)$ is an elliptic curve over $F$.
    
    We define $a_{F, 2}(P) = \frac{4d}{k}$; $b_{F, 2}(P) = 0$. We will show the existence of the constant $c_{F, 2} > 0$, but it does not have a nice form; see the remark after Proposition \ref{Theorem_Main_Degree2_CountingResult_Sym2_EC} for its value in a special case, where $c_{F, 2} = \frac{2}{|Aut(\pi_P)|}C_{E, f, F}$ with the constant $C_{E, f, F}$ defined in the remark.
    
    \item If $P \in \Gamma_2$, then $X_1(P)$ is a genus 2 curve over $F$.

    We define $a_{F, 2}(P) = \frac{4d}{k}$; $b_{F, 2}(P) = 0$. For $c_{F, 2}$, suppose $X_1(P)$ has a model $y^2 = h(x)$ for some polynomial $h(x) \in F[x]$, and $\pi_P$ has a presentation
    $$\pi_P(x) = \frac{g_0(x)}{g_1(x)}$$
    for relatively prime $g_0(x), g_1(x) \in F[x]$ (see examples in Appendix \ref{Appendix_Explicit_Presentation_piG},Table \ref{TableModelOfForgetfulMapGenus0}). In particular, $\pi_P$ is invariant under the hyperelliptic involution. Then we define
    $$c_{F, 2}(P) = \frac{1}{|Aut(\pi_P)|} \frac{\res_{s=1} \zeta_F(s)}{\zeta_F(2)} \prod_{v \in M_F} \frac{w_{\BP^1, \pi_P^*\mathbf{O(1)}, v}(\BP^1(F_v))}{\lambda_v},$$
    where
    \begin{itemize}
        \item $M_F$ is the set of places of $F$.
        \item $\zeta_F(s)$ is the Dedekind zeta function of $F$.
        \item For each place $v$, $w_{\BP^1, \pi_P^*\mathbf{O(1)}, v}$ is a local Tamagawa measure of $\BP^1(F_v)$, and
        $$w_{\BP^1, \pi_P^*\mathbf{O(1)}, v}(\BP^1(F_v)) = \int_{F_v} \max\{|g_0(u)|_v, |g_1(u)|_v\}^{-4 / k} du.$$
        \item For each place $v$, $\lambda_v$ is a regularization factor to make sure the infinite product converges,
        $$\lambda_v = \begin{cases}
        1 + \frac{1}{N_{F/\BQ}(\frak{p})} & \text{ if $v < \infty$ corresponds to prime ideal $\frak{p}$,} \\
        1 & \text{if $v \in M_{F, \infty}$.}
        \end{cases}$$
    \end{itemize}
\end{itemize}
\end{defn}
With $a_{F, 2}, b_{F, 2}, c_{F, 2}$ defined, we now restate Theorem \ref{Theorem_Main_Degree2} with error terms.

\begin{thm}
\label{Theorem_Main_Degree2_Details}
Let $F/\BQ$ be a number field of degree $d$. For each $P \in \SP_2$, define 
$$S_{F,2}(P, B) := \#\{c \in \overline{F}: \preper(f_c, K) \cong P \text{ for some $K/F$ of degree $2$ containing $c$}, H(c) \leq B\}.$$

Assume the Morton-Silverman conjecture for quadratic extensions of $F$. Then for each $P \in \SP_2$, we have the asymptotic formula
$$S_{F,2}(P, B) = c_{F, 2}(P) B^{a_{F, 2}(P)} (\log B)^{b_{F, 2}(P)} + O_{P, F}(Error_{P, F, 2}(B))$$
as $B \to \infty$. Here
$$
Error_{P, F, 2}(B) = \begin{cases}
B^{2(3d-1)/k} \log B & \text{when $P \in \Gamma_0$,} \\
B^{2(2d-1)/k} (\log B)^{r/2 + 1} & \text{when $P \in \Gamma_1$,} \\
B^{2(2d-1)/k} \log B & \text{when $P \in \Gamma_2$.} \\
\end{cases}
$$
Here when $P \in \Gamma_1$, $X_1(P)$ is an elliptic curve over $F$, and $r = r(X_1(P)/F)$ is the rank of the Mordell-Weil group $X_1(P)(F)$.
\end{thm}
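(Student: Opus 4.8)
The plan is to follow the proof of Theorem~\ref{Theorem_Main_Degree1_Details} with $X_1(P)$ replaced by its symmetric square $\Sym^2 X_1(P)$. A value $c$ of degree $\le 2$ over $F$ that realizes $P$ over some quadratic extension $K/F$ corresponds to a point $x\in X_1(P)(\overline F)$ with $[F(x):F]\le 2$ lying above $c$; the points $x$ of degree exactly $2$ are precisely the $F$-rational points of $\Sym^2 X_1(P)$ off the diagonal and off the locus of unordered pairs of $F$-rational points of $X_1(P)$, and these two exceptional loci are harmless --- contributing $O(1)$ via Faltings when $\genus X_1(P)\ge 2$, $O((\log B)^{r})$ via N\'eron on $X_1(P)$ when $\genus X_1(P)=1$, and $O(B^{4d/k})$, below the main term $B^{6d/k}$ and its error, when $\genus X_1(P)=0$. \textbf{First} I would run the inclusion--exclusion exactly as in the proof of Theorem~\ref{Theorem_Main_Degree1_Details}: the condition ``$\preper(f_c,K)\cong P$ for some quadratic $K\ni c$'' differs from ``$c\in\pi_P(X_1(P))$ has a preimage under $\pi_P$ of degree $\le 2$ over $F$'' only by the conditions for the finitely many minimal strongly admissible graphs $G_1,\dots,G_m$ strictly containing $P$ for which $X_1(G_i)$ carries a quadratic point over $F$ --- finiteness of this list is exactly where the Morton--Silverman conjecture for quadratic extensions of $F$ enters. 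This squeezes $S_{F,2}(P,B)$ between $\#\{c\in\pi_P(X_1(P)):\ [F(c):F]\le 2,\ H(c)\le B,\ c\text{ realized over a quadratic extension}\}$ and that quantity minus the sum of the analogous counts for the $G_i$.

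\textbf{Second}, I would prove a degree-$2$ analogue of Lemma~\ref{Lemma_Main_Degree1_Fiber_Count}: for all but $O_{P,F}(Error_{P,F,2}(B))$ of the $c$ in question, the set of points of $X_1(P)$ of degree $\le 2$ over $F$ above $c$ is a single $\Aut(\pi_P)$-orbit, so that the count of such $c$ equals $\tfrac{1}{|\Aut(\pi_P)|}\,N_{F,2}(P,B)+O_{P,F}(Error_{P,F,2}(B))$ with $N_{F,2}(P,B):=\#\{x\in X_1(P)(\overline F):\ [F(x):F]\le 2,\ H(\pi_P(x))\le B\}$. Here an exceptional $c$ --- one with two points of degree $\le 2$ above it in different $\Aut(\pi_P)$-orbits --- produces a point of degree $\le 2$ on one of the finitely many irreducible components $C$ of $X_1(P)\times_{\BP^1}X_1(P)$ other than the graphs of the $\sigma\in\Aut(\pi_P)$; I would compute these ``exceptional'' curves $C$ in Sage (Appendix~\ref{Appendix_Explicit_Presentation_piG}) and check that their genus is large enough (and, where necessary, that they are neither hyperelliptic nor bielliptic over a positive-rank elliptic curve) that their points of degree $\le 2$ over $F$ number $O(Error_{P,F,2}(B))$, by the Faltings-type finiteness for quadratic points (\cite{DoyleKrumm}) together with the genus $\le 2$ counts below.

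\textbf{Third}, the heart is the asymptotic for $N_{F,2}(P,B)$ (the content of the quoted Theorem~\ref{Theorem_Main_Degree2_CountingResult_Summary}), obtained case by case in $\genus X_1(P)$. When $P\in\Gamma_0$, $\Sym^2 X_1(P)\cong\BP^2$ and the bound $H(\pi_P(x))\le B$ on a quadratic $x$ becomes $H_{(\Sym^2\pi_P)^{*}\mathbf{O(1,1)}}(\{x,x^{\sigma}\})\le B^{2}$, so Theorem~\ref{Theorem_GeneralCountingResult_Pn} with $n=2$, applied to $\Sym^2\pi_P\colon\BP^2\to\BP^2$ of degree $k$, gives $\tfrac13\,\tau_{\BP^2}(\CL)\,B^{6d/k}+O(B^{2(3d-1)/k}\log B)$, and unwinding the Tamagawa measure via Example~\ref{Example_Sym2P1_pullback_Peyre_Constant} and Definition~\ref{Definition_Degree2_abc_constants} identifies $\tfrac13\,\tau_{\BP^2}(\CL)=|\Aut(\pi_P)|\,c_{F,2}(P)$. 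When $P\in\Gamma_2$, $X_1(P)$ is hyperelliptic with $\pi_P=\psi\circ q$, $q$ the hyperelliptic map and $\deg\psi=k/2$; the quadratic points $x$ with $q(x)\in\BP^1(F)$ --- the pairs $\{x,x^{\iota}\}$ --- are parametrized two-to-one by $\BP^1(F)$ through $\psi$ and give the main term $\sim c\,B^{2d/(k/2)}=c\,B^{4d/k}$ by Proposition~\ref{Theorem_Main_Degree1_CountingResult_P1} (the twofold overcount of $x$ versus $x^{\iota}$ cancelling the $\tfrac12$ there), with $c=|\Aut(\pi_P)|\,c_{F,2}(P)$, while the remaining quadratic points lie on $\mathrm{Jac}(X_1(P))$ (of which $\Sym^2 X_1(P)$ is the blow-up at the canonical class) and number at most a power of $\log B$ by N\'eron (Theorem~\ref{Theorem_GeneralCountingResult_AV}), hence $O(Error_{P,F,2}(B))$. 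When $P\in\Gamma_1$, $X_1(P)=E$ is an elliptic curve and $\Sym^2 E$ is a $\BP^1$-bundle over $E$ via the summation map; one slices the $F$-points along this fibration, estimates each fibre by the $\BP^1$-count of Proposition~\ref{Theorem_Main_Degree1_CountingResult_P1} uniformly in the base point, and sums over $E(F)$ using N\'eron's growth estimate for $\{Q\in E(F):\widehat{h}(Q)\le T\}$, producing $c_{F,2}(P)\,B^{4d/k}+O(B^{2(2d-1)/k}(\log B)^{r/2+1})$ with $c_{F,2}(P)>0$ of no closed form --- this is Proposition~\ref{Theorem_Main_Degree2_CountingResult_Sym2_EC}, proved in Appendix~\ref{Appendix_Proof_Sym2_EC}. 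When $\genus X_1(P)\ge 3$ one has $N_{F,2}(P,B)=O(1)$ by Faltings for $\Sym^2$ (\cite{DoyleKrumm}). In every case the $G_i\supsetneq P$ satisfy $\deg\pi_{G_i}\ge 2k$ and $\genus X_1(G_i)\ge\genus X_1(P)$ --- in fact $\genus X_1(G_i)\ge 3$ once $P\in\Gamma_1\cup\Gamma_2$ --- so a short computation (the one place $d\ge 1$ is used) shows their contributions are $O(Error_{P,F,2}(B))$. Combining the three steps gives $S_{F,2}(P,B)=c_{F,2}(P)B^{a_{F,2}(P)}(\log B)^{b_{F,2}(P)}+O_{P,F}(Error_{P,F,2}(B))$.

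\textbf{The main obstacle} is the genus-$1$ case: $\Sym^2 E$ is neither toric nor abelian, so neither Franke--Manin--Tschinkel nor N\'eron applies to it directly. One must make the fibrewise $\BP^1$-count of Proposition~\ref{Theorem_Main_Degree1_CountingResult_P1} uniform across the fibres of $\Sym^2 E\to E$ --- the implied constants, and the degree of the pulled-back height on each fibre, varying with the base point $Q\in E(F)$ --- and then interchange the resulting sum over the infinite group $E(F)$ with the $B\to\infty$ asymptotics, controlling the tail by the N\'eron--Northcott growth of points of $E(F)$ of bounded canonical height. This regularization, the ``calculus on arithmetic surfaces'' deferred to Appendix~\ref{Appendix_Proof_Sym2_EC}, is exactly where the extra $(\log B)^{r/2+1}$ in the error term and the absence of a closed form for $c_{F,2}(P)$ when $P\in\Gamma_1$ originate.
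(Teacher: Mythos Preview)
Your proposal follows the same architecture as the paper's proof---inclusion--exclusion over the minimal $G_i\supsetneq P$ (using Morton--Silverman for finiteness), a fiber-count lemma reducing to $N_{F,2}(P,B)/|\Aut(\pi_P)|$, the case-by-case asymptotic for $N_{F,2}$ via $\Sym^2 X_1(P)$, and absorption of the $G_i$ terms into the error. The identification of the genus-$1$ case as the main obstacle, and the $\BP^1$-bundle-over-$E$ strategy for it, are exactly right.

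There is, however, a genuine gap in your Step~2. You assert that an exceptional $c$ (one with two preimages $x,z$ of degree $\le 2$ in different $\Aut(\pi_P)$-orbits) ``produces a point of degree $\le 2$ on one of the \dots\ components $C$ of $X_1(P)\times_{\BP^1}X_1(P)$''. This fails when $c\in F$ and $x,z$ lie in \emph{distinct} quadratic extensions of $F$: then $[F(x,z):F]=4$, so $(x,z)$ is not a quadratic point on the fiber product, and your exceptional-curve argument does not see it. The paper (Lemma~\ref{Lemma_Main_Degree2_Fiber_Count}) handles this subcase separately: such a $c$ forces $\{x,\bar{x}\}$ to be an $F$-rational point on the curve $(\Sym^2\pi_P)^{-1}(\Delta)\subset \Sym^2 X_1(P)$, where $\Delta\subset\Sym^2\BP^1$ is the diagonal; since $\Sym^2\pi_P$ restricted to this curve still has degree $k$ onto $\Delta\cong\BP^1$, one gets $O(B^{2d/k})$ such $c$'s by the degree-$1$ count, which is within $Error_{P,F,2}(B)$. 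You need to add this diagonal argument (or something equivalent) to close the gap.

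A minor point on your Step~2 for $P\in\Gamma_2$: the paper avoids any Sage computation of the exceptional curves there, instead observing that any exceptional component $C\subset X_1(P)\times_{\BP^1}X_1(P)$ admits a degree-$\ge 2$ map to the genus-$2$ curve $X_1(P)$, so Riemann--Hurwitz forces $\genus(\widetilde C)\ge 3$ automatically.
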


\subsection{Counting rational points on $\Sym^2 \BP^1$}
\begin{prop}
\label{Theorem_Main_Degree2_CountingResult_Sym2_P1}
Let $F/\BQ$ be a number field of degree $d$. Consider a morphism 
$$f: \BP^1 \to \BP^1$$ 
over $F$ of degree $k$. This induces a map
$$\Sym^2 f: \Sym^2 \BP^1 \to \Sym^2 \BP^1,$$
also of degree $k$. Consider the metrized line bundle $\CL := (\Sym^2 f)^* \mathbf{O(1, 1)}$ on the source $V = \Sym^2 \BP^1$.

Let $H_{\CL}: \Sym^2 \BP^1(F) \to [1, \infty)$ be the induced (absolute, multiplicative) Weil height. Then
$$\#\{x \in \Sym^2 \BP^1(F): H_{\CL}(x) \leq B\} = \frac{1}{3}\tau_{\Sym^2 \BP^1} (\CL) B^{3d/k} + O_{f, F} (B^{(3d-1)/k}),$$
as $B \to \infty$. Here $\tau_{\Sym^2 \BP^1}(\CL)$ is the volume of $\Sym^2 \BP^1$ with respect to a global Tamagawa measure, defined in Section \ref{Subsection_Batyrev_Manin_c_V} and calculated in Example \ref{Example_Sym2P1_pullback_Local_Tamagawa_Measure},  \ref{Example_Sym2P1_pullback_Global_Tamagawa_Measure} and \ref{Example_Sym2P1_pullback_Peyre_Constant}.
\end{prop}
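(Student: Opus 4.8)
The plan is to change models to $\BP^2$ and then quote Theorem \ref{Theorem_GeneralCountingResult_Pn} with $n = 2$. First I would invoke Lemma \ref{Lemma_Properties_Sym2_P1}: the isomorphism $\epsilon: \Sym^2 \BP^1 \to \BP^2$ over $F$ identifies the standard metrized bundle $\mathbf{O(1, 1)}$ on $\Sym^2 \BP^1$ with a metrized line bundle $\CL' := \epsilon_* \mathbf{O(1,1)}$ on $\BP^2$ whose underlying bundle is $O_{\BP^2}(1)$; this is the ``product metric'' of Example \ref{Example_Standard_Height_Sym2P1}, which is not the standard metric on $\BP^2$ but is still an adelic metric, and Theorem \ref{Theorem_GeneralCountingResult_Pn} is stated for an arbitrary metric on $O(1)$, so it applies. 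Under $\epsilon$ the map $\Sym^2 f$ becomes a self-morphism $g := \epsilon \circ (\Sym^2 f) \circ \epsilon^{-1}$ of $\BP^2$, and I would check that $g$ is presented by three relatively prime homogeneous forms of degree $k$ with empty common vanishing locus, equivalently $g^* O_{\BP^2}(1) \cong O_{\BP^2}(k)$. The cleanest route is to pull back along the quotient $\pi: (\BP^1)^2 \to \Sym^2 \BP^1$: writing $f = [f_0, f_1]$ with $f_0, f_1$ coprime homogeneous of degree $k$, the composite $\sigma \circ (f \times f)$, with $\sigma$ the Veronese-type map of Example \ref{Example_Standard_Height_Sym2P1}, is given by symmetric bihomogeneous polynomials of bidegree $(k,k)$, and rewriting these in the coordinates $[x_0, x_1, x_2] = [a_1 a_2,\, a_1 b_2 + a_2 b_1,\, b_1 b_2]$ exhibits $g$ by degree-$k$ forms. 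For $k = 4$ this is precisely the explicit computation recorded for the portrait $8(2,1,1)$ preceding Theorem \ref{Theorem_Main_Q_Example_8_211_Degree2}, which is a useful sanity check.

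Next I would transport heights: since $\epsilon$ carries $\CL = (\Sym^2 f)^* \mathbf{O(1, 1)}$ to $g^* \CL'$ on $\BP^2$, the pullback compatibility of heights induced by metrized line bundles gives $H_{\CL}(x) = H_{g^* \CL'}(\epsilon(x))$ for all $x \in \Sym^2 \BP^1(F)$, so the two counting functions agree term for term. Then I would apply Theorem \ref{Theorem_GeneralCountingResult_Pn} with $n = 2$, metric $\CL'$, and morphism $g$ of degree $k$; it yields $\frac{1}{3} \tau_{\BP^2}(g^* \CL') B^{3d/k} + O_{\CL', F}(B^{(3d-1)/k} \log B)$. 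Because $n = 2$, we are never in the exceptional case $(d, n, k) = (1,1,1)$ of the remark following Theorem \ref{Theorem_GeneralCountingResult_Pn}, so the factor $\log B$ in the error term may be deleted, matching the desired $O_{f, F}(B^{(3d-1)/k})$ (the implied constant depends only on $f$ and $F$, since $\CL'$ and $g$ are built canonically from $f$). Finally I would note that the global Tamagawa measure is intrinsic to the pair (variety, metrized line bundle) and is carried to itself by the isomorphism $\epsilon$, so $\tau_{\BP^2}(g^* \CL') = \tau_{\Sym^2 \BP^1}(\CL)$; this is exactly the quantity computed in Examples \ref{Example_Sym2P1_pullback_Local_Tamagawa_Measure}, \ref{Example_Sym2P1_pullback_Global_Tamagawa_Measure} and \ref{Example_Sym2P1_pullback_Peyre_Constant}.

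I do not expect a serious obstacle here: the statement is essentially a change of model followed by an application of the already-established $\BP^n$ theorem. The one point requiring genuine care is the degree bookkeeping for $g$ — one must resist the temptation to think $\Sym^2 f$ has degree $k^2$ (which is indeed its degree as a finite morphism of surfaces) and instead verify that it is defined by degree-$k$ homogeneous polynomials in the $\BP^2$-coordinates, since it is the latter notion of degree that feeds into Theorem \ref{Theorem_GeneralCountingResult_Pn}. A secondary technical point is confirming that $\CL'$ really is an adelic metric in the sense required by that theorem (it is, being the pushforward of the product metric of Example \ref{Example_Standard_Height_Sym2P1}), and keeping track that the implied constant ultimately depends only on $f$ and $F$.
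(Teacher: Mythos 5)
Your proposal is correct and takes exactly the same route as the paper: identify $\Sym^2 \BP^1 \cong \BP^2$, push the product metric over to $O_{\BP^2}(1)$, observe that $\Sym^2 f$ becomes a self-map of $\BP^2$ cut out by degree-$k$ forms, and apply Theorem \ref{Theorem_GeneralCountingResult_Pn} with $n=2$, with the $\log B$ removable because $(d,n) \neq (1,1)$. The paper states this in two lines; you have usefully spelled out the bookkeeping the paper leaves implicit, in particular the distinction between the topological degree $k^2$ of $\Sym^2 f$ and the polynomial degree $k$ that actually enters the $\BP^n$ counting theorem, and the verification that the product metric is indeed a legitimate adelic metric on $O_{\BP^2}(1)$.
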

\begin{proof}
This is special case of Theorem \ref{Theorem_GeneralCountingResult_Pn} when $n = 2$, for the product metric on $\BP^2$. The savings of $\log B$ in error term comes from $\Sym^2 \BP^1 \cong \BP^2$ having dimension $> 1$.
\end{proof}

\subsection{Counting rational points on symmetric square of elliptic curves}
\label{Section_Sym2_EC}
\begin{prop}[Counting rational points on symmetric square of elliptic curve]
\label{Theorem_Main_Degree2_CountingResult_Sym2_EC}
Let $F/\BQ$ be a number field of degree $d$, and consider $\mathbf{O(1, 1)}$ on $\Sym^2 \BP^1$ over $F$ with the standard metric. 

Let $E$ be an elliptic curve over $F$. Let $f: E \to \BP^1$ be a morphism over $F$ of degree $k$ such that $f(x) = f(-x)$; in particular, $k$ is even. This induces
$$\Sym^2 f: \Sym^2 E \to \Sym^2 \BP^1,$$
also of degree $k$. 

Consider the metrized line bundle $\CL = (\Sym^2 f)^*\mathbf{O(1, 1)}$ on the source $\Sym^2 E$, and let $H_{\CL}: (\Sym^2 E)(F) \to [1, \infty)$ be the induced (absolute, multiplicative) height. Then
$$\#\{x \in (\Sym^2 E)(F): H_{\CL}(x) \leq B\} = C_{f, E, F} B^{2d/k} + O_{f, E, F}(B^{(2d-1)/k} (\log B)^{r/2 + 1}),$$
as $B \to \infty$, for some constant $C_{f, E, F} > 0$. Here $r = r(E/F)$ is the rank of the Mordell-Weil group $E(F)$.
\end{prop}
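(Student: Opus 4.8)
The plan is to fibre $\Sym^2 E$ over $E$, apply the $\BP^1$-counting result Theorem \ref{Theorem_GeneralCountingResult_Pn} on each fibre, and then sum the fibrewise contributions over $E(F)$. Concretely, composing the Abel--Jacobi morphism $\Sym^2 E \to \Pic^2(E)$ with the isomorphism $\Pic^2(E)\cong E$, $[D]\mapsto[D-2O]$, I would work with $\psi\colon\Sym^2 E\to E$, $\{P_1,P_2\}\mapsto P_1\oplus P_2$. Its fibre over $Q$ is the complete linear system $|O+Q|$, which is $\cong\BP^1_F$ because $O+Q$ is an effective $F$-rational divisor of degree $2$; since $Q\mapsto\{O,Q\}$ is an $F$-section, $\psi$ exhibits $\Sym^2 E$ as a $\BP^1$-bundle over $E$ (the standard fact that $\Sym^2$ of a genus $1$ curve is a $\BP^1$-bundle over its Jacobian). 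Hence $(\Sym^2 E)(F)=\bigsqcup_{Q\in E(F)}\psi^{-1}(Q)(F)$, and the problem reduces to counting $F$-points of bounded $H_\CL$-height on each $\BP^1$-fibre and then adding up over $Q$.

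For the fibrewise count I would first show, by an intersection computation (using $\CL=(\Sym^2 f)^*\mathbf{O(1,1)}$, the identification $\mathbf{O(1,1)}\leftrightarrow\mathcal O_{\BP^2}(1)$ under $\Sym^2\BP^1\cong\BP^2$ from Lemma \ref{Lemma_Properties_Sym2_P1}, and pulling back along $E\times E\to\Sym^2 E$), that $\CL\cdot[\psi^{-1}(Q)]=k$. Thus $\CL|_{\psi^{-1}(Q)}$ is a metrised line bundle on $\BP^1_F$ of degree $k$, and the restricted morphism $\psi^{-1}(Q)\hookrightarrow\Sym^2 E\xrightarrow{\Sym^2 f}\BP^2$ is given by three relatively prime binary forms of degree $k$ whose coefficients depend on $Q$. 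The argument behind Theorem \ref{Theorem_GeneralCountingResult_Pn} applies to the height attached to such a system of forms (cf. the remarks following that theorem on adelic Lipschitz systems), giving, for each fixed $Q$,
$$\#\{x\in\psi^{-1}(Q)(F):H_\CL(x)\le B\}=\tfrac12\,\tau_Q(\CL)\,B^{2d/k}+O_{f,E,F,Q}\bigl(B^{(2d-1)/k}\log B\bigr),$$
where $\tau_Q(\CL)$ is the Tamagawa volume of $\BP^1(\BA_F)$ for the measure induced by $\CL|_{\psi^{-1}(Q)}$.

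The hard part --- and where I expect the real work to lie --- is to sum this over all $Q\in E(F)$ with error terms uniform in $Q$. The key quantitative input is that the ``arithmetic size'' of the morphism $\Sym^2 f|_{\psi^{-1}(Q)}$ (equivalently, of how the fibre sits inside an integral model of $\Sym^2 E$ over $O_F$) grows like the N\'eron--Tate height $\widehat h_{f^*\mathbf{O(1)}}(Q)$; since $\deg f^*\mathcal O(1)=k>0$ this is a positive-definite quadratic form on $E(F)\otimes\BR\cong\BR^r$, and since $f$ factors through $E/[\pm 1]$ the class $f^*\mathcal O(1)\sim k[O]$ is divisible by $2$ in $\Pic(E)$ (recall $k$ is even), so Theorem \ref{Theorem_GeneralCountingResult_AV} applies. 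Consequently $\tau_Q(\CL)$ decays like $\exp\!\bigl(-\tfrac{2d}{k}\widehat h_{f^*\mathbf{O(1)}}(Q)\bigr)$ up to lower-order factors, the series $\sum_{Q\in E(F)}\tau_Q(\CL)$ converges (a theta-type sum), and one sets $C_{f,E,F}:=\tfrac12\sum_{Q\in E(F)}\tau_Q(\CL)\in(0,\infty)$. Only the $\ll(\log B)^{r/2}$ points $Q$ with $\widehat h_{f^*\mathbf{O(1)}}(Q)\le\log B$ (counted via the lattice-point estimate underlying Theorem \ref{Theorem_GeneralCountingResult_AV}) contribute to the main term; multiplying the per-fibre error $B^{(2d-1)/k}\log B$ by this count, and adding the truncated tail together with the contribution of the (negligibly many, by Theorem \ref{Theorem_GeneralCountingResult_AV}) fibre-points coming from pairs of $F$-rational points, yields the error $O_{f,E,F}\bigl(B^{(2d-1)/k}(\log B)^{r/2+1}\bigr)$. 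Turning the estimate $\tau_Q(\CL)\asymp\exp(-\tfrac{2d}{k}\widehat h_{f^*\mathbf{O(1)}}(Q))$ into a rigorous bound, and controlling the $Q$-dependence of the implied constant in the fibrewise error, is exactly the ``calculus on arithmetic surfaces'' carried out in Appendix \ref{Appendix_Proof_Sym2_EC}, to which I would defer for the full details.
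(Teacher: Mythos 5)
Your proposal follows essentially the same approach the paper sketches and then carries out in Appendix \ref{Appendix_Proof_Sym2_EC}: exhibit $\Sym^2 E$ as a $\BP^1$-bundle over $E$ via the addition map, count $F$-points fiberwise with the $\BP^1$ asymptotic of Theorem \ref{Theorem_GeneralCountingResult_Pn}, observe that only $O((\log B)^{r/2})$ fibers contribute by N\'eron's theorem, and sum with constants uniform in $Q$, deferring the Arakelov-theoretic uniformity arguments. One caution about your heuristic: the per-fiber leading constant actually decays like $H(f(Q))^{-d/k}$, not the $\exp\!\bigl(-\tfrac{2d}{k}\widehat{h}_{f^*\mathbf{O(1)}}(Q)\bigr)\approx H(f(Q))^{-2d/k}$ you predicted from the height shrinkage alone; the discrepancy is that the covolume of the lattice $H^0(X, O_X(D_Q+D_O))$ also shrinks, like $H_{NT}(Q)^{-d}\approx H(f(Q))^{-d/k}$ as $Q$ grows (Proposition \ref{Proposition_Covolume_H0X}), packing more lattice points per unit volume and partially offsetting the shrinkage of the region $S_{\Omega,Q}(T)$, so the geometry-of-numbers step must track both effects.
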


\begin{remark}
This calculation was done in Arakelov without details \cite[p. 408, Constant T]{Arakelov}, where the celebrated Faltings-Riemann-Roch theorem was also conjectured (and later proved in Faltings \cite{FaltingsRR}). 

As pointed out in Faltings \cite[p. 406, Remark]{FaltingsRR}, Arakelov's normalization for volume does not lead to a Riemann-Roch theorem, so modifications are probably needed for Arakelov's result to hold. See also \cite[Section 4.3]{BatyrevTschinkel}, where Bost pointed out that Arakelov's calculations may have some $O(1)$ error. 

Since we are unable to find another reference for this calculation, we would sketch it here, and carry it out in details in Appendix \ref{Appendix_Proof_Sym2_EC}. Comparing to Faltings' calculation \cite[Section 8]{FaltingsRR}, we compute the leading constant of main term explicitly (which was not calculated there), and we prove an explicit zero-free region of width $\frac{1}{k} - \epsilon$ for the associated height zeta function.
\end{remark}

\begin{remark}
\label{Remark_Sym2_E_Constant}
The constant $C_{f, E, F}$ does not have a nice form. To illustrate how the constant may look, we compute $C_{f, E, F}$ in Appendix \ref{Appendix_Proof_Sym2_EC} with the following assumptions:
\begin{itemize}
    \item $E$ is semi-stable over $F$,
    \item If $X / \Spec(O_F)$ is the minimal regular model of $E/F$, and $v$ is a non-archimedean place, we assume that the vertical fiber $X_v := X \times_{\Spec(O_F)} \Spec (k(v))$ is irreducible.
\end{itemize}
In this case, the constant $C_{f, E, F}$ takes the form
\begin{align*} & C_{f, E, F} \\
= & 2^{r_1 + r_2 - 1} \cdot \frac{|Cl(F)| Reg_F}{|\mu_F| \zeta_F(2)} \cdot \frac{1}{|\disc(O_F)|} \cdot \frac{\exp\left(d \cdot h_{Faltings}(E)\right)}{|N_{F/\BQ} (\Delta_{E/F})|^{1/4} H(f(O))^{2d/k}} \\
& \sum_{R \in E(F)} \left(\frac{H_{NT}(R)}{H(f(R))^{2/k}}\right)^d  \\
& \,\,\,\,\, \left(\prod_{v \in M_{F, \infty}} \vol_{Fal}\left(g_v \in H^0(X, O_X(D_R + D_O)) \otimes_{O_F} F_v: \int_{X(\overline{F_v})} \log |g_v|_v d\mu_v\leq 0\right)\right).
\end{align*}
Here,
\begin{itemize}
    \item $r_1, r_2$ are the number of real embeddings/pairs of complex embeddings of $F$.
    \item $|Cl(F)|$, $Reg_F$, $|\mu_F|$, $disc(O_F)$, $\zeta_F(s)$ are the class number, regulator, number of roots of unity, discriminant and Dedekind zeta function of $F$.
    \item $h_{Faltings}(E)$ is the (logarithmic) Faltings height of $E$ (defined at Remark \ref{Remark_Definition_Faltings_height}).
    \item $\Delta_{E/F}$ is the minimal discriminant of $E/F$. 
    \item $E(F)$ are the $F$-rational points of the elliptic curve $E$. $O$ is the identity of the elliptic curve $E/F$. 
    \item $X \to Spec(O_F)$ is the minimal regular model of $E/F$. 
    \item $D_R$ is the closure of the $F$-rational point $R$ in $X$. It is an irreducible horizontal Weil divisor. 
    \item $O_X(D_R + D_O)$ is the line bundle on $X$ corresponding to the horizontal Weil divisor $D_R + D_O$.
    \item $H_{NT}$ is the (multiplicative) N\'{e}ron-Tate height on $E$.
    \item For each archimedean place $v$, let
    $$d\mu_v := \frac{1}{k} c_1(f^*\mathbf{O(1)})$$ 
    be the first Chern form of the metrized line bundle $f^*\mathbf{O(1)}$, normalized by $1/k$. The $1/k$ factor ensures that 
    $$\int_{X(\overline{F_v})} d\mu_v = 1.$$
    \item The space $H^0(X, O_X(D_R + D_O)) \otimes_{O_F} F_v$ has a Haar measure, called the Faltings volume \cite{FaltingsRR}, and is denoted by $\vol_{Fal}$ (see Definition \ref{Definition_Faltings_Volume}).
\end{itemize}
It is worth remarking that
$$2^{r_1 + r_2 - 1} \cdot \frac{|Cl(F)| Reg_F}{|\mu_F| \zeta_F(2)} \cdot \frac{1}{|\disc(O_F)|}$$
is basically Schanuel's constant for $n = 2$ (up to a constant factor), and is closely related to $$\frac{res_{s=1}\zeta_F(s)}{\zeta_F(2)}$$
via class number formula. 

\end{remark}
\begin{proof}
We will sketch the proof here, while leaving the details in Appendix \ref{Appendix_Proof_Sym2_EC}.

The addition map $\Sym^2 E \to E$ gives $\Sym^2 E$ a $\BP^1$-bundle structure over $E$. We can then count rational points on $\Sym^2 E$ fiber by fiber. We summarize the key points in this argument:
\begin{itemize}
    \item If $x \in (\Sym^2 E)(F)$ lies in the fiber over $R \in E(F)$ such that $H_{\CL}(x) \leq B$, we will show that 
    $$H(R) \ll_{E, f, F} B^2.$$ 
    By Theorem \ref{Theorem_GeneralCountingResult_AV}, there are $\asymp_{E, f, F} (\log B)^{r/2}$ such $R \in E(F)$, so we only need to handle $\asymp_{E, f, F} (\log B)^{r/2}$ fibers.
    \item Let $R \in E(F)$. We will count the number of rational points $x$ in $\BP^1$-fiber over $R$ satisfying $H_{\CL}(x) \leq B$. The height function on the $\BP^1$-fiber comes from the pullback of $\mathbf{O(1)}$ via the map
    $$\BP^1 \into \Sym^2 E \stackrel{\Sym^2 f}{\longrightarrow} \Sym^2 \BP^1,$$
    and is a metrized line bundle of degree $k$. By Proposition \ref{Theorem_Main_Degree1_CountingResult_P1}, we hence expect the number of rational points on each $\BP^1$-fiber to have main term $\asymp B^{2d/k}$, and error term $O\left(B^{(2d-1)/k} \log B\right)$. However, a priori the big $O$ constant of the error term depends on the fiber; equivalently it depends on $R \in E(F)$.

    We will show that:
    \begin{itemize}
        \item The sum over $R \in E(F)$ of the leading constants in the main term converges.
        \item The big-$O$ constant in the error term can be taken uniformly over $R \in E(F)$.

        (As in Masser-Vaaler \cite{MasserVaaler}, power savings in error term comes from the Lipschitz parametrizability of certain regions, which vary with $R \in E(F)$. We will show that the number of Lipschitz parametrizations/Lipschitz constants of these regions are uniform over $R \in E(F)$ (Proposition \ref{Proposition_Uniformity_R}), from which the uniformity of big-O constant follows.)
    \end{itemize}
    Hence after summing over all fibers, the main term would still have size $\asymp B^{2d/k}$, and the error term would have size $O\left(B^{(2d-1)/k} (\log B)^{r/2 + 1}\right)$.
\end{itemize}
We would need to work explicitly with the $\BP^1$-bundle structure of $\Sym^2 E$ and height on $\Sym^2 E$ restricted to fibers over $E$. This naturally brings us to Arakelov geometry, and is why we ran the argument in the Arakelov setup.
\end{proof}

\subsection{Counting rational points on symmetric square of genus 2 curves}
\begin{prop}
\label{Theorem_Main_Degree2_CountingResult_Sym2_Genus2}
Let $F/\BQ$ be a number field of degree $d$, and consider $\mathbf{O(1, 1)}$ on $\Sym^2 \BP^1$ over $F$ with the standard metric. 

Let $C$ be a genus 2 curve over $F$ with a $F$-rational point $Q_0$, and let $-: C \to C$ be the hyperelliptic involution. Let $x: C \to \BP^1$ be the unique even degree 2 map such that
$$x(Q_0) = x(-Q_0) = \infty.$$
Consider an even morphism $f: C \to \BP^1$ over $F$ of degree $k$, factorized as $f = G \circ x$ for some $G: \BP^1 \to \BP^1$ of degree $\frac{k}{2}$. This induces 
$$\Sym^2 f: \Sym^2 C \to \Sym^2 \BP^1,$$ 
also of degree $k$. Consider the metrized line bundle 
$$\CL := (\Sym^2 f)^* \mathbf{O(1, 1)}$$ 
on the source $\Sym^2 C$. Let $H_{\CL}: (\Sym^2 C)(F) \to [1, \infty)$ be the induced (absolute, multiplicative) Weil height. Then
$$\#\{x \in (\Sym^2 C)(F): H_{\CL}(x) \leq B\} = \frac{1}{2} \tau_{\BP^1} (G^*\mathbf{O(1)}) B^{2d / k} + O_{\CL, F}\left(B^{(2d-1) / k} \log B \right),
$$
as $B \to \infty$. Here $\tau_{\BP^1} (G^*\mathbf{O(1)})$ is the volume of $\BP^1$ with respect to a global Tamagawa measure, defined in Section \ref{Subsection_Batyrev_Manin_c_V} and calculated in Example \ref{Example_Pn_pullback_Local_Tamagawa_Measure},  \ref{Example_Pn_pullback_Global_Tamagawa_Measure} and \ref{Example_Pn_pullback_Peyre_Constant}.
\end{prop}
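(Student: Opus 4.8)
The plan is to exploit that $f$ is even. Since $f = G \circ x$, the map $\Sym^2 f$ factors through $\Sym^2 x$, so for $P = \{Q_1,Q_2\} \in (\Sym^2 C)(F)$ we have, by Lemma~\ref{Lemma_Properties_Sym2_P1},
$$H_{\CL}(P) = H(f(Q_1))\,H(f(Q_2)) = H(G(x(Q_1)))\,H(G(x(Q_2))).$$
I would then split $(\Sym^2 C)(F)$ along the curve $e \subset \Sym^2 C$ consisting of the effective divisors linearly equivalent to $K_C$, equivalently the locus of divisors of the form $\{Q,-Q\}$; note that $e$ is the image of the closed immersion $\BP^1 \hookrightarrow \Sym^2 C$, $t \mapsto x^{-1}(t)$, which is defined over $F$ because $x$ and $Q_0$ are. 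Pulling the class of $\CL$ back along $\pi\colon C\times C \to \Sym^2 C$ one finds that its underlying line bundle is $\overline{K}^{\otimes k/2}$, where $\pi^*\overline{K} \cong p_1^*K_C \otimes p_2^*K_C$, and a short intersection computation on $C\times C$ (using that $\pi^{-1}(e)$ is the graph of the hyperelliptic involution) gives $\overline{K}\cdot e = 2$ and $e^2 = -1$. In particular $\CL$ is ample on $\Sym^2 C$ and restricts to a degree-$k$ line bundle on $e \cong \BP^1$.

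The first step would be to count the points on $e$. Under the identification $e(F) \cong \BP^1(F)$, the fibre $x^{-1}(t) = \{Q_1,Q_2\}$ satisfies $f(Q_i) = G(t)$, hence $H_{\CL}(x^{-1}(t)) = H(G(t))^2 = H_{G^*\mathbf{O(1)}}(t)^2$. Therefore $\#\{P \in e(F) : H_{\CL}(P) \le B\} = \#\{t \in \BP^1(F) : H_{G^*\mathbf{O(1)}}(t) \le B^{1/2}\}$, and applying Proposition~\ref{Theorem_Main_Degree1_CountingResult_P1} to the morphism $G$ of degree $k/2$ with the bound $B^{1/2}$ evaluates this to $\tfrac12\,\tau_{\BP^1}(G^*\mathbf{O(1)})\,B^{2d/k} + O_{\CL,F}(B^{(2d-1)/k}\log B)$, which is exactly the claimed main term.

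It then remains to show that the points off $e$ lie in the error term. Here I would use the Abel--Jacobi map $aj\colon \Sym^2 C \to \mathrm{Pic}^2 C$: it is defined over $F$, its target is identified with $\mathrm{Jac}(C)$ via $[2Q_0]$, and — since $e$ is a $(-1)$-curve contracted to the single point $[K_C]$ — it is the blow-down of $e$, an isomorphism away from $e$. Thus $(\Sym^2 C\setminus e)(F)$ injects into $\mathrm{Jac}(C)(F)$, a finitely generated group of some rank $r$. For any ample symmetric line bundle $M$ on $\mathrm{Jac}(C)$, the standard Weil height comparison $h_{aj^*M} \ll h_{\CL} + 1$ (valid because $\CL$ is ample) gives, after passing to canonical heights, $\widehat{h}_M(aj(P)) \ll \log H_{\CL}(P) + 1$; hence $H_{\CL}(P)\le B$ forces $aj(P)$ into the set $\{Q \in \mathrm{Jac}(C)(F) : \widehat{h}_M(Q) \ll \log B\}$, which by N\'eron's theorem (Theorem~\ref{Theorem_GeneralCountingResult_AV}) has $O((\log B)^{r/2})$ elements. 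Since $(\log B)^{r/2} = O(B^{(2d-1)/k})$, adding the two contributions yields the proposition.

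The step I expect to be the main obstacle is the last one: pinning down over $F$ exactly which curve $aj$ contracts (so that the locus off $e$ is genuinely controlled by $\mathrm{Jac}(C)(F)$), and setting up a clean comparison between the $\CL$-height on $\Sym^2 C$ and a canonical height on $\mathrm{Jac}(C)$. Fortunately only a crude polynomial-in-$\log B$ bound is needed there, so the comparison may be wasteful, which should keep the step routine. The only other points requiring care are the identification $e(F) \cong \BP^1(F)$ via the fibres of $x$, and the bookkeeping of the exponents through Proposition~\ref{Theorem_Main_Degree1_CountingResult_P1}.
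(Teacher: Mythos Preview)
Your proposal is correct and follows essentially the same approach as the paper: split $(\Sym^2 C)(F)$ into the exceptional curve $e=\nabla=\{\{Q,-Q\}\}$ (counted via Proposition~\ref{Theorem_Main_Degree1_CountingResult_P1} applied to $G$, giving the main term) and its complement (bounded by N\'eron's theorem on $\mathrm{Jac}(C)$, giving $O((\log B)^{r/2})$). The one difference is in how Case~2 is handled: the paper proves an explicit lemma identifying $[\CL] = \tfrac{k}{2}[p_{Q_0}^*\Theta_{Q_0}] + \tfrac{k}{2}[p_{-Q_0}^*\Theta_{-Q_0}] - k\nabla$ in $\Pic(\Sym^2 C)$, so that away from $\nabla$ the height $H_{\CL}$ is literally the pullback of a height attached to a specific degree-$k$ divisor on $\mathrm{Jac}(C)$, allowing Theorem~\ref{Theorem_GeneralCountingResult_AV} to be applied directly; you instead invoke the general comparison $h_{aj^*M}\ll h_{\CL}+O(1)$ from ampleness of $\CL$. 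Your route is slightly softer and avoids the Picard computation, while the paper's lemma gives more precise information (and makes the divisibility-by-2 hypothesis of Theorem~\ref{Theorem_GeneralCountingResult_AV} transparent), but both yield the same $O((\log B)^{r/2})$ bound, which is all that is needed.
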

\begin{proof}
Consider the birational map
$$p_{Q_0}: \Sym^2 C \to Jac(C)$$
over $F$, defined by sending $\{Q, Q'\} \to cl\left([Q] + [Q'] - 2[Q_0]\right)$. This is the blow up of $Jac(C)$ at one point, with exceptional divisor of the form
$$\nabla = \{\{Q, -Q\}: Q \in C(\overline{F})\}.$$

Consider the theta divisor on $Jac(C),$
$$\Theta_{Q_0} := \{[Q] - [Q_0]: Q \in C(\overline{F})\},$$
with respect to $Q_0$. Similarly, consider $p_{-Q_0}: \Sym^2 C \to Jac(C)$ and the theta divisor $\Theta_{-Q_0}$.

\begin{lemma}
In $\Pic(\Sym^2 C)$,
$$[\CL] = \frac{k}{2} [p_{Q_0}^* \Theta_{Q_0}] + \frac{k}{2} [p_{-Q_0}^* \Theta_{-Q_0}] - k\nabla.$$
\end{lemma}
\begin{proof}[Proof of lemma]
In $\Pic(\Sym^2 \BP^1)$, $O(1, 1)$ corresponds to the hyperplane class
$$[D] = \{\{R, \infty\}: R \in \BP^1(\overline{F})\}.$$
Pulling back, we see that in $\Pic(Sym^2 C)$,
\begin{align*}
[\CL] = & \sum_{\stackrel{P}{f(P) = \infty}} \{\{Q, P\}: Q \in C(\overline{F})\} \\ 
= & \sum_{\stackrel{P / \{\pm 1\} }{f(P) = \infty}} \left(\{\{Q, P\}: Q \in C(\overline{F})\} + \{\{Q, -P\}: Q \in C(\overline{F})\}\right)
\end{align*}

For any $P \in C(\overline{F})$, consider the morphism $prod \circ \Sym^2 (x - x(P)): \Sym^2 C \to \BP^1$, defined by
$$prod \circ \Sym^2 (x - x(P))(\{Q, Q'\}) = (x(Q) - x(P))(x(Q') - x(P)).$$
Note that the associated principal divisor is
\begin{align*}
& \div\left(prod \circ \Sym^2 (x - x(P))\right) \\
= &\, \{\{Q, P\}: Q \in C(\overline{F})\} + \{\{Q, -P\}: Q \in C(\overline{F})\} \\
&\, - \{\{Q, Q_0\}: Q \in C(\overline{F})\} - \{\{Q, -Q_0\}: Q \in C(\overline{F})\}.
\end{align*}
Hence in $\Pic(\Sym^2 C)$,
$$[\CL] = \frac{k}{2} \left(\{\{Q, Q_0\}: Q \in C(\overline{F})\} + \{\{Q, -Q_0\}: Q \in C(\overline{F})\}\right)$$
By \cite[Proposition V.3.6]{Hartshorne},
$$p_{Q_0}^* \Theta_{Q_0} = \{\{Q, Q_0\}: Q \in C(\overline{F})\} + \nabla$$
and
$$p_{-Q_0}^* \Theta_{-Q_0} = \{\{Q, -Q_0\}: Q \in C(\overline{F})\} + \nabla$$
Hence
$$[\CL] = \frac{k}{2} [p_{Q_0}^* \Theta_{Q_0}] + \frac{k}{2} [p_{-Q_0}^* \Theta_{-Q_0}] - k\nabla$$
as desired.
\end{proof}

We now count $\Sym^2 C(F)$ in two cases: those on the exceptional divisor $\nabla$, and those not on $\nabla$.

\textit{Case 1: Counting $\nabla(F)$.}

Note that $\nabla$ is a line; it is parametrized by 
$$x^{-1}: \BP^1 \to \nabla \subset \Sym^2 C$$
defined over $F$. Moreover, if $R \in \BP^1(F)$ and $x(Q) = x(-Q) = R$, we see that 
\begin{align*}
H_{\CL} (x^{-1}(R)) & = H(f(Q)) H(f(-Q))\\
& = H(f(Q))^2 \\
& = H(G(R))^2
\end{align*}
Hence by Proposition \ref{Theorem_Main_Degree1_CountingResult_P1},
\begin{align*}
& \, \#\{R \in \BP^1(F): H_{\CL}(x^{-1}(R)) \leq B\} \\
= & \, \#\{R \in \BP^1(F): H(G(R)) \leq B^{1/2}\} \\
= & \, \frac{1}{2} \tau_{\BP^1} (G^* \mathbf{O(1)}) B^{2d / k} + O_{\CL, F}\left(B^{(2d-1) / k} \log B\right),
\end{align*}
since $\deg G = \frac{k}{2}$.

\textit{Case 2: Counting $(\Sym^2 C - \nabla)(F)$.}

Note that $\Sym^2 C - \nabla$ is isomorphic to $Jac(C) - (point)$, and that the divisor $\CL |_{\Sym^2 C - \nabla}$ comes from pullback of a degree $k$ effective divisor on $Jac(C)$. Hence by Theorem \ref{Theorem_GeneralCountingResult_AV} applied to $Jac(C)$, we see that 
$$\#\{x \in (\Sym^2 C - \nabla)(F): H_{\CL}(x) \leq B\} = O_{\CL, F}((\log B)^{r/2}),$$
where $r$ is the rank of Mordell-Weil group $Jac(C)(F)$. In particular, this is smaller than the error term in Case 1.

Finally, putting both cases together we see that
$$\#\{x \in (\Sym^2 C)(F): H_{\CL}(x) \leq B\} = \frac{1}{2} \tau_{\BP^1} (G^*\mathbf{O(1)}) B^{2d / k} + O_{\CL, F}\left(B^{(2d-1) / k} \log B\right),
$$
as desired.
\end{proof}

\subsection{Asymptotics formula of $N_{F, 2}(P, B)$}
Putting the last three propositions together, we can now compute the asymptotics for $N_{F, 2}(P, B)$.

\begin{thm}
\label{Theorem_Main_Degree2_CountingResult_Summary}
Let $P$ be a portrait. Let $X_1(P)$ be the dynamical modular curve associated to $P$, and let $\pi_P: X_1(P) \to \BP^1$ be the natural forgetful map of degree $k$.

Let $H: \BP^1(\BQbar) \to [1, \infty)$ be the absolute, multiplicative Weil height. For a fixed number field $F/\BQ$ of degree $d$, define
$$N_{F, 2}(P, B) := \#\{x \in X_1(P)(\overline{F}): [F(x): F] \leq 2, H(\pi_P(x)) \leq B\},$$
\begin{enumerate}[label=(\alph*)]
    \item Suppose $P \in \Gamma_0$, then $X_1(P)$ is rational over $F$. 
    We have
    $$N_{F, 2}(P, B) = |Aut(\pi_P)| \cdot c_{F, 2}(P) B^{6d/k} + O_{P, F}(B^{2(3d-1)/k} \log B)$$
    as $B \to \infty$. 
    \item Suppose $P \in \Gamma_1$, then $X_1(P)$ is an elliptic curve over $F$. We have
    $$N_{F, 2}(P, B) = |Aut(\pi_P)| \cdot c_{F, 2}(P) B^{4d/k} + O_{P, F}(B^{2(2d-1)/k}(\log B)^{r/2 + 1})$$
    as $B \to \infty$. 
    \item Suppose $P \in \Gamma_2$, then $X_1(P)$ is a genus 2 curve over $F$. We have
    $$N_{F, 2}(P, B) = |Aut(\pi_P)| \cdot c_{F, 2}(P) B^{4d/k} + O_{P, F}(B^{2(2d-1)/k}\log B)$$
    as $B \to \infty$. 
    \item Suppose $P \not\in \Gamma_0 \cup \Gamma_1 \cup \Gamma_2$, then $X_1(P)$ is a curve of genus $\ge 3$, and 
    $$N_{F, 2}(P, B) = O_{P, F}(1).$$
\end{enumerate}
\end{thm}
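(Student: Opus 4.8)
The plan is to pass from degree-$\le 2$ points on $X_1(P)$ to $F$-rational points on the symmetric square $\Sym^2 X_1(P)$ and then feed in the three counting results of this section according to the genus of $X_1(P)$. First I would split
$$N_{F, 2}(P, B) = \#\{x \in X_1(P)(F) : H(\pi_P(x)) \le B\} + Q(B),$$
where $Q(B) := \#\{x \in X_1(P)(\overline{F}) : [F(x):F] = 2,\ H(\pi_P(x)) \le B\}$. The first summand is exactly $N_{F,1}(P, B)$, which by Theorem \ref{Theorem_Main_Degree1_CountingResult_Summary} is $O_{P,F}(B^{2d/k})$, $O_{P,F}((\log B)^{r/2})$, and $O_{P,F}(1)$ for $P$ in $\Gamma_0$, $\Gamma_1$, $\Gamma_2$ respectively, hence in every case dominated by the asserted error term. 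For $Q(B)$, I would use that sending a degree-$2$ closed point of $X_1(P)/F$ with geometric points $x, x^\sigma$ to the effective divisor $[x] + [x^\sigma]$ identifies such closed points with the \emph{irreducible} $F$-rational points of $\Sym^2 X_1(P)$ — those not of the form $[Q_1] + [Q_2]$ with $Q_1, Q_2 \in X_1(P)(F)$; that each degree-$2$ closed point accounts for exactly the two geometric points $x$ and $x^\sigma$; and that the standard height pulled back along $\Sym^2\pi_P$ satisfies $H_{(\Sym^2\pi_P)^*\mathbf{O(1, 1)}}([x] + [x^\sigma]) = H(\pi_P(x))\,H(\pi_P(x^\sigma)) = H(\pi_P(x))^2$ by Galois invariance of the Weil height. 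Writing $M(T)$ for the number of $y \in \Sym^2 X_1(P)(F)$ with $H_{(\Sym^2\pi_P)^*\mathbf{O(1,1)}}(y) \le T$ and $\Sigma(T)$ for the number of \emph{split} points (including the diagonal) of height $\le T$, this gives $Q(B) = 2\big(M(B^2) - \Sigma(B^2)\big)$.

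Next I would evaluate $M(B^2)$ case by case, writing $\CL := (\Sym^2\pi_P)^*\mathbf{O(1,1)}$. When $P \in \Gamma_0$, $X_1(P) \cong \BP^1$, so $\Sym^2 X_1(P) \cong \Sym^2\BP^1$ and $\Sym^2\pi_P$ is induced by the degree-$k$ morphism $\pi_P$; Proposition \ref{Theorem_Main_Degree2_CountingResult_Sym2_P1} gives $M(B^2) = \frac13\tau_{\Sym^2\BP^1}(\CL) B^{6d/k} + O_{P,F}(B^{2(3d-1)/k})$, and since $|Aut(\pi_P)|\,c_{F,2}(P) = \frac23\tau_{\Sym^2\BP^1}(\CL)$ by Definition \ref{Definition_Degree2_abc_constants} and Example \ref{Example_Sym2P1_pullback_Global_Tamagawa_Measure}, inserting the factor $2$ yields (a). When $P \in \Gamma_1$, $X_1(P)$ is an elliptic curve $E/F$; the forgetful map of each of the six genus-$1$ portraits is invariant under the involution of $X_1(P)$ with quotient $\BP^1$ (visible from the models in Appendix \ref{Appendix_Explicit_Presentation_piG}), so choosing the origin accordingly makes $\pi_P$ even, and Proposition \ref{Theorem_Main_Degree2_CountingResult_Sym2_EC} gives $M(B^2) = C_{\pi_P, E, F}\, B^{4d/k} + O_{P,F}(B^{2(2d-1)/k}(\log B)^{r/2 + 1})$; this proves (b) with $c_{F,2}(P) := \frac{2}{|Aut(\pi_P)|} C_{\pi_P, E, F} > 0$. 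When $P \in \Gamma_2$, the presentation $\pi_P(x) = g_0(x)/g_1(x)$ in the hyperelliptic coordinate (Appendix \ref{Appendix_Explicit_Presentation_piG}) exhibits $\pi_P = G \circ x$ with $x : X_1(P) \to \BP^1$ the hyperelliptic quotient and $\deg G = k/2$; picking an $F$-rational point $Q_0$ with $x(Q_0) = \infty$, Proposition \ref{Theorem_Main_Degree2_CountingResult_Sym2_Genus2} gives $M(B^2) = \frac12\tau_{\BP^1}(G^*\mathbf{O(1)}) B^{4d/k} + O_{P,F}(B^{2(2d-1)/k}\log B)$, the bulk coming from the exceptional line $\nabla \subset \Sym^2 X_1(P)$ since $\Sym^2 X_1(P) - \nabla$ embeds in $Jac(X_1(P))$ and contributes only $O((\log B)^{r/2})$; matching with Definition \ref{Definition_Degree2_abc_constants} (where $|Aut(\pi_P)|\,c_{F,2}(P) = \tau_{\BP^1}(G^*\mathbf{O(1)})$ by Example \ref{Example_Pn_pullback_Global_Tamagawa_Measure}) gives (c). Finally (d) is immediate: for $\genus(X_1(P)) \ge 3$, $X_1(P)$ has only finitely many points of degree $\le 2$ over $F$ by \cite[Theorem 1.5]{DoyleKrumm}, so $N_{F,2}(P, B) = O_{P,F}(1)$.

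The step I expect to need the most care is verifying that $\Sigma(B^2)$ — in particular the split-pair contribution, which dominates the diagonal — is absorbed into the asserted error term in each case. Bounding $\Sigma(B^2) \le \sum_{Q_1 \in X_1(P)(F)} N_{F,1}\!\big(P,\, B^2/H(\pi_P(Q_1))\big)$ and estimating by a dyadic decomposition together with Theorem \ref{Theorem_Main_Degree1_CountingResult_Summary}, one obtains $\Sigma(B^2) \ll_{P,F} B^{4d/k}\log B$ when $P \in \Gamma_0$ (absorbed because $4d \le 6d - 2$ for $d \ge 1$, with the extra $\log B$ already present in the error term), $\Sigma(B^2) \ll_{P,F} (\log B)^{r}$ when $P \in \Gamma_1$, and $\Sigma(B^2) = O_{P,F}(1)$ when $P \in \Gamma_2$ (since $X_1(P)(F)$ is finite); in the latter two cases the error exponent $2(2d-1)/k$ is a strictly positive power of $B$, which swamps any power of $\log B$. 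The only other loose ends are the finite verifications for $\Gamma_1 \cup \Gamma_2$ mentioned above — that $\pi_P$ is even, respectively factors through the hyperelliptic quotient, and that the required $F$-rational points exist — which can be read off the explicit data in Appendix \ref{Appendix_Explicit_Presentation_piG}; the one genuinely deep input is Proposition \ref{Theorem_Main_Degree2_CountingResult_Sym2_EC}, established separately.
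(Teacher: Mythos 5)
Your decomposition $N_{F,2}(P,B) = N_{F,1}(P,B) + Q(B)$ with $Q(B) = 2\big(M(B^2) - \Sigma(B^2)\big)$ is exactly the paper's $N_{F,2}(P,B) = |S_1| + 2(|T| - |T_1|)$, and you invoke the same three counting propositions case by case, the same dyadic bound on $\Sigma(B^2)$ in the genus-$0$ case, and Faltings/Doyle--Krumm in the remaining cases. This is the same proof as the paper's, modulo notation.
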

\begin{proof}
$N_{F, 2}(P, B)$ is the cardinality of the set
$$S := \{x \in X_1(P)(\overline{F}): [F(x): F] \leq 2, H(\pi_P(x)) \leq B\}$$
Note that $S$ can be partitioned into $S_1 \cup S_2$, where $S_1$ is the degree 1 points,
$$S_1 := \{x \in X_1(P)(F): H(\pi_P(x)) \leq B\},$$
and $S_2$ is the degree 2 points,
$$S_2 := \{x \in X_1(P)(\overline{F}): [F(x): F] = 2, H(\pi_P(x)) \leq B\}.$$

Also consider
$$T := \{\{Q_1, Q_2\} \in \Sym^2 X_1(P)(F): H(\pi_P(Q_1))H(\pi_P(Q_2)) \leq B^2\}.$$
which can be partitioned into $T_1 \cup T_2$. Here $T_1$ is
$$
T_1 := \{\{Q_1, Q_2\} \in \Sym^2 X_1(P)(F): [F(Q_1, Q_2): F] = 1, H(\pi_P(Q_1))H(\pi_P(Q_2)) \leq B^2\}.
$$
Since $[F(Q_1, Q_2):F] = 1$ forces $Q_1, Q_2 \in X_1(P)(F)$, $T_1$ tracks unordered pairs of points in $X_1(P)(F)^2$ with bounded height,
$$T_1 = \{\{Q_1, Q_2\} \in X_1(P)(F)^2 / (\BZ/2\BZ): H(\pi_P(Q_1))H(\pi_P(Q_2)) \leq B^2\}.
$$
On the other hand,
$$T_2 := \{\{Q_1, Q_2\} \in \Sym^2 X_1(P)(F): [F(Q_1, Q_2): F] = 2, H(\pi_P(Q_1))H(\pi_P(Q_2)) \leq B^2\}.$$

We have the obvious lemma.
\begin{lemma}
Let $\overline{x} \in X_1(P)(\overline{F})$ be the Galois conjugate of $x$, when $[F(x): F] = 2$. 
Then the map $x \to \{x, \overline{x}\}$ is a 2-to-1 map from $S_2$ to $T_2$.
\end{lemma}
Hence $|S_2| = 2|T_2| = 2(|T| - |T_1|)$, and
$$N_{F, 2}(P, B) = |S_1| + |S_2| = |S_1| + 2(|T| - |T_1|).$$
We now use the last three propositions, and previous results on $N_{F, 1}(P, B)$ to calculate $N_{F, 2}(P, B)$. The main term would come from $|T|$, while the $|S_1|, |T_1|$ would contribute to error terms only.

\textit{Case (a): $P \in \Gamma_0$}.
\begin{itemize}
    \item By Theorem \ref{Theorem_Main_Degree1_CountingResult_Summary}, $|S_1| = O_{P, F}(B^{2d/k})$.
    \item By Proposition \ref{Theorem_Main_Degree2_CountingResult_Sym2_P1}, 
    $$|T| = \frac{1}{3} \tau_{\Sym^2 \BP^1} (\CL) B^{6d/k} + O_{P, F} (B^{2(3d-1)/k}).$$
    The constant $\tau_{\Sym^2 \BP^1}(\CL)$ was calculated in Example \ref{Example_Sym2P1_pullback_Local_Tamagawa_Measure}, \ref{Example_Sym2P1_pullback_Global_Tamagawa_Measure} and \ref{Example_Sym2P1_pullback_Peyre_Constant}. We defined $c_{F, 2}(P)$ such that
    $$2 \cdot \frac{1}{3} \tau_{\Sym^2 \BP^1}(\CL) = |Aut(\pi_P)| \cdot c_{F, 2}(P),$$
    hence
    $$2|T| = |Aut(\pi_P)| \cdot c_{F, 2}(P) B^{6d/k} + O_{P, F} (B^{2(3d-1)/k}).$$
    
    \item For $|T_1|$, focus on $H(\pi_P(Q_1))$, and partition $[0, B^2]$ into $\asymp \log B$ dyadic intervals. By Theorem \ref{Theorem_Main_Degree1_CountingResult_Summary}, for each dyadic interval $[U, 2U]$ we have
    \begin{align*}
    & \, \#\left\{\{Q_1, Q_2\} \in X_1(P)(F)^2: H(\pi_P(Q_1)) \asymp U, H(\pi_P(Q_2)) \ll \frac{B^2}{U}\right\} \\
    = & \, O_{P, F}\left(U^{2d/k} \cdot \left(\frac{B^2}{U}\right)^{2d/k}\right) \\
    = & \, O_{P, F}(B^{4d/k}).
    \end{align*}
    Summing over all the intervals and since $d \ge 1$, we see that $$|T_1| = O_{P, F}(B^{4d/k} \log B) = O_{P, F}(B^{2(3d-1)/k} \log B).$$
    \item Hence 
    $$N_{F, 2}(P, B) = |Aut(\pi_P)| \cdot c_{F, 2}(P) B^{6d/k} + O_{P, F} (B^{2(3d-1)/k} \log B).$$
\end{itemize}
\textit{Case (b): $P \in \Gamma_1$.} Let $r$ be the rank of Mordell-Weil group $X_1(P)(F)$.
\begin{itemize}
    \item By Theorem \ref{Theorem_Main_Degree1_CountingResult_Summary}, $|S_1| = O_{P, F}\left((\log B)^{r/2}\right)$.
    \item By Proposition \ref{Theorem_Main_Degree2_CountingResult_Sym2_EC},
    $$|T| = C_{\pi_P, E, F} B^{4d/k} + O_{P, F}(B^{2(2d-1)/k} (\log B)^{r/2 + 1}).$$
    We defined $c_{F, 2}(P)$ such that
    $$2 \cdot C_{\pi_P, E, F} = |Aut(\pi_P)| \cdot c_{F, 2}(P),$$
    hence 
    $$2|T| = |Aut(\pi_P)| \cdot c_{F, 2}(P) B^{4d/k} + O_{P, F}(B^{2(2d-1)/k} (\log B)^{r/2 + 1}).$$
    \item For $|T_1|$, note that 
    \begin{align*}
    & \, \#\left\{\{Q_1, Q_2\} \in X_1(P)(F)^2: H(\pi_P(Q_1)) H(\pi_P(Q_2)) \leq B^2\right\} \\
    \leq & \, \#\left\{Q_1 \in X_1(P)(F): H(\pi_P(Q_1)) \leq B^2\right\}^2.
    \end{align*}
    By Theorem \ref{Theorem_Main_Degree1_CountingResult_Summary},
    $$\#\left\{Q_1 \in X_1(P)(F): H(\pi_P(Q_1)) \leq B^2\right\} = O_{P, F}\left((\log B)^{r/2}\right).$$
    Therefore $|T_1| = O_{P, F}\left((\log B)^{r}\right)$.
    \item Hence
    $$N_{F, 2}(P, B) = |Aut(\pi_P)| \cdot c_{F, 2}(P) B^{4d/k} + O_{P, F} (B^{2(2d-1)/k} (\log B)^{r/2 + 1}).$$
\end{itemize}
\textit{Case (c): $P \in \Gamma_2$.}
\begin{itemize}
    \item By Faltings' theorem, $|S_1| = O_{P, F}(1)$.
    \item By Proposition \ref{Theorem_Main_Degree2_CountingResult_Sym2_Genus2},
    $$|T| = \frac{1}{2}\tau_{\BP^1}{G^*\mathbf{O(1)}} B^{4d/k} + O_{P, F}(B^{2(2d-1)/k} \log B).$$
    The constant $\frac{1}{2}\tau_{\BP^1}{G^*\mathbf{O(1)}}$ was calculated in Example \ref{Example_Pn_pullback_Local_Tamagawa_Measure}, \ref{Example_Pn_pullback_Global_Tamagawa_Measure} and \ref{Example_Pn_pullback_Peyre_Constant}. We defined $c_{F, 2}(P)$ such that
    $$2 \cdot \frac{1}{2}\tau_{\BP^1}{G^*\mathbf{O(1)}} = |Aut(\pi_P)| \cdot c_{F, 2}(P),$$
    hence
    $$2|T| = |Aut(\pi_P)| \cdot c_{F, 2}(P) B^{4d/k} + O_{P, F}(B^{2(2d-1)/k} \log B).$$
    \item By Faltings' theorem, $|T_1| = O_{P, F}(1)$.
    \item Hence
    $$N_{F, 2}(P, B) = |Aut(\pi_P)| \cdot c_{F, 2}(P) B^{4d/k} + O_{P, F} (B^{2(2d-1)/k} \log B).$$
\end{itemize}
\textit{Case (d): $P \notin \Gamma_0 \cup \Gamma_1 \cup \Gamma_2$.}
\begin{itemize}
    \item By Faltings' theorem, $|S_1| = O_{P, F}(1)$.
    \item By Theorem \ref{Theorem_Base_Degree2} and the remark that follows, $|T| = O_{P, F}(1)$.
    \item By Faltings' theorem, $|T_1| = O_{P, F}(1)$.
    \item Hence
    $$N_{F, 2}(P, B) = O_{P, F}(1).$$
\end{itemize}
\end{proof}

\subsection{Proof of Theorem \ref{Theorem_Main_Degree2}/Theorem \ref{Theorem_Main_Degree2_Details}}

\begin{lemma}
\label{Lemma_Main_Degree2_Fiber_Count}
With the notations in Theorem \ref{Theorem_Main_Degree2_CountingResult_Summary}, we have
\begin{align*}
& \#\{c \in \overline{F}: \preper(f_c, K) \supset P \text{ for some $K/\BQ$ of degree $2$ containing $c$}, [F(c):F] \leq 2, H(c) \leq B\} \\
= & \frac{1}{|Aut(\pi_P)|} N_{F, 2}(P, B) + Error_{P, F, 2}(B).
\end{align*}
Here 
$$
Error_{P, F, 2}(B) = \begin{cases}
B^{2(3d-1)/k} \log B & \text{when $P \in \Gamma_0$,} \\
B^{2(2d-1)/k} (\log B)^{r/2 + 1} & \text{when $P \in \Gamma_1$,} \\
B^{2(2d-1)/k} \log B & \text{when $P \in \Gamma_2$.} \\
\end{cases}
$$
\end{lemma}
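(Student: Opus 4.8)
The plan is to adapt the argument of Lemma \ref{Lemma_Main_Degree1_Fiber_Count} to the setting of points of degree $\le 2$. The key observation is the chain of equivalences: a value $c \in \overline{F}$ with $[F(c):F] \le 2$ satisfies $\preper(f_c, K) \supset P$ for some quadratic $K/F$ containing $c$ if and only if there is a point $x \in X_1(P)(\overline{F})$ with $[F(x):F] \le 2$ and $\pi_P(x) = c$; indeed, the coordinates $z_1, \dots, z_n$ of a point of $X_1(P)$ lying over $c$ are defined over $F(x)$, which has degree $\le 2$ over $F$, so they all lie in a single quadratic (or trivial) extension. Thus the left-hand side counts the image $\pi_P\big(\{x \in X_1(P)(\overline{F}) : [F(x):F] \le 2\}\big)$ intersected with the height ball, and $N_{F,2}(P,B)$ counts the source. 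The task reduces to showing that the generic fiber of $\pi_P$ restricted to degree-$\le 2$ points has exactly $|Aut(\pi_P)|$ elements, with the exceptional $c$'s contributing only to the stated error term.

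First I would note that for every $c$ in the image, the $Aut(\pi_P)$-orbit of any $x$ over $c$ lies in the fiber, giving the lower bound $\ge |Aut(\pi_P)|$ with at most $O_P(1)$ exceptions (where the orbit collapses, a zero-dimensional condition on $X_1(P)$, hence finitely many $c$). For the upper bound, I would argue that if a fiber over $c$ contains two degree-$\le 2$ points $x, z$ in distinct $Aut(\pi_P)$-orbits, then the pair $(x,z)$ is an $F$-point — or, after taking Galois conjugates, gives rise to an $F$-point of $\Sym^2$ — on some irreducible component $C$ of the fiber product $X_1(P) \times_{\BP^1} X_1(P)$ that is not the graph of an automorphism in $Aut(\pi_P)$. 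As in Lemma \ref{Lemma_Main_Degree1_Fiber_Count}, one computes in Sage (Appendix \ref{Appendix_Explicit_Presentation_piG}) the genera of these exceptional curves $C$ for each $P \in \Gamma_0 \cup \Gamma_1 \cup \Gamma_2$; the genus bounds there ($\ge 1$ for $\Gamma_0$, $\ge 4$ for $\Gamma_1$, and one checks the analogous bound for $\Gamma_2$) must be upgraded so that after passing to $\Sym^2 C$ — since we now allow degree-$2$ points on $C$ — the number of such points with $H(\pi_P) \le B$ is still absorbed by $Error_{P,F,2}(B)$. Concretely, one bounds the count of degree-$\le 2$ points on $C$ pulled back by height using Theorem \ref{Theorem_Main_Degree2_CountingResult_Summary} applied to $C$ (via $\Sym^2 C$): if $C$ has genus $\ge 3$ this is $O_P(1)$; if $C$ has genus $1$ or $2$, one gets a power of $B$ strictly smaller than the main term's exponent, dominated by $Error_{P,F,2}(B)$; the genus $0$ case for exceptional $C$ must be ruled out by the explicit computation (this is where the degree-$2$ version needs $C$ to have genus $\ge 1$ at minimum, and strictly more in the $\Gamma_1, \Gamma_2$ cases to beat the relevant error exponent).

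The main obstacle I anticipate is the bookkeeping of which genus bounds on the exceptional curves suffice. In the degree-$1$ case a genus-$1$ exceptional curve already gave only $O_P(\log B)$-type contributions; here a genus-$1$ curve $C$ contributes via its $\Sym^2$ up to $\asymp B^{2(2d-1)/k}(\log B)^{r/2+1}$-type terms, so for $P \in \Gamma_0$ (where the error is the larger $B^{2(3d-1)/k}\log B$) genus $\ge 1$ is comfortably enough, but for $P \in \Gamma_1$ and $P \in \Gamma_2$ (error $B^{2(2d-1)/k}\log B$ or with an extra $(\log B)^{r/2}$) one genuinely needs the exceptional curves to have genus $\ge 3$ so that their degree-$\le 2$ points are finite in number; the Sage computation in the appendix must confirm this. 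Granting those genus bounds, the final step is routine: combine the lower bound (fiber size $\ge |Aut(\pi_P)|$ off an $O_P(1)$ set) with the upper bound (fiber size $> |Aut(\pi_P)|$ only for $c$ in a set of size $O_P(Error_{P,F,2}(B))$) to conclude that
$$
\#\{c : \dots\} = \frac{1}{|Aut(\pi_P)|} N_{F,2}(P,B) + O_P(Error_{P,F,2}(B)),
$$
which is the claim.
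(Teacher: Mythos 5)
Your overall approach matches the paper's — reduce to the claim that the $\pi_P$-fiber over a generic $c$ meets the degree-$\le 2$ locus of $X_1(P)$ in exactly one $\Aut(\pi_P)$-orbit, then bound exceptional $c$'s via low-genus curves. But there is a genuine gap in your treatment of the case $c \in F$ with both "extra" points $x,z$ quadratic over $F$.

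You claim that whenever $x,z$ lie in distinct $\Aut(\pi_P)$-orbits, the pair $(x,z)$ (or its Galois symmetrization) lands on an exceptional component $C$ of $X_1(P) \times_{\BP^1} X_1(P)$ as a point of degree $\le 2$, so that the count can be absorbed into $\Sym^2 C$. This fails when $c \in F$ and $x,z$ are both of degree $2$ but generate \emph{different} quadratic extensions: then $(x,z)$ is a degree-$4$ point on $C$, and neither $\{x,\overline{x}\}$ nor $\{z,\overline{z}\}$ need lie on any exceptional component — indeed $\overline{x}$ may well be $\sigma(x)$ for some $\sigma \in \Aut(\pi_P)$, in which case $\{x,\overline{x}\}$ lies on the image of the graph of $\sigma$, which was explicitly excluded from the exceptional locus. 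So the reduction to genus bounds on exceptional curves does not cover this configuration.

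The paper resolves this by a separate argument for the subcase $c \in F$, $[F(x):F]=2$: one \emph{forgets about exceptionality entirely} and simply observes that $\{x,\overline{x}\}$ is always an $F$-rational point of the curve $(\Sym^2 \pi_P)^{-1}(\Delta) \subset \Sym^2 X_1(P)$, where $\Delta \cong \BP^1$ is the diagonal of $\Sym^2 \BP^1$. The map $\Sym^2 \pi_P$ restricted to this curve has degree $k$, so by Theorem \ref{Theorem_Main_Degree1_CountingResult_Summary} its rational points of bounded height contribute $O_P(B^{2d/k})$, which is dominated by $Error_{P,F,2}(B)$ in all three cases. You would need to add this third subcase (and drop the attempt to squeeze it into the $\Sym^2 C$ analysis) to make the upper bound complete. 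The rest — the lower bound, the dichotomy $[F(c):F]=2$ versus $c \in F$ with $x \in X_1(P)(F)$, and the genus bookkeeping ($\ge 1$ for $\Gamma_0$, $\ge 4$ for $\Gamma_1$, $\ge 3$ for $\Gamma_2$ via Riemann–Hurwitz rather than Sage) — matches the paper and is sound.
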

\begin{proof}
The forgetful map $\pi_P: X_1(P) \to \BP^1$ induces
$$\Sym^2 \pi_P: \Sym^2 X_1(P) \to \Sym^2 \BP^1.$$
Note that the following are equivalent:
\begin{itemize}
    \item $c \in \overline{F}$ satisfies $[F(c): F] \leq 2$ and $\preper(f_c, K) \supset P$ for some $K/F$ of degree 2,
    \item $c = \pi_P(x)$ for some $x \in X_1(P) (\overline{F})$ such that $[F(x) : F] \leq 2$, and $c \neq \infty$.
\end{itemize}
Hence
\begin{align*}
& \#\{c \in \overline{F}: \preper(f_c, K) \supset P \text{ for some $K/\BQ$ of degree $2$ containing $c$}, [F(c):F] \leq 2, H(c) \leq B\} \\
= & \#\{c \in \overline{F}: c = \pi_P(x) \text{ for some $x \in X_1(P) (\overline{F})$, with } [F(x) : F] \leq 2\} + O_P(1).
\end{align*}

Suppose $c = \pi_P(x)$ for some $x \in X_1(P)(\overline{F})$ with $[F(x):F] \leq 2$. We want to show that for generic such $c$,
$$\pi_P^{-1}(c) \cap \{z \in X_1(P)(\overline{F}): [F(z):F] \leq 2 \} = \{\sigma(x): \sigma \in Aut(\pi_P)\},$$
which would prove the lemma. By the definition of $Aut(\pi_P)$, we see that right hand side is a subset of the left. 

First, note that right hand side has size $|Aut(\pi_P)|$ with $O_P(1)$ exceptions of $c$. This is because if right hand side is smaller than expected, it must mean that an unexpected coincidence
$$\sigma(x) = \sigma'(x)$$ 
occured for some $\sigma \neq \sigma' \in Aut(\pi_P)$. Since $\sigma = \sigma'$ is a proper Zariski-closed subset of the curve $X_1(P)$, it is of dimension 0; so there are finitely many exceptional $x$'s, hence finitely many exceptional $c$'s as well. To recap, with $O_P(1)$ exceptions of $c \in \pi_P(X_1(P)(\overline{F}))$, we have
$$\left|\pi_P^{-1}(c) \cap \{z \in X_1(P)(\overline{F}): [F(z):F] \leq 2 \}\right| \ge |Aut(\pi_P)|.$$

Finally, we need to show that 
$$\left|\pi_P^{-1}(c) \cap \{z \in X_1(P)(\overline{F}): [F(z):F] \leq 2 \}\right| > |Aut(\pi_P)|$$
happens rarely. Suppose we have such an exceptional $c$, where there exists $x, z \in X_1(P)(\overline{F})$ not in the same $Aut(\pi_P)$-orbit, with $[F(x): F] \leq 2$ and $[F(z): F] \leq 2$, such that
$$\pi_P(x) = \pi_P(z) = c.$$
\begin{itemize}
    \item If $[F(c):F] = 2$, then $F(x) = F(c)$, since $F(c) \subset F(x)$, and
    $$2 = [F(c) : F] \leq [F(x): F] \leq 2.$$
    Similarly, $F(z) = F(c) = F(x)$. Hence $(x, z)$ is a quadratic point on some irreducible component $C$ of
    $$X_1(P) \times_{\BP^1} X_1(P) := \{(x, z) \in X_1(P) \times X_1(P): \pi_P(x) = \pi_P(z)\},$$
    which is not the graph of any $\sigma \in Aut(\pi_P)$; we call such curve exceptional. We find that
    \begin{itemize}
    \item If $P \in \Gamma_0$, then any exceptional $C$ is smooth and has genus $\ge 1$ (Explicit computation in Sage, see Appendix \ref{Appendix_Explicit_Presentation_piG}).
    \item If $P \in \Gamma_1$, then any exceptional $C$ is smooth and has genus $\ge 4$ (Explicit computation in Sage, see Appendix \ref{Appendix_Explicit_Presentation_piG}).
    \item If $P \in \Gamma_2$, then any exceptional $C$ has geometric genus $\ge 3$.

    \textit{Reason:} Consider the projection map $\pi: C \to X_1(P)$ to first coordinate 
    $$C \into X_1(P) \times_{\BP^1} X_1(P) \stackrel{\pi_1}{\longrightarrow} X_1(P),$$
    and extend it to the smooth projective model $\widetilde{\pi}: \widetilde{C} \to X_1(P)$. Note that $\widetilde{\pi}$ has degree $> 1$; otherwise $C$ is the graph of automorphism $\sigma := \pi_2 \circ \widetilde{\pi}^{-1} \in Aut(\pi_P)$, contradicting the assumption that $C$ is exceptional.

    Since $X_1(P)$ has genus 2, by applying Riemann-Hurwitz theorem to $\widetilde{\pi}: \widetilde{C} \to X_1(P)$ we see that $genus(\widetilde{C}) \ge 3$.
\end{itemize}
The number of quadratic points on these exceptional curves are few due to the lower bound on genus: by Theorem \ref{Theorem_Main_Degree2_CountingResult_Summary}, we see that
\begin{align*}
& \, \#\left\{c \in \pi_P(X_1(P)(\overline{F})): 
    \begin{aligned}
    & \left|\pi_P^{-1}(c) \cap \{z \in X_1(P)(\overline{F}): [F(z):F] \leq 2 \}\right| > |Aut(\pi_P)|, \\
    & [F(c): F] = 2, H(c) \leq B
    \end{aligned}
  \right\} \\
\ll_P & \, \#\left\{(x, z) \in (X_1(P) \times_{\BP^1} X_1(P))(\overline{F}): 
    \begin{aligned}
    & (x, z) \text{ lies in an exceptional curve $C$} \\
    & \text{of $X_1(P)$, $[F(x, z): F] = 2$,} \\
    & \pi_P(x) = \pi_P(z), H(\pi_P(x)) \leq B
    \end{aligned}
\right\} \\
\ll_P & \, \sum_{\text{exceptional curves $C$}} \#\left\{ Q \in (\Sym^2 C)(F): H(\Sym^2 \pi_P(Q)) \leq B^2
\right\} \\
= & \, O_P(Error_{P, F, 2}(B))
\end{align*}

\item Now suppose $c \in F$, and suppose there exists $x, z \in X_1(P)(\overline{F})$ not in the same $Aut(\pi_P)$-orbit, with $[F(x): F] \leq 2$ and $[F(z): F] \leq 2$, such that
$$\pi_P(x) = \pi_P(z) = c.$$

If $x \in X_1(P)(F)$, then $(x, z)$ is a quadratic point on $X_1(P) \times_{\BP^1} X_1(P)$. The number of such $c$'s can be handled the same way as in the last case.
\item Hence suppose $c \in F$ and $[F(x): F] = 2$ as well. Let $\overline{x}$ be the Galois conjugate of $x$. Consider the diagonal closed subset $\Delta: \BP^1 \to \Sym^2 \BP^1$. That $c \in F$ implies that $\{c, c\} \in \Delta(F) \subset \Sym^2 \BP^1(F)$, so 
$$\{x, \overline{x}\} \in (\Sym^2 \pi_P)^{-1}(\Delta) (F) \subset \Sym^2 X_1(P)(F).$$
The number of rational points on the curve $(\Sym^2 \pi_P)^{-1}(\Delta)$ is few: the degree of the restriction
$$\Sym^2 \pi_P: (\Sym^2 \pi_P)^{-1}(\Delta) \to \BP^1$$
is still $k$, there are $O_P(1)$ irreducible components in $(\Sym^2 \pi_P)^{-1}(\Delta)$, and each irreducible component $C$ has at most $O\left(B^{2d/k}\right)$ rational points $Q \in C(F)$ with $H(\Sym^2 \pi_P(Q)) \leq B$ (Theorem \ref{Theorem_Main_Degree1_CountingResult_Summary}), which is small compared to $Error_{P, F, 2}(B)$.

In other words,
\begin{align*}
& \, \#\left\{c \in \pi_P(X_1(P)(\overline{F})): 
    \begin{aligned}
    & \left|\pi_P^{-1}(c) \cap \{z \in X_1(P)(\overline{F}): [F(z):F] \leq 2 \}\right| > |Aut(\pi_P)|, \\
    & c \in F, c = \pi_P(x) \text{ for $[F(x): F] = 2$}, H(c) \leq B
    \end{aligned}
  \right\} \\
\le & \, \#\left\{c \in \pi_P(X_1(P)(\overline{F})): 
    c \in F, c = \pi_P(x) \text{ for $[F(x): F] = 2$}, H(c) \leq B
  \right\} \\
\ll & \, \#\left\{Q \in \left(\Sym^2 \pi_P\right)^{-1}(\Delta)(F): 
    H(\Sym^2 \pi_P(Q)) \leq B
\right\} \\
= & \, O_P\left(B^{2d/k}\right) \\
= & \, O_P(Error_{P, F, 2}(B)).
\end{align*}
\end{itemize}
This shows the rarity of $c$ satisfying 
$$\left|\pi_P^{-1}(c) \cap \{z \in X_1(P)(\overline{F}): [F(z):F] \leq 2 \}\right| > |Aut(\pi_P)|,$$
and finish the proof of lemma.
\end{proof}

\begin{proof}[Proof of Theorem \ref{Theorem_Main_Degree2}/Theorem \ref{Theorem_Main_Degree2_Details}]
Recall the definition of $S_{F, 2}(P, B)$: for each $P \in \SP_2$, 
$$S_{F,2}(P, B) := \#\{c \in \overline{F}: \preper(f_c, K) \cong P \text{ for some $K/F$ of degree $2$ containing $c$}, H(c) \leq B\}.$$
Our goal is to get an asymptotic formula for $S_{F, 2}(P, B)$ as $B \to \infty$.

Let $\{G_i\}_{i \in I}$ be the minimal strongly admissible graphs that strictly contains $P$, where $X_1(G_i)$ has a quadratic point over $F$. Assuming Morton-Silverman conjecture for quadratic extensions of $F$, this set of graphs is finite; enumerate them as $G_1, \cdots, G_m$.

By inclusion-exclusion principle, we see that
\begin{align*}
& \, \#\{c \in \overline{F}: \preper(f_c, K) \supset P \text{ for some $K/F$ of degree $2$ containing $c$}, H(c) \leq B\} \\
& \, \,- \sum_{i=1}^m \#\{c \in \overline{F}: \preper(f_c, K) \supset G_i \text{ for some $K/F$ of degree $2$ containing $c$}, H(c) \leq B\} \\
\leq & \, S_{F, 2} (P, B) \\
\leq & \, \#\{c \in \overline{F}: \preper(f_c, K) \supset P \text{ for some $K/F$ of degree $2$ containing $c$}, H(c) \leq B\}.
\end{align*}

The main term would be
\begin{align*}
& \, \#\{c \in \overline{F}: \preper(f_c, K) \supset P \text{ for some $K/F$ of degree $2$ containing $c$}, H(c) \leq B\} \\
= & \, \frac{1}{|Aut(\pi_P)|} N_{F, 2}(P, B) + O_{P, F}(Error_{P, F, 2}(B))) \\
= & \, c_{F, 2}(P) B^{a_{F, 2}(P)} (\log B)^{b_{F, 2}(P)} + O_{P, F}(Error_{P, F, 2}(B))
\end{align*}
by Lemma \ref{Lemma_Main_Degree2_Fiber_Count} and Theorem \ref{Theorem_Main_Degree2_CountingResult_Summary}.

It now suffices to show that 
$$\sum_{i=1}^m \#\{c \in \overline{F}: \preper(f_c, K) \supset G_i \text{ for some $K/F$ of degree $2$ containing $c$}, H(c) \leq B\}$$
only contributes to the error term. 

For each $G_i \supset P$,
\begin{itemize}
    \item If $P \in \Gamma_0$, by Proposition \ref{Proposition_Properties_Of_X1G}(b), $\deg \pi_{G_i} \ge 2 \cdot \deg \pi_P = 2k$, so either $X_1(G_i)$ has genus $\leq 2$ and
    $$a_{F, 1}(G_i) \leq \frac{6d}{\deg \pi_{G_i}} \leq \frac{3d}{k} < \frac{2(3d-1)}{k}, $$
    or $X_1(G_i)$ has genus $\ge 3$ and $a_{F, 1}(G_i) = 0$. In either case, Theorem \ref{Theorem_Main_Degree2_CountingResult_Summary} implies that
    \begin{align*}
    & \, \#\{c \in \overline{F}: \preper(f_c, K) \supset G_i \text{ for some $K/F$ of degree $2$ containing $c$}, H(c) \leq B\} \\
    = & \, O_{P, F} \left(B^{2(3d-1)/k}\right) \\
    = & \, O_{P, F} (Error_{P, F, 2}(B)).
    \end{align*}
    \item If $P \in \Gamma_1 \cup \Gamma_2$, then $X_1(P)$ has genus 1 or 2. Since the 10 such possible $P$'s (Table \ref{TableModelOfForgetfulMapGenus1} and Table \ref{TableModelOfForgetfulMapGenus2}, Appendix \ref{Appendix_Explicit_Presentation_piG}) has no containment relationship, we see that $X_1(G_i)$ has genus $\ge 3$. Theorem \ref{Theorem_Main_Degree2_CountingResult_Summary}(d) implies that 
    \begin{align*}
    & \, \#\{c \in \overline{F}: \preper(f_c, K) \supset G_i \text{ for some $K/F$ of degree $2$ containing $c$}, H(c) \leq B\} \\
    = & \, O_{P, F} (1) \\
    = & \, O_{P, F} (Error_{P, F, 2}(B)).
    \end{align*}
\end{itemize}
The result hence follows.
\end{proof}

\appendix
\newpage

\section{Explicit presentation of $\pi_G: X_1(G) \to \BP^1$}
\label{Appendix_Explicit_Presentation_piG}
For strongly admissible graphs $G$, we document a model of $X_1(G)$, and a presentation of the forgetful map $\pi_G: X_1(G) \to \BP^1$
when genus of $X_1(G)$ is 0, 1 or 2. 

Model and presentation of $\pi_G$ were already known. For genus 0, it was calculated in \cite{Poonen} but scattered across several theorems there; we summarize it in the table below. For genus 1 and 2, it was calculated in \cite{Poonen} \cite{Morton4Cycle} and was summarized nicely in \cite[Appendix C, Table 8]{DoyleCyclotomic}. We replicate this table here, and add the Cremona labels \cite{Cremona}/LMFDB labels \cite{LMFDB} whenever appropriate, for reader's convenience.

We also document the automorphism group $Aut(\pi_G)$ for each $G$. When genus of $X_1(G)$ is 0 or 1, we also record some information about exceptional curves associated to $G$, used in Lemma \ref{Lemma_Main_Degree1_Fiber_Count} and Lemma \ref{Lemma_Main_Degree2_Fiber_Count}. These were calculated manually in Sage \cite{Sage}.

All pictures from below are taken from \cite[Appendix C, Table 8]{DoyleCyclotomic}.

\newpage
\begin{longtable}[h!]{|l|c|l|c|l|}
	\caption{Models for dynamical modular curves of genus $0$}
	\label{TableModelOfForgetfulMapGenus0}
	\\\hline
	Label & Graph $G$ & Model of $\pi_G: X_1(G) \to \BP^1$ & $|Aut(\pi_G)|$ & Reference\\\hline
            &&&& \\
        & $\emptyset$ & $x$ & 1 & \\
            &&&& \\\hline
            &\multicolumn{4}{|l|}{\makecell[l]{
            $Aut(\pi_G) = \{id\}$ \\
            Exceptional curves: None
            }} \\\hline
            &&&& \\
        $4(1,1)$ & \includegraphics[scale=.6]{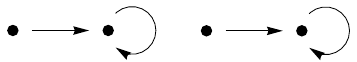} & $\dfrac{1}{4} - x^2$ & 2 & \cite[Theorem 1.1]{Poonen} \\
            &&&& \\\hline
            &\multicolumn{4}{|l|}{\makecell[l]{
            $Aut(\pi_G) = \{\sigma(x) = \pm x\}$ \\
            Exceptional curves: None
            }} \\\hline
            &&&& \\
        $4(2)$ & \includegraphics[scale=.6]{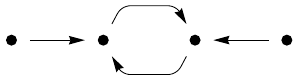} & $-\dfrac{3}{4} - x^2$ & 2 & \cite[Theorem 1.2]{Poonen} \\
            &&&& \\\hline
            &\multicolumn{4}{|l|}{\makecell[l]{
            $Aut(\pi_G) = \{\sigma(x) = \pm x\}$ \\
            Exceptional curves: None
            }} \\\hline
            &&&& \\
        $6(1,1)$ & \includegraphics[scale=.6]{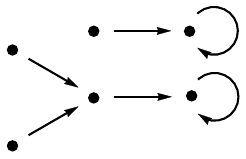} & $-\dfrac{2(x^2+1)}{(x^2-1)^2}$ & 2 & \cite[Theorem 3.2]{Poonen} \\
            &&&& \\\hline
            &\multicolumn{4}{|l|}{\makecell[l]{
            $Aut(\pi_G) = \{\sigma(x) = \pm x\}$ \\
            Exceptional curves: 1 smooth curve of genus 1
            }} \\\hline
            &&&& \\
        $6(2)$ & \includegraphics[scale=.6]{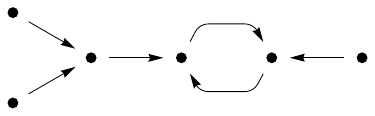} & $\dfrac{-x^4 - 2x^3 - 2x^2 + 2x - 1}{(x^2-1)^2}$ & 2 & \cite[Theorem 3.3]{Poonen} \\
            &&&& \\\hline
            &\multicolumn{4}{|l|}{\makecell[l]{
            $Aut(\pi_G) = \left\{\sigma(x) = x, -\frac{1}{x}\right\}$ \\
            Exceptional curves: 1 smooth curve of genus 1
            }} \\\hline
            &&&& \\
        $6(3)$ & \includegraphics[scale=.6]{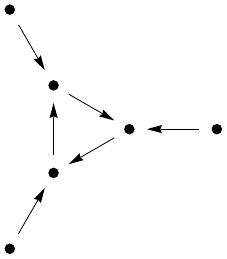} & $-\frac{x^6 + 2x^5 + 4x^4 + 8x^3 + 9x^2 + 4x + 1}{4x^2(x+1)^2}$ & 3 & \cite[Theorem 1.3]{Poonen} \\
            &&&& \\\hline
            &\multicolumn{4}{|l|}{\makecell[l]{
            $Aut(\pi_G) = \left\{\sigma(x) = x, -\frac{1}{x + 1}, - \frac{x + 1}{x}\right\}$ \\
            Exceptional curves: 1 smooth curve of genus 4
            }} \\\hline
            &&&& \\
        $8(2, 1, 1)$ & \includegraphics[scale=.6]{assets/graph8_211} & $-\dfrac{3x^4 + 10x^2 + 3}{4(x^2-1)^2}$ & 4 & \cite[Theorem 2.1]{Poonen} \\
            &&&& \\\hline
            &\multicolumn{4}{|l|}{\makecell[l]{
            $Aut(\pi_G) = \left\{\sigma(x) = \pm x, \pm \frac{1}{x}\right\}$ \\
            Exceptional curves: None
            }} \\\hline
\end{longtable}

\newpage
\begin{longtable}[h!]{|l|c|p{0.25\textwidth}|l|c|}
	\caption{Models for dynamical modular curves of genus $1$ \cite[Appendix C, Table 8]{DoyleCyclotomic}}
	\label{TableModelOfForgetfulMapGenus1}
	\\\hline
	Label & Graph $G$ & Model for $X_1(G)$ & Model of $\pi_G: X_1(G) \to \BP^1$ & $|Aut(\pi_G)|$\\\hline
		&&&& \\
	8(1,1)a & \includegraphics[scale=.5]{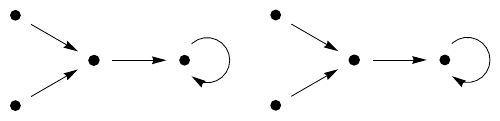} & {$y^2 = -(x^2 - 3)(x^2 + 1)$ \newline Cremona label: 24A4 \newline LMFDB label: 24.a5} & $-\dfrac{2(x^2 + 1)}{(x+1)^2(x - 1)^2}$ & 8 \\
		&&&& \\\hline
            &\multicolumn{4}{|l|}{\makecell[l]{
            $Aut(\pi_G) = \left\{
            \sigma(x,y) = (\pm x, \pm y), \left(\pm \frac{x^2 - 3}{y}, \pm \frac{4x}{x^2 + 1}\right)
            \right\}$ \\
            Exceptional curves: None
            }} \\\hline
            &&&& \\
	8(1,1)b & \includegraphics[scale=.5]{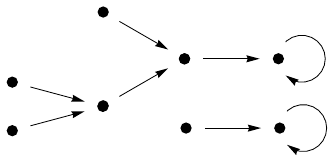} & {$y^2 = 2(x^3 + x^2 - x + 1)$ \newline Cremona label: 11A3 \newline LMFDB label: 11.a3} & $-\dfrac{2(x^2 + 1)}{(x+1)^2(x - 1)^2}$ & 2\\
		&&&& \\\hline
            &\multicolumn{4}{|l|}{\makecell[l]{
            $Aut(\pi_G) = \left\{
            \sigma(x,y) = (x, \pm y)
            \right\}$ \\
            Exceptional curves: 1 smooth curve of genus 4, 1 smooth curve of genus 17
            }} \\\hline
            &&&& \\
	8(2)a & \includegraphics[scale=.5]{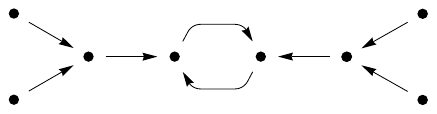} & {$y^2 = 2(x^4 + 2x^3 - 2x + 1)$ \newline Cremona label: 40A3 \newline LMFDB label: 40.a3} & $-\dfrac{x^4 + 2x^3 + 2x^2 - 2x + 1}{(x+1)^2(x - 1)^2}$ & 8\\
		&&&& \\\hline
            &\multicolumn{4}{|l|}{\makecell[l]{
            $Aut(\pi_G) = \left\{
            \sigma(x,y) = (x, \pm y), \left(-\frac{1}{x}, \pm \frac{y}{x^2}\right), \left(\frac{-y + 2x^2 + 2x - 2}{y - 2x}, \pm \frac{-4yx^2 - 2y^2 + 4y - 16x + 8}{(y - 2x)^2}\right), \right.$\\
            $\left. \left(- \frac{y - 2x}{-y + 2x^2 + 2x - 2}, \pm \frac{-4yx^2 - 2y^2 + 4y - 16x + 8}{(-y + 2x^2 + 2x - 2)^2}\right)
            \right\}$ \\
            Exceptional curves: None
            }} \\\hline
            &&&& \\
	8(2)b & \includegraphics[scale=.5]{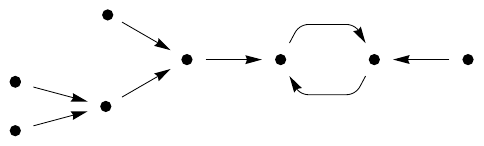} & {$y^2 = 2(x^3 + x^2 - x + 1)$ \newline Cremona label: 11A3 \newline LMFDB label: 11.a3} & $-\dfrac{x^4 + 2x^3 + 2x^2 - 2x + 1}{(x+1)^2(x - 1)^2}$ & 2\\
		&&&& \\\hline
            &\multicolumn{4}{|l|}{\makecell[l]{
            $Aut(\pi_G) = \left\{
            \sigma(x,y) = (x, \pm y)
            \right\}$ \\
            Exceptional curves: 1 smooth curve of genus 5, 1 smooth curve of genus 17
            }} \\\hline
            &&&& \\
	10(2,1,1)a & \includegraphics[scale=.5]{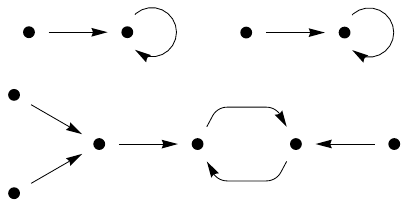} & {$y^2 = 5x^4 - 8x^3 + 6x^2 + 8x + 5$ \newline Cremona label: 17A4 \newline LMFDB label: 17.a4} & $-\dfrac{(3x^2 + 1)(x^2 + 3)}{4(x+1)^2(x - 1)^2}$ & 4\\
		&&&& \\\hline
            &\multicolumn{4}{|l|}{\makecell[l]{
            $Aut(\pi_G) = \left\{
            \sigma(x,y) = (x, \pm y), \left(-\frac{1}{x}, \pm \frac{y}{x^2}\right)
            \right\}$ \\
            Exceptional curves: 2 smooth curves of genus 5
            }} \\\hline
            &&&& \\
	10(2,1,1)b & \includegraphics[scale=.5]{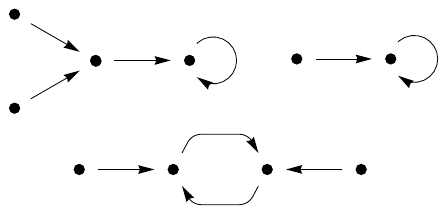} & {$y^2 = (5x^2 - 1)(x^2 + 3)$ \newline Cremona label: 15A8 \newline LMFDB label: 15.a7} & $-\dfrac{(3x^2 + 1)(x^2 + 3)}{4(x+1)^2(x - 1)^2}$ & 4\\
            &&&& \\\hline
            &\multicolumn{4}{|l|}{\makecell[l]{
            $Aut(\pi_G) = \left\{
            \sigma(x,y) = (\pm x, \pm y)
            \right\}$ \\
            Exceptional curves: 2 smooth curves of genus 5
            }} \\\hline
\end{longtable}

\newpage
\begin{longtable}[h!]{|l|c|p{0.35\textwidth}|l|c|}
	\caption{Models for dynamical modular curves of genus $2$ \cite[Appendix C, Table 8]{DoyleCyclotomic}}
	\label{TableModelOfForgetfulMapGenus2}
	\\\hline
	Label & Graph $G$ & Model for $X_1(G)$ & Model of $\pi_G: X_1(G) \to \BP^1$ & $|Aut(\pi_G)|$ \\\hline
		&&&& \\
            &&&& \\
	8(3) & \includegraphics[scale=.5]{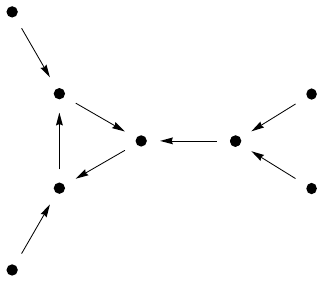} & {$y^2 = x^6 - 2x^4 + 2x^3 + 5x^2 + 2x + 1$ \newline LMFDB label: 743.a.743.1} & $-\frac{x^6 + 2x^5 + 4x^4 + 8x^3 + 9x^2 + 4x + 1}{4x^2(x+1)^2}$ & 2 \\
		&&&& \\\hline
            &\multicolumn{4}{|l|}{\makecell[l]{
            $Aut(\pi_G) = \left\{
            \sigma(x,y) = (x, \pm y)
            \right\}$ \\
            Exceptional curves: not computed
            }} \\\hline
            &&&& \\
	8(4) & \includegraphics[scale=.55]{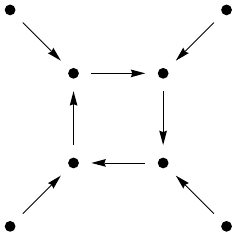} & {$y^2 = -x(x^2 + 1)(x^2 - 2x - 1)$ \newline LMFDB label: 256.a.512.1} & $\frac{(x^2 - 4x - 1)(x^4 + x^3 + 2x^2 - x + 1)}{4x(x+1)^2(x-1)^2}$ & 4 \\
		&&&& \\\hline
            &\multicolumn{4}{|l|}{\makecell[l]{
            $Aut(\pi_G) = \left\{
            \sigma(x,y) = (x, \pm y), \left(-\frac{1}{x}, \pm \frac{y}{x^3}\right)
            \right\}$ \\
            Exceptional curves: not computed
            }} \\\hline
            &&&& \\
	10(3,1,1) & \includegraphics[scale=.6]{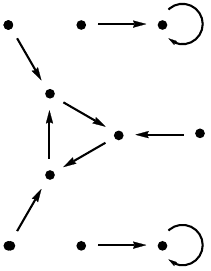} & {$y^2 = x^6 + 2x^5 + 5x^4 + 10x^3 + 10x^2 + 4x + 1$ \newline LMFDB label: 324.a.648.1} & $-\frac{x^6 + 2x^5 + 4x^4 + 8x^3 + 9x^2 + 4x + 1}{4x^2(x+1)^2}$ & 6 \\
		&&&& \\\hline
            &\multicolumn{4}{|l|}{\makecell[l]{
            $Aut(\pi_G) = \left\{
            \sigma(x,y) = (x, \pm y), \left(-\frac{1}{x + 1}, \pm \frac{y}{(x+1)^3}\right), \left(-\frac{x+1}{x}, \pm \frac{y}{x^3}\right)
            \right\}$ \\
            Exceptional curves: not computed
            }} \\\hline
            &&&& \\
	10(3,2) & \includegraphics[scale=.5]{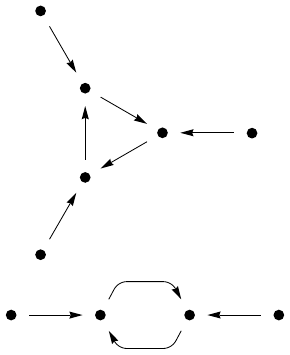} & {$y^2 = x^6 + 2x^5 + x^4 + 2x^3 + 6x^2 + 4x + 1$ \newline LMFDB label: 169.a.169.1} & $-\frac{x^6 + 2x^5 + 4x^4 + 8x^3 + 9x^2 + 4x + 1}{4x^2(x+1)^2}$ & 6 \\
		&&&& \\\hline
            &\multicolumn{4}{|l|}{\makecell[l]{
            $Aut(\pi_G) = \left\{
            \sigma(x,y) = (x, \pm y), \left(-\frac{1}{x + 1}, \pm \frac{y}{(x+1)^3}\right), \left(-\frac{x+1}{x}, \pm \frac{y}{x^3}\right)
            \right\}$ \\
            Exceptional curves: not computed
            }} \\\hline
\end{longtable}

\clearpage

\section{Proof of Proposition \ref{Theorem_Main_Degree2_CountingResult_Sym2_EC}}
\label{Appendix_Proof_Sym2_EC}
This appendix proves Proposition \ref{Theorem_Main_Degree2_CountingResult_Sym2_EC}, restated here. 
\begin{prop*}
Let $F/\BQ$ be a number field of degree $d$, and consider $\mathbf{O(1, 1)}$ on $\Sym^2 \BP^1$ over $F$ with the standard metric. 

Let $E$ be an elliptic curve over $F$. Let $f: E \to \BP^1$ be a morphism over $F$ of degree $k$ such that $f(x) = f(-x)$; in particular, $k$ is even. This induces
$$\Sym^2 f: \Sym^2 E \to \Sym^2 \BP^1,$$
also of degree $k$. 

Consider the metrized line bundle $\CL = (\Sym^2 f)^*\mathbf{O(1, 1)}$ on the source $\Sym^2 E$, and let $H_{\CL}: (\Sym^2 E)(F) \to [1, \infty)$ be the induced (absolute, multiplicative) height. Then
$$\#\{x \in (\Sym^2 E)(F): H_{\CL}(x) \leq B\} = C_{f, E, F} B^{2d/k} + O_{f, E, F}(B^{(2d-1)/k} (\log B)^{r/2 + 1}),$$
as $B \to \infty$, for some constant $C_{f, E, F} > 0$. Here $r = r(E/F)$ is the rank of the Mordell-Weil group $E(F)$.
\end{prop*}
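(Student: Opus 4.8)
The plan is to exploit the $\BP^1$-bundle structure on $\Sym^2 E$ coming from the addition morphism $s \colon \Sym^2 E \to E$, $\{Q, Q'\} \mapsto Q + Q'$, and to count the $F$-rational points of $\Sym^2 E$ fiber by fiber. Fix $R \in E(F)$. The fiber $s^{-1}(R)$ is a copy of $\BP^1$ over $F$, parametrized by $Q \mapsto \{Q, R - Q\}$ (it is the quotient of $E$ by the involution $Q \mapsto R - Q$). The restriction of $\Sym^2 f$ to this fiber is the composite $s^{-1}(R) \cong \BP^1 \hookrightarrow \Sym^2 E \xrightarrow{\Sym^2 f} \Sym^2 \BP^1$, and one checks that $\CL$ restricted to $s^{-1}(R)$ is a metrized line bundle $\CM_R$ on $\BP^1$ of degree $k$, with $H_{\CM_R}(\{Q, R-Q\}) = H(f(Q)) H(f(R-Q))$. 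Proposition \ref{Theorem_Main_Degree1_CountingResult_P1} then gives, for each fixed $R$,
\[
\#\{x \in s^{-1}(R)(F) : H_{\CL}(x) \le B\} = \tfrac12 \tau_{\BP^1}(\CM_R) B^{2d/k} + O_R\!\left(B^{(2d-1)/k} \log B\right),
\]
where both the leading constant $\tau_{\BP^1}(\CM_R)$ and, a priori, the implied constant of the error term depend on $R$.

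Next I would control which fibers are relevant. If $x = \{Q, R-Q\}$ satisfies $H_{\CL}(x) = H(f(Q)) H(f(R-Q)) \le B$, then, writing $\widehat{h_E}$ for the canonical height of $E/F$, the relation $\log H(f(Q)) = k\,\widehat{h_E}(Q) + O_{E,f}(1)$ (as in the proof of Proposition \ref{Theorem_Main_Degree1_CountingResult_EC}) together with the parallelogram law $\widehat{h_E}(Q) + \widehat{h_E}(R-Q) = \tfrac12\big(\widehat{h_E}(R) + \widehat{h_E}(2Q - R)\big) \ge \tfrac12 \widehat{h_E}(R)$ forces $\widehat{h_E}(R) \le \tfrac2k \log B + O_{E,f}(1)$, hence $H(f(R)) \ll_{E,f,F} B^2$. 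By Theorem \ref{Theorem_GeneralCountingResult_AV} applied to $E$, the number of such $R \in E(F)$ is $\asymp_{E,f,F} (\log B)^{r/2}$, so only $O((\log B)^{r/2})$ fibers are in play. For the main term, I would show that the series $\sum_{R \in E(F)} \tfrac12 \tau_{\BP^1}(\CM_R)$ converges to a positive limit: the per-fiber constant $\tau_{\BP^1}(\CM_R)$ is dominated by a constant multiple of $\exp(-c\,\widehat{h_E}(R))$ for some $c > 0$ (this is where the Arakelov normalization of $\CM_R$ and the quantity $H(f(R))^{-2d/k}$ enter, cf. the explicit formula in Remark \ref{Remark_Sym2_E_Constant}), and $\widehat{h_E}$ is a positive-definite quadratic form on $E(F)/E(F)_{\mathrm{tor}}$, so the sum is dominated by a convergent theta-type (Gaussian) series. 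Summing the per-fiber main terms over the relevant fibers, and absorbing the tail $\sum_{\widehat{h_E}(R) > \frac2k\log B}$ into the error (it contributes $O(B^{d/k}) = O(B^{(2d-1)/k})$ since $d \ge 1$), produces the main term $C_{f,E,F} B^{2d/k}$ with $C_{f,E,F} = \sum_R \tfrac12 \tau_{\BP^1}(\CM_R) > 0$; summing the per-fiber errors over $O((\log B)^{r/2})$ fibers produces the claimed error $O(B^{(2d-1)/k}(\log B)^{r/2 + 1})$, \emph{provided} the implied constant in each per-fiber error term can be taken uniformly in $R$.

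The hard part is precisely this uniformity of the error term over the fibers. In Proposition \ref{Theorem_Main_Degree1_CountingResult_P1} the power-saving error is obtained, following Masser--Vaaler and Huxley, from the principle of Lipschitz applied to the archimedean and $\mathfrak{p}$-adic regions cutting out bounded-height points for the bundle $\CM_R$; since these regions move with $R$, one must bound the number of Lipschitz charts and their Lipschitz constants uniformly over $R \in E(F)$. To do this I would pass to the Arakelov setting: take the minimal regular model $X \to \Spec(O_F)$ of $E$, describe $\Sym^2 E$ and its $\BP^1$-bundle structure over this model, and express the local Weil functions defining the regions for $\CM_R$ in terms of Green's functions attached to the horizontal divisors $D_R + D_O$ on $X$. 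The required estimate — that the Lipschitz data of the regions for $\CM_R$ stay bounded as $R$ varies — is the content of Proposition \ref{Proposition_Uniformity_R}, and should reduce to a boundedness statement for the family of metrized degree-$k$ line bundles $\{\CM_R\}$ on $\BP^1$ parametrized by $E$, which is compact. Once this uniformity is in place, collecting the main and error terms as above completes the proof; the Arakelov-geometric bookkeeping, together with the explicit value of $C_{f,E,F}$ under the simplifying hypotheses of Remark \ref{Remark_Sym2_E_Constant}, is carried out in detail in the remainder of this appendix.
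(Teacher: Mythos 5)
Your proposal follows the paper's own proof exactly: both decompose $\Sym^2 E$ into $\BP^1$-fibers via the addition map $s \colon \Sym^2 E \to E$, apply the genus-$0$ counting result fiberwise, observe that only $\asymp (\log B)^{r/2}$ fibers (those with $H(f(R)) \ll B^2$) contribute, show the per-fiber leading constants form a convergent theta-type series via the Gaussian decay $\exp(-d\,\widehat{h_E}(R))$, and correctly identify the uniformity in $R$ of the fiberwise big-$O$ constant as the essential difficulty, to be resolved through the Arakelov-geometric boundedness argument of Proposition~\ref{Proposition_Uniformity_R}. No gap: this is the same decomposition, the same key lemma, and the same reduction as the paper.
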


\begin{assumption}
\textbf{For ease of presentation, we will assume in the following that:}
\label{Assumption_Sym2_EC}
\begin{itemize}
    \item Class number of $F$ is 1.
    \item $E$ is semi-stable over $F$.
    \item If $X / Spec(O_F)$ is the minimal regular model of $E/F$, and $v$ is a non-archimedean place, we assume that the vertical fiber $X_v := X \times_{\Spec(O_F)} \Spec(k(v))$ is irreducible.
\end{itemize}
\end{assumption}

Our basic strategy is to utilize the addition map $\Sym^2 E \to E$, which gives $\Sym^2 E$ a $\BP^1$-bundle structure. We will count the rational points fiber by fiber.

In Section 1-5, we review some Arakelov geometry and calculus on arithmetic surfaces. Section 6-8 recasts Proposition \ref{Theorem_Main_Degree2_CountingResult_Sym2_EC} into a lattice point counting problem in a region; both the lattice and the region vary with $R \in E(F)$. The geometry of numbers set up is very similar to Masser-Vaaler \cite{MasserVaaler}. In Section 9-10, we develop basic properties of the first successive minimum and the covolume of the involved lattices, and we track the dependence on $R \in E(F)$ carefully. In Section 11-12, we prove the Lipschitz parametrizability of boundary of the regions in which we count lattice points. These results come together in Section 13 and gives us an asymptotic count of rational point on each fiber. Finally in section 14, we finish the proof of Proposition \ref{Theorem_Main_Degree2_CountingResult_Sym2_EC}, and discuss modifications needed wihout Assumption \ref{Assumption_Sym2_EC}.

\subsection{First Chern form of a Hermitian metric}
For this subsection, let $M$ be a compact connected Riemann surface, and $\CL = (L, \|\cdot\|)$ be a line bundle $L$ on $M$ equipped with a continuous Hermitian metric $\|\cdot\|$.

Let $s$ be a nonzero, meromorphic section of $\CL$. The first Chern form of $\CL$ is a distribution on the space of smooth functions $C^{\infty}(M)$ defined by 
$$c_1(\CL) := \frac{\partial \overline{\partial}}{\pi i} \log \|s\| + \delta_{\div(s)}.$$
This is independent of the choice of section $s$.
\begin{remark}
When the metric is smooth, the first Chern form can be represented by a $(1, 1)$-form on $M$.
\end{remark}

\begin{prop}[Poincare-Lelong formula, {\cite[Theorem 13.4]{DemaillyComplexGeometry}}]
\label{Proposition_Poincare_Lelong}
$$\int_{M} c_1(\CL) = \deg(L).$$
\end{prop}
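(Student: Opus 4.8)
The plan is to reduce the formula to two elementary facts: that the current $c_1(\CL)$ does not depend on the auxiliary section used to define it (recorded just above), and that $\partial\overline{\partial}$ is formally self-adjoint on a compact Riemann surface. First I would fix a \emph{nonzero meromorphic} section $s$ of $L$; such a section exists because the compact Riemann surface $M$ is projective algebraic, so $L$ is an algebraic line bundle admitting nonzero rational sections (equivalently, apply Riemann--Roch to $L\otimes\OO_M(D)$ for a divisor $D$ of sufficiently large degree). For this choice $\div(s)$ is an honest divisor representing the divisor class of $L$, so that $\deg(\div(s))=\deg(L)$.

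Next I would unwind what $\int_M c_1(\CL)$ means. Since $M$ is compact, the constant function $1$ lies in $C^\infty(M)$, and $\int_M c_1(\CL)$ is by definition the value of the distribution $c_1(\CL)$ on $1$. Using the defining formula $c_1(\CL)=\frac{\partial\overline{\partial}}{\pi i}\log\|s\|+\delta_{\div(s)}$, this splits as
$$\int_M c_1(\CL)=\Bigl\langle\tfrac{\partial\overline{\partial}}{\pi i}\log\|s\|,\,1\Bigr\rangle+\bigl\langle\delta_{\div(s)},\,1\bigr\rangle .$$
The second pairing equals $\deg(\div(s))=\deg(L)$, directly from the definition of the current of integration along a divisor. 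Hence everything comes down to showing the first pairing is zero.

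For that I would argue as follows. In a local holomorphic coordinate $z$ the section $s$ is $z^{m}$ times a nonvanishing holomorphic function times a continuous positive factor coming from the metric, so $\log\|s\|$ differs from $m\log|z|$ by a locally bounded function; since $\int_0^1\bigl|\log r\bigr|\,r\,dr<\infty$ and $M$ is compact, this gives $\log\|s\|\in L^1(M)$. Thus $\log\|s\|$ defines a distribution, and $\frac{\partial\overline{\partial}}{\pi i}\log\|s\|$ is its distributional $\partial\overline{\partial}$-derivative, i.e.
$$\Bigl\langle\tfrac{\partial\overline{\partial}}{\pi i}\log\|s\|,\,\phi\Bigr\rangle=\int_M\log\|s\|\,\tfrac{\partial\overline{\partial}}{\pi i}\phi\qquad(\phi\in C^\infty(M)),$$
the passage of the operator onto $\phi$ being legitimate because $\partial\overline{\partial}$ is formally self-adjoint on $M$: Stokes applied to the $1$-form $u\,\overline{\partial}v+v\,\partial u$ gives $\int_M\bigl(u\,\partial\overline{\partial}v-v\,\partial\overline{\partial}u\bigr)=0$ for all smooth $u,v$, there being no boundary. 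Taking $\phi\equiv1$ and using $\partial\overline{\partial}(1)=0$ shows the first pairing vanishes, and therefore $\int_M c_1(\CL)=\deg(L)$.

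The step requiring the most care is the $L^1$-integrability together with the resulting distributional identity: one must make sure the logarithmic singularities of $\log\|s\|$ along $\div(s)$ are mild enough that no mass is lost when the operator is transferred to the test function, and that the mere continuity (rather than smoothness) of the Hermitian metric causes no difficulty. If one wishes to avoid currents entirely, the classical alternative is to excise $\varepsilon$-disks around the points of $\div(s)$, apply Stokes on the complement where $\log\|s\|$ is smooth, and evaluate the resulting boundary integrals by a residue computation as $\varepsilon\to0$; this proves Poincar\'e--Lelong by hand, at the price of bookkeeping that the distributional argument absorbs automatically.
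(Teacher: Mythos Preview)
The paper does not supply its own proof of this proposition; it simply cites Demailly. Your argument is correct and is essentially the standard proof: interpret $\frac{\partial\overline{\partial}}{\pi i}\log\|s\|$ as the distributional derivative of the $L^1$ function $\log\|s\|$, pair with the constant test function $1$, and use that the distributional derivative of an $L^1$ function against $1$ vanishes because $\partial\overline{\partial}1=0$.

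One small clarification worth making explicit: the identity
\[
\Bigl\langle\tfrac{\partial\overline{\partial}}{\pi i}\log\|s\|,\,\phi\Bigr\rangle=\int_M\log\|s\|\,\tfrac{\partial\overline{\partial}}{\pi i}\phi
\]
is the \emph{definition} of the distributional derivative, so there is no ``passage of the operator'' to justify at this step; the Stokes computation you record is rather what shows this definition agrees with the classical one when $\log\|s\|$ happens to be smooth, and hence what confirms that the paper's expression $c_1(\CL)=\frac{\partial\overline{\partial}}{\pi i}\log\|s\|+\delta_{\div(s)}$ really is the curvature current. With that reading fixed, the proof goes through exactly as you wrote it, and the alternative you sketch at the end (excise $\varepsilon$-disks, apply Stokes, compute boundary residues) is the version the paper itself uses later in Proposition~\ref{Proposition_Properties_Mahler_Measure}(a).
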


\begin{prop}
\label{Proposition_Metric_Determined_By_Curvature}
Let $\omega$ be a positive measure on $M$ with $\omega(M) = 1$. Suppose $\omega$ is a scalar multiple of the first Chern form of some smoothly (resp. continuously) metrized line bundle.

Then for any line bundle $L$ on $M$ with a nonzero meromorphic section $s$, there is a unique smooth (resp. continuous) Hermitian metric $\|\cdot\|$ on $L$ such that the first Chern form 
$$c_1(\|\cdot\|, L) = \deg (L) \cdot \omega,$$
and
$$\int_M \log \|s\| \omega = 0.$$
\end{prop}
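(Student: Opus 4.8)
The statement is the classical existence–uniqueness of an admissible (canonical) metric on a line bundle, given a reference $(1,1)$-measure $\omega$ that arises as (a multiple of) a Chern form. I would prove it by reducing to the case $L = \mathcal{O}_M$ via the given meromorphic section $s$, which turns the problem into solving a distributional Poisson-type equation on $M$.

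\textbf{Step 1: Reduce to the trivial bundle.} Fix the nonzero meromorphic section $s$ of $L$, with divisor $D = \operatorname{div}(s)$. Any Hermitian metric $\|\cdot\|$ on $L$ is determined by the single function $\phi := -\log\|s\|$, which is smooth (resp. continuous) away from $\operatorname{supp}(D)$ with prescribed logarithmic singularities along $D$; conversely any such $\phi$ defines a metric. Under this dictionary, $c_1(\|\cdot\|,L) = -\frac{\partial\overline\partial}{\pi i}\phi + \delta_D$ as distributions. So the two conditions to be solved become
\begin{equation*}
-\frac{\partial\overline\partial}{\pi i}\phi + \delta_D = \deg(L)\,\omega,
\qquad \int_M \phi\,\omega = 0,
\end{equation*}
i.e. $\frac{\partial\overline\partial}{\pi i}\phi = \delta_D - \deg(L)\,\omega$ with the normalization $\int_M\phi\,\omega=0$.

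\textbf{Step 2: Solvability and uniqueness.} Uniqueness is immediate: if $\phi_1,\phi_2$ are two solutions, $u := \phi_1-\phi_2$ is a global distribution with $\partial\overline\partial u = 0$, hence $u$ is (pluri)harmonic, hence constant on the compact connected Riemann surface $M$; the normalization $\int_M u\,\omega = 0$ with $\omega(M)=1$ forces $u\equiv 0$. For existence, the right-hand side $\mu := \delta_D - \deg(L)\,\omega$ is a distribution of total mass $0$ by Proposition \ref{Proposition_Poincare_Lelong} (the Poincaré–Lelong formula gives $\int_M \omega$-part $=\deg(L)$ and $\delta_D$ has mass $\deg D = \deg L$). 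On a compact Riemann surface the equation $\frac{\partial\overline\partial}{\pi i}\phi = \mu$ is solvable precisely when $\mu$ has total mass $0$: this is standard Hodge theory / the theory of the Green's function. Concretely, pick any auxiliary smooth positive volume form (or use $\omega$ itself if one wants), let $g(x,y)$ be the associated Green's function (the distributional solution of $\frac{\partial\overline\partial}{\pi i}g(\cdot,y) = \delta_y - (\text{vol})$ normalized by $\int g\,d(\text{vol}) = 0$), and set $\phi(x) = \int_M g(x,y)\,d\mu(y)$, then correct by the constant needed to achieve $\int_M\phi\,\omega = 0$. The hypothesis that $\omega$ is (a scalar multiple of) the Chern form of a smoothly (resp. continuously) metrized bundle is exactly what guarantees $\omega$ has a smooth (resp. continuous) local potential, so elliptic regularity for $\partial\overline\partial$ upgrades $\phi$ from a distribution to a smooth (resp. continuous) function off $\operatorname{supp}(D)$, with the correct log-poles along $D$; this is what makes $\|\cdot\| = e^{-\phi}$ an honest smooth (resp. continuous) Hermitian metric.

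\textbf{Main obstacle.} The only genuinely delicate point is the regularity claim in the continuous case: one must argue that when $\omega$ has merely a continuous local potential, the solution $\phi$ is continuous (not just $L^1_{\mathrm{loc}}$ or in some Sobolev class), so that $e^{-\phi}$ is a continuous metric. I would handle this by writing $\phi = \phi_{\mathrm{sing}} + \phi_{\mathrm{reg}}$, where $\phi_{\mathrm{sing}}$ is an explicit local model absorbing the $\delta_D$ singularity (a finite sum of $\log|z-z_i|$'s times multiplicities) — smooth off $D$ — and $\phi_{\mathrm{reg}}$ solves $\frac{\partial\overline\partial}{\pi i}\phi_{\mathrm{reg}} = (\text{smooth, with mass }0) - \deg(L)\,\omega + (\text{the }\delta_D\text{ already absorbed})$, whose right side is now a \emph{continuous} density plus a bounded term; standard potential theory (convolution with the fundamental solution $\frac{1}{2\pi}\log|z|$ in local charts) then gives $\phi_{\mathrm{reg}}\in C^0$, indeed in $C^{1,\alpha}_{\mathrm{loc}}$ if one wants. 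The smooth case is softer and follows from elliptic regularity directly. Everything else — the reduction, the mass-zero check via Poincaré–Lelong, and the harmonic-function uniqueness — is routine.
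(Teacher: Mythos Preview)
Your proof is correct, but it takes a different route from the paper's. You reduce to solving the distributional Poisson equation $\frac{\partial\overline\partial}{\pi i}\phi = \delta_D - \deg(L)\,\omega$ for the single function $\phi = -\log\|s\|$, invoke Green's functions for existence, and then run a regularity argument to recover continuity of $\phi$ from the hypothesis that $\omega$ has continuous local potentials. The paper instead never touches $\delta_D$: it starts from an \emph{arbitrary smooth} metric $\|\cdot\|_L$ on $L$ (which already has the correct singularity structure along $\div(s)$) and modifies it multiplicatively by $\exp(\deg(L)\varphi' + \varphi)$, where $\varphi$ comes from the smooth $\partial\overline\partial$-lemma applied to the closed $(1,1)$-form $\frac{\deg L}{\deg L'}c_1(L') - c_1(L)$, and $\varphi'$ is the smooth (resp.\ continuous) potential encoding the difference between $\omega$ and the auxiliary Chern form $\frac{1}{\deg L'}c_1(\|\cdot\|_{L'},L')$. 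The paper's approach is shorter and sidesteps your ``main obstacle'' entirely, since the correction $\deg(L)\varphi' + \varphi$ is visibly smooth (resp.\ continuous) by construction --- no potential-theoretic regularity argument is needed. Your approach, on the other hand, makes uniqueness more explicit and is closer to how one would actually compute the metric (via the Arakelov--Green function), which is how the paper uses this proposition later anyway.
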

\begin{proof}
Suppose $\omega = a \cdot c_1(L')$ for some line bundle $L'$ with smooth (resp. continuous) metric. By integrating over $M$ and Proposition \ref{Proposition_Poincare_Lelong}, we must have $a = \frac{1}{\deg(L')}$. 

Take an arbitrary smooth metric $\|\cdot\|_{L'}$ on $L'$. Note that $\deg (L') \omega$ and $c_1(\|\cdot\|_{L'}, L')$ are both Chern forms of $L'$ with possibly different metrics. By definition, one sees that their difference is $\partial \overline{\partial}$-exact. In particular,
$$\omega - \frac{1}{\deg(L')} \cdot c_1(\|\cdot\|_{L'}, L') = \frac{\partial \overline{\partial}}{\pi i} \varphi'$$
for some smooth (resp. continuous) function $\varphi'$ on $M$.

Consider a line bundle $L$ on $M$ with a nonzero meromorphic section $s$. Take an arbitrary smooth metric $\|\cdot\|_L$ on $L$. Then
$$\frac{\deg(L)}{\deg(L')} c_1(\|\cdot\|_{L'}, L') - c_1(\|\cdot\|_{L}, L)$$
is a $(1,1)$-form with integral 0 over $M$, hence $d$-exact (from de Rham's theorem), and hence $\partial \overline{\partial}$-exact \cite[Lemma VI.8.6]{DemaillyComplexGeometry}. Hence
$$\frac{\deg(L)}{\deg(L')} c_1(\|\cdot\|_{L'}, L') - c_1(\|\cdot\|_{L}, L) = \frac{\partial \overline{\partial}}{\pi i} \varphi$$
for some smooth function $\varphi$ on $M$.

Therefore,
\begin{align*}
& \deg(L) \cdot \omega \\
= & \deg(L)\left(\omega - \frac{1}{\deg(L')} \cdot c_1(\|\cdot\|_{L'}, L')\right) + \left(\frac{\deg(L)}{\deg(L')} c_1(\|\cdot\|_{L'}, L') - c_1(\|\cdot\|_{L}, L)\right) + c_1(\|\cdot\|_{L}, L) \\
= & \frac{\partial \overline{\partial}}{\pi i} \left( \deg (L) \cdot \varphi' +  \varphi\right) + c_1(\|\cdot\|_L, L).
\end{align*}
So the metric 
$$\|\cdot\| = \|\cdot\|_L \cdot \exp\left( \deg (L) \cdot \varphi' +  \varphi\right)$$
is smooth (resp. continuous), and satisfies $c_1(\|\cdot\|, L) = \deg(L) \cdot w$. By taking a unique scalar multiple of this metric, we can make sure  that
$$\int_M \log \|s\| \omega = 0.$$
\end{proof}

For this paper, all metrics we consider would be in class (S), as defined below. These metrics were considered in \cite{Zhang}.
\begin{defn}
Let $L$ be a line bundle on compact, connected Riemann surface $M$. A metric $\|\cdot\|$ on $L$ is of class (S) if
\begin{itemize}
    \item The first Chern form $c_1(\|\cdot\|, L)$ can always be extended to a continuous functional on the space of continuous functions $C(M)$ with supremum norm.
    \item There is a sequence of smooth metrics $\|\cdot\|_n$ on $M$ such that
    \begin{itemize}
        \item $\lim_{n \to \infty} \frac{\|\cdot\|_n}{\|\cdot\|} = 1$ uniformly on $M$.
        \item For continuous function $g$ on $M$, we have 
        $$\lim_{n \to \infty} \int_M g c_1(L, \|\cdot\|_n) = \int_M g c_1(L, \|\cdot\|).$$
    \end{itemize}
\end{itemize}
\end{defn}
Proposition \ref{Proposition_Metric_Determined_By_Curvature} also works for metrics of class (S).

\begin{example}
Consider $\BP^1(\BC)$ with coordinates $[x_0, x_1]$, and consider the line bundle $\CL = \mathbf{O(1)}$ on $\BP^1(\BC)$ with the standard metric. The standard metric is of class (S), since it is well-approximated by the $L^p$-metrics on $O(1)$ as $p \to \infty$.

One computes that the first Chern form $c_1(\mathbf{O(1)})$ to be the functional: for continuous function $g$ on $\BP^1(\BC)$,
$$\int_{\BP^1(\BC)} g c_1(\mathbf{O(1)}) := \frac{1}{2\pi} \int_0^{2\pi} g([1, e^{i\theta}])d\theta.$$
(See \cite[Section 6]{Zhang} for a reference.)
\end{example}

\begin{example}
\label{Example_Curvature_EC}
Let $E(\BC)$ be a complex elliptic curve. Let $f: E(\BC) \to \BP^1(\BC)$ be a morphism of degree $k$. Consider $\mathbf{O(1)}$ on $\BP^1(\BC)$ with the standard metric, and consider the metrized line bundle $\CL = f^*\mathbf{O(1)}$ on $E(\BC)$.

For $x \in \BP^1(\BC)$, let $A_f(g)$ be the average of $g$ over fiber of $f$ counted with multiplicity, i.e.
$$A_f(g)(x) := \sum_{y \in f^{-1}(x)} e_f(y) g(y),$$
with $e_f(y)$ the ramification index of $f$ at $y \in E(\BC)$.

Similar to the last example, one can then compute the first Chern form $c_1(f^*\mathbf{O(1)})$ to be the functional: for continuous function $g$ on $E(\BC)$,
$$\int_{E(\BC)} g c_1(f^*\mathbf{O(1)}) := \frac{1}{2\pi} \int_0^{2\pi} A_f(g)([1, e^{i\theta}]) d\theta.$$
Since $\deg(f^*O(1)) = k$, Poincare-Lelong formula implies that 
$$\int_{E(\BC)} c_1(f^*\mathbf{O(1)}) = k,$$
so $\frac{1}{k}c_1(f^*\mathbf{O(1)})$ is a probability measure on $E(\BC)$.

More generally, let $F$ be a number field. Let $E/F$ be an elliptic curve and $f: E \to \BP^1$ be a morphism of degree $k$ over $F$. For each archimedean place $v$, let $\overline{f^*\mathbf{O(1)}_v}$ be the metrized line bundle on $E(\overline{F_v})$ via base change. As before, we can define the probability measure 
$$d\mu_v := \frac{1}{k} c_1 (\overline{f^* \mathbf{O(1)}_v})$$ 
on $E(\overline{F_v})$ for each $v$.
\end{example}

\subsection{Admissible metrics and Faltings volume on elliptic curves}
\label{Subsection_Admissible_Metric_EC}
Following Faltings \cite{FaltingsRR}, we define the admissible metrics on line bundles of elliptic curves, and will use them to define the Faltings volume on global sections of a line bundle. These notions will be used when we discuss the Faltings-Riemann-Roch theorem in later sections. Finally, we will prove an upper bound on Faltings volume.

Let $E(\BC)$ be a complex elliptic curve. 
\begin{defn}[Canonical volume form]
Take a global holomorphic 1-form $\omega \in H^0(E, \Omega_E^1)$ such that its norm
$$\frac{i}{2}\int_{E(\BC)} \omega \wedge \overline{\omega} = 1.$$
The form $\nu_{can} := \frac{i}{2} \omega \wedge \overline{\omega}$ is called the canonical volume form on $E(\BC)$.
\end{defn} 

\begin{defn}[Admissible metric]
Let $L$ be a line bundle on $E(\BC)$. By \cite[Theorem V.13.9(b)]{DemaillyComplexGeometry} and Proposition \ref{Proposition_Metric_Determined_By_Curvature}, there exists a smooth metric $\|\cdot\|$ on $L$ such that
$$c_1(\|\cdot\|, L) = \deg(L) \cdot \nu_{can}.$$
We will call this an \textbf{admissible metric} on $L$. When $\CL = (L, \|\cdot\|)$ is metrized with an admissible metric, we will call $\CL$ an admissible line bundle.

There is a unique metric on $L$ that further satisfies
$$\int_{E(\BC)} \log \|s\| \cdot \nu_{can} = 0.$$
This metric will be called the \textbf{canonical admissible metric} on $L$. 
\end{defn}

The canonical admissible metric can be expressed in terms of the Arakelov-Green function (normalized as in Faltings \cite[p.393]{FaltingsRR}). Recall that the Arakelov-Green function $G: E(\BC) \times E(\BC) \to \BR_{\ge 0}$ is the unique function such that the following three properties hold:
\begin{itemize}
    \item $G(P, Q)^2$ is smooth on $E(\BC) \times E(\BC)$, and $G(P, Q)$ vanishes only on the diagonal with multiplicity 1. In particular, $G(P, Q)$ is bounded above on $E(\BC) \times E(\BC)$.
    \item For any $P \neq Q \in E(\BC)$, we have
    $$\frac{\partial_Q \overline{\partial_Q}}{\pi i} \log G(P, Q) = \nu_{can}(Q).$$
    \item For any $P \in E(\BC)$, 
    $$\int_{E(\BC)} \log G(P, Q) \nu_{can}(Q) = 0.$$
\end{itemize}
For $Q \in E(\BC)$, consider the line bundle $O_E(Q)$ with the canonical holomorphic section $1_Q$ that vanishes at $Q$. The canonical admissible metric on $O_E(Q)$ then satisfies
$$\|1_Q\|(P) = G(P, Q),$$
for $P \neq Q \in E(\BC)$. 

We record two properties (\cite[p.394]{FaltingsRR}) of admissible metrics and Arakelov-Green's function as follows:
\begin{itemize}
    \item Tensor product/inverse of admissible line bundles, with the natural metric, is admissible. Hence for a Weil divisor $D = n_1 Q_1 + \cdots + n_r Q_r$, with corresponding line bundle $O_E(D)$ and canonical section $1_D$, we have
    $$\|1_D\|(P) = G(P, Q_1)^{n_1} \cdots G(P, Q_r)^{n_r}$$
    for $P \neq Q_1, \cdots, Q_r \in E(\BC)$.
    \item Arakelov-Green function is symmetric, i.e. $G(P, Q) = G(Q, P)$.
\end{itemize}

We next work towards the Faltings metric on determinant of cohomology. For a complex vector space $V$, define
$$\det(V) := \wedge^{top} V$$
to be the top exterior product of $V$. For a line bundle $L$ on $E(\BC)$, define the determinant of its cohomology to be
$$\det H(E, L) := \det H^0 (E, L) \otimes \det H^1(E, L)^{-1}.$$
Note that when $H^1(E, L) = 0$, we have $\det H(E, L) = \det H^0(E, L)$.
\begin{thm}[Faltings metric, {\cite[Theorem 1]{FaltingsRR}}]
For each admissible line bundle $\CL$ on $E(\BC)$, one can define a unique metric on $\det H(E, \CL)$ such that the following properties hold:
\begin{itemize}
    \item Isometric isomorphism of line bundles $\CL_1 \cong \CL_2$ induces an isometry on 
    $$\det H(E, \CL_1) \cong \det H(E, \CL_2).$$
    \item If the metric on $\CL$ is changed by a factor $\alpha > 0$, then the metric on $\det H(E, \CL)$ is changed by $\alpha^{\deg \CL}$.
    \item For a Weil divisor $D$ on $E(\BC)$ and a point $P \in E(\BC)$, let $D_1 = D - P$, and consider the line bundles $O_E(D), O_E(D_1)$ with canonical admissible metrics. The fiber of $O_E(D)$ over $P$, denoted $O_E(D)[P]$, has a metric by restriction. The natural isomorphism
    $$\det H(E, O_E(D)) \cong \det H(E, O_E(D_1)) \otimes_{\BC} O_E(D)[P]$$
    is an isometry.
    \item For $\CL = \Omega_E^1$, we require $\det H(E, \Omega_E^1) \cong H^0(E, \Omega_E^1)$ to be metrized by the metric:
    $$\langle \omega_1, \omega_2 \rangle := \frac{i}{2} \int_{E(\BC)} \omega_1 \wedge \overline{\omega_2}.$$
\end{itemize}
This unique metric is called the Faltings metric on $\det H(E, \CL)$.
\end{thm}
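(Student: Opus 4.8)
The plan is to follow the inductive construction of Faltings \cite{FaltingsRR}, specialized to the elliptic curve $E(\BC)$. The third axiom is the load-bearing one: for an admissible line bundle $\CL$ with underlying bundle $O_E(D)$ and a point $P$, writing $D = D_1 + P$ and equipping $O_E(D), O_E(D_1)$ with their \emph{canonical} admissible metrics, it forces the natural long-exact-sequence isomorphism $\det H(E, O_E(D)) \cong \det H(E, O_E(D_1)) \otimes_{\BC} O_E(D)[P]$ to be an isometry, where the fiber $O_E(D)[P]$ carries the restriction of the canonical admissible metric --- and this fiber norm is computed explicitly by $\|1_D\|(P) = \prod_j G(P, Q_j)^{n_j}$ for $D = \sum_j n_j Q_j$ in terms of the Arakelov--Green function $G$. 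Since every divisor class on $E$ has a Weil-divisor representative, and any two Weil divisors are joined by a finite chain of moves $D \mapsto D \pm P$, iterating this axiom expresses the metric on $\det H(E, O_E(D))$ through the metric on the single base object $\det H(E, O_E)$ times a product of Arakelov--Green factors. The base object is pinned down by the fourth axiom: trivializing $\Omega_E^1$ by an invariant differential and using Serre duality $H^1(E, O_E) \cong H^0(E, \Omega_E^1)^{\vee}$ gives canonical identifications $\det H(E, O_E) \cong \det H(E, \Omega_E^1) \cong H^0(E, \Omega_E^1)$, and the last space carries the Hodge metric $\langle \omega_1, \omega_2\rangle = \frac{i}{2}\int_E \omega_1 \wedge \overline{\omega_2}$ prescribed there. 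Finally the second axiom reduces the case of a general admissible metric (which differs from the canonical one by a positive scalar $\alpha$ on a degree-$n$ bundle) to the canonical case at the cost of a factor $\alpha^n$. Running this recipe produces a candidate metric on $\det H(E, \CL)$.

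Concretely, I would: (a) fix $\det H(E, O_E)$ via the fourth axiom and check internal consistency, namely that $\Omega_E^1 \cong O_E$ with canonical admissible metrics is an isometry (both metrics are flat since $\deg = 0$, and the normalization $\int_E \log\|s\|\, \nu_{can} = 0$ forces the invariant differential to have norm $1$), so that the first axiom is respected; (b) for $\CL$ with underlying bundle $O_E(D)$ and canonical admissible metric, fix a chain $0 = D^{(0)}, D^{(1)}, \dots, D^{(m)} = D$ with consecutive terms differing by one point and define the metric by transporting the base metric along the composite isometry
$$\det H(E, O_E(D)) \;\cong\; \det H(E, O_E) \otimes \bigotimes_i O_E(D^{(i)})[P_i]^{\pm 1}$$
coming from repeated application of the third axiom; (c) extend to arbitrary admissible metrics using the second axiom. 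One also has to verify that applying this recipe starting from $\Omega_E^1$ itself reproduces the Hodge metric, a short computation using $\deg \Omega_E^1 = 0$ and $\int_E \log G(P,Q)\, \nu_{can}(Q) = 0$, so that the third and fourth axioms are mutually compatible.

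The main obstacle will be proving that the construction is \emph{well defined}, independent of: the ordering of point additions/removals, the chain chosen, and the Weil divisor representing the class. After cancelling tautological ``add-then-remove'' segments, independence of ordering reduces to checking that commuting two point insertions $D \mapsto D+P \mapsto D+P+Q$ and $D \mapsto D+Q \mapsto D+P+Q$ yield the same composite isometry; the discrepancy is exactly the ratio of $\|1_{D+P}\|(Q)/\|1_D\|(Q)$ to $\|1_{D+Q}\|(P)/\|1_D\|(P)$, i.e.\ of $G(P,Q)$ to $G(Q,P)$, which vanishes by the symmetry of the Arakelov--Green function recorded in Section \ref{Subsection_Admissible_Metric_EC}. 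Independence of the divisor representative: if $D' = D + \div(f)$, the isomorphism $O_E(D) \cong O_E(D')$ is multiplication by $f$ and must be an isometry by the first axiom, so consistency amounts to compatibility of $G$ with principal divisors (Weil reciprocity) together with the product formula, which makes the telescoping product of $G$-factors depend only on the class. I expect this bundle of compatibilities --- symmetry of $G$, behaviour of $G$ on principal divisors, and the interplay with Serre duality through the base case --- to be the technical core of the argument.

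Uniqueness is then essentially automatic: any metric satisfying the four axioms must, by the third axiom, agree with the base metric on $\det H(E, O_E)$ transported along an arbitrary chain, and then be corrected by the scalar $\alpha^n$ dictated by the second axiom; since the base metric is fixed by the fourth axiom and every fiber factor is canonical, no freedom remains, so the metric constructed above is the unique one.
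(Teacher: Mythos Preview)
The paper does not prove this theorem: it is stated with the citation \cite[Theorem 1]{FaltingsRR} and no proof is given in the text. Your proposal is a faithful outline of Faltings' original argument --- inductive construction via the short exact sequence axiom, base case fixed by the Hodge metric on $H^0(E,\Omega_E^1)$, well-definedness reduced to symmetry of the Arakelov--Green function --- so there is nothing to compare against here beyond noting that your sketch matches the source the paper defers to.
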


\begin{defn}[Faltings volume]
\label{Definition_Faltings_Volume}
If $H^1(E, \CL) = 0$, then $\det H(E, \CL) = \det H^0(E, \CL)$. In that case, the Faltings metric is a volume form on $H^0(E, \CL)$, called the Faltings volume on $H^0(E, \CL)$.
\end{defn}

Let $O \in E(\BC)$ be the identity, and $R \neq O \in E(\BC)$. Note that $H^1(E, O_E(R + O)) = 0$ by Riemann-Roch, so we have Faltings volume on $H^0(E, O_E(R + O))$. 

On the other hand, equip $O_E(R + O)$ with the canonical admissible metric. This induces an $L^2$-norm on $g \in H^0(E, O_E(R + O))$, defined as
$$\|g\|_2^2 := \int_{E(\BC)} \|g\|^2 \nu_{can}.$$

\begin{prop}
\label{Proposition_Faltings_Volume_Unit_ball_Uniformly_Bounded}
The Faltings volume of the unit ball
$$\left\{g \in H^0(E, O_E(R + O)): \|g\|_2 \leq 1\right\}$$
is bounded above, uniformly over $R \in E(\BC)$.
\end{prop}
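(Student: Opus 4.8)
## Proof Proposal

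The plan is to compare the Faltings volume on $H^0(E, O_E(R+O))$ with the $L^2$-norm coming from the canonical admissible metric, and to show that the resulting comparison constant is controlled uniformly in $R$. The key observation is that the Faltings metric on $\det H(E, O_E(R+O))$ is \emph{defined} by a chain of isometries starting from $\Omega_E^1$, so what we really need is to trace through those isometries and estimate each contribution. Concretely, I would pick a basis of $H^0(E, O_E(R+O))$ adapted to the filtration by $H^0(E, O_E(O))$: the constant section $1$ (spanning $H^0(E, O_E(O)) = \BC$, since $O_E(O)$ has degree $1$ and $H^1 = 0$ by Riemann--Roch), together with a section $g_R$ vanishing at $O$ but not at $R$. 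Then the Faltings volume form evaluated on the wedge $1 \wedge g_R$ can be computed using the isometry
$$\det H(E, O_E(R+O)) \cong \det H(E, O_E(O)) \otimes_{\BC} O_E(R+O)[R],$$
which relates the Faltings volume to (i) the Faltings metric on $\det H(E, O_E(O))$ (a fixed quantity, independent of $R$), and (ii) the norm of the canonical admissible section of $O_E(R+O)$ evaluated in the fiber over $R$, i.e. a product of Arakelov--Green values $G(R, O)$ (and possibly $G(R,R)$-type regularized quantities arising from the residue of $g_R$ at $R$).

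The main analytic input is then that the Arakelov--Green function $G(P,Q)$ is \emph{bounded above} on $E(\BC) \times E(\BC)$ (this is recorded in the paper as one of the defining properties of $G$) and, more delicately, that the relevant product $G(R,O) \cdot (\text{leading coefficient of } g_R \text{ at } R)$ stays bounded \emph{away from zero and infinity} as $R$ ranges over $E(\BC)$. Boundedness above handles one direction; for the lower bound one uses that $G(R,O) \to 0$ only as $R \to O$ with multiplicity $1$, but this vanishing is exactly compensated by the pole of $g_R$ at $O$ forced by the divisor condition — the canonical admissible metric on $O_E(R+O)$ is built so that these cancel. I would make this precise by writing the $L^2$-norm $\|g\|_2$ of an arbitrary $g \in H^0(E, O_E(R+O))$ explicitly in terms of $G(\cdot, R)$, $G(\cdot, O)$ and the canonical volume form $\nu_{can}$, and comparing it to the Faltings volume via the explicit formula of Bost--Faltings for the determinant-of-cohomology metric on an elliptic curve in terms of $\eta$-functions / the Faltings height of $E$. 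Since $E$ is fixed, all the $E$-dependent factors (Faltings height, $\|\Omega_E^1\|$, the normalizing constant in $\nu_{can}$) are constants; only the $R$-dependence must be tracked.

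The step I expect to be the main obstacle is establishing the uniform \emph{lower} bound on the comparison constant as $R \to O$, where the section $g_R$ degenerates: one must check that the Faltings volume and the $L^2$-volume degenerate at the same rate, so that their ratio stays bounded. I would handle this by a compactness-plus-continuity argument: on the complement of a small neighborhood of $O$, the map $R \mapsto (\text{Faltings vol of unit ball})$ is continuous and hence bounded on the compact complement; near $O$ one performs a local analysis, parametrizing $g_R$ near $R = O$ and showing the leading singular behavior cancels. A cleaner alternative, which I would try first, is to invoke the known closed-form expression: for an elliptic curve $E(\BC) = \BC/(\BZ + \tau\BZ)$, the Faltings metric on $\det H(E, O_E(D))$ is expressible through Jacobi theta functions and the Dedekind $\eta$-function evaluated at $\tau$, and the $R$-dependence enters only through $\theta$ evaluated at the point $R$, which is a bounded-below-and-above continuous function of $R \in E(\BC)$ (after the appropriate automorphic normalization that makes it well-defined on $E$ rather than $\BC$). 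Either way, the conclusion is that
$$\sup_{R \in E(\BC)} \mathrm{vol}_{Fal}\left(\{g \in H^0(E, O_E(R+O)) : \|g\|_2 \leq 1\}\right) < \infty,$$
which is the assertion of the proposition.
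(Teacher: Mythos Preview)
Your approach differs from the paper's and leaves the key step unproven. The paper, following Faltings, restricts at \emph{two} auxiliary points $P,Q$ rather than at $R$. For an $L^2$-orthonormal basis $f_1 1_{R+O}, f_2 1_{R+O}$ of $H^0(E, O_E(R+O))$ this yields, for generic $(P,Q)$,
\[
\|f_1 1_{R+O} \wedge f_2 1_{R+O}\|_{Fal}^2 \;=\; \frac{c_E^2\,\bigl|f_1(P)f_2(Q) - f_2(P)f_1(Q)\bigr|^2 \, \|1_{R+O}(P)\|^2\, \|1_{R+O}(Q)\|^2}{G(P+Q,R)^2\, G(P,Q)^2},
\]
with $c_E$ depending only on $E$. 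The left side is independent of $(P,Q)$, so one averages the right side over $S_\epsilon = \{(P,Q): G(P,Q)>\epsilon,\ G(P+Q,R)>\epsilon\}$: the Green denominators are then bounded below by $\epsilon$, the remaining integrand integrates to at most $2$ over all of $E^2$ by orthonormality, and $\mathrm{vol}(S_\epsilon)$ is bounded below independently of $R$ by translation invariance of $G$. The uniform bound drops out with no limit analysis whatsoever.

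Your single-point filtration, by contrast, forces the $R\to O$ analysis that you yourself flag as the obstacle, and neither of your proposed resolutions is actually executed. The theta-function route contains a specific error: the relevant theta function \emph{vanishes} at $R=O$ (this is precisely why $G(R,O)\to 0$), so it is not bounded below; what would have to be shown is that the $L^2$-determinant degenerates at the same rate, and that is exactly the content you do not supply. Separately, your basis description is off: a section of $O_E(R+O)$ ``vanishing at $O$ but not at $R$'' does not exist for $R\neq O$, since sections vanishing at $O$ lie in the image of $H^0(E,O_E(R))=\BC$ and hence also vanish at $R$. The complementary section to the constants corresponds to a meromorphic function with simple poles at \emph{both} $R$ and $O$.
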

\begin{proof}
We follow the approach of Faltings \cite[Theorem 2]{FaltingsRR}. 

Start with a fixed $R \in E(\BC)$. For $P, Q \in E(\BC)$, the lines $\det H(E, O_E(R + O - P - Q))$ varies nicely with $P, Q$: they are the fibers of a line bundle $\CN$ on $E(\BC)^2$ (see \cite[p.297]{Chinburg} for a construction of $\CN$). By fixing a polarization $\Pic^0(E) \cong E$ via $O_E(R - S) \to S$, we see that the map $\varphi: E(\BC)^2 \to \Pic^0(E) \cong E$,
$$\varphi(P, Q) = O_E(R + O - P - Q) \cong O_E(R - (P+Q)) \cong P + Q$$
is the addition map. Moreover, from the proof of Faltings \cite[Theorem 1]{FaltingsRR}, 
$$i: \CN \cong \varphi^*O_E(-R),$$
where the Faltings metric on $\CN$ is the pullback of some admissible metric on $O_E(-R)$. Hence the Faltings metric is a constant multiple of the canonical admissible metric on $O_E(-R)$, with the constant $c_E > 0$ depending only on $E$.

For general points $P, Q \in E(\BC)$ where $P + Q \neq R$, the line bundle $O_E(R + O - P - Q)$ is of degree 0 and has no global sections. So $\det H(E, O_E(R + O - P - Q)) \cong \BC$. Moreover at such points, under the isomorphism $i$, the image of $1$ in $\det H(E, O_E(R + O - P - Q))$ corresponds to the section $1_{-R}$ of $O_E(-R)$ at $P+Q$. This correspondence implies that
$$\|1\|^2_{Fal} = c_E^2 \|1_{-R} (P+Q)\|^2 = \frac{c_E^2}{G(P + Q, R)^2}.$$

The restriction map at general points $P, Q \in E(\BC)$ yields an exact sequence
$$0 \to O_E(R + O - P - Q) \to O_E(R + O) \to O_E(R+O)[P] \oplus O_E(R + O)[Q] \to 0$$
which yields a natural isomorphism
$$\det H(E, O_E(R+O)) \cong \det H(E, O_E(R + O - P - Q)) \otimes O_E(R + O)[P] \otimes O_E(R + O)[Q].$$
One computes the norm of this map to be $G(P, Q)$.

Let $1_{R+O}$ be the canonical section for $O_E(R+O)$. Let $f_1 1_{R+O}, f_2 1_{R+O}$ be an orthonormal basis of $H^0(E, O_E(R + O))$, under the $L^2$-norm induced from admissible metric; here $f_i$ are meromorphic functions on $E$ with at worst a pole at $R$ and $O$. Under the isomorphism, 
$$f_1 1_{R+O} \wedge f_2 1_{R+O} \to (f_1(P)f_2(Q) - f_2(P)f_1(Q)) 1 \otimes 1_{R+O}(P) \otimes 1_{R+O}(Q).$$
We hence have
\begin{align*}
&\vol_{Fal} \left(\left\{g \in H^0(E, O_E(R + O)): \int_{E(\BC)}\|g\|^2 \nu_{can} \leq 1\right\}\right) \\
= & \frac{\pi^2}{2} \|f_1 1_{R+O} \wedge f_2 1_{R+O}\|_{Fal}^2 \\
= & \frac{\pi^2}{2} \frac{1}{G(P, Q)^2} \cdot \left|f_1(P)f_2(Q) - f_2(P)f_1(Q)\right|^2 \|1\|_{Fal}^2 \|1_{R+O} (P)\|^2\|1_{R+O} (Q)\|^2 \\
= & \frac{\pi^2 c_E^2}{2} \frac{1}{G(P+Q, R)^2 G(P, Q)^2} \left|f_1(P)f_2(Q) - f_2(P)f_1(Q)\right|^2 \|1_{R+O} (P)\|^2\|1_{R+O} (Q)\|^2
\end{align*}
We integrate over 
$$S_{\epsilon} = \{(P, Q) \in E(\BC)^2: G(P, Q) > \epsilon, \, G(P+Q, R) > \epsilon\}.$$
for a sufficiently small $\epsilon > 0$, such that $\vol(S_{\epsilon}) > 0$. Then,
\begin{align*}
& \vol_{Fal} \left(\{g \in H^0(E, O_E(R + O)): \int_{E(\BC)}\|g\|^2 \nu_{can} \leq 1\}\right) \\
& = \frac{\pi^2 c_E^2}{2 \vol(S_{\epsilon})} \int_{(P, Q) \in S_{\epsilon}} \frac{\left|f_1(P)f_2(Q) - f_2(P)f_1(Q)\right|^2}{G(P+Q, R)^2 G(P, Q)^2} \|1_{R+O} (P)\|^2\|1_{R+O} (Q)\|^2 \nu_{can}^2 \\
& \ll_{E, \epsilon} \int_{(P, Q) \in S_{\epsilon}} \left|f_1(P)f_2(Q) - f_2(P)f_1(Q)\right|^2 \|1_{R+O} (P)\|^2\|1_{R+O} (Q)\|^2 \nu_{can}^2.
\end{align*}
Since $f_1 1_{R+O}, f_2 1_{R+O} \in H^0(E, O_E(R+O))$ are orthonormal, we have
\begin{align*}
& \, \int_{(P, Q) \in S_{\epsilon}} \left|f_1(P)f_2(Q) - f_2(P)f_1(Q)\right|^2 \|1_{R+O} (P)\|^2\|1_{R+O} (Q)\|^2 \nu_{can}^2 \\
\leq & \, \int_{E(\BC)^2}\left|f_1(P)f_2(Q) - f_2(P)f_1(Q)\right|^2 \|1_{R+O} (P)\|^2\|1_{R+O} (Q)\|^2 \nu_{can}^2 \\
= & \, 2 \left(\int_{E(\BC)} \|f_1(P) 1_{R+O}(P)\|^2 \nu_{can}\right)^2 \\
= & \, 2.
\end{align*}
Thus we have
$$\vol_{Fal} \left(\left\{g \in H^0(E, O_E(R + O)): \int_{E(\BC)}\|g\|^2 \nu_{can} \leq 1\right\}\right) \ll_{E, \epsilon} 1,$$
uniformly over $R \in E(\BC)$ as desired.
\end{proof}

\subsection{Regular model of $E$}
Let $E$ be an elliptic curve over $F$. 
\begin{defn}[Regular model of $E$]
A regular model $X \to \Spec(O_F)$ is an integral, regular, proper, flat scheme of dimension 2, whose generic fiber is isomorphic to $E \to \Spec(F)$.   
\end{defn}

For elliptic curves, there is a particularly nice regular model: the minimal regular model \cite[Definition 9.3.14]{Liu}. Minimal regular model exists and is unique for elliptic curves \cite[Theorem 9.3.21]{Liu}. We will eventually work with minimal regular models so that Proposition \ref{Proposition_RobindeJong} applies, but will stay at the level of regular models for now.

\subsection{Weil divisors on regular model of $E$}
\label{Section_Weil_Divisors_On_Regular_Model}

Let $X$ be a regular model of $E$. There are two types of irreducible Weil divisors on $X$:
\begin{itemize}
    \item The horizontal divisors: those whose image under $X \to \Spec(O_F)$ is the entire $\Spec(O_F)$. For $Q \in E(\overline{F})$, we will denote its closure in $X$ by $D_Q$; this is a horizontal divisor, and all irreducible horizontal divisor on $X$ arises this way.
    \item The vertical divisors: those whose image under $X \to \Spec(O_F)$ is a closed point. 
    
    For a non-archimedean place $v$, let $k(v)$ be the residue field, and $X_v$ be the fiber of $X$ over $\Spec(k(v))$. By Assumption \ref{Assumption_Sym2_EC}, $X_v$ is irreducible.
    
    Since $X$ is regular and $X_v$ is irreducible of codimension 1, each $X_v$ corresponds to a discrete valuation $\val_{X_v}$ on $K(X)$, or equivalently a discrete absolute value $|\cdot|_{X_v}$, normalized such that for $a \in F$,
    $$|a|_{X_v} = |a|_v.$$
\end{itemize}
We will also consider principal divisors. A rational function $g$ on $E$ uniquely extends to a rational function $g_X$ on $X$. We note that if 
$$\div (g) = \sum_i a_i Q_i$$
for $Q_i \in E(\overline{F})$, then
$$\div(g_X) = \sum_i a_i D_{Q_i} + \sum_{v \in M_{F, finite}} \val_{X_v}(g_X) X_v.$$

\subsection{Intersection theory on regular model of $E$}
Let $E/F$ be an elliptic curve, and $X / \Spec(O_F)$ be a regular model of $E$. For two smoothly metrized line bundles $\CL$, $\CM$ on $X$, Deligne \cite{Deligne} defined an intersection number $\langle \CL, \CM \rangle$ which is bilinear and includes contributions from archimedean places. The same definition works for metrized line bundle of class (S) as well (\cite[Section 1]{Zhang}). From here, we can define
\begin{itemize}
    \item The intersection number between a metrized line bundle $\CL$ of class (S), and a Weil divisor $D$. 
    
    This can be done by first equipping $O_X(D)$ with the metric whose first Chern form equals that of $\CL$, via Proposition \ref{Proposition_Metric_Determined_By_Curvature}; we then take the intersection number of $\CL$ and $O_X(D)$ under this metric.

    For horizontal Weil divisor, this definition is equivalent to the degree of metrized line bundle pulled back to $\Spec(O_F)$ \cite[Proposition 4.20]{Moriwaki}.
    \item The intersection number between two Weil divisors $D_1, D_2$.
    
    By Proposition \ref{Proposition_Metric_Determined_By_Curvature}, we can equip $O_X(D_1)$ and $O_X(D_2)$ with metric that has the same first Chern form, then take the intersection number of such metrized line bundles. This definition is compatible with Arakelov \cite{Arakelov}, where the canonical admissible metric is always used.
\end{itemize}
We will use $\langle -, - \rangle$ to denote this intersection pairing. The precise definition would not matter to us; we will only use the formal properties of intersection number in the following propositions.

The intersection number is important for us because height can be expressed via this intersection number.
\begin{prop}[{\cite[Proposition 4.20]{Moriwaki}}]
\label{Proposition_Height_As_Intersection}
Let $X$ be a regular model of $E/F$. Let $\CL$ be a metrized line bundle on $X$, and $H_{\CL}$ be the absolute, multiplicative Weil height on $X$ induced by $\CL$.

If $Q \in E(\overline{F})$ and $D_Q$ is its closure in $X$, then
$$\log H_{\CL}(Q) = \frac{1}{[F(Q):\BQ]} \langle \CL, D_Q \rangle$$
\end{prop}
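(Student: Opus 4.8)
The plan is to identify both sides with the (Arakelov) degree of the metrized line bundle obtained by restricting $\CL$ to $D_Q$ and pushing to $\Spec(O_F)$, first in the case $Q \in E(F)$ and then in general by finite base change. Throughout I will use the formal properties of the intersection pairing recorded in Section~5, namely bilinearity, the projection formula, and the identification of $\langle \CL, D\rangle$ for a horizontal divisor $D$ with $\widehat{\deg}$ on $\Spec(O_F)$ (this last being precisely \cite[Proposition~4.20]{Moriwaki}).

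Assume first that $Q \in E(F)$. By properness of $X/O_F$ and regularity of $X$, the point $Q\colon \Spec(F)\to E$ extends uniquely to a section $\epsilon_Q\colon \Spec(O_F)\to X$, and $D_Q = \epsilon_Q(\Spec(O_F))$. Then $\langle \CL, D_Q\rangle = \widehat{\deg}\bigl(\epsilon_Q^*\CL\bigr)$. Choose a rational section $s$ of the underlying line bundle $L$ with $s(Q)\neq 0$, and put $t := \epsilon_Q^* s$, a nonzero rational section of $\epsilon_Q^*L$. By the definition of the arithmetic degree,
$$\widehat{\deg}\bigl(\epsilon_Q^*\CL\bigr) \;=\; \sum_{\mathfrak{p}} \mathrm{ord}_{\mathfrak{p}}(t)\,\log N_{F/\BQ}(\mathfrak{p}) \;-\; \sum_{v\in M_{F,\infty}} \log\|s(Q)\|_v .$$
With the normalized absolute values of Section~3 one has $\mathrm{ord}_{\mathfrak{p}}(t)\,\log N_{F/\BQ}(\mathfrak{p}) = -\log\|s(Q)\|_{v_{\mathfrak{p}}}$ at each finite place, and the adelic compatibility of the metric forces $\|s(Q)\|_v = 1$ for all but finitely many $v$; hence the right-hand side equals
$$-\sum_{v\in M_F}\log\|s(Q)\|_v \;=\; \log\prod_{v\in M_F}\|s(Q)\|_v^{-1} \;=\; \log H_{\CL,F}(Q) \;=\; [F:\BQ]\,\log H_{\CL}(Q),$$
using $H_{\CL}=H_{\CL,F}^{1/[F:\BQ]}$. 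This is the claim when $F(Q)=F$.

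For arbitrary $Q\in E(\overline F)$ set $F'=F(Q)$, base change $X$ along $O_F\to O_{F'}$, and choose a regular model $X'$ of $E_{F'}$ with a dominant morphism $\pi\colon X'\to X$ lying over $\Spec(O_{F'})\to\Spec(O_F)$. Now $Q\in E_{F'}(F')$; applying the case already proved to $\pi^*\CL$ on $X'$ gives $\langle \pi^*\CL, D_Q^{X'}\rangle_{X'} = [F':\BQ]\,\log H_{\pi^*\CL}(Q) = [F':\BQ]\,\log H_{\CL}(Q)$, where $D_Q^{X'}\cong\Spec(O_{F'})$ is the closure of $Q$ in $X'$ and the last equality is functoriality of the height under $\pi$. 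Since $D_Q^{X'}\to D_Q$ is birational (both have generic fibre $\Spec F'$), $\pi_*[D_Q^{X'}] = [D_Q]$ with multiplicity one, so the projection formula gives $\langle \pi^*\CL, D_Q^{X'}\rangle_{X'} = \langle \CL, D_Q\rangle_X$. Combining the two relations yields $\log H_{\CL}(Q) = \tfrac{1}{[F(Q):\BQ]}\langle \CL, D_Q\rangle$, as desired.

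The one point requiring care — rather than a new estimate — is that this entire argument has to be carried out for metrized line bundles of class~(S), whereas Deligne's intersection pairing is constructed for smooth (or continuous) metrics. The needed extension of the pairing to class~(S), together with bilinearity, the projection formula, and the identification with $\widehat{\deg}$ on $\Spec(O_F)$, is exactly what is supplied by \cite{Zhang} and \cite{Moriwaki} and quoted in Section~5; so I would simply invoke those formal properties. I expect no other obstacle.
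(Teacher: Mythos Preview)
The paper does not supply its own proof of this proposition; it is simply recorded with a citation to \cite[Proposition~4.20]{Moriwaki}, as one of several formal properties of the Arakelov intersection pairing collected in Section~B.5. Your argument is the standard one underlying that reference: identify $\langle \CL, D_Q\rangle$ with $\widehat{\deg}(\epsilon_Q^*\CL)$ for $Q\in E(F)$, unwind the arithmetic degree place by place to recover the relative height, and for general $Q$ descend by base change and the projection formula. This is correct, with the only mild caveat that when you ``choose a regular model $X'$ of $E_{F'}$ with a dominant morphism $\pi\colon X'\to X$'' you are implicitly invoking resolution of singularities for the (possibly singular) base change $X\times_{O_F}O_{F'}$; this is classical for arithmetic surfaces but worth a word.
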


We document other properties of the intersection number as follows.

\begin{prop}[Intersection with a vertical divisor, {\cite[Lemma 4.2]{Moriwaki}}]
\label{Proposition_Intersection_Vertical_Divisor}
Let $X$ be a regular model of $E/F$. Let $\CL$ be a metrized line bundle on $X$.

If $v$ is a non-archimedean place of $F$, then
$$\langle \CL, X_v \rangle = \deg(\CL) \log |k(v)|.$$
\end{prop}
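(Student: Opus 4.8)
The plan is to reduce the statement to a purely local computation at the place $v$, exploiting the bilinearity of the intersection pairing and the functoriality of $\deg$ under the structure map $X \to \Spec(O_F)$. First I would recall that, by the way intersection numbers with Weil divisors were set up in Section 5, the number $\langle \CL, X_v\rangle$ is computed by choosing a metric on $O_X(X_v)$ with the same first Chern form as $\CL$ and taking the Deligne intersection number of the two metrized line bundles; since $X_v$ is a vertical fiber, lying entirely over the closed point $v$ of $\Spec(O_F)$, the archimedean contributions to this intersection number vanish, and the computation becomes the classical (geometric) intersection of $\CL$ with the fiber $X_v$ on the arithmetic surface $X$.

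Next I would use the projection formula for the proper morphism $p\colon X \to \Spec(O_F)$. The vertical fiber $X_v$ is, as a divisor, the pullback $p^*[\mathfrak{p}_v]$ of the closed point (using Assumption \ref{Assumption_Sym2_EC} that $X_v$ is irreducible, so that $p^*[\mathfrak p_v]=X_v$ with no multiplicity issues from a non-reduced or reducible special fiber). Therefore
\begin{align*}
\langle \CL, X_v \rangle = \langle \CL, p^*[\mathfrak p_v] \rangle = \deg\!\big(p_*(\CL)\big)\big|_{\mathfrak p_v} \cdot (\text{length of }\mathcal O_{X_v}),
\end{align*}
and the degree of $\CL$ restricted to a fiber is the generic degree $\deg(\CL) = \deg(L|_E)$, while the ``weight'' of the closed point $\mathfrak p_v$ in the arithmetic degree is $\log|k(v)|$. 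Concretely, one trivializes $L$ in a Zariski neighborhood of $X_v$ (possible since $\Pic$ of a local ring is trivial and $X_v$ is irreducible), picks a rational section $s$ of $L$ whose divisor is horizontal near $X_v$, and counts $\sum_{x\in X_v} (\operatorname{ord}_x s)\,[k(x):k(v)]\log|k(v)|$, which by the definition of $\deg$ of a line bundle on the projective curve $X_v/k(v)$ equals $\deg(L|_{X_v})\log|k(v)| = \deg(\CL)\log|k(v)|$.

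Alternatively — and this is probably the cleanest write-up — I would simply cite the formal properties already granted: the intersection pairing is bilinear, compatible with pullback along $p$, and the arithmetic degree of the structure sheaf twisted trivially over a single closed point contributes exactly $\log|k(v)|$; then combine these with Proposition \ref{Proposition_Height_As_Intersection}'s underlying formalism. The main (and essentially only) obstacle is bookkeeping around normalizations: one must check that the normalization of the local absolute value $|\cdot|_{X_v}$ fixed in Section \ref{Section_Weil_Divisors_On_Regular_Model} (so that $|a|_{X_v}=|a|_v$ for $a\in F$) is the same normalization that makes the arithmetic intersection number come out with the factor $\log|k(v)|$ rather than $\log p$ or $\tfrac{1}{e}\log|k(v)|$; this is exactly the content of \cite[Lemma 4.2]{Moriwaki}, from which the identity follows directly, so I would in the end just invoke that reference and spell out the reduction above as the conceptual explanation.
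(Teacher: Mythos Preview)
The paper provides no proof of its own for this proposition; it is stated with the citation \cite[Lemma 4.2]{Moriwaki} and used as a black box. Your proposal correctly explains the underlying mechanism (the archimedean contribution vanishes for a vertical divisor, and the finite-place computation reduces to $\deg(\CL|_{X_v})\log|k(v)|$ via the projection formula), and your concluding remark that you would ultimately just invoke the reference is exactly what the paper does.
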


\begin{prop}[Intersection with a principal divisor, {\cite[Proposition 4.17(2)]{Moriwaki}}]
\label{Proposition_Intersection_Principal_Divisor}
Let $X$ be a regular model of $E/F$, and $\CL$ a metrized line bundle on $X$. 

For each archimedean place $v$, let $\overline{\CL_v}$ be the metrized line bundle over $X(\overline{F_v})$, induced from $\CL$; let $c_1(\overline{\CL_v})$ be the first Chern form of $\overline{\CL_v}$.

For a rational function $g_X$ on $X$, we have
$$\langle \CL, \div(g_X) \rangle = \sum_{v \in M_{F, \infty}} \int_{X(\overline{F_v})} \log |g_X|_v c_1(\overline{\CL_v}).$$
\end{prop}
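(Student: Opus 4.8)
The plan is to unwind the definition of $\langle \CL, \div(g_X)\rangle$ and to exploit that $\div(g_X)$ is a \emph{principal} divisor. Recall from the construction preceding Proposition \ref{Proposition_Height_As_Intersection} that $\langle \CL, \div(g_X)\rangle = \langle \CL, \overline{\mathcal{O}_X(\div g_X)}\rangle$, the Deligne intersection number \cite{Deligne} (extended to metrized line bundles of class (S) in \cite{Zhang}), where $\mathcal{O}_X(\div g_X)$ is equipped, via Proposition \ref{Proposition_Metric_Determined_By_Curvature}, with the metric whose first Chern form at each archimedean $v$ is the scalar multiple of $c_1(\overline{\CL_v})$ having the correct total mass, normalized so that $\int_{X(\overline{F_v})}\log\|s_D\|_v\, c_1(\overline{\CL_v})=0$ for the canonical rational section $s_D$ of $\mathcal{O}_X(\div g_X)$. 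The crucial observation is that $\div(g_X)$ has degree $0$ on every fiber of $X\to\Spec O_F$, hence by Proposition \ref{Proposition_Poincare_Lelong} the prescribed first Chern form vanishes, so the metric on $\mathcal{O}_X(\div g_X)$ is flat at each archimedean place — i.e. it is a positive constant $\lambda_v$ times the trivial metric under the isomorphism $\mathcal{O}_X(\div g_X)\cong\mathcal{O}_X$ given by multiplication by $g_X$.

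Under that isomorphism $s_D\mapsto g_X$, and in the trivial metric $\|s_D\|_v=|g_X|_v$ on $X(\overline{F_v})$; hence the prescribed metric satisfies $\|s_D\|_v=\lambda_v|g_X|_v$. Imposing the normalization and using $\int_{X(\overline{F_v})}c_1(\overline{\CL_v})=\deg\CL$ (Proposition \ref{Proposition_Poincare_Lelong}) gives
$$\log\lambda_v=-\frac{1}{\deg\CL}\int_{X(\overline{F_v})}\log|g_X|_v\, c_1(\overline{\CL_v}).$$
Equivalently, the arithmetic first Chern class of $\overline{\mathcal{O}_X(\div g_X)}$ is the sum of the arithmetic principal divisor attached to $g_X$ (for the trivial metric), which is rationally equivalent to zero, and the purely archimedean class supported on the constants $-2\log\lambda_v$.

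To finish, evaluate $\langle\CL,\overline{\mathcal{O}_X(\div g_X)}\rangle$: bilinearity of the pairing and the vanishing of $\langle\CL,(\mathcal{O}_X,\mathrm{triv})\rangle$ reduce it to the effect of rescaling the trivial metric by $\lambda_v$, which by the defining archimedean formula for the arithmetic intersection pairing is $-\sum_{v\in M_{F,\infty}}(\deg\CL)\log\lambda_v$; substituting the formula for $\log\lambda_v$ yields precisely $\sum_{v\in M_{F,\infty}}\int_{X(\overline{F_v})}\log|g_X|_v\, c_1(\overline{\CL_v})$. The sign (and the overall normalization) is pinned down by the special case $g_X\in F^{\times}$: then $\div(g_X)=\sum_{v<\infty}\val_{X_v}(g_X)X_v$ is vertical, so by Proposition \ref{Proposition_Intersection_Vertical_Divisor} the left side equals $(\deg\CL)\sum_{v<\infty}\val_{X_v}(g_X)\log\#k(v)$, and this agrees with $(\deg\CL)\sum_{v\mid\infty}\log|g_X|_v$ by the product formula, matching the right side of the proposition.

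The main obstacle is the careful bookkeeping of normalizations and signs in the arithmetic/Deligne intersection pairing — the factor of $2$, the weighting of real versus complex places, and reconciling the conventions of \cite{Moriwaki} with the first Chern form convention $c_1(\CL)=\frac{\partial\overline{\partial}}{\pi i}\log\|s\|+\delta_{\div(s)}$ used here — together with verifying that the metric-matching recipe defining $\langle\CL,D\rangle$ is additive in $D$, so that it really does extend the Deligne pairing bilinearly, and that the flatness step uses only that $\mathcal{O}_X(\div g_X)$ has degree $0$ on every fiber. An alternative route that sidesteps the Deligne formalism is to write $\div(g_X)=\sum_i a_i D_{Q_i}+\sum_{v<\infty}\val_{X_v}(g_X)X_v$ as in Section \ref{Section_Weil_Divisors_On_Regular_Model}, apply Propositions \ref{Proposition_Intersection_Vertical_Divisor} and \ref{Proposition_Height_As_Intersection} to the vertical and horizontal parts, and then use Green's identity on each Riemann surface $X(\overline{F_v})$ to convert the resulting sums $\sum_i a_i\log\|s(Q_i)\|_v$ (for a fixed rational section $s$ of $\CL$) into $\int_{X(\overline{F_v})}\log|g_X|_v\, c_1(\overline{\CL_v})$ plus boundary terms along $\div(s)$ that cancel the leftover finite-place and vertical contributions by the product formula and Weil reciprocity; this is in essence the argument of \cite[Proposition 4.17]{Moriwaki}, and that cancellation is where the real work lies.
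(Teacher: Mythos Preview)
The paper does not prove this proposition at all: it is stated with a bare citation to \cite[Proposition 4.17(2)]{Moriwaki} and used as a black box. So there is no ``paper's own proof'' to compare against.

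Your sketch is a reasonable way to prove the result. The key observation --- that $\div(g_X)$ has degree $0$ on every fiber, hence the prescribed metric on $\mathcal{O}_X(\div g_X)$ is flat at each archimedean place and differs from the trivial metric (under $\mathcal{O}_X(\div g_X)\cong\mathcal{O}_X$ via $g_X$) only by constants $\lambda_v$ --- is correct, and reducing to the archimedean rescaling contribution is the right idea. Your own caveat is on point: the argument is essentially formal once one has pinned down the normalizations, but those normalizations (the factor of $2$ at complex places, the sign in $c_1$, and exactly which section is used in Proposition~\ref{Proposition_Metric_Determined_By_Curvature}) are where all the content lies, and you have not actually carried them out. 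The sanity check with $g_X\in F^\times$ is a good consistency test but does not by itself fix the sign for general $g_X$.

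The alternative route you sketch at the end --- decomposing $\div(g_X)$ into horizontal and vertical parts and invoking Green's identity/Weil reciprocity on each $X(\overline{F_v})$ --- is closer to how Moriwaki actually argues, and is probably cleaner to execute since it avoids the metric-matching recipe entirely.
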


\begin{prop}[Adjunction formula, {\cite[Lemma 4.26]{Moriwaki}}]
\label{Proposition_Adjunction_Formula}
Let $X / \Spec(O_F)$ be a regular model of $E/F$. The relative dualizing sheaf $\omega_X$ exists \cite[Theorem 6.4.32]{Liu}. For each archimedean place, we metrize $\omega_X$ with the canonical admissible metric (Section \ref{Subsection_Admissible_Metric_EC}).

Let $Q \in E(\overline{F})$, and $D_Q$ be the irreducible horizontal divisor on $X$. Metrize $O_X(D_Q)$ with the canonical admissible metric. Then
$$\langle O_X(D_Q), O_X(D_Q) + \omega_X \rangle = 0.$$
\end{prop}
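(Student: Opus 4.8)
The plan is to deduce the identity from the adjunction (residue) isomorphism on the arithmetic surface $X$ together with the defining normalization of the canonical admissible metric on $\omega_X$. First I would reduce to the case $Q \in E(F)$: both sides of the asserted equality are compatible with finite base change up to a positive scalar (passing to a regular model over $O_{F(Q)}$ and using the projection formula for the generically finite map onto $X$), so it suffices to treat a point defined over the ground field, in which case $D_Q$ is the image of a section $\Spec(O_F) \to X$ and $D_Q \cong \Spec(O_F)$ is regular. (Alternatively, one can keep $Q$ general, in which case $\omega_{D_Q/O_F}$ below is the inverse different $\mathfrak{d}_{F(Q)/F}^{-1}$, and its negative finite-place degree is cancelled by a positive archimedean contribution built into the admissible metric; the bookkeeping is cleaner after base change.)

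Next, using that the intersection of a metrized line bundle with a horizontal divisor is the Arakelov degree of its restriction to that divisor (\cite[Proposition 4.20]{Moriwaki}, in the spirit of Proposition \ref{Proposition_Height_As_Intersection}), I would rewrite
$$\langle O_X(D_Q),\, O_X(D_Q) + \omega_X \rangle \;=\; \widehat{\deg}\Big( \big(O_X(D_Q)\otimes\omega_X\big)\big|_{D_Q} \Big),$$
where the right-hand side is computed on $\Spec(O_F)$ with the metrized structure induced at each $v \in M_{F,\infty}$ from the canonical admissible metrics on $O_X(D_Q)$ and $\omega_X$. Thus the whole statement reduces to showing that this restricted metrized line bundle on $\Spec(O_F)$ is isometric to the trivial metrized bundle $\mathcal{O}_{\Spec(O_F)}$ (with $\|1\|_v = 1$ for all archimedean $v$), hence has Arakelov degree $0$.

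Then I would invoke geometric adjunction: since $D_Q$ is a regular prime divisor on the regular $2$-dimensional scheme $X$, the residue map gives a canonical isomorphism
$$\big(\omega_{X/O_F}\otimes O_X(D_Q)\big)\big|_{D_Q} \;\xrightarrow{\ \sim\ }\; \omega_{D_Q/O_F} \;\cong\; \mathcal{O}_{D_Q},$$
the last isomorphism because $D_Q \to \Spec(O_F)$ is an isomorphism. At the finite places this is an honest isomorphism of $O_F$-modules and contributes nothing to the degree. The one point to verify at each archimedean place $v$ is that the residue isomorphism is an \emph{isometry} onto $F_v$ with its standard absolute value. This is exactly the content of the definitions in Subsection \ref{Subsection_Admissible_Metric_EC}: the canonical admissible metric on $O_{E(\BC)}(Q)$ is the one with $\|1_Q\|(P) = G(P,Q)$ for the Arakelov--Green function, which has a simple zero along the diagonal with residue normalization tied to $\nu_{can}$; the metric on $\omega_E = \Omega^1_E$ is the flat one normalized by $\tfrac{i}{2}\int_{E(\BC)}\omega\wedge\overline{\omega}=1$; and these two normalizations are matched precisely so that the residue isomorphism $\omega_E|_Q \otimes O_E(Q)|_Q \cong \BC$ is norm-preserving — which is precisely the defining property of the Faltings metric recalled via the bullet points for $\det H(E,\Omega^1_E)$ and for $O_E(D)$.

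The main obstacle, and the real heart of the argument, is this last archimedean compatibility: one must unwind the local behaviour of $G(P,Q)$ near the diagonal and confirm that it is compatible with the canonical volume form $\nu_{can}$ and with the metric on $\Omega^1_E$ in exactly the way needed to make the residue isomorphism an isometry. Granting that, the restricted bundle $\big(\omega_{X/O_F}\otimes O_X(D_Q)\big)\big|_{D_Q}$ is isometrically trivial on $\Spec(O_F)$, so its Arakelov degree vanishes, and therefore $\langle O_X(D_Q),\, O_X(D_Q) + \omega_X \rangle = 0$.
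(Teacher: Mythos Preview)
The paper does not give its own proof of this proposition: it is stated with a citation to \cite[Lemma~4.26]{Moriwaki} and used as a black box. So there is no in-paper argument to compare against; your task was only to supply what the citation hides.

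Your sketch is the standard argument and is essentially correct. Rewriting the intersection number as the Arakelov degree of $(O_X(D_Q)\otimes\omega_X)|_{D_Q}$ via Proposition~\ref{Proposition_Height_As_Intersection}, and then invoking the scheme-theoretic adjunction isomorphism $(\omega_{X/O_F}\otimes O_X(D_Q))|_{D_Q}\cong\omega_{D_Q/O_F}$, is exactly the right plan; the whole content is then, as you say, that the residue map is an isometry at each archimedean place for the canonical admissible metrics. Two minor points worth tightening: (i) your base-change reduction glosses over the fact that the pullback of a regular model need not be regular, so one must either resolve and check the intersection numbers are unaffected, or simply run the argument directly for $Q\in E(\overline F)$ and keep track of the inverse different as you parenthetically indicate; (ii) the isometry property of the residue map is not literally one of the bullet points listed under the Faltings metric in Section~\ref{Subsection_Admissible_Metric_EC}---rather, it is the defining property of the Arakelov metric on $\omega_X$, and one must check (via the local expansion $G(P,Q)\sim |z(P)-z(Q)|\cdot(\text{normalized factor})$ near the diagonal) that this agrees with the paper's ``canonical admissible metric'' normalized by $\tfrac{i}{2}\int\omega\wedge\overline\omega=1$. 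That compatibility is standard (it is how Arakelov and Faltings set things up), but it is the genuine content and deserves an explicit reference or a line of computation rather than an appeal to the determinant-of-cohomology axioms.
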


\begin{prop}[Faltings-Hriljac theorem]
\label{Proposition_Faltings_Hriljac}
Let $X$ be a regular model of $E/F$. 

Let $O \in E(F)$ be the identity, and $Q \in E(F)$. Metrize $O_X(D_Q - D_O)$ with the canonical admissible metric. If $H_{NT}$ is the (multiplicative) N\'{e}ron-Tate height of $E$, then
$$\langle O_X(D_Q - D_O), O_X(D_Q - D_O) \rangle = - 2d \log H_{NT}(Q)$$
\end{prop}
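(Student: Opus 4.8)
This identity is the Faltings--Hriljac theorem \cite{FaltingsRR} in the special case of an elliptic curve; under Assumption \ref{Assumption_Sym2_EC} it can be proved directly from the intersection-theoretic facts assembled above, and this is the route I would take. Write $\phi(Q):=O_X(D_Q-D_O)$, equipped at each archimedean place with the canonical admissible metric; as a line bundle on the generic fibre it is $O_E((Q)-(O))$, of degree $0$, so its archimedean first Chern forms vanish (an admissible metric on a degree-$0$ line bundle has $c_1=0$). By Proposition \ref{Proposition_Intersection_Vertical_Divisor}, $\langle\phi(Q'),X_v\rangle=\deg(\phi(Q'))\log|k(v)|=0$ for every finite $v$, and by Proposition \ref{Proposition_Intersection_Principal_Divisor}, $\langle\phi(Q'),\div(g_X)\rangle=\sum_{v\mid\infty}\int_{X(\overline{F_v})}\log|g_X|_v\,c_1(\overline{\phi(Q')}_v)=0$ for every principal divisor. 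Under Assumption \ref{Assumption_Sym2_EC} each special fibre $X_v$ is reduced and irreducible, hence $X_v=\div(\pi_v)$ for a uniformizer $\pi_v$ and $O_X(X_v)$ is trivial; combined with the linear equivalence $(Q_1)+(Q_2)\sim(Q_1+Q_2)+(O)$ coming from the group law on $E$, which lifts to $X$ up to a principal and a vertical divisor, this gives $\phi(Q_1)\otimes\phi(Q_2)\cong\phi(Q_1+Q_2)$ as adelically metrized line bundles (the archimedean metrics match because two metrics on a line bundle over a compact Riemann surface with equal curvature and equal normalization $\int\log\|\cdot\|\,\nu_{can}=0$ coincide, while the finite-place structures agree by construction on $X$). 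Thus $\phi\colon E(F)\to\widehat{\Pic}(X)$ is a homomorphism, so $B(Q_1,Q_2):=\langle\phi(Q_1),\phi(Q_2)\rangle$ is a symmetric $\BZ$-bilinear form and $Q\mapsto B(Q,Q)$ is a quadratic form on $E(F)$.

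Next I would expand $B(Q,Q)=\langle O_X(D_Q),O_X(D_Q)\rangle-2\langle O_X(D_Q),O_X(D_O)\rangle+\langle O_X(D_O),O_X(D_O)\rangle$, with canonical admissible metrics throughout (the tensor and inverse of canonical admissible metrics are again canonical admissible, by the curvature-and-normalization characterisation, so this decomposition is compatible with the definition of $\phi(Q)$). The adjunction formula (Proposition \ref{Proposition_Adjunction_Formula}) gives $\langle O_X(D_Q),O_X(D_Q)\rangle=-\langle O_X(D_Q),\omega_X\rangle$, and Proposition \ref{Proposition_Height_As_Intersection} rewrites this as $-d\log H_{\omega_X}(Q)$; since $\omega_X$ restricts to the trivial line bundle $\Omega^1_{E/F}$ on $E$, the height $H_{\omega_X}$ is that of a metrized trivial line bundle and is bounded, so $\langle O_X(D_Q),O_X(D_Q)\rangle=O_{E,F}(1)$, and likewise $\langle O_X(D_O),O_X(D_O)\rangle$ is a constant. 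Again by Proposition \ref{Proposition_Height_As_Intersection}, $\langle O_X(D_Q),O_X(D_O)\rangle=d\log H_{(O)}(Q)$, where $H_{(O)}$ is the height attached to $O_E((O))$ with its canonical admissible metric. Hence $B(Q,Q)=-2d\log H_{(O)}(Q)+O_{E,F}(1)$.

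Finally, $(O)$ is a symmetric divisor on $E$, so $\log H_{(O)}$ differs by a bounded function from the canonical (N\'eron--Tate) height $\log H_{NT}=\widehat{h_{(O)}}$, which is a genuine quadratic form. Therefore $-\tfrac{1}{2d}B(Q,Q)$ is a quadratic form on $E(F)$ differing from the quadratic form $\log H_{NT}$ by a bounded function; since a bounded quadratic form on $E(F)$ vanishes identically (evaluate at $[n]Q$ and let $n\to\infty$), the two agree, which is exactly $\langle O_X(D_Q-D_O),O_X(D_Q-D_O)\rangle=-2d\log H_{NT}(Q)$. The only delicate step is the first paragraph: one must verify that the canonical admissible normalization is stable under tensor products and inverses, and recognise that the well-definedness of $B$ is precisely where Assumption \ref{Assumption_Sym2_EC} enters --- without irreducible special fibres one must instead work with $\BQ$-divisors orthogonal to all fibral components and invoke the structure of the N\'eron model, which is the substance of the general Faltings--Hriljac theorem; once $B$ is available as a bilinear form, the remaining steps are formal, modulo the normalization $\log H_{NT}=\widehat{h_{(O)}}$ that pins down the constant $2d$.
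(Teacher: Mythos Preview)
Your proof is correct in substance and takes a genuinely different route from the paper. The paper's proof is two sentences: it notes that under Assumption \ref{Assumption_Sym2_EC} the divisor $D_Q-D_O$ is orthogonal to every vertical fibre (by Proposition \ref{Proposition_Intersection_Vertical_Divisor}, since it has degree $0$), and then simply cites \cite[Theorem~4]{FaltingsRR}. You instead reprove the Faltings--Hriljac identity for elliptic curves from scratch, using only the intersection-theoretic propositions already in the appendix together with the characterising property of the N\'eron--Tate height. What you gain is a self-contained explanation of why the formula holds and precisely where the irreducible-fibre hypothesis enters; what the paper gains is brevity.

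One imprecision is worth flagging. Your claim that $\phi\colon E(F)\to\widehat{\Pic}(X)$ is a group homomorphism is not quite literally true: the canonical admissible normalization is relative to the canonical section $1_D$, and under the isomorphism $O_E((Q_1){+}(Q_2){-}2(O))\cong O_E((Q_1{+}Q_2){-}(O))$ this section is carried to $h\cdot 1_{(Q_1{+}Q_2)-(O)}$, where $\int\log|h|_v\,\nu_{can,v}$ need not vanish. Thus $\phi(Q_1)\otimes\phi(Q_2)\otimes\phi(Q_1{+}Q_2)^{-1}$ is only trivial up to a bundle $(O_X,e^{-C_v})$ with constant $C_v$. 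This does not damage your argument: pairing such a constant against $\phi(Q')$ gives $\sum_v C_v\int c_1(\overline{\phi(Q')_v})=0$ because $\phi(Q')$ has degree~$0$, so $B$ is still bilinear. Equivalently --- and this is really what your invocations of Propositions \ref{Proposition_Intersection_Vertical_Divisor} and \ref{Proposition_Intersection_Principal_Divisor} already give --- one bypasses metrics entirely by observing that under Assumption \ref{Assumption_Sym2_EC} the Weil divisor $D_{Q_1}+D_{Q_2}-D_{Q_1+Q_2}-D_O$ on $X$ is a $\BZ$-combination of principal and fibral divisors, hence orthogonal to $\phi(Q')$. The remaining steps (adjunction, boundedness of the $\omega_X$-height on a genus-one curve, and the vanishing of a bounded quadratic form) are correct as written.
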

\begin{proof}
We assumed that all vertical fibers are irreducible (Assumption \ref{Assumption_Sym2_EC}). By Proposition \ref{Proposition_Intersection_Vertical_Divisor}, $D_Q - D_O$ is perpendicular to all vertical divisors. Faltings-Hriljac theorem \cite[Theorem 4]{FaltingsRR} thus applies to get the result.
\end{proof}

The next result is the reason we work with minimal regular model, instead of any regular model.
\begin{prop}[{\cite[Theorem 6.1, Proposition 6.2]{RobindeJong}}]
\label{Proposition_RobindeJong}
Let $X$ be the minimal regular model of $E/F$. 

Let $O \in E(F)$ be the identity, and $Q \in E(F)$. Then,
$$\langle O_X(D_Q), O_X(D_Q) \rangle = \langle O_X(D_O), O_X(D_O) \rangle.$$
If furthermore $E$ is semi-stable over $F$, then
$$\langle O_X(D_O), O_X(D_O) \rangle = - \frac{1}{12} \log |N_{F/\BQ}(\Delta_{E/F})|,$$
where $\Delta_{E/F}$ is the minimal discriminant of the elliptic curve $E/F$.
\end{prop}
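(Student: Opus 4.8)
The plan is to deduce both assertions from one observation about $\omega_X$. By the adjunction formula (Proposition~\ref{Proposition_Adjunction_Formula}), for every $Q\in E(F)$ one has $\langle O_X(D_Q),O_X(D_Q)\rangle=-\langle O_X(D_Q),\omega_X\rangle$, and since intersection against a horizontal divisor is the arithmetic degree of the pullback along that section (the fact recalled after Proposition~\ref{Proposition_Height_As_Intersection}, from \cite[Prop.~4.20]{Moriwaki}), this equals $-\widehat{\deg}\big(D_Q^*\omega_X\big)$. So both statements reduce to understanding the metrized line bundle $\omega_X$ and its pullbacks along the sections $D_Q$.

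First I would show $\omega_X\cong\pi^*\underline{\omega}$, where $\underline{\omega}:=O^*\omega_X$ is the module of N\'eron differentials equipped at each archimedean place with the canonical admissible metric. Since $E$ is semistable, every closed fibre $X_v$ is a curve of arithmetic genus $1$ (under Assumption~\ref{Assumption_Sym2_EC} irreducible, with at worst one node), so $\omega_X|_{X_v}\cong\OO_{X_v}$; hence the class of $\omega_X$ lies in $\pi^*\Pic(\Spec O_F)$ with no vertical correction, and over $\BC$ the metric has zero curvature for the same reason, so the isometry holds as well. Granting this, $D_Q^*\omega_X=D_Q^*\pi^*\underline{\omega}=\underline{\omega}$ for \emph{every} $Q$ because $\pi\circ D_Q=\mathrm{id}$; this already yields $\langle O_X(D_Q),O_X(D_Q)\rangle=-\widehat{\deg}(\underline{\omega})=\langle O_X(D_O),O_X(D_O)\rangle$, the first identity. (One can reach the same conclusion by pairing $D_Q-D_O$ against $\omega_X$: it has generic-fibre degree $0$, hence is orthogonal to all $X_v$ by Proposition~\ref{Proposition_Intersection_Vertical_Divisor}, and Proposition~\ref{Proposition_Faltings_Hriljac} controls the remaining horizontal interaction.)

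It remains to identify $\widehat{\deg}(\underline{\omega})$ with $\tfrac1{12}\log|N_{F/\BQ}(\Delta_{E/F})|$. Using the minimal Weierstrass model (which exists with the class-number hypothesis and, under Assumption~\ref{Assumption_Sym2_EC}, equals the minimal regular model), write $\eta$ for the N\'eron differential, so $\underline{\omega}=O_F\cdot\eta$ and the minimal discriminant $\Delta_{E/F}\in O_F$ is the coordinate of the discriminant section $\Delta$ of $\underline{\omega}^{\otimes 12}$ in the basis $\eta^{\otimes 12}$. Computing $\widehat{\deg}(\underline{\omega}^{\otimes 12})$ with the section $\Delta$: the finite places contribute $\log\#(O_F/\Delta_{E/F}O_F)=\log|N_{F/\BQ}(\Delta_{E/F})|$ and the archimedean places contribute (up to the standard normalization factors) $-\sum_{v\mid\infty}\big(\log|\Delta_{E/F}|_v+12\log\|\eta\|_v\big)$; by the product formula $\sum_{v\mid\infty}\log|\Delta_{E/F}|_v=\log|N_{F/\BQ}(\Delta_{E/F})|$, so the two copies of $\log|N_{F/\BQ}(\Delta_{E/F})|$ cancel and $\widehat{\deg}(\underline{\omega})=-\sum_{v\mid\infty}\log\|\eta\|_v$. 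The crux — which I expect to be the main obstacle — is the purely analytic fact that the canonical admissible metric on $\Omega^1_{E_v}$ is normalized so that the discriminant section is isometrically trivializing, i.e.\ $\|\eta\|_v^{12}=|\Delta(E_v)|^{-1}$ (up to a fixed constant absorbed by the conventions); feeding this back and applying the product formula once more gives $\widehat{\deg}(\underline{\omega})=\tfrac1{12}\sum_{v\mid\infty}\log|\Delta(E_v)|=\tfrac1{12}\log|N_{F/\BQ}(\Delta_{E/F})|$, hence $\langle O_X(D_O),O_X(D_O)\rangle=-\tfrac1{12}\log|N_{F/\BQ}(\Delta_{E/F})|$. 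This identity is the real content of \cite[Theorem~6.1, Proposition~6.2]{RobindeJong}: it is proved either by an explicit evaluation of the Arakelov--Green function via the Kronecker limit / Jacobi derivative formula, or by combining the arithmetic Noether formula $\langle\omega_X,\omega_X\rangle=12\,\widehat{\deg}(\pi_*\omega_X)-\sum_v\delta_v$ (whose left side vanishes here because $\omega_X$ is pulled back from the one-dimensional base) with the classical relation between the Faltings height, the minimal discriminant, and Faltings' archimedean $\delta$-invariants; either way the archimedean period contributions must be tracked carefully and shown to recombine into $|N_{F/\BQ}(\Delta_{E/F})|$.

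Finally I would record the role of the hypotheses: semistability is used both to force $\omega_X|_{X_v}\cong\OO_{X_v}$ (so $\omega_X=\pi^*\underline{\omega}$ with no vertical twist) and to make the conductor--discriminant comparison exact, so that the \emph{minimal} discriminant appears without correction; dropping Assumption~\ref{Assumption_Sym2_EC} allows reducible $I_n$-fibres, which leaves $\omega_X=\pi^*\underline{\omega}$ intact (and thus the first identity, since $\omega_X$ is fibrewise trivial), while the class-number restriction only trivializes $\underline{\omega}$ as an $O_F$-module and can be removed by replacing "$\Delta_{E/F}\in O_F$" with an ideal-theoretic minimal discriminant throughout; for the general statement one appeals directly to \cite{RobindeJong}.
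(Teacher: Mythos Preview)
The paper supplies no proof of this proposition: it is stated with a bare citation to \cite[Theorem~6.1, Proposition~6.2]{RobindeJong} and nothing more. So there is no in-paper argument to compare against; your sketch is being judged on its own terms and against what de~Jong actually does.

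Your structural reduction is correct and is precisely de~Jong's route: adjunction gives $\langle O_X(D_Q),O_X(D_Q)\rangle=-\langle O_X(D_Q),\omega_X\rangle=-\widehat{\deg}(D_Q^*\omega_X)$, and once $\omega_X\cong\pi^*\underline{\omega}$ (fibrewise triviality of the dualizing sheaf on a genus-one degeneration) the pullback along any section is the same metrized $O_F$-module, giving the first identity. Reducing the second identity to evaluating $\widehat{\deg}(\underline{\omega})$ is likewise the right move.

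The soft spot is the archimedean step. With the paper's conventions (Section~\ref{Subsection_Admissible_Metric_EC}), the canonical admissible metric on $\Omega^1_{E_v}$ is the flat metric for which $\|\omega\|^2\,\nu_{\mathrm{can}}=\tfrac{i}{2}\,\omega\wedge\bar\omega$; for $\eta=dz$ on $\BC/(\BZ+\tau_v\BZ)$ this gives $\|\eta\|_v^2=\mathrm{Im}(\tau_v)$, not a power of the discriminant. Your asserted identity ``$\|\eta\|_v^{12}=|\Delta(E_v)|^{-1}$'' is therefore not what the normalization yields, and as written the argument becomes circular: the symbol $|\Delta(E_v)|$ at an archimedean place is a different object from $|\Delta_{E/F}|_v$, so ``feeding this back and applying the product formula once more'' does not close. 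What actually enters is the local Arakelov self-intersection, controlled by the behaviour of the Arakelov--Green function near the diagonal; the needed input is (in one normalization) that the archimedean contribution at $v$ equals $-\tfrac{1}{12}\log\bigl(|\Delta(\tau_v)|\,(\mathrm{Im}\,\tau_v)^{6}\bigr)$ up to an absolute constant, and it is the $(\mathrm{Im}\,\tau_v)^{6}$ factor that cancels against the period coming from $\|\eta\|_v$, leaving only the finite-place discriminant. You correctly name the tools that produce this (explicit Green function via the Kronecker limit formula, or the arithmetic Noether formula with $\langle\omega_X,\omega_X\rangle=0$), so the outline is salvageable, but the displayed identity and the cancellation you describe are not the ones that actually occur.
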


\subsection{Reinterpreting counting points on $\Sym^2 E$ as counting rational functions on $E$}
We first recall the set up of Proposition \ref{Theorem_Main_Degree2_CountingResult_Sym2_EC}. 

Let $F/\BQ$ be a number field of degree $d$, and consider $\mathbf{O(1, 1)}$ on $\Sym^2 \BP^1$ over $F$ with the standard metric. 

Let $E$ be an elliptic curve over $F$, and let $X$ over $\Spec(O_F)$ be the minimal regular model. Let $f: E \to \BP^1$ be a morphism over $F$ of degree $k$ such that $f(x) = f(-x)$; in particular, $k$ is even. This induces
$$\Sym^2 f: \Sym^2 E \to \Sym^2 \BP^1,$$
also of degree $k$. 

Consider the metrized line bundle $\CL = (\Sym^2 f)^*\mathbf{O(1, 1)}$ on the source $\Sym^2 E$, and let $H_{\CL}: (\Sym^2 E)(F) \to [1, \infty)$ be the induced (absolute, multiplicative) height.

Our eventual goal is to prove Proposition \ref{Theorem_Main_Degree2_CountingResult_Sym2_EC}, i.e. getting an asymptotic formula for
$$\#\{x \in (\Sym^2 E)(F): H_{\CL}(x) \leq B\}$$
as $B \to \infty$. As a first step, we want to reinterpret this point counting problem in terms of counting rational functions on $E$ instead.

If $D$ is a Weil divisor on $E$, we let $O_E(D)$ be the corresponding line bundle, and $H^0(E, O_E(D))$ be its global sections; this can also be identified with the rational functions $g$ on $E$ satisfying $\div(g) + D \ge 0$. Any such $g$ extends uniquely to a rational function $g_X$ on $X$.

\begin{prop}
\label{Proposition_Step_1_Rewrite_Counting_Rational_Functions}
\begin{align*}
& \#\{x \in (\Sym^2 E)(F): H_{\CL}(x) \leq B\} \\
= & \sum_{R \in E(F)} \#\left\{g_X \in (H^0(E, O_E(R + O)) - 0) / F^{\times}: \, \right.\\
& \left.\,\,\,\sum_{v \in M_{F, finite}} \log |g_X|_{X_v} + \sum_{v \in M_{F, \infty}} \int_{X(\overline{F_v})} \log |g_X|_v d \mu_v
\leq 
\frac{d}{k}\left(\log B - \log H(f(R)) - \log H(f(O))\right)\right\}
\end{align*}
\end{prop}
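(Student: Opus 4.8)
The plan is to unwind the definitions: $(\Sym^2 E)(F)$-points correspond to effective $F$-rational divisors of degree $2$ on $E$, and such a divisor, after fixing the sum $R \in E(F)$ of its two points (via the addition map $\Sym^2 E \to E$), is precisely the zero locus of a global section of $O_E(R+O)$, well-defined up to $F^\times$-scaling. So the first step is to establish the bijection
\[
(\Sym^2 E)(F) \;\longleftrightarrow\; \coprod_{R \in E(F)} \bigl(H^0(E, O_E(R+O)) - 0\bigr)/F^\times,
\]
where a pair $\{Q_1,Q_2\}$ with $Q_1+Q_2 = R$ maps to the section $g$ with $\div(g) + (R)+(O) = (Q_1)+(Q_2)$. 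Here $h^0(O_E(R+O)) = 2$ by Riemann--Roch, so each fiber is a $\mathbb{P}^1(F)$ worth of functions; I should note the degenerate case $R = O$ where $O_E(R+O) = O_E(2O)$ still has $h^0 = 2$, and the case where the divisor $(Q_1)+(Q_2)$ has the two points conjugate rather than both rational — the construction is insensitive to this since we only require $F$-rationality of the divisor.

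The second and main step is to translate the height condition $H_{\CL}(x) \le B$ into the stated archimedean-plus-finite inequality on $g_X$. Writing $x = \{Q_1, Q_2\}$, by the description of the standard height on $\Sym^2 \BP^1$ pulled back via $\Sym^2 f$ we have $H_{\CL}(x) = H(f(Q_1))\,H(f(Q_2))$. Taking logs and using Proposition \ref{Proposition_Height_As_Intersection} (height as intersection number on the regular model $X$), $\log H(f(Q_i)) = \frac{1}{[F(Q_i):\BQ]}\langle f^*\mathbf{O(1)}, D_{Q_i}\rangle$; combined over the two points this is $\frac{1}{d}\langle f^*\mathbf{O(1)}, D_{Q_1} + D_{Q_2}\rangle$ (the $[F(Q_i):\BQ]$ vs.\ $d$ bookkeeping works out whether the $Q_i$ are rational or conjugate). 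Now $D_{Q_1} + D_{Q_2}$ and $D_R + D_O$ differ, as divisors on $X$, by the principal divisor $\div(g_X)$ \emph{plus} a sum of vertical fibers coming from the extension of $g$ from $E$ to $X$ (exactly the formula recalled in Section \ref{Section_Weil_Divisors_On_Regular_Model}: $\div(g_X) = \sum a_i D_{Q_i} + \sum_v \val_{X_v}(g_X) X_v$). Intersecting $f^*\mathbf{O(1)}$ against this identity: the principal-divisor part contributes $\sum_{v \in M_{F,\infty}} \int_{X(\overline{F_v})} \log|g_X|_v\, c_1(\overline{f^*\mathbf{O(1)}_v})$ by Proposition \ref{Proposition_Intersection_Principal_Divisor}, which is $k\sum_v \int \log|g_X|_v\, d\mu_v$ since $d\mu_v = \frac1k c_1(f^*\mathbf{O(1)})$; and each vertical fiber $X_v$ contributes $\langle f^*\mathbf{O(1)}, X_v\rangle = \deg(f^*\mathbf{O(1)})\log|k(v)| = k\log|k(v)|$ by Proposition \ref{Proposition_Intersection_Vertical_Divisor}, so $\sum_v \val_{X_v}(g_X)\, k\log|k(v)| = k\sum_v \log|g_X|_{X_v}$ by the normalization $|a|_{X_v} = |a|_v$. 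Assembling,
\[
d\bigl(\log H(f(Q_1)) + \log H(f(Q_2))\bigr) = d\bigl(\log H(f(R)) + \log H(f(O))\bigr) + k\Bigl(\sum_{v<\infty}\log|g_X|_{X_v} + \sum_{v\mid\infty}\int \log|g_X|_v\, d\mu_v\Bigr),
\]
and rearranging $H_{\CL}(x) \le B$ gives precisely the displayed inequality with $\frac{d}{k}(\log B - \log H(f(R)) - \log H(f(O)))$ on the right.

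Finally I would check well-definedness: scaling $g$ by $\lambda \in F^\times$ changes $\sum_{v<\infty}\log|g_X|_{X_v} + \sum_{v\mid\infty}\int\log|g_X|_v\,d\mu_v$ by $\sum_{v<\infty}\log|\lambda|_v + \sum_{v\mid\infty}\log|\lambda|_v\int d\mu_v = \sum_{v<\infty}\log|\lambda|_v + \sum_{v\mid\infty}\log|\lambda|_v = \log\prod_v|\lambda|_v = 0$ by the product formula (using $\int_{X(\overline{F_v})} d\mu_v = 1$), so the counting set on the right genuinely depends only on the class of $g_X$ in $(H^0 - 0)/F^\times$. Summing the bijection over all $R \in E(F)$ and imposing the equivalent height cutoffs on each fiber yields the Proposition. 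The main obstacle I anticipate is the careful bookkeeping in the second step — tracking the $[F(Q_i):\BQ]$ versus $d$ normalizations when the two points of the divisor may be a conjugate pair, and making sure the vertical-fiber contribution is accounted exactly once (this is where Assumption \ref{Assumption_Sym2_EC} that each $X_v$ is irreducible keeps the formula clean); everything else is a formal manipulation of intersection numbers using the propositions already in hand.
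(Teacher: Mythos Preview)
Your proposal is correct and follows essentially the same route as the paper: fiber the count over $R \in E(F)$ via the addition map, invoke Riemann--Roch for the bijection with $\bigl(H^0(E, O_E(R+O))-0\bigr)/F^\times$, then translate the height inequality through Propositions \ref{Proposition_Height_As_Intersection}, \ref{Proposition_Intersection_Vertical_Divisor}, and \ref{Proposition_Intersection_Principal_Divisor} exactly as you outline. Your product-formula check of $F^\times$-invariance is a welcome sanity step the paper leaves implicit; just watch the sign in your vertical-fiber line (it should read $-\sum_v \val_{X_v}(g_X)\,k\log|k(v)| = k\sum_v \log|g_X|_{X_v}$, since $\log|g_X|_{X_v} = -\val_{X_v}(g_X)\log|k(v)|$), though your assembled identity is correct.
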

\begin{proof}
Our goal is to count
\begin{align*}
& \, \#\{x \in (\Sym^2 E)(F): H_{\CL}(x) \leq B\} \\
= & \, \#\{\{Q_1, Q_2\} \in (\Sym^2 E)(F): H(f(Q_1))H(f(Q_2)) \leq B\} \\
= & \, \#\{\{Q_1, Q_2\} \in (\Sym^2 E)(F): \log H(f(Q_1)) + \log H(f(Q_2)) \leq \log B\}
\end{align*}
We can first fix the sum $Q_1 + Q_2 = R \in E(F)$, then count the points in each fiber $(Sym^2 E)(F) \to E(F)$ over $R$. This gives 
\begin{align*}
& \#\{x \in (\Sym^2 E)(F): H_{\CL}(x) \leq B\} \\
= & \sum_{R \in E(F)} \#\{\{Q_1, Q_2\} \in (\Sym^2 E)(F): Q_1 + Q_2 = R, \, \log H(f(Q_1)) + \log H(f(Q_2)) \leq \log B\}
\end{align*}
For each $R \in E(F)$, we next try to rewrite the inner sum
\begin{equation}
\label{Equation_Sym2_EC_Step1}
\#\{\{Q_1, Q_2\} \in (\Sym^2 E)(F): Q_1 + Q_2 = R, \, \log H(f(Q_1)) + \log H(f(Q_2)) \leq \log B\}.
\end{equation}

By Riemann-Roch theorem, there is a bijection between
$$(H^0(E, O_E(R + O)) - 0) / F^{\times} \leftrightarrow \{\{Q_1, Q_2\} \in (\Sym^2 E)(F): Q_1 + Q_2 = R\}$$
as follows: for $g \in H^0(E, O_E(R + O)) - 0$, its divisor is of the shape
$$\div(g) = \frac{1}{[F(Q_1):F]} \left(Q_1 + Q_2\right) - R - O.$$ 
We can then send $g \to \{Q_1, Q_2\}$.

Hence to count the pair $\{Q_1, Q_2\} \in (\Sym^2 E)(F)$, we can count $g \in (H^0(E, O_E(R + O)) - 0) / F^{\times}$ instead. We need to express the condition
$$\log H(f(Q_1)) + \log H(f(Q_2)) \leq \log B$$
in terms of $g$, which we do next.

For each $g \in H^0(E, O_E(R + O))$, the unique extension $g_X$ on $X$ has divisor
$$
\div (g_X) = \frac{1}{[F(Q_1):F]} \left(D_{Q_1} + D_{Q_2}\right) - D_R - D_O + \sum_{v \in M_{F, finite}} \val_{X_v}(g_X) X_v.
$$

By Proposition \ref{Proposition_Height_As_Intersection}, 
$$\log H(f(Q_1)) + \log H(f(Q_2)) = \frac{1}{[F(Q_1):\BQ]} \langle f^*\mathbf{O(1)}, D_{Q_1} + D_{Q_2} \rangle.$$
Expressing $D_{Q_1} + D_{Q_2}$ in terms of $\div(g_X)$, we get
$$\log H(f(Q_1)) + \log H(f(Q_2)) = \frac{1}{d} \langle f^*\mathbf{O(1)}, D_R + D_O - \sum_{v \in M_{F, finite}} \val_{X_v}(g_X) X_v + \div(g_X) \rangle.$$
Using bilinearity of intersection number, and simplifying each term using Proposition \ref{Proposition_Height_As_Intersection}, Proposition \ref{Proposition_Intersection_Vertical_Divisor} and Proposition \ref{Proposition_Intersection_Principal_Divisor}, we get
\begin{align*}
&\log H(f(Q_1)) + \log H(f(Q_2)) \\
= & \log H(f(R)) + \log H(f(O))+ \frac{1}{d} \left(- \sum_{v \in M_{F, finite}} \val_{X_v}(g_X) \cdot k\log |k(v)| + \right.\\
& \,\,\,\left. \sum_{v \in M_{F, \infty}} \int_{X(\overline{F_v})} \log |g_X|_v c_1\left(\overline{f^*\mathbf{O(1)}_v}\right) \right) \\
= & \log H(f(R)) + \log H(f(O))+ \frac{k}{d} \left(\sum_{v \in M_{F, finite}} \log |g_X|_{X_v} + \sum_{v \in M_{F, \infty}} \int_{X(\overline{F_v})} \log |g_X|_v d \mu_v \right) \\
\end{align*}
The last equation comes from how we normalize $|\cdot|_{X_v}$ (Section \ref{Section_Weil_Divisors_On_Regular_Model}), and how $d\mu_v$ was defined (Example \ref{Example_Curvature_EC}).

Therefore the condition
$$\log H(f(Q_1)) + \log H(f(Q_2)) \leq \log B$$
is equivalent to 
$$\sum_{v \in M_{F, finite}} \log |g_X|_{X_v} + \sum_{v \in M_{F, \infty}} \int_{X(\overline{F_v})} \log |g_X|_v d \mu_v
\leq 
\frac{d}{k}\left(\log B - \log H(f(R)) - \log H(f(O))\right)
$$
and the result follows.
\end{proof}

\subsection{Finding good representative of $(H^0(E, O_E(R + O)) - 0) / F^{\times}$} We next try to pin down a specific representative for each equivalence class of $(H^0(E, O_E(R + O)) - 0) / F^{\times}$, so that the $F^{\times}$-action can be removed.

Fix a $g \in H^0(E, O_E(R + O)) - 0$, and let $g_X$ be its unique extension to $X$.

\textbf{Pinning down the finite places.} 

For finite places $v$, since $|\cdot|_{X_v}$ extends the valuation on $F_v$,
$$\sum_{v \in M_{F, finite}} \log |g_X|_{X_v}$$
equals the valuation of some fractional ideal of $F$. By Assumption \ref{Assumption_Sym2_EC}, $F$ has class number 1, so there exists some $a_g \in F^{\times} / O_F^{\times}$ such that
$$\sum_{v \in M_{F, finite}} \log |g_X|_{X_v} = \sum_{v \in M_{F, finite}} \log |a_g|_v.$$
In other words,
$$\sum_{v \in M_{F, finite}} \log |a_g^{-1} g_X|_{X_v} = 0.$$

We can hence impose the condition
$$\sum_{v \in M_{F, finite}} \log |g_X|_{X_v} = 0,$$
while we still have freedom to change $g_X$ by some factor in $O_F^{\times}$.

\textbf{Pinning down the infinite places.}

For infinite places $v$, we first introduce some notations as in Masser-Vaaler \cite{MasserVaaler}. They would be used in later geometry of numbers arguments as well.
\begin{itemize}
    \item Let $r_1, r_2$ be the number of real/complex embeddings of $F$, so that $r_1 + 2r_2 = d$. 
    \item Denote by $\Sigma \subset \BR^{r_1 + r_2}$ the plane 
    $$\Sigma := \{(x_1, \cdots, x_{r_1 + r_2}) \in \BR^{r_1 + r_2}: x_1 + \cdots + x_{r_1 + r_2} = 0\}.$$
    Also let
    $$\delta = (1, \cdots, 1, 2, \cdots, 2) \in \BR^{r_1 + r_2},$$
    where the first $r_1$ entries are 1, and the last $r_2$ entries are 2. 
    \item By Dirichlet's unit theorem, the log embedding $O_F^{\times} \to \BR^{r_1 + r_2}$ has image a lattice in $\Sigma$, and has kernel the roots of unity $\mu_F$ of $F$. 
    
    Let $\Omega$ be a fundamental domain of this lattice in $\Sigma$. For real $T > 0$, we denote the set $\Omega(T) \subset \BR^{r_1 + r_2}$ to be the vector sum
    $$\Omega(T) := \Omega + (-\infty, \log T] \delta.$$
\end{itemize}

For a place $v$, define the map $\ev_v: H^0(E, O_E(R + O)) \otimes_F F_v  - 0 \to \BR$ by
$$\ev_v(g_v) = \begin{cases}
\displaystyle \log |g_v|_{X_v} & \text{if $v$ is non-archimedean,} \\
\displaystyle \int_{X(\overline{F_v})} \log |g_v|_v d\mu_v & \text{if $v$ is archimedean.}
\end{cases}$$
Note that for $a \in O_F^{\times}$, we have
$$
\ev_v(ag_X) = \begin{cases}
    \ev_v(g_X) & \text{ if $v$ is non-archimedean,} \\
    \log |a|_v + \ev_v(g_X) & \text{ if $v$ is archimedean.}
\end{cases}
$$
Hence the condition 
$$g_X \in (H^0(E, O_E(R + O)) - 0) / O_F^{\times} \text{ and } \sum_{v \in M_{F, \infty}} \ev_v(g_X) \leq d \log T,$$ 
is equivalent to requiring 
$$g_X \in (H^0(E, O_E(R + O)) - 0) / \mu_F \text{ and } (\ev_v(g_X))_{v \in M_{F, \infty}} \in \Omega\left(T\right),$$
via multiplying $g_X$ by a suitable $a \in O_F^{\times}$. Since $\ev_v$ is $\mu_F$-invariant for any place $v$, we can remove the $\mu_F$-action directly.

We summarize how we pin down a good representative as follows.
\begin{prop}
\label{Proposition_Removing_Fcross_Action_1}
There is a bijection between
$$\#\{g_X \in (H^0(E, O_E(R + O)) - 0) / F^{\times}: 
\sum_{v \in M_{F, finite}} \log |g_X|_{X_v} + \sum_{v \in M_{F, \infty}} \int_{X(\overline{F_v})} \log |g_X|_v d \mu_v
\leq \log T\}$$
and
$$\frac{1}{|\mu_F|}\#\left\{g_X \in H^0(E, O_E(R + O)) - 0:
\sum_{v \in M_{F, finite}} \log |g_X|_{X_v} = 0, 
(\ev_v(g_X))_{v \in M_{F, \infty}} \in \Omega\left(T^{1/d}\right)
\right\}.$$
\end{prop}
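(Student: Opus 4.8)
The plan is to make precise the two pinning-down steps described in the paragraphs preceding the statement and package them as a single bijection. The proof is essentially bookkeeping with the $F^\times$-action on $H^0(E, O_E(R+O)) - 0$, broken into its non-archimedean and archimedean parts, so I will organize it in two stages corresponding to the finite and infinite places.

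\textbf{Setup and strategy.} Fix $R \in E(F)$ and write $V := H^0(E, O_E(R+O)) - 0$, on which $F^\times$ acts by scaling. The target quantity on the left counts $F^\times$-orbits in $V$ satisfying the height inequality $\sum_{v \text{ fin}} \log|g_X|_{X_v} + \sum_{v \text{ inf}} \ev_v(g_X) \le \log T$. The key point is that both terms of this inequality and the quantity $\sum_{v\text{ fin}}\log|g_X|_{X_v}$ transform in a controlled way under scaling: for $a \in F^\times$, by the product formula applied to $|\cdot|_{X_v}$ (which extends $|\cdot|_v$ on $F_v$ at finite $v$) and to $d\mu_v$ (which integrates to $1$, so $\ev_v(ag_X) = \log|a|_v + \ev_v(g_X)$ at infinite $v$), the full sum $\sum_{\text{all }v} $ is $F^\times$-invariant up to... no, it changes by $\sum_v \log|a|_v = 0$, i.e. it \emph{is} $F^\times$-invariant, so the height inequality is well-defined on orbits. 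The finite part alone changes by $\sum_{v\text{ fin}}\log|a|_v = -\sum_{v\text{ inf}}\log|a|_v$.

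\textbf{Stage 1: the finite places.} Given $g \in V$ with extension $g_X$, the divisor-valuation data $\bigl(\val_{X_v}(g_X)\bigr)_{v\text{ fin}}$ defines a fractional ideal of $F$; since $F$ has class number $1$ (Assumption \ref{Assumption_Sym2_EC}), it is principal, generated by some $a_g \in F^\times$, unique up to $O_F^\times$. Replacing $g_X$ by $a_g^{-1}g_X$ achieves $\sum_{v\text{ fin}}\log|g_X|_{X_v} = 0$, and this normalization is unique up to the residual action of $O_F^\times$. So the map $V/F^\times \to (\{g_X \in V : \sum_{v\text{ fin}}\log|g_X|_{X_v} = 0\})/O_F^\times$ is a bijection, and it carries the height inequality $\sum_{\text{all }v}(\cdots) \le \log T$ to $\sum_{v\text{ inf}}\ev_v(g_X) \le \log T$ (the finite sum now vanishing).

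\textbf{Stage 2: the infinite places and the unit group.} On the normalized set, $O_F^\times$ still acts: at finite $v$ it preserves $|g_X|_{X_v}$ (units are $v$-adic units) so the normalization survives, and at infinite $v$ it shifts $\ev_v(g_X)$ by $\log|a|_v$. Via the log embedding $O_F^\times \to \Sigma \subset \BR^{r_1+r_2}$ with kernel $\mu_F$ and image a lattice with fundamental domain $\Omega$, the vector $(\ev_v(g_X))_{v\text{ inf}}$ can be translated by a unique (modulo $\mu_F$) unit so that it lands in $\Omega + (-\infty, \log T^{1/d}]\delta = \Omega(T^{1/d})$; here the compatibility $\sum_{v\text{ inf}}\ev_v(g_X) \le d\log T^{1/d} = \log T$ matches because $\delta$ has coordinate sum $d = r_1 + 2r_2$ and $\sum_v \log|a|_v$ over archimedean $v$ is the coordinate sum of the log-embedding image scaled appropriately. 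Since each $\ev_v$ is $\mu_F$-invariant, quotienting by $O_F^\times$ and then counting with weight $\frac{1}{|\mu_F|}$ gives the stated right-hand side. I expect the only mildly delicate point to be the correct normalization of $\delta$ and $\Omega(T)$ — matching the factor $d$ between $\log T$ and $\log T^{1/d}$, and checking that the archimedean log-embedding conventions (the "$2$" for complex places) align with the definition $\ev_v(ag_X) = \log|a|_v + \ev_v(g_X)$ using the \emph{normalized} absolute values $|a|_v = |N_{F_v/\BQ_p}(a)|_p$. This is precisely the setup of Masser--Vaaler \cite{MasserVaaler}, and once the conventions are pinned the bijection is immediate.
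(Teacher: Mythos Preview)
Your proposal is correct and follows essentially the same two-stage argument as the paper: first use the class-number-one assumption to normalize the finite-place sum to zero (leaving a residual $O_F^\times$-action), then use the log embedding of $O_F^\times$ and the fundamental domain $\Omega$ to pin down a representative modulo $\mu_F$, with the $\delta$-direction encoding the height inequality. The paper's proof is in fact just the discussion immediately preceding the proposition, and your write-up matches it closely, with the added explicit verification via the product formula that the height inequality is well-defined on $F^\times$-orbits.
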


\begin{defn}
\label{Definition_S_Omega_R}
For each $R \in E(F)$, define
$$S_{\Omega, R}(T) \subset \prod_{v \in M_{F, \infty}} H^0(E, O_E(R + O)) \otimes_F F_v$$
as the subset
$$S_{\Omega, R}(T) := \left\{(g_v) \in \prod_{v \in M_{F, \infty}} H^0(E, O_E(R + O)) \otimes_F F_v: (\ev_v(g_v)) \in \Omega(T)\right\}.$$
\end{defn}
Note that $S_{\Omega, R}(T) = T S_{\Omega, R}(1)$. Hence we can restate Proposition \ref{Proposition_Removing_Fcross_Action_1} as
\begin{prop}
\label{Proposition_Removing_Fcross_Action_2}
There is a bijection between
$$\#\{g_X \in (H^0(E, O_E(R + O)) - 0) / F^{\times}: 
\sum_{v \in M_{F, finite}} \log |g_X|_{X_v} + \sum_{v \in M_{F, \infty}} \int_{X(\overline{F_v})} \log |g_X|_v d \mu_v
\leq \log T\}$$
and
$$\frac{1}{|\mu_F|}\#\left\{g_X \in (H^0(E, O_E(R + O)) - 0):
\sum_{v \in M_{F, finite}} \log |g_X|_{X_v} = 0, 
(g_X)_{v \in M_{F, \infty}} \in T^{1/d} S_{\Omega, R}(1)
\right\}.$$
Here $g_X \in H^0(E, O_E(R + O))$ is embedded diagonally into $\prod_{v \in M_{F, \infty}} (H^0(E, O_E(R + O)) \otimes_F F_v)$.
\end{prop}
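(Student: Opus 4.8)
The plan is to deduce Proposition \ref{Proposition_Removing_Fcross_Action_2} from Proposition \ref{Proposition_Removing_Fcross_Action_1}, which has already been established by the preceding discussion (class number one to clear the finite places via $a_g$, Dirichlet's unit theorem together with the fundamental domain $\Omega$ to clear the $O_F^\times$-action, and the factor $1/|\mu_F|$ for the residual roots of unity). Granting that, the only thing left to verify is the homogeneity assertion $S_{\Omega, R}(T) = T\, S_{\Omega, R}(1)$, after which the two propositions are literally the same statement written in two ways.

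First I would record how the maps $\ev_v$ transform under multiplication by a positive real scalar $T$. The finite places play no role here (the condition $\sum_{v \in M_{F, finite}} \log|g_X|_{X_v} = 0$ is carried along untouched), so it suffices to treat $v \in M_{F, \infty}$. For $g_v \in H^0(E, O_E(R+O)) \otimes_F F_v - 0$, the fiberwise norm scales by $|T|_v$, so $\log|Tg_v|_v = \log|T|_v + \log|g_v|_v$ pointwise on $X(\overline{F_v})$, and integrating against $d\mu_v$ (which has total mass $1$ by Example \ref{Example_Curvature_EC}) gives
$$\ev_v(Tg_v) = \log|T|_v + \ev_v(g_v).$$
The one point demanding care is the normalization of absolute values: with the paper's conventions $|T|_v = T$ at a real place and $|T|_v = T^2$ at a complex place, so that the vector $(\log|T|_v)_{v \in M_{F,\infty}}$ equals $(\log T)\,\delta$ in the coordinates of $\BR^{r_1+r_2}$, which is precisely the vector $\delta$ used in the definition $\Omega(T) = \Omega + (-\infty, \log T]\delta$.

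Combining these two facts, $(\ev_v(Tg_v))_v \in \Omega(T)$ if and only if $(\ev_v(g_v))_v \in \Omega(1)$, i.e. $S_{\Omega,R}(T) = T\, S_{\Omega,R}(1)$. Applying this with $T$ replaced by $T^{1/d}$, the hypothesis ``$(\ev_v(g_X))_{v\in M_{F,\infty}} \in \Omega(T^{1/d})$'' appearing in Proposition \ref{Proposition_Removing_Fcross_Action_1} is, by the very definition of $S_{\Omega,R}$, equivalent to ``$(g_X)_{v\in M_{F,\infty}} \in S_{\Omega,R}(T^{1/d}) = T^{1/d} S_{\Omega,R}(1)$'' for the diagonal embedding of $g_X$. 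Substituting this reformulated condition into Proposition \ref{Proposition_Removing_Fcross_Action_1} yields Proposition \ref{Proposition_Removing_Fcross_Action_2} verbatim. I do not expect any genuine obstacle in this step — the mathematical content sits in Proposition \ref{Proposition_Removing_Fcross_Action_1}, and what remains is a homogeneity bookkeeping whose only subtlety is keeping track of the factor $2$ at the complex places so that the scaling vector matches $\delta$.
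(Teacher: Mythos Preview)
Your proposal is correct and follows exactly the paper's approach: the paper simply states ``Note that $S_{\Omega, R}(T) = T\, S_{\Omega, R}(1)$. Hence we can restate Proposition \ref{Proposition_Removing_Fcross_Action_1} as'' Proposition \ref{Proposition_Removing_Fcross_Action_2}, without further justification. Your argument supplies the verification of the homogeneity (including the careful tracking of the factor $2$ at complex places so that the scaling vector is $(\log T)\,\delta$), which the paper leaves implicit.
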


Applying this proposition with 
$$T = \exp\left(\frac{d}{k}(\log B - \log H(f(R)) - \log H(f(O)))\right),$$ 
together with Proposition \ref{Proposition_Step_1_Rewrite_Counting_Rational_Functions}, we get
\begin{cor}
\label{Corollary_Step_2_Remove_Fcross_Action}
\begin{align*}
& \#\{x \in (\Sym^2 E)(F): H_{\CL}(x) \leq B\} \\
= & \frac{1}{|\mu_F|} \sum_{R \in E(F)} \#\left\{g_X \in H^0(E, O_E(R + O)) - 0: \sum_{v \in M_{F, finite}} \log |g_X|_{X_v} = 0,\, \right.\\
& \left.\,\,\,(g_X)_{v \in M_{F, \infty}} \in \exp\left(\frac{1}{k}\left(\log B - \log H(f(R)) - \log H(f(O))\right)\right) S_{\Omega, R}(1)\right\}.
\end{align*}
Here $g_X \in H^0(E, O_E(R + O))$ is embedded diagonally into $\prod_{v \in M_{F, \infty}} (H^0(E, O_E(R + O)) \otimes_F F_v)$.
\end{cor}

\subsection{Reinterpreting counting points on $\Sym^2 E$ in terms of $H^0(X, O_X(D_R + D_O))$.}
We can repackage the constraint on finite places
$$\sum_{v \in M_{F, finite}} \log |g_X|_{X_v} = 0$$
using $H^0(X, O_X(D_R + D_O))$ and M\"obius inversion. The end result will be recorded in Corollary \ref{Corollary_Step_3_Reframe_Cohom_X}.

\begin{prop}
\label{Proposition_H0E_vs_H0X}
There is a bijection
$$H^0(X, O_X(D_R + D_O)) \cong \left\{g \in H^0(E, O_E(R + O)): \log |g_X|_{X_v} \le 0 \text{ for all non-arch place $v$}\right\}.$$
\end{prop}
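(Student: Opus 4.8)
The plan is to identify both sides as subspaces of the function field $K(X) = K(E)$ and show that the defining conditions coincide. Recall that $O_X(D_R + D_O)$ is the line bundle associated to the effective horizontal Weil divisor $D_R + D_O$ on $X$, so its global sections are exactly the rational functions $g_X$ on $X$ with $\div(g_X) + D_R + D_O \ge 0$ as Weil divisors on $X$. Since $X$ is regular and two-dimensional, I would first invoke the dictionary from Section \ref{Section_Weil_Divisors_On_Regular_Model}: a rational function $g$ on $E$ extends uniquely to a rational function $g_X$ on $X$, and if $\div(g) = \sum_i a_i Q_i$ on $E$ then $\div(g_X) = \sum_i a_i D_{Q_i} + \sum_{v \in M_{F,finite}} \val_{X_v}(g_X)\, X_v$. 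Thus the horizontal part of $\div(g_X)$ is precisely the closure of $\div(g)$, while the vertical part records the valuations $\val_{X_v}(g_X) = \log|g_X|_{X_v}$ up to the normalization fixed in Section \ref{Section_Weil_Divisors_On_Regular_Model} (so that $\val_{X_v}(g_X) \le 0$ iff $\log|g_X|_{X_v} \le 0$; here I should be careful about the sign convention, since $|\cdot|_{X_v}$ is a discrete absolute value and larger valuation means more vanishing).

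The key step is then to split the condition $\div(g_X) + D_R + D_O \ge 0$ into its horizontal and vertical components, which live in disjoint parts of the group of Weil divisors and so can be checked independently. The horizontal inequality reads $(\text{closure of } \div(g)) + D_R + D_O \ge 0$, and since $D_R + D_O$ is the closure of the divisor $R + O$ on $E$ and taking closures is order-preserving and additive on horizontal divisors, this is equivalent to $\div(g) + R + O \ge 0$ on $E$, i.e. $g \in H^0(E, O_E(R+O))$. The vertical inequality reads $\sum_v \val_{X_v}(g_X)\, X_v \ge 0$; because the $X_v$ are distinct prime divisors (using Assumption \ref{Assumption_Sym2_EC} that each fiber is irreducible, so there is exactly one $X_v$ per finite place), this holds iff $\val_{X_v}(g_X) \ge 0$ for every finite place $v$, equivalently $\log|g_X|_{X_v} \le 0$ for all finite $v$ with the sign convention in force. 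The bijection $g_X \leftrightarrow g$ is the restriction-to-generic-fiber map (with inverse the unique extension), which is clearly a $K$-linear isomorphism of function fields and hence identifies the two stated subspaces.

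The main obstacle I anticipate is purely bookkeeping around sign and normalization conventions: the paper defines $|\cdot|_{X_v}$ as a \emph{discrete absolute value} normalized by $|a|_{X_v} = |a|_v$ for $a \in F$, and one must check that ``$\log|g_X|_{X_v} \le 0$'' corresponds to $\val_{X_v}(g_X) \ge 0$ (i.e. $g_X$ is regular, not having a pole, along $X_v$), which is the condition making the vertical part of $\div(g_X) + D_R + D_O$ effective. Once the convention is pinned down this is immediate. A secondary point worth a sentence is that $D_R + D_O$ is genuinely the Zariski closure of $R + O$ and contains no vertical components — true because it is defined as the closure of a horizontal divisor — so that adding it does not interfere with the vertical inequality. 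No deeper input (Riemann--Roch, intersection theory, Arakelov geometry) is needed here; this proposition is the elementary translation lemma that lets one later use $H^0(X, O_X(D_R + D_O))$ in place of the awkward condition $\sum_{v} \log|g_X|_{X_v} = 0$ via a M\"obius inversion, as announced before Corollary \ref{Corollary_Step_3_Reframe_Cohom_X}.
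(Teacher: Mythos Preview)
Your proposal is correct and follows essentially the same approach as the paper: both identify $H^0(X, O_X(D_R+D_O))$ inside $H^0(E, O_E(R+O))$ and then observe that the extra condition cutting it out is precisely $\val_{X_v}(g_X) \ge 0$, i.e.\ $\log|g_X|_{X_v} \le 0$, for each finite $v$. The only cosmetic difference is that the paper phrases the injection via ``cohomology commutes with flat base change'' plus torsion-freeness, whereas you work directly with the divisor inequality $\div(g_X)+D_R+D_O\ge 0$ split into horizontal and vertical parts; both arguments land on the same verification.
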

\begin{proof}
Since cohomology commutes with flat base change, we have 
$$H^0(X, O_X(D_R + D_O)) \otimes_{O_F} F \cong H^0(E, O_E(R + O)).$$
Since $X \to \Spec(O_F)$ is flat, so $H^0(X, O_X(D_R + D_O))$ is torsion-free, thus the natural map 
$$H^0(X, O_X(D_R + D_O)) \to H^0(X, O_X(D_R + D_O)) \otimes_{O_F} F \cong H^0(E, O_E(R + O))$$
is injective. For any non-archimedean place $v$, by definition $g_X \in H^0(X, O_X(D_R + D_O))$ has no poles along $X_v$. This is equivalent to $\val_{X_v}(g_X) \ge 0$, or equivalently, $\log |g_X|_{X_v} \leq 0$, as desired.
\end{proof}

\begin{prop}
\begin{align*}
& \#\{g \in H^0(E, O_E(R + O)) - 0: \sum_{v \in M_{F, finite}} \log |g_X|_{X_v} = 0, (g_X)_{v \in M_{F, \infty}} \in S_{\Omega, R}(T)\} \\
= & \sum_{(a) \subset O_F} \mu((a)) \#\left\{g_X \in H^0(X, O_X(D_R + D_O)) - 0: (g_X)_{v \in M_{F, \infty}} \in S_{\Omega, R}\left(\frac{T}{|N_{F/\BQ}(a)|^{1/d}}\right)\right\}.
\end{align*}
Here $\mu(I)$ is the M\"obius function on a fractional ideal $I$ of $F$.
\end{prop}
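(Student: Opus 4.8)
The plan is to realize the left-hand side as a count of \emph{primitive} integral sections and recover the general count by a M\"obius inversion over ideals of $O_F$. First I would attach to each nonzero $g_X \in H^0(X, O_X(D_R + D_O))$ its content ideal: since $D_R + D_O$ is horizontal, the description of vertical divisors in Section \ref{Section_Weil_Divisors_On_Regular_Model} gives $\val_{X_v}(g_X) \ge 0$ for every non-archimedean $v$, so
$$\mathfrak{c}(g_X) := \prod_{v \nmid \infty} \mathfrak{p}_v^{\val_{X_v}(g_X)} \subseteq O_F$$
is a nonzero integral ideal, and $\mathfrak{c}(g_X) = O_F$ exactly when $\log|g_X|_{X_v} = 0$ for all non-archimedean $v$. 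By Proposition \ref{Proposition_H0E_vs_H0X}, such $g_X$ are precisely the $g \in H^0(E, O_E(R+O)) - 0$ with $\sum_{v \nmid \infty} \log|g_X|_{X_v} = 0$ occurring on the left-hand side, so the left side equals
$$N_R(T) := \#\{g_X \in H^0(X, O_X(D_R + D_O)) - 0 : \mathfrak{c}(g_X) = O_F,\ (g_X)_{v \in M_{F,\infty}} \in S_{\Omega,R}(T)\}.$$

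The local-to-global divisibility dictionary is the following: for a nonzero ideal $\mathfrak{a} = (a)$ (principal, since the class number of $F$ is $1$ by Assumption \ref{Assumption_Sym2_EC}), the divisor $\div(a)$ on $X$ is purely vertical, equal to $\sum_{v \nmid \infty} v(a)\,X_v$, so $g_X/a \in H^0(X, O_X(D_R + D_O))$ if and only if $\val_{X_v}(g_X) \ge v(a)$ for all non-archimedean $v$, i.e. if and only if $\mathfrak{a} \mid \mathfrak{c}(g_X)$. Thus multiplication by $a$ is a bijection from $H^0(X, O_X(D_R + D_O)) - 0$ onto $\{g_X : \mathfrak{a} \mid \mathfrak{c}(g_X)\}$. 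I then track the archimedean condition through this bijection: $\ev_v(ah) = \ev_v(h) + \log|a|_v$ for each $v \in M_{F,\infty}$, and by the product formula the vector $(\log|a|_v)_{v \in M_{F,\infty}}$ decomposes as $\sigma_a + \tfrac{1}{d}\log|N_{F/\BQ}(a)|\,\delta$ with $\sigma_a \in \Sigma$, so $(ah)_{v \in M_{F,\infty}} \in S_{\Omega,R}(T)$ is equivalent to $(\ev_v(h))_{v \in M_{F,\infty}}$ lying in $(\Omega - \sigma_a) + (-\infty,\ \log T - \tfrac{1}{d}\log|N_{F/\BQ}(a)|\,]\,\delta$, i.e. in the region $S_{\Omega',R}(T/|N_{F/\BQ}(a)|^{1/d})$ for the translated fundamental domain $\Omega' = \Omega - \sigma_a$.

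What makes the answer depend on $\mathfrak{a}$ only through $|N_{F/\BQ}(a)|$ is that $H^0(X, O_X(D_R + D_O))$ is an $O_F$-module stable under multiplication by $O_F^\times$, and $h \mapsto uh$ shifts $(\ev_v(h))_{v \in M_{F,\infty}}$ by the logarithmic embedding of $u$, which sweeps out the full unit lattice $\Lambda \subseteq \Sigma$. A standard cut-and-paste argument then shows that, for a discrete point set carrying such a $\Lambda$-action, the number of points whose $\ev$-vector lies in $(\text{fundamental domain of }\Lambda) + (-\infty, \log T']\,\delta$ is independent of the chosen fundamental domain; hence $\#\{g_X : \mathfrak{a} \mid \mathfrak{c}(g_X),\ (g_X)_{v \in M_{F,\infty}} \in S_{\Omega,R}(T)\} = M_R\!\left(T/|N_{F/\BQ}(a)|^{1/d}\right)$, where $M_R(T') := \#\{h \in H^0(X, O_X(D_R+D_O)) - 0 : (h)_{v \in M_{F,\infty}} \in S_{\Omega,R}(T')\}$ and this count does not depend on the generator $a$ of $\mathfrak{a}$. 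Finally, using $\mathbf{1}[\mathfrak{c}(g_X) = O_F] = \sum_{\mathfrak{a} \mid \mathfrak{c}(g_X)} \mu(\mathfrak{a})$, summing over the set counted by $N_R(T)$ — which is finite, since $S_{\Omega,R}(T)$ bounds the archimedean size of $g_X$ up to a factor $O(T)$ and only finitely many lattice points cluster near the origin — and interchanging the two finite sums,
$$N_R(T) = \sum_{\mathfrak{a}} \mu(\mathfrak{a})\, \#\{g_X : \mathfrak{a} \mid \mathfrak{c}(g_X),\ (g_X)_{v \in M_{F,\infty}} \in S_{\Omega,R}(T)\} = \sum_{(a) \subseteq O_F} \mu((a))\, M_R\!\left(T/|N_{F/\BQ}(a)|^{1/d}\right),$$
which is the asserted identity.

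I expect the main obstacle to be the bookkeeping in the last two steps: verifying that clearing the principal ideal $(a)$ is exactly multiplicative on content (so that $\mathfrak{a} \mid \mathfrak{c}(g_X)$ really coincides with integrality of $g_X/a$ on \emph{all} of $X$ — here the irreducibility of the vertical fibers from Assumption \ref{Assumption_Sym2_EC} and class number one are both used), and establishing that $M_R(T')$ is genuinely independent of the choice of fundamental domain $\Omega$. The latter is precisely why the preceding argument passed from $H^0(E, O_E(R+O))$, merely an $F$-vector space, to $H^0(X, O_X(D_R+D_O))$, an $O_F$-module: the $O_F^\times$-symmetry is what collapses the dependence on the representative $a$ down to its norm.
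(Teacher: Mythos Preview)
Your approach is the same M\"obius-inversion-on-content argument the paper uses, but you are more careful at one step where the paper is imprecise. The paper asserts the pointwise equivalence
\[
(g_X)_{v\mid\infty}\in S_{\Omega,R}(T)\ \Longleftrightarrow\ (a^{-1}g_X)_{v\mid\infty}\in S_{\Omega,R}\!\left(T/|N_{F/\BQ}(a)|^{1/d}\right),
\]
checking only the scalar condition $\sum_{v\mid\infty}\ev_v\le d\log T$. As you correctly note, the vector $(\log|a|_v)_{v\mid\infty}$ has a nonzero $\Sigma$-component $\sigma_a$ in general, so the substitution really lands in $S_{\Omega-\sigma_a,R}(T/|N(a)|^{1/d})$ rather than $S_{\Omega,R}$. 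Your cut-and-paste argument via the free $O_F^{\times}$-action on $H^0(X,O_X(D_R+D_O))-0$ is exactly what justifies that the \emph{counts} nonetheless coincide (each $O_F^{\times}$-orbit contributes the same number of points for any translate of the fundamental domain), and the paper omits this.

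There is one genuine gap in your identification of the left-hand side with $N_R(T)$. You claim that for $g\in H^0(E,O_E(R+O))-0$ the condition $\sum_{v\nmid\infty}\log|g_X|_{X_v}=0$ is equivalent to $\mathfrak{c}(g_X)=O_F$; but the sum vanishing only says the fractional content ideal has norm $1$, which does not force it to be trivial (take $\mathfrak{p}_1\mathfrak{p}_2^{-1}$ for two primes of equal residue degree over a split rational prime). The paper has exactly the same sloppiness: tracing back through Proposition~\ref{Proposition_Removing_Fcross_Action_1} and the ``pinning down the finite places'' paragraph, the normalization that makes the $F^{\times}$-orbit count work is $(a_g)=\mathfrak{a}(g_X)$, i.e.\ $\log|g_X|_{X_v}=0$ for \emph{every} non-archimedean $v$, not merely the sum. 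Read the hypothesis that way and both your proof and the paper's go through; your $\Omega$-independence argument then fills a real lacuna in the paper's presentation.
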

\begin{proof}
By M\"obius inversion, counting
$$\#\{g \in H^0(E, O_E(R + O)) - 0: \sum_{v \in M_{F, finite}} \log |g_X|_{X_v} = 0\}$$
is equivalent to
$$\sum_{I \subset O_F} \mu(I) \#\{g \in H^0(E, O_E(R + O)) - 0: \log |g_X|_{X_v} \le \log |I|_v \text{ for all non-arch place $v$}\}.$$
Here $I$ is an integral ideal of $O_F$, and $\mu(I)$ is the M\"obius function on fractional ideals of $F$. The absolute value $|\cdot|_v$ on $F$ extends naturally to fractional ideals of $F$, hence we can make sense of $|I|_v$. 

By Assumption \ref{Assumption_Sym2_EC}, $F$ has class number 1, so all ideals are principal. Therefore,
\begin{align*}
& \#\left\{g \in H^0(E, O_E(R + O)) - 0: \sum_{v \in M_{F, finite}} \log |g_X|_{X_v} = 0, (g_X)_{v \in M_{F, \infty}} \in S_{\Omega, R}(T)\right\} \\
= & \sum_{(a) \subset O_F} \mu((a)) \#\left\{g \in H^0(E, O_E(R + O)) - 0: \right.\\
&\,\,\,\left.\log |g_X|_{X_v} \le \log |a|_v \text{ for all non-arch place $v$}, (g_X)_{v \in M_{F, \infty}} \in S_{\Omega, R}(T)\right\}
\end{align*}
Note that 
$$\sum_{v \in M_{F, \infty}} \ev_v (g_X) \leq d \log T,$$
is equivalent to
$$\sum_{v \in M_{F, \infty}} \ev_v (a^{-1}g_X) 
\leq d \log T - \sum_{v \in M_{\infty}} \log |a|_v
= d \log \left(\frac{T}{|N_{F/\BQ}(a)|^{1/d}}\right).
$$
So 
$$
(g_X)_{v \in M_{F, \infty}} \in S_{\Omega, R}(T) 
\Leftrightarrow
(a^{-1}g_X)_{v \in M_{F, \infty}} \in S_{\Omega, R}\left(\frac{T}{|N_{F/\BQ}(a)|^{1/d}}\right).
$$
Hence,
\begin{align*}
& \#\left\{g \in H^0(E, O_E(R + O)) - 0: \sum_{v \in M_{F, finite}} \log |g_X|_{X_v} = 0, (g_X)_{v \in M_{F, \infty}} \in S_{\Omega, R}(T)\right\} \\
= & \sum_{(a) \subset O_F} \mu((a)) \#\left\{g \in H^0(E, O_E(R + O)) - 0: \right.\\
&\,\,\,\left.\sum_{v \in M_{F, finite}} \log |a^{-1}g_X|_{X_v} \le 0,\,(a^{-1}g_X)_{v \in M_{F, \infty}} \in S_{\Omega, R}\left(\frac{T}{|N_{F/\BQ}(a)|^{1/d}}\right)\right\} \\
= & \sum_{(a) \subset O_F} \mu((a)) \#\left\{g \in H^0(E, O_E(R + O)) - 0: \right.\\
&\,\,\,\left.\sum_{v \in M_{F, finite}} \log |g_X|_{X_v} \le 0,\,(g_X)_{v \in M_{F, \infty}} \in S_{\Omega, R}\left(\frac{T}{|N_{F/\BQ}(a)|^{1/d}}\right)\right\} \\
\end{align*}
The result now follows by repackaging the constraint on finite place via Proposition \ref{Proposition_H0E_vs_H0X}.
\end{proof}

Combining the last proposition with Corollary \ref{Corollary_Step_2_Remove_Fcross_Action}, we have
\begin{cor}
\label{Corollary_Step_3_Reframe_Cohom_X}
\begin{align*}
& \#\{x \in (\Sym^2 E)(F): H_{\CL}(x) \leq B\} \\
= & \frac{1}{|\mu_F|} \sum_{R \in E(F)} \sum_{(a) \subset O_F} \mu((a)) \#\left\{g_X \in H^0(X, O_X(D_R + D_O)) - 0: \right.\\
& \left.\,\,\, (g_X)_{v \in M_{F, \infty}} \in \exp\left(\frac{1}{k}\left(\log B - \log H(f(R)) - \log H(f(O))\right) - \frac{1}{d} \log |N_{F/\BQ}(a)|\right) S_{\Omega, R}(1)\right\}
\end{align*}
\end{cor}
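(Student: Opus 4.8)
The plan is to obtain Corollary \ref{Corollary_Step_3_Reframe_Cohom_X} by directly substituting the M\"obius‑inversion proposition (stated immediately above it) into Corollary \ref{Corollary_Step_2_Remove_Fcross_Action}. First I would record that the archimedean condition appearing in Corollary \ref{Corollary_Step_2_Remove_Fcross_Action} is exactly $(g_X)_{v \in M_{F, \infty}} \in S_{\Omega, R}(T)$ for
\[
T := \exp\!\left(\tfrac{1}{k}\left(\log B - \log H(f(R)) - \log H(f(O))\right)\right),
\]
using the homogeneity $S_{\Omega, R}(T) = T\, S_{\Omega, R}(1)$ recorded after Definition \ref{Definition_S_Omega_R}; thus each inner cardinality there has precisely the shape to which the M\"obius proposition applies.

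Next, for each fixed $R \in E(F)$ I would replace that cardinality by its M\"obius expansion $\sum_{(a) \subset O_F} \mu((a))\, \#\{g_X \in H^0(X, O_X(D_R + D_O)) - 0 : (g_X)_{v \in M_{F, \infty}} \in S_{\Omega,R}(T\,|N_{F/\BQ}(a)|^{-1/d})\}$, and pull the (finite) sum over ideals $(a)$ outside the sum over $R$. The remaining bookkeeping is to re‑expand the scale: using $S_{\Omega,R}(T') = T' S_{\Omega,R}(1)$ with $T' = T\,|N_{F/\BQ}(a)|^{-1/d}$, and $\log T' = \tfrac{1}{k}(\log B - \log H(f(R)) - \log H(f(O))) - \tfrac{1}{d}\log|N_{F/\BQ}(a)|$, the membership condition becomes exactly the one displayed in the statement, which finishes the derivation.

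Since both ingredients are already established, there is no substantive obstacle; the only points deserving a word of care are the legitimacy of interchanging the $R$‑ and $(a)$‑summations and of the M\"obius inversion itself. Both follow from the fact that, for fixed $B$ and $R$, only finitely many ideals $(a)$ contribute a nonempty set: once $|N_{F/\BQ}(a)|$ is large, the region $S_{\Omega,R}(T\,|N_{F/\BQ}(a)|^{-1/d})$ forces every coordinate $\ev_v$ of a candidate $g_X$ uniformly toward $-\infty$, hence $g_X$ into an arbitrarily small neighborhood of the origin, which the discrete module $H^0(X, O_X(D_R + D_O))$ meets only at $0$ (excluded); together with the Northcott‑type finiteness already underlying the count in Corollary \ref{Corollary_Step_2_Remove_Fcross_Action}, all the identities hold term by term, completing the proof.
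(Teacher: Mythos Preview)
Your proposal is correct and follows exactly the paper's approach: the paper obtains the corollary in one line by ``combining the last proposition with Corollary \ref{Corollary_Step_2_Remove_Fcross_Action},'' which is precisely your substitution of the M\"obius-inversion proposition into the inner summand of Corollary \ref{Corollary_Step_2_Remove_Fcross_Action} with $T = \exp\bigl(\tfrac{1}{k}(\log B - \log H(f(R)) - \log H(f(O)))\bigr)$. Your additional remarks on finiteness are a helpful elaboration but not needed for the formal identity.
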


We have now transformed the counting problem on $\Sym^2 E$ to counting points of the lattice
$$H^0(X, O_X(D_R + D_O)) \into H^0(X, O_X(D_R + D_O)) \otimes_{\BZ} \BR \cong \prod_{v \in M_{F, \infty}} H^0(E, O_E(R + O)) \otimes_F F_v,$$
lying inside the homogeneous region 
$$\exp\left(\frac{1}{k}\left(\log B - \log H(f(R)) - \log H(f(O))\right) - \frac{1}{d} \log |N_{F/\BQ}(a)|\right) S_{\Omega, R}(1),$$
as $B \to \infty$, for each $R \in E(F)$. This can be approached by geometry of numbers. 

In the next few sections, we will develop the basic properties for this lattice, such as the first successive minimum and the covolume.

\subsection{The first successive minimum of the lattice $H^0(X, O_X(D_R + D_O))$}
For each $R \in E(F)$, we seek to understand the lattice
$$H^0(X, O_X(D_R + D_O)) \into H^0(X, O_X(D_R + D_O)) \otimes_{\BZ} \BR \cong \prod_{v \in M_{F, \infty}} H^0(E, O_E(R + O)) \otimes_F F_v.$$
In this section, we first define a metric on the ambient space $H^0(X, O_X(D_R + D_O) \otimes_{\BZ} \BR$, then prove a lower bound of the first successive minimum of the lattice.

The line bundle $O_E(R + O)$ has a canonical section $1_{R+O}$. For an archimedean place $v$, by Proposition \ref{Proposition_Metric_Determined_By_Curvature} there is a unique metric $\|\cdot\|_v$ on $O_{E(\overline{F_v})}(R + O)$ such that the first Chern form of the metric equals $d\mu_v$, and 
$$\int_{E(\overline{F_v})} \log \|1_{R+O}\|_v d\mu_v = 0.$$
\begin{defn}[Metric on $H^0(X, O_X(D_R + D_O) \otimes_{\BZ} \BR$]
\label{Definition_L2_metric_H0X}
For $g_v \in H^0(E, O_E(R + O)) \otimes_F F_v$, we define the $L^2$-norm
$$\|g_v\|_2^2 = \int_{E(\overline{F_v})} |g_v|^2 \|1_{R+O}\|_v^2 d \mu_v.$$

The $L^2$-norm on $H^0(X, O_X(D_R + D_O) \otimes_{\BZ} \BR \cong \prod_{v \in M_{F, \infty}} H^0(E, O_E(R + O)) \otimes_F F_v$ can now be defined by
$$\|(g_v)_{v \in M_{F, \infty}}\|_2^2 := \sum_{v \in M_{F, \infty}} \|g_v\|_2^2 = \sum_{v \in M_{F, \infty}} \int_{E(\overline{F_v})} |g_v|^2 \|1_{R+O}\|_v^2 d \mu_v.$$
\end{defn}

Recall that the first successive minimum $\lambda_1(\Lambda)$ of a lattice $\Lambda$ is defined to be the length of shortest nonzero vector of the lattice.

\begin{prop}
\label{Proposition_First_Successive_Minimum_H0}
The first successive minimum of $H^0(X, O_X(D_R + D_O))$ satisfies
$$\lambda_1(H^0(X, O_X(D_R + D_O))) \gg_{E, f, F} \frac{1}{(H(f(R))H(f(O)))^{1/k}}.$$
\end{prop}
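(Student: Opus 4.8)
The goal is a lower bound on the first successive minimum $\lambda_1(H^0(X, O_X(D_R+D_O)))$ that depends on $R$ only through the height factor $(H(f(R))H(f(O)))^{1/k}$. The natural approach is to translate ``$g_X$ is a nonzero element of the lattice'' into an intersection-theoretic statement and then run an arithmetic Riemann-Roch / slope-type inequality. So first I would take an arbitrary nonzero $g_X \in H^0(X, O_X(D_R+D_O))$. By Proposition \ref{Proposition_H0E_vs_H0X}, $g_X$ corresponds to a rational function $g$ on $E$ with $\div(g) + (R+O) \geq 0$ and $\log|g_X|_{X_v} \leq 0$ at all finite places; its divisor on $E$ has the shape $\div(g) = \frac{1}{[F(Q_1):F]}(Q_1 + Q_2) - R - O$ for the pair $\{Q_1, Q_2\}$ with $Q_1 + Q_2 = R$ (as in the proof of Proposition \ref{Proposition_Step_1_Rewrite_Counting_Rational_Functions}). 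The key identity I would reuse verbatim from that proof is
$$\log H(f(Q_1)) + \log H(f(Q_2)) = \log H(f(R)) + \log H(f(O)) + \frac{k}{d}\left(\sum_{v \in M_{F,finite}} \log|g_X|_{X_v} + \sum_{v\in M_{F,\infty}} \ev_v(g_X)\right).$$
Since the left side is $\geq 0$ (heights are $\geq 1$) and the finite-place sum is $\leq 0$ by the integrality of $g_X$, this already forces
$$\sum_{v \in M_{F,\infty}} \ev_v(g_X) \;\geq\; -\frac{d}{k}\bigl(\log H(f(R)) + \log H(f(O))\bigr),$$
i.e. the archimedean ``size'' of $g_X$, measured by $\sum_v \int \log|g_X|_v\, d\mu_v$, cannot be too negative.

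\textbf{From the archimedean bound to the $L^2$-norm.} The second step is to convert the lower bound on $\sum_v \ev_v(g_X)$ into a lower bound on the $L^2$-norm $\|g_X\|_2$ of Definition \ref{Definition_L2_metric_H0X}. Here I would use a Jensen-type inequality: by concavity of $\log$ and the fact that $d\mu_v$ is a probability measure with $\int \log\|1_{R+O}\|_v\, d\mu_v = 0$ (the defining normalization of the metric $\|\cdot\|_v$ just before Definition \ref{Definition_L2_metric_H0X}), we get
$$\tfrac{1}{2}\log\left(\int_{E(\overline{F_v})} |g_v|^2 \|1_{R+O}\|_v^2\, d\mu_v\right) \;\geq\; \int_{E(\overline{F_v})} \log\bigl(|g_v|\,\|1_{R+O}\|_v\bigr) d\mu_v \;=\; \ev_v(g_X).$$
Summing over the archimedean places and exponentiating yields $\|g_X\|_2^2 \geq \exp\bigl(2\sum_v \ev_v(g_X)\bigr) \gg \exp\bigl(-\tfrac{2d}{k}(\log H(f(R)) + \log H(f(O)))\bigr)$, which after taking square roots and using the relation between the $L^2$-norm on $H^0(X,\cdot)\otimes\mathbb{R}$ and the $r_1+2r_2 = d$ archimedean factors gives $\lambda_1 \gg_{E,f,F} (H(f(R))H(f(O)))^{-1/k}$ up to a constant depending only on $E, f, F$ (coming from comparing the normalization of $d\mu_v$ to $c_1(f^*\mathbf{O(1)})$ and from the number of archimedean places, i.e. on $d$). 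One has to be slightly careful that the Jensen step is applied per place and the per-place exponents add up correctly; this is where the exponent $1/k$ (not $1/k$ per place) emerges, because $\sum_v \ev_v$ already carries the full $d/k$ and the norm on the product space is the sum of the per-place squared norms.

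\textbf{Main obstacle and remaining care.} The only genuinely delicate point is making the reduction to finite places rigorous: a priori $g_X \in H^0(X, O_X(D_R+D_O))$ need not have $\sum_{v \text{ fin}} \log|g_X|_{X_v} = 0$ exactly — it is only $\leq 0$ — so I cannot directly quote the clean identity from Proposition \ref{Proposition_Step_1_Rewrite_Counting_Rational_Functions}; I must instead keep the finite-place sum as an inequality, which is exactly what is used above (dropping a nonpositive term only helps the lower bound). A secondary technical nuisance is the constant bookkeeping: one should check that $\|1_{R+O}\|_v$ having the prescribed curvature and average-zero normalization is what is forced by Proposition \ref{Proposition_Metric_Determined_By_Curvature}, and that the comparison between $\|\cdot\|_2$ on the lattice and the Euclidean structure implicit in $\lambda_1$ introduces only an $O_{E,f,F}(1)$ factor independent of $R$ — this uses that the spaces $H^0(E,O_E(R+O))\otimes_F F_v$ all have dimension $2$ over $F_v$ and that the Faltings/admissible data vary continuously, ultimately bounded uniformly over $R$ by the compactness of $E(\mathbb{C})$ (cf. Proposition \ref{Proposition_Faltings_Volume_Unit_ball_Uniformly_Bounded}). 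I expect no deeper difficulty: the whole argument is the ``easy direction'' of a height–intersection comparison, where heights $\geq 1$ plus integrality at finite places immediately pins down how small a lattice vector can be.
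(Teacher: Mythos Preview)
Your overall architecture is right and mirrors the paper: use the height identity from Proposition~\ref{Proposition_Step_1_Rewrite_Counting_Rational_Functions}, drop the nonpositive finite-place sum, and convert the lower bound on $\sum_{v\mid\infty}\ev_v(g_X)$ into a lower bound on $\|g_X\|_2$ via Jensen. However, there is a genuine quantitative gap that prevents your argument from reaching the stated exponent $1/k$.

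\textbf{The Jensen-to-norm passage.} Your claim ``summing over the archimedean places and exponentiating yields $\|g_X\|_2^2 \ge \exp\bigl(2\sum_v \ev_v(g_X)\bigr)$'' is not correct: per-place Jensen gives $\|g_v\|_2^2 \ge \exp(2\,\ev_v(g_X))$, and since $\|g_X\|_2^2 = \sum_v \|g_v\|_2^2$ you only get $\sum_v \exp(2\,\ev_v(g_X))$, not $\exp(2\sum_v \ev_v)$. The paper bridges this with AM--GM, obtaining
\[
\|g_X\|_2^2 \;\ge\; (r_1+r_2)\,\exp\!\left(\frac{2}{r_1+r_2}\sum_{v\in M_{F,\infty}}\ev_v(g_X)\right),
\]
and then uses $d = r_1+2r_2 \le 2(r_1+r_2)$. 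Your hand-wave about ``the relation between the $L^2$-norm and the $r_1+2r_2=d$ archimedean factors'' does not repair this: even granting your (false) intermediate inequality, it would yield $\lambda_1 \gg (H(f(R))H(f(O)))^{-d/k}$, not $^{-1/k}$.

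\textbf{The missing factor of two.} More seriously, your lower bound $\sum_v \ev_v(g_X) \ge -\tfrac{d}{k}\log\bigl(H(f(R))H(f(O))\bigr)$, coming from $H(f(Q_i))\ge 1$, is too crude by exactly a factor of $2$. Plugging it into the correct AM--GM inequality above gives, in the worst case (totally imaginary $F$, where $d=2(r_1+r_2)$), only $\lambda_1 \gg (H(f(R))H(f(O)))^{-2/k}$. The paper recovers the missing factor by passing to N\'eron--Tate heights and using the parallelogram law: since $Q_1+Q_2=R$,
\[
h_{NT}(Q_1)+h_{NT}(Q_2) \;=\; \tfrac12\bigl(h_{NT}(Q_1+Q_2)+h_{NT}(Q_1-Q_2)\bigr) \;\ge\; \tfrac12\,h_{NT}(R),
\]
which yields $\sum_v \ev_v(g_X) \ge -\tfrac{d}{2k}\log\bigl(H(f(R))H(f(O))\bigr)+O_{E,f,F}(1)$. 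This halving of the exponent is exactly what is needed, after AM--GM and $d\le 2(r_1+r_2)$, to land on $\lambda_1 \gg (H(f(R))H(f(O)))^{-1/k}$. Without the parallelogram law your approach proves the proposition only for totally real $F$ and falls short otherwise; so this is the one genuine idea you are missing.
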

\begin{proof}
By Proposition \ref{Proposition_H0E_vs_H0X}, we can identify $g_X \in H^0(X, O_X(D_R + D_O)) - 0$ as $g_X \neq 0 \in H^0(E, O_E(R + O)) - 0$ with 
$$\sum_{v \in M_{F, finite}} \log |g_X|_{X_v} \le 0.$$ 
Our goal is to find a lower bound for
$$\|g_X\|_2^2 = \sum_{v \in M_{F, \infty}} \int_{X(\overline{F_v})} |g_X|_v^2 \|1_{R+O}\|_v^2 d \mu_v.$$

Restricting $g_X$ to the generic fiber $E$, suppose that
$$\div(g) = \frac{1}{[F(Q_1):F]}(Q_1 + Q_2) - R - O.$$
As in the proof of Proposition \ref{Proposition_Step_1_Rewrite_Counting_Rational_Functions}, we have 
\begin{align*}
&\log H(f(Q_1)) + \log H(f(Q_2)) \\
= & \log H(f(R)) + \log H(f(O))+ \frac{k}{d} \left(\sum_{v \in M_{F, finite}} \log |g_X|_{X_v} + \sum_{v \in M_{F, \infty}} \int_{X(\overline{F_v})} \log |g_X|_v d \mu_v \right). \\
\end{align*}
Since 
$$\sum_{v \in M_{F, finite}} \log |g_X|_{X_v} \le 0,$$
we see that
$$\sum_{v \in M_{F, \infty}} \int_{X(\overline{F_v})} \log |g_X|_v d \mu_v
\ge \frac{d}{k} \left(\log H(f(Q_1)) + \log H(f(Q_2)) - \log H(f(R)) - \log H(f(O))\right).
$$
Recall that the logarithmic N\'{e}ron-Tate height $h_{NT}$ is a quadratic form on $E(F)$, and satisfies
$$\log H(f(Q)) = h_{NT}(Q) + O_{E, f, F}(1)$$
for any $Q \in E(F)$, with the error $O_{E, f, F}(1)$ independent of $Q$. 

Switching to the N\'{e}ron-Tate height, we see that
$$\sum_{v \in M_{F, \infty}} \int_{X(\overline{F_v})} \log |g_X|_v d \mu_v
\ge \frac{d}{k} \left(h_{NT}(Q_1) + h_{NT}(Q_2) - h_{NT}(R) - h_{NT}(O)\right) + O_{E, f, F}(1).
$$
Since $h_{NT}(O) = 0$, and
\begin{align*}
h_{NT}(Q_1) + h_{NT}(Q_2) = & \, \frac{1}{2} \left(h_{NT}(Q_1 + Q_2) + h_{NT}(Q_1 - Q_2)\right) \\
= & \, \frac{1}{2} \left(h_{NT}(R) + h_{NT}(Q_1 - Q_2)\right) \\
\ge & \, \frac{1}{2}h_{NT}(R),
\end{align*}
we have
$$\sum_{v \in M_{F, \infty}} \int_{X(\overline{F_v})} \log |g_X|_v d \mu_v
\ge - \frac{d}{2k} h_{NT}(R) + O_{E, f, F}(1).
$$
Switching back to the height $H(f(Q))$, we have
$$\sum_{v \in M_{F, \infty}} \int_{X(\overline{F_v})} \log |g_X|_v d \mu_v
\ge - \frac{d}{2k} \left(\log H(f(R)) + \log H(f(O))\right) + O_{E, f, F}(1),
$$
independent of $R \in E(F)$.

By Cauchy-Schwarz inequality and Jensen's inequality ($\log$ is concave), we have for each $v \in M_{F, \infty}$,
\begin{align*}
\int_{X(\overline{F_v})} |g_X|_v^2 \|1_{R+O}\|_v^2 d\mu_v
\ge & \, \left(\int_{X(\overline{F_v})} |g_X|_v \|1_{R+O}\|_v d \mu_v \right)^2 \\
\ge & \, \exp\left(2 \int_{X(\overline{F_v})} \left(\log |g_X|_v - \log \|1_{R+O}\|_v \right) d \mu_v\right) \\
= & \, \exp\left(2 \int_{X(\overline{F_v})} \log |g_X|_v d \mu_v\right).
\end{align*}
Hence,
\begin{align*}
\|g_X\|_2^2 = & \, \sum_{v \in M_{F, \infty}} \int_{X(\overline{F_v})} |g_X|_v^2 \|1_{R+O}\|_v^2 d \mu_v \\
\ge & \, \sum_{v \in M_{F, \infty}} \exp\left(2 \int_{X(\overline{F_v})} \log |g_X|_v d \mu_v\right) \\
\ge & \, (r_1 + r_2) \exp\left(\frac{2}{r_1 + r_2} \sum_{v \in M_{F, \infty}} \int_{X(\overline{F_v})} \log |g_X|_v d \mu_v\right) \text{ (by AM-GM inequality)} \\
\ge & \, (r_1 + r_2) \exp\left(- \frac{d}{k(r_1 + r_2)}\left(\log H(f(R)) + \log H(f(O))\right) + O_{E, f, F}(1)\right)
\end{align*}
Since $d = r_1 + 2r_2 \leq 2(r_1 + r_2)$, we have
$$\|g_X\|_2^2 \gg_{E, f, F} \exp\left(-\frac{2}{k} (\log H(f(R)) + \log H(f(O)))\right) = \frac{1}{(H(f(R))H(f(O)))^{2/k}}.$$
Taking square root on both sides, we get the desired result.
\end{proof}

\begin{prop}
\label{Proposition_Counting_Rational_Functions_Truncation}
If $T \ll_{E, f, F} (H(f(R))H(f(O)))^{-1/(2k)}$ with a sufficiently small big-$O$ constant, then 
$$\#\{g_X \in H^0(X, O_X(D_R + D_O)) - 0: (g_X)_{v \in M_{F, \infty}} \in S_{\Omega, R}(T)\} = 0.$$
\end{prop}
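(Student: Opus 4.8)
The plan is to deduce this directly from the first successive minimum bound of Proposition \ref{Proposition_First_Successive_Minimum_H0}, or rather from the chain of inequalities inside its proof. The one input I would extract from that proof is the following: every nonzero $g_X \in H^0(X, O_X(D_R + D_O))$ satisfies
$$\sum_{v \in M_{F,\infty}} \ev_v(g_X) \ = \ \sum_{v \in M_{F,\infty}} \int_{X(\overline{F_v})} \log|g_X|_v \, d\mu_v \ \ge \ -\frac{d}{2k}\big(\log H(f(R)) + \log H(f(O))\big) + O_{E,f,F}(1),$$
with the implied constant independent of $R \in E(F)$. This is precisely the penultimate display in the proof of Proposition \ref{Proposition_First_Successive_Minimum_H0}: it is obtained there from the height identity of Proposition \ref{Proposition_Step_1_Rewrite_Counting_Rational_Functions}, the sign condition $\sum_{v \text{ finite}} \log|g_X|_{X_v} \le 0$ (Proposition \ref{Proposition_H0E_vs_H0X}), the parallelogram law $h_{NT}(Q_1) + h_{NT}(Q_2) \ge \tfrac12 h_{NT}(R)$ for the N\'{e}ron--Tate height, and the comparison $\log H(f(Q)) = h_{NT}(Q) + O_{E,f,F}(1)$.

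Next I would unwind what lying in $S_{\Omega, R}(T)$ means. By Definition \ref{Definition_S_Omega_R}, the condition $(g_X)_{v \in M_{F,\infty}} \in S_{\Omega, R}(T)$ is exactly that $(\ev_v(g_X))_{v \in M_{F,\infty}} \in \Omega(T) = \Omega + (-\infty, \log T]\,\delta$. Writing such a point as $\omega + t\delta$ with $\omega \in \Omega \subset \Sigma$ and $t \le \log T$, and summing over the $r_1 + r_2$ coordinates (using $\sum_i \omega_i = 0$ since $\omega \in \Sigma$, and $\sum_i \delta_i = r_1 + 2r_2 = d$), we obtain
$$\sum_{v \in M_{F,\infty}} \ev_v(g_X) \ = \ t\,d \ \le \ d \log T.$$

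Combining the two displays, the existence of a nonzero $g_X \in H^0(X, O_X(D_R + D_O))$ with $(g_X)_{v \in M_{F,\infty}} \in S_{\Omega, R}(T)$ would force
$$d \log T \ \ge \ -\frac{d}{2k}\big(\log H(f(R)) + \log H(f(O))\big) + O_{E,f,F}(1),$$
i.e. $\log(1/T) \le \tfrac{1}{2k}\log\big(H(f(R)) H(f(O))\big) + O_{E,f,F}(1)$, which is $T \gg_{E,f,F} \big(H(f(R)) H(f(O))\big)^{-1/(2k)}$. Contrapositively, if the implied constant in the hypothesis $T \ll_{E,f,F} \big(H(f(R)) H(f(O))\big)^{-1/(2k)}$ is chosen small enough to contradict this lower bound, then the set in the statement is empty, as claimed. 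There is essentially no obstacle: all the analytic work is already done in Proposition \ref{Proposition_First_Successive_Minimum_H0}, and the only points needing care are the coordinate-sum bookkeeping for $\Omega(T)$ above and checking that the $O_{E,f,F}(1)$ errors, which are uniform in $R$, are absorbed correctly into the final implied constant.
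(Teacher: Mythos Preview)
Your proposal is correct and is essentially identical to the paper's own proof: the paper repeats verbatim the chain of inequalities from the proof of Proposition \ref{Proposition_First_Successive_Minimum_H0} to obtain the same lower bound on $\sum_{v \in M_{F,\infty}} \ev_v(g_X)$, then combines it with the bound $\sum_{v \in M_{F,\infty}} \ev_v(g_X) \le d\log T$ coming from the definition of $S_{\Omega,R}(T)$, exactly as you do.
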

\begin{proof}
Suppose for the sake of contradiction that $g_X \in H^0(X, D_R + D_O) - 0$ exists. By Proposition \ref{Proposition_H0E_vs_H0X}, we can identify $g_X \in H^0(X, O_X(D_R + D_O)) - 0$ as $g_X \neq 0 \in H^0(E, O_E(R + O)) - 0$ with 
$$\sum_{v \in M_{F, finite}} \log |g_X|_{X_v} \le 0.$$ 

As a rational function on $E$, suppose
$$\div(g) = \frac{1}{[F(Q_1):F]}(Q_1 + Q_2) - R - O.$$
As in the proof of Proposition \ref{Proposition_Step_1_Rewrite_Counting_Rational_Functions}, we have 
\begin{align*}
&\log H(f(Q_1)) + \log H(f(Q_2)) \\
= & \log H(f(R)) + \log H(f(O))+ \frac{k}{d} \left(\sum_{v \in M_{F, finite}} \log |g_X|_{X_v} + \sum_{v \in M_{F, \infty}} \int_{X(\overline{F_v})} \log |g_X|_v d \mu_v \right) \\
\end{align*}
Since 
$$\sum_{v \in M_{F, finite}} \log |g_X|_{X_v} \le 0,$$
we see that
\begin{align*}
&\sum_{v \in M_{F, \infty}} \int_{X(\overline{F_v})} \log |g_X|_v d \mu_v \\
\ge & \frac{d}{k} \left(\log H(f(Q_1)) + \log H(f(Q_2)) - \log H(f(R)) - \log H(f(O))\right)
\end{align*}
Recall that the logarithmic N\'{e}ron-Tate height $h_{NT}$ is a quadratic form on $E(F)$, and satisfies
$$\log H(f(Q)) = h_{NT}(Q) + O_{E, f, F}(1)$$
for any $Q \in E(F)$, with the error $O_{E, f, F}(1)$ independent of $Q$. 

Switching to the N\'{e}ron-Tate height, we see that
$$\sum_{v \in M_{F, \infty}} \int_{X(\overline{F_v})} \log |g_X|_v d \mu_v
\ge \frac{d}{k} \left(h_{NT}(Q_1) + h_{NT}(Q_2) - h_{NT}(R) - h_{NT}(O)\right) + O_{E, f, F}(1).
$$
Since $h_{NT}(O) = 0$, and
\begin{align*}
h_{NT}(Q_1) + h_{NT}(Q_2) = & \, \frac{1}{2} \left(h_{NT}(Q_1 + Q_2) + h_{NT}(Q_1 - Q_2)\right) \\
= & \, \frac{1}{2} \left(h_{NT}(R) + h_{NT}(Q_1 - Q_2)\right) \\
\ge & \, \frac{1}{2}h_{NT}(R),
\end{align*}
we have
$$\sum_{v \in M_{F, \infty}} \int_{X(\overline{F_v})} \log |g_X|_v d \mu_v
\ge - \frac{d}{2k} h_{NT}(R) + O_{E, f, F}(1).
$$
Switching back to the height $H(f(Q))$, we have
$$\sum_{v \in M_{F, \infty}} \int_{X(\overline{F_v})} \log |g_X|_v d \mu_v
\ge - \frac{d}{2k} \left(\log H(f(R)) + \log H(f(O))\right) + O_{E, f, F}(1),
$$
independent of $R \in E(F)$.

By definition, $(g_X)_{v \in M_{F, \infty}} \in S_{\Omega, R}(T)$ implies
$$\sum_{v \in M_{F, \infty}} \int_{X(\overline{F_v})} \log |g_X|_v d \mu_v \leq d \log T.$$
Hence the existence of $g_X$ implies that
$$d \log T \ge - \frac{d}{2k} \left(\log H(f(R)) + \log H(f(O))\right) + O_{E, f, F} (1).
$$
which is equivalent to
$$T \gg_{E, f, F} (H(f(R))H(f(O)))^{-1/(2k)}$$
Hence if $T \ll_{E, f, F} (H(f(R))H(f(O)))^{-1/(2k)}$ with a sufficiently small big-$O$ constant, we get a contradiction if $g_X$ exists.
\end{proof}

\subsection{Covolume of the lattice $H^0(X, O_X(D_R + D_O))$}
We defined a $L^2$-norm on $H^0(X, O_X(D_R + D_O))$ in the last section, which induces a Haar measure $\vol_2$ on $H^0(X, O_X(D_R + D_O))$. To apply geometry of numbers, we want to compute the covolume of the lattice
$$\covol_2 \left(H^0(X, O_X(D_R + D_O))\right) := \vol_2 \left(H^0(X, O_X(D_R + D_O)) \otimes_{\BZ} \BR / H^0(X, O_X(D_R + D_O))\right)$$ 

It would be easier to calculate the covolume using another Haar measure on $H^0(X, O_X(D_R + D_O))$: the Faltings volume $\vol_{Fal}$. This measure allows the Faltings-Riemann-Roch theorem to apply. For lattice point counting, we only care about quotient of measures of two sets, so it does not matter which Haar measure we use.

We will work towards stating Faltings-Riemann-Roch theorem (following Faltings \cite{FaltingsRR}), and then calculate the Faltings covolume of $H^0(X, O_X(D_R + D_O))$.

\begin{defn}[Euler characteristic of finitely generated $O_F$-module]
For a finitely generated $O_F$-module $M$, together with a Haar measure $\vol$ on $M \otimes_{\BZ} \BR$, we can define the Euler characteristic of $M$ by
$$\chi(M) = - \log \left(\vol \left(M \otimes_{\BZ} \BR / M\right) / |M_{tor}|\right) + rank_{O_F}(M) \cdot \log \vol \left(O_F \otimes_{\BZ} \BR / O_F\right).$$
\end{defn}
\begin{remark}
By definition of discriminant of number fields,
$$\vol \left(O_F \otimes_{\BZ} \BR / O_F\right) = |\disc(O_F)|^{1/2}.$$
The term $\rank_{O_F}(M) \cdot \log \vol \left(O_F \otimes_{\BZ} \BR / O_F\right)$ is a normalization term to make sure that $\chi(O_F) = 0$.
\end{remark}

For each archimedean place $v$, $X(\overline{F_v})$ is an elliptic curve. In Section \ref{Subsection_Admissible_Metric_EC}, we defined admissible metrics on line bundles over $X(\overline{F_v})$ and Faltings metrics on the determinant of cohomology for admissible line bundles.

\begin{defn}
An metrized line bundle $\CL$ on $X$ is called admissible, if for each archimedean place $v$ the base change $\overline{\CL_v}$ on $X(\overline{F_v})$ is equipped with an admissible metric.
\end{defn}

Let $\CL$ be an admissible line bundle on $X$. For each archimedean place $v$, the Faltings metric gives a Haar measure on the formal difference
$$H^0(X(\overline{F_v}), \overline{\CL_v}) - H^1(X(\overline{F_v}), \overline{\CL_v}).$$
We therefore obtain a Haar measure on the formal difference
\begin{align*}
& \prod_{v \in M_{F, \infty}} H^0(X(\overline{F_v}), \overline{\CL_v}) - \prod_{v \in M_{F, \infty}} H^1(X(\overline{F_v}), \overline{\CL_v}) \\
= & \, H^0(X, \CL) \otimes_{\BZ} \BR - H^1(X, \CL) \otimes_{\BZ} \BR.
\end{align*}
This allows us to define
$$\chi(\CL) := \chi(H^0(X, \CL)) - \chi(H^1(X, \CL)).$$

\begin{example}
\label{Example_Volume_Euler_Characteristic}
Suppose $R \in E(F)$, and let $\CL = O_X(D_R + D_O)$, metrized with the canonical admissible metric at each archimedean place. We want to compute $\chi(\CL)$.

We first calculate $\chi(H^0(X, \CL))$. By definition,
\begin{align*}
& \chi(H^0(X, O_X(D_R + D_O))) \\
= & - \log \covol_{Fal}\left(H^0(X, O_X(D_R + D_O))\right) + \log |H^0(X, O_X(D_R + D_O))_{tor}| \\
& \, + \rank_{O_F} (H^0(X, O_X(D_R + D_O))) \cdot \log \vol_{Fal} \left(O_F \otimes_{\BZ} \BR / O_F\right).
\end{align*}
Recall that 
$$H^0(X, O_X(D_R + D_O))_{tor} = 0$$ 
since $X \to \Spec(O_F)$ is flat. Moreover, by Riemann-Roch theorem for curves,
$$\rank_{O_F} (H^0(X, O_X(D_R + D_O))) = \rank_F H^0(E, O_E(R + O)) = 2.$$
Hence,
\begin{align*}
& \chi(H^0(X, O_X(D_R + D_O))) \\
= & - \log \covol_{Fal}\left(H^0(X, O_X(D_R + D_O))\right) + 2 \log |\disc(O_F)|^{1/2} \\
= & - \log \covol_{Fal}\left(H^0(X, O_X(D_R + D_O))\right) + \log |\disc(O_F)|.
\end{align*}

For $H^1(X, \CL)$, again by Riemann-Roch theorem for curves, 
$$\rank_{O_F} (H^1(X, O_X(D_R + D_O))) = \rank_F H^1(E, O_E(R + O)) = 0,$$
so $H^1(X, O_X(D_R + D_O))$ is torsion. Since $X \to \Spec(O_F)$ is flat, each fiber has arithmetic genus 1, so the same argument actually shows that $H^1(X_v, O_X(D_R + D_O) |_{X_v}) = 0$ for any non-archimedean place $v \in \Spec(O_F)$. Grauert's theorem \cite[Corollary III.12.9]{Hartshorne} then applies to show that $H^1(X, O_X(D_R + D_O))$ is torsion-free. Hence $H^1(X, O_X(D_R + D_O)) = 0$, and
$$\chi(H^1(X, O_X(D_R + D_O))) = 0.$$

Therefore,
\begin{align*}
\chi(\CO_X(D_R + D_O)) 
= & \, \chi(H^0(X, O_X(D_R + D_O))) - \chi(H^1(X, O_X(D_R + D_O))) \\
= & \, - \log \covol_{Fal}\left(H^0(X, O_X(D_R + D_O))\right) + \log |\disc(O_F)|.
\end{align*}
\end{example}

\begin{thm}[Faltings-Riemann-Roch Theorem, {\cite[Theorem 3]{FaltingsRR}}]
Metrize $O_X$ and the relative dualizing sheaf $\omega_{X}$ by admissible metrics. Then for any admissible line bundle $\CL$, we have
$$\chi(\CL) = \frac{1}{2} \langle \CL, \CL - \omega_X \rangle + \chi(O_X).$$
\end{thm}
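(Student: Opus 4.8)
The statement is Faltings' arithmetic Riemann--Roch theorem for an arithmetic surface with elliptic generic fibre, so the plan is to reproduce Faltings' d\'evissage \cite[Theorem 3]{FaltingsRR}: reduce an arbitrary admissible line bundle to the trivial bundle $O_X$ by peeling off one prime divisor at a time, and check that both sides of the identity transform identically at each step. First I would record the elementary reductions. Among admissible metrics on a fixed line bundle $L$ the curvature at each archimedean place $v$ is forced to be $\deg(L)\,\nu_{can}$, so the metric is unique up to a positive scalar at each place; rescaling the metric on $\CL$ at $v$ by $e^{\varphi_v}$ scales the Faltings metric on $\det H(X(\overline{F_v}),\CL_v)$ by $e^{\deg(L)\varphi_v}$, hence shifts $\chi(\CL)$ by $\deg(L)\varphi_v$, and since $\deg(\omega_X)=2g-2=0$ on the elliptic fibre it shifts $\frac12\langle\CL,\CL-\omega_X\rangle$ by the same amount. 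Thus it is enough to prove the identity for the canonical admissible metric on each underlying $L$, and to connect underlying line bundles by prime-divisor twists.

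The key inductive step is peeling off a horizontal divisor. Let $P\in E(F)$ with closure $D_P\subset X$, so $D_P\cong\Spec O_F$. From the exact sequence $0\to\CL(-D_P)\to\CL\to\CL|_{D_P}\to 0$ on $X$, together with Faltings' third compatibility property (the natural isomorphism $\det H(E,O_E(D))\cong\det H(E,O_E(D-P))\otimes O_E(D)[P]$ is an isometry), one gets $\chi(\CL)=\chi(\CL(-D_P))+\chi(\CL|_{D_P})$, where $\chi$ of a metrized line bundle on $\Spec O_F$ is its Arakelov degree --- the normalization term $\rank_{O_F}(M)\log|\disc(O_F)|^{1/2}$ in the definition of $\chi$ is there precisely so that this additivity holds. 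By Proposition~\ref{Proposition_Height_As_Intersection} (intersection with a horizontal divisor equals the Arakelov degree of the restriction), $\chi(\CL|_{D_P})=\langle\CL,D_P\rangle$. On the intersection side, writing $\CL=\CL(-D_P)\otimes O_X(D_P)$ and expanding bilinearly,
$$\tfrac12\langle\CL,\CL-\omega_X\rangle-\tfrac12\langle\CL(-D_P),\CL(-D_P)-\omega_X\rangle=\langle\CL(-D_P),D_P\rangle+\tfrac12\langle D_P,D_P\rangle-\tfrac12\langle D_P,\omega_X\rangle,$$
and the adjunction formula $\langle D_P,D_P+\omega_X\rangle=0$ of Proposition~\ref{Proposition_Adjunction_Formula} collapses the last two terms to $\langle D_P,D_P\rangle$, leaving $\langle\CL(-D_P),D_P\rangle+\langle D_P,D_P\rangle=\langle\CL,D_P\rangle$. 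Hence both sides change by exactly $\langle\CL,D_P\rangle$, so the identity for $\CL$ is equivalent to that for $\CL(-D_P)$, and iterating strips off all horizontal components.

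It remains to handle vertical components and the base case. Over the class-number-one base of Assumption~\ref{Assumption_Sym2_EC} each fibre is principal, $X_v=\div(\pi_v)$ for a generator $\pi_v$ of $v$, so twisting by $O_X(X_v)$ is multiplication by $\pi_v\in F^{\times}$; using Proposition~\ref{Proposition_Intersection_Vertical_Divisor}, Proposition~\ref{Proposition_Intersection_Principal_Divisor} and the product formula one checks that $\chi$ and $\frac12\langle\CL,\CL-\omega_X\rangle$ both shift by $\deg(\CL)\log|k(v)|$ under this twist (and the residual archimedean scalar is absorbed by the first reduction above). Since every line bundle on the regular arithmetic surface $X$ is $O_X(D)$ for some Weil divisor $D$, i.e. an integer combination of horizontal and vertical prime divisors, the peeling steps reduce any admissible $\CL$ to $O_X$. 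Finally the canonical admissible metric on $O_X$ is the trivial one, so $\langle O_X,-\rangle\equiv 0$ and the desired identity becomes $\chi(O_X)=0+\chi(O_X)$, which holds.

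The main obstacle is not this bookkeeping but the analytic framework being invoked: the very existence of the Faltings metric on $\det H(X(\overline{F_v}),\CL_v)$ satisfying the four compatibility properties is a non-trivial statement about line bundles on complex elliptic curves (of Quillen-metric / analytic-torsion type), and it is this that one imports wholesale from \cite{FaltingsRR}. Granting it, the only genuinely delicate points in the d\'evissage are matching the sign and normalization conventions of the Deligne intersection pairing with those of $\chi$, and confirming that the $\log|\disc(O_F)|^{1/2}$ normalization cancels consistently along the induction.
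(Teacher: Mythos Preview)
The paper does not give its own proof of this statement: it is quoted directly as \cite[Theorem 3]{FaltingsRR} and used as a black box. Your d\'evissage sketch is a faithful outline of Faltings' original argument and is correct; the horizontal-divisor step (using the isometry property of the Faltings metric together with adjunction to match the change in $\chi$ with the change in $\tfrac12\langle\CL,\CL-\omega_X\rangle$) is carried out accurately, and your acknowledgment that the real content lies in the existence of the Faltings metric with its four compatibility properties is exactly right. One small remark: your treatment of vertical divisors leans on the class-number-one hypothesis of Assumption~\ref{Assumption_Sym2_EC}, which is fine in the paper's context, but Faltings' theorem itself holds without that restriction---one handles a general vertical component by the same exact-sequence/restriction argument, using that $\chi$ of a sheaf supported on a closed fibre equals its ordinary Euler characteristic times $\log|k(v)|$.
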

\begin{remark}
\label{Remark_Definition_Faltings_height}
Applying Faltings-Riemann-Roch theorem to $\omega_X$, we see that $\chi(\omega_X) = \chi(O_X)$. Normalizing, we arrived at the (unstable) Faltings height of an elliptic curve,
$$h_{Fal}(E) := \frac{1}{d} \chi(\CO_X).$$
\end{remark}

\begin{prop}
\label{Proposition_Covolume_H0X}
Metrize $O_X(D_R + D_O)$ with the canonical admissible metric at each archimedean place. Then
$$
\covol_{Fal}\left(H^0(X, O_X(D_R + D_O))\right) = \frac{|N_{F/\BQ} (\Delta_{E/F})|^{1/4} \cdot |\disc(O_F)|}{\exp\left(d \cdot h_{Faltings}(E)\right)} \cdot \frac{1}{H_{NT}(R)^d}.
$$
Here $\Delta_{E/F}$ is the minimal discrminant of $E/F$, $H_{NT}(R)$ is the (multiplicative) N\'{e}ron-Tate height of $R$ and $h_{Faltings}(E)$ is the (logarithmic) Faltings height of $E$.
\end{prop}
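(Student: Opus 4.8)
The strategy is to compute the Faltings covolume by chasing the Faltings--Riemann--Roch theorem through Example \ref{Example_Volume_Euler_Characteristic}, and then feeding in the intersection-theoretic identities from Section 5. Concretely, I would proceed as follows. First, apply Faltings--Riemann--Roch to the admissible line bundle $\CL = O_X(D_R + D_O)$ (with the canonical admissible metric at each archimedean place):
$$\chi(O_X(D_R + D_O)) = \frac{1}{2}\langle O_X(D_R + D_O), O_X(D_R + D_O) - \omega_X \rangle + \chi(O_X).$$
By Example \ref{Example_Volume_Euler_Characteristic}, the left side equals $-\log\covol_{Fal}(H^0(X, O_X(D_R + D_O))) + \log|\disc(O_F)|$, so it suffices to evaluate the intersection number on the right and use $\chi(O_X) = d \cdot h_{Faltings}(E)$ (Remark \ref{Remark_Definition_Faltings_height}).

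The second step is to expand the self-intersection term. Write $D_R + D_O = (D_R - D_O) + 2D_O$ and use bilinearity of $\langle -,-\rangle$:
\begin{align*}
\langle O_X(D_R + D_O), O_X(D_R + D_O) \rangle = &\ \langle O_X(D_R - D_O), O_X(D_R - D_O)\rangle \\
&\ + 4\langle O_X(D_R - D_O), O_X(D_O)\rangle + 4\langle O_X(D_O), O_X(D_O)\rangle.
\end{align*}
By the Faltings--Hriljac theorem (Proposition \ref{Proposition_Faltings_Hriljac}), the first summand is $-2d\log H_{NT}(R)$. For the cross term, note $D_R - D_O$ is orthogonal to all vertical fibers (Proposition \ref{Proposition_Intersection_Vertical_Divisor}, using Assumption \ref{Assumption_Sym2_EC}), so $\langle O_X(D_R - D_O), O_X(D_O)\rangle$ depends only on the class of $D_R - D_O$ in $\Pic^0$; moving $D_O$ around via a principal divisor and pairing against the degree-$0$ bundle $O_X(D_R - D_O)$ shows this cross term vanishes (alternatively, combine the adjunction formula Proposition \ref{Proposition_Adjunction_Formula} with Proposition \ref{Proposition_RobindeJong}). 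The term $\langle O_X(D_O), O_X(D_O)\rangle$ equals $-\frac{1}{12}\log|N_{F/\BQ}(\Delta_{E/F})|$ by Proposition \ref{Proposition_RobindeJong} (here we use both $E$ semistable and the assumed irreducibility of the vertical fibers, so the minimal regular model is the one to which Proposition \ref{Proposition_RobindeJong} applies). For the $\langle O_X(D_R + D_O), \omega_X\rangle$ piece, apply the adjunction formula $\langle O_X(D_Q), O_X(D_Q) + \omega_X\rangle = 0$ to $Q = R$ and $Q = O$ to rewrite $\langle O_X(D_Q), \omega_X\rangle = -\langle O_X(D_Q), O_X(D_Q)\rangle$, then use $\langle O_X(D_R), O_X(D_R)\rangle = \langle O_X(D_O), O_X(D_O)\rangle$ (Proposition \ref{Proposition_RobindeJong}), so that after another bilinearity expansion the $\omega_X$-contribution combines cleanly with the self-intersection terms.

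The final step is bookkeeping: assemble the pieces to get $\chi(O_X(D_R + D_O)) = -d\log H_{NT}(R) - \frac{1}{4}\log|N_{F/\BQ}(\Delta_{E/F})| + d\cdot h_{Faltings}(E)$, equate with $-\log\covol_{Fal} + \log|\disc(O_F)|$, and solve for $\covol_{Fal}(H^0(X, O_X(D_R + D_O)))$, giving
$$\covol_{Fal}(H^0(X, O_X(D_R + D_O))) = \frac{|N_{F/\BQ}(\Delta_{E/F})|^{1/4}\cdot|\disc(O_F)|}{\exp(d\cdot h_{Faltings}(E))}\cdot\frac{1}{H_{NT}(R)^d}.$$
The main obstacle is getting the numerical coefficients exactly right while correctly tracking the archimedean normalizations: the Faltings metric convention (the $\alpha^{\deg\CL}$ scaling, the metric on $\det H(E,\Omega_E^1)$), the definition of $\chi$ with its $\rank_{O_F}(M)\log|\disc(O_F)|^{1/2}$ normalization term, and the fact that the canonical admissible metric on $O_X(D_R + D_O)$ is what makes $\langle -,-\rangle$ match Arakelov's intersection pairing rather than Deligne's. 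I expect verifying the vanishing of the cross term and that the $\omega_X$-terms telescope correctly (so that only $h_{Faltings}(E)$, the minimal discriminant, and $H_{NT}(R)$ survive) to require the most care; everything else is linear algebra in the intersection pairing.
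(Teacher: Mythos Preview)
Your overall plan is the same as the paper's, but there is a genuine error: the cross term $\langle O_X(D_R - D_O), O_X(D_O)\rangle$ does \emph{not} vanish. Neither justification you offer works. The first (``moving $D_O$ around via a principal divisor'') fails because on an elliptic curve $D_O$ is not linearly equivalent to any other degree-$1$ point divisor, so there is nothing to move it to; and the Arakelov pairing of a degree-$0$ bundle against an arbitrary horizontal divisor is not determined by the $\Pic^0$-class alone. The second (``adjunction plus Proposition~\ref{Proposition_RobindeJong}'') only yields $\langle O_X(D_R - D_O), \omega_X\rangle = 0$, which is a different quantity. In fact, expanding Proposition~\ref{Proposition_Faltings_Hriljac} and using $\langle O_X(D_R), O_X(D_R)\rangle = \langle O_X(D_O), O_X(D_O)\rangle$ from Proposition~\ref{Proposition_RobindeJong} gives
\[
\langle O_X(D_R - D_O), O_X(D_O)\rangle = \langle O_X(D_R), O_X(D_O)\rangle - \langle O_X(D_O), O_X(D_O)\rangle = d\log H_{NT}(R),
\]
which is nonzero whenever $R$ is non-torsion. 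Plugging this into your expansion, the $4 \cdot d\log H_{NT}(R)$ contribution flips the sign of the $H_{NT}$-term, giving $\chi(O_X(D_R + D_O)) = +d\log H_{NT}(R) - \tfrac{1}{4}\log|N_{F/\BQ}(\Delta_{E/F})| + d\cdot h_{Faltings}(E)$, not the value you wrote. (Your final covolume formula is nonetheless correct, but it does not follow from the $\chi$ you computed --- solving $-\log\covol_{Fal} + \log|\disc(O_F)| = -d\log H_{NT}(R) + \cdots$ would give $\covol_{Fal} \sim H_{NT}(R)^{+d}$, not $H_{NT}(R)^{-d}$.)

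The paper sidesteps the cross term entirely by a different bilinear expansion: after using adjunction to replace $\langle O_X(D_R + D_O), \omega_X\rangle$ by $-\langle O_X(D_R), O_X(D_R)\rangle - \langle O_X(D_O), O_X(D_O)\rangle$, it rewrites the resulting expression as $\tfrac{3}{2}\langle O_X(D_R), O_X(D_R)\rangle + \tfrac{3}{2}\langle O_X(D_O), O_X(D_O)\rangle - \tfrac{1}{2}\langle O_X(D_R - D_O), O_X(D_R - D_O)\rangle$, so that only self-pairings of $D_R$, $D_O$, and $D_R - D_O$ appear --- each of which is directly supplied by Propositions~\ref{Proposition_Faltings_Hriljac} and~\ref{Proposition_RobindeJong}. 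If you want to keep your decomposition $D_R + D_O = (D_R - D_O) + 2D_O$, you can, but you must carry the cross term $4d\log H_{NT}(R)$ through the computation.
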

\begin{proof}
From Example \ref{Example_Volume_Euler_Characteristic}, we see that
$$
\log \covol_{Fal}\left(H^0(X, O_X(D_R + D_O))\right) = - \chi(\CO_X(D_R + D_O)) + \log |\disc(O_F)|.
$$
By Faltings-Riemann-Roch theorem, we have
\begin{align*}
\chi(\CO_X(D_R + D_O)) = & \frac{1}{2} \langle \CO_X(D_R + D_O), \CO_X(D_R + D_O) - \omega_X \rangle + \chi(\CO_X) \\
= & \frac{1}{2} \langle \CO_X(D_R + D_O), \CO_X(D_R + D_O) \rangle - \frac{1}{2} \langle \CO_X(D_R + D_O), \omega_X \rangle + \chi(\CO_X).
\end{align*}
By adjunction formula (Proposition \ref{Proposition_Adjunction_Formula}), 
$$\langle O_X(D_R), O_X(D_R) + \omega_X \rangle = 0, \,\,\, \langle O_X(D_O), O_X(D_O) + \omega_X \rangle = 0.$$
Hence by linearity of intersection pairing,
\begin{align*}
& \, \chi(\CO_X(D_R + D_O)) \\
= & \, \frac{1}{2} \langle \CO_X(D_R + D_O), \CO_X(D_R + D_O) \rangle + \frac{1}{2} \langle \CO_X(D_R), \CO_X (D_R) \rangle + \frac{1}{2} \langle \CO_X(D_O), \CO_X (D_O) \rangle + \chi(\CO_X) \\
= & \, \frac{3}{2} \langle \CO_X(D_R), \CO_X(D_R) \rangle + \frac{3}{2}\langle \CO_X(D_O), \CO_X(D_O) \rangle - \frac{1}{2} \langle \CO_X(D_R - D_O), \CO_X(D_R - D_O) \rangle + \chi(\CO_X).
\end{align*}
By Proposition \ref{Proposition_Faltings_Hriljac},
$$\langle \CO_X(D_R - D_O), \CO_X(D_R - D_O) \rangle = -2d \log H_{NT}(R),$$
where $H_{NT}$ is the multiplicative N\'{e}ron-Tate height. 

By Assumption \ref{Assumption_Sym2_EC}, $E$ is semi-stable over $F$. Proposition \ref{Proposition_RobindeJong} then implies that for any $R \in E(F)$,
$$\langle \CO_X(D_R), \CO_X(D_R) \rangle = \langle \CO_X(D_O), \CO_X(D_O) \rangle = - \frac{1}{12} \log |N_{F/\BQ}(\Delta_{E/F})|,$$
where $\Delta_{E/F}$ is the minimal discriminant of the elliptic curve $E/F$. Therefore,
\begin{align*}
& \, \chi(\CO_X(D_R + D_O)) \\
= & \, 3\langle \CO_X(D_O), \CO_X(D_O) \rangle + d \log H_{NT} (R) + \chi(\CO_X) \\
= & \, -\frac{1}{4} \log |N_{F/\BQ} (\Delta_{E/F})| + d \log H_{NT} (R) + d \cdot h_{Faltings}(E),
\end{align*}
by the definition of Faltings height. Hence,
\begin{align*}
& \log \covol_{Fal}\left(H^0(X, O_X(D_R + D_O))\right) \\
= & - \chi(\CO_X(D_R + D_O)) + \log |\disc(O_F)| \\
= & \frac{1}{4} \log |N_{F/\BQ} (\Delta_{E/F})| - d \log H_{NT} (R) - d \cdot  h_{Faltings}(E) + \log |\disc(O_F)|
\end{align*}
and
\begin{align*}
& \covol_{Fal}\left(H^0(X, O_X(D_R + D_O))\right) \\
= & \exp\left(\frac{1}{4} \log |N_{F/\BQ} (\Delta_{E/F})| - d \log H_{NT} (R) - d \cdot  h_{Faltings}(E) + \log |\disc(O_F)|\right) \\
= & \frac{|N_{F/\BQ} (\Delta_{E/F})|^{1/4} \cdot |\disc(O_F)|}{\exp\left(d \cdot h_{Faltings}(E)\right)} \cdot \frac{1}{H_{NT}(R)^d}.
\end{align*}
\end{proof}

\subsection{Some lemmas on $\int_{X(\overline{F_v})} \log |g_v|_v d\mu_v$}
Recall that after Corollary \ref{Corollary_Step_3_Reframe_Cohom_X}, we want to count points of the lattice
$$H^0(X, O_X(D_R + D_O)) \into H^0(X, O_X(D_R + D_O)) \otimes_{\BZ} \BR \cong \prod_{v \in M_{F, \infty}} H^0(E, O_E(R + O)) \otimes_F F_v,$$
lying inside the homogeneous region 
$$\exp\left(\frac{1}{k}\left(\log B - \log H(f(R)) - \log H(f(O))\right) - \frac{1}{d} \log |N_{F/\BQ}(a)|\right) S_{\Omega, R}(1),$$
as $B \to \infty$, for each $R \in E(F)$. We will use geometry of numbers to do this counting in Section 13. To control the error term in the geometry of numbers argument, we will show that the boundary $\partial S_{\Omega, R}(1)$ can be parametrized by finitely many Lipschitz maps with controlled Lipschitz constants in Section 12. In this section, we develop properties of $\int_{X(\overline{F_v})} \log |g_v|_v d\mu_v$, to be used in the next section.

Let $v \in M_{F, \infty}$ be an archimedean place.

\subsubsection{Interpretation of $\int_{X(\overline{F_v})} \log |g_v|_v d\mu_v$ as Mahler measure}

Let $C(\overline{F_v})$ be a smooth projective curve, and $\CL = (L, \|\cdot\|)$ a metrized line bundle of class (S) on $C(\overline{F_v})$.

\begin{defn}[Mahler measure]
Consider a rational function $G: C(\overline{F_v}) \to \BP^1(\overline{F_v})$, with
$$\div(G) = \sum_{\gamma} m_{\gamma} \gamma.$$
Consider a section $s \in H^0(C(\overline{F_v}), L)$, with $\div(s) = \sum_{\eta} n_{\eta} \eta$ disjoint from $\div(G)$.

We define the Mahler measure of $G$ with respect to $\CL$ to be
$$M_{v, \CL, s}(G) := \prod_{\gamma \in \supp(G)} \|s(\gamma)\|^{-m_{\gamma}} \prod_{\eta \in \supp(s)} |G(\eta)|_v^{n_{\eta}}.$$
\end{defn}

\begin{prop}
\label{Proposition_Properties_Mahler_Measure}
The Mahler measure has the following properties:
\begin{enumerate}[label=(\alph*)]
    \item $M_{v, \CL, s}(G)$ does not depend on the section $s$. In fact,
    $$\log M_{v, \CL, s}(G) = \int_{C(\overline{F_v})} \log |G|_v c_1(\CL),$$
    for any section $s$ where $\div(s)$ and $\div(G)$ are disjoint. We may hence denote the Mahler measure as $M_{v, \CL}(G)$.
    \item If $G_1, G_2$ are two rational functions on $C(\overline{F_v})$ and $G_1G_2$ is their pointwise product, then 
    $$M_{v, \CL}(G_1G_2) = M_{v, \CL}(G_1)M_{v, \CL}(G_2).$$
    \item (Pushforward) If $\phi: C'(\overline{F_v}) \to C(\overline{F_v})$ is a morphism, and $G'$ is a rational function on $C'(\overline{F_v})$, then
    $$M_{v, \phi^*\CL} (G') = M_{v, \CL} (\phi_*(G')).$$
    Here $\phi_* := N_{\BC(C(\overline{F_v})) / \BC(\phi^* C'(\overline{F_v}))}$ is the norm map on function fields.
    \item (Pullback) If $\phi: C'(\overline{F_v}) \to C(\overline{F_v})$ is a morphism, and $G$ is a rational function on $C(\overline{F_v})$, then
    $$M_{v, \phi^*\CL} (\phi^*(G)) = M_{v, \CL} (G)^{deg(\phi)}.$$
    Here $\phi^*(G) := G \circ \phi$ is the usual pullback.
\end{enumerate}
\end{prop}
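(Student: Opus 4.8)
The statement collects four standard properties of the $v$-adic Mahler measure, so the plan is to prove (a) first and then derive (b)--(d) as formal consequences. For (a), I would argue that $\log M_{v,\CL,s}(G)$ is independent of $s$ by a telescoping/product-formula argument: given two sections $s, s'$ of $L$ with divisors disjoint from $\div(G)$, the ratio $s/s'$ is a rational function on $C(\overline{F_v})$, and the quantity $\log\|s(\gamma)\| - \log\|s'(\gamma)\|$ summed against $\div(G)$ versus the contribution of $\div(s)-\div(s') = \div(s/s')$ paired against $\log|G|_v$ cancel by Weil reciprocity for the tame symbol on a curve (equivalently, by the symmetry of the archimedean local height pairing / the fact that $\langle \div(s/s'), \div(G)\rangle_v = \langle \div(G), \div(s/s')\rangle_v$). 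Then to identify the common value with $\int_{C(\overline{F_v})}\log|G|_v\, c_1(\CL)$, I would use the defining distributional identity $c_1(\CL) = \frac{\partial\overline\partial}{\pi i}\log\|s\| + \delta_{\div(s)}$ from the first appendix subsection: pairing both sides against the function $\log|G|_v$ (which is smooth away from $\supp(G)$, with logarithmic singularities there) and integrating by parts, $\int \log|G|_v \cdot \frac{\partial\overline\partial}{\pi i}\log\|s\|$ transfers to $\int \log\|s\| \cdot \frac{\partial\overline\partial}{\pi i}\log|G|_v = \int\log\|s\|\cdot(\delta_{\div(G)} - \text{something})$; combined with the $\delta_{\div(s)}$ term this reassembles exactly $\log M_{v,\CL,s}(G)$. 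The class (S) hypothesis is used to justify that $c_1(\CL)$ extends to a continuous functional, so the pairing against the mildly singular $\log|G|_v$ is legitimate (approximate by smooth metrics and pass to the limit).

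Given (a), properties (b)--(d) are immediate from the integral formula. For (b), $\log|G_1G_2|_v = \log|G_1|_v + \log|G_2|_v$ pointwise, so integrating against $c_1(\CL)$ is additive. For the pullback (d): if $\phi: C'(\overline{F_v})\to C(\overline{F_v})$ has degree $n$, then $\phi^*c_1(\CL) = c_1(\phi^*\CL)$ as currents, and $\phi_*(\phi^*c_1(\CL)) = n\cdot c_1(\CL)$ by the projection formula; hence $\int_{C'}\log|\phi^*G|_v\, c_1(\phi^*\CL) = \int_{C'}(\phi^*\log|G|_v)\, c_1(\phi^*\CL) = \int_{C}\log|G|_v\, \phi_*c_1(\phi^*\CL) = n\int_C\log|G|_v\, c_1(\CL)$, which gives $M_{v,\phi^*\CL}(\phi^*G) = M_{v,\CL}(G)^n$. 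For the pushforward (c): the key point is that $\log|\phi_*(G')|_v = \log|N(G')|_v$, and for a finite morphism $N(G')(x) = \prod_{y\in\phi^{-1}(x)} G'(y)^{e_\phi(y)}$ (norm on function fields), so $\phi^*(\log|\phi_*G'|_v) = \sum_{y\in\phi^{-1}(\cdot)} e_\phi(y)\log|G'(y)|_v$ as functions on $C'$ pulled back; then $\int_C \log|\phi_*G'|_v\, c_1(\CL) = \int_{C'} \phi^*(\log|\phi_*G'|_v)\cdot \frac{1}{n}\phi^*c_1(\CL)\cdot(\text{multiplicity bookkeeping})$, which after unwinding the ramification indices equals $\int_{C'}\log|G'|_v\, c_1(\phi^*\CL)$. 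I would state this last computation carefully, keeping track of the degree factor, since that is where sign/normalization slips are most likely.

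The main obstacle I anticipate is the rigorous justification of the integration-by-parts step in (a) for metrics only of class (S): one must handle the logarithmic singularities of $\log|G|_v$ along $\supp(G)$ and of $\log\|s\|$ along $\div(s)$ simultaneously. My plan is to first prove the identity for smooth metrics (where Stokes' theorem applies directly away from small disks around the finitely many singular points, and the boundary-disk contributions vanish in the limit by the standard logarithmic-potential estimate), and then invoke the class (S) approximation: choosing smooth metrics $\|\cdot\|_n$ with $\|\cdot\|_n/\|\cdot\| \to 1$ uniformly and $c_1(L,\|\cdot\|_n)\to c_1(\CL)$ weakly, both sides of (a) converge to their class (S) values. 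A secondary, more bookkeeping-level obstacle is (c): getting the norm map $\phi_* = N_{\BC(C)/\BC(C')}$ and the ramification-weighted fiber sum to line up requires care, but it is not deep. Once (a) is established in the class (S) generality, (b)--(d) follow with only routine computation.
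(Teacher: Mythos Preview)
Your approach to (a) and (b) matches the paper's: both remove small disks around $\supp(G)\cup\div(s)$, apply Stokes to the $1$-form $\omega = \log|G|\,\frac{\overline\partial}{\pi i}\log\|s\| + \log\|s\|\,\frac{\partial}{\pi i}\log|G|$, and extract the boundary contributions as residues. Your preliminary Weil-reciprocity step for $s$-independence is redundant, since the integral formula is manifestly independent of $s$ and the paper simply proves that formula directly.

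Where you diverge is in (c) and (d). The paper proves (c) \emph{from the product definition} of $M_{v,\CL,s}$, not from the integral: choose a section $s$ of $\CL$ with $\div(s)$ disjoint from $\div(\phi_*G')$, pull back to $\phi^*s$, and match the two products term-by-term using $\div(\phi_*G')=\phi_*\div(G')$ and the fiberwise formula $\phi_*G'(\eta)=\prod_{\phi(\eta')=\eta}G'(\eta')^{e_\phi(\eta')}$. This is purely algebraic and avoids any analytic bookkeeping. The paper then obtains (d) as a one-line corollary of (c) by taking $G'=\phi^*G$ and using $\phi_*\phi^*G=G^{\deg\phi}$ together with (b). Your plan---proving (d) independently via the projection formula $\phi_*\phi^*c_1(\CL)=\deg(\phi)\,c_1(\CL)$ and (c) via a change of variables on the integral---can be made to work, but your description of (c) is muddled: the correct identity is
\[
\int_{C'}\log|G'|\,\phi^*c_1(\CL)=\int_C\bigl(\phi_*\log|G'|\bigr)\,c_1(\CL),
\]
where $\phi_*$ on functions is the trace $(\phi_*f)(x)=\sum_{y\in\phi^{-1}(x)}e_\phi(y)f(y)$, and then $\phi_*\log|G'|=\log|N(G')|=\log|\phi_*G'|$. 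The line you wrote with a $\tfrac{1}{n}$ factor and ``multiplicity bookkeeping'' is not correct as stated. The paper's definition-based argument for (c) is cleaner and, as a bonus, yields (d) for free.
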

\begin{proof}
For part (a), this is \cite[Proposition B.2]{PineiroSzpiroTucker}; for completeness we sketch the proof here. Let $s$ be a nonzero, meromorphic section of $\CL$, where $\div(s)$ and $\div(G)$ are disjoint. Away from $\div(s)$, 
$$c_1(\CL) = \frac{\partial \overline{\partial}}{\pi i} \log \|s\|.$$ 
For $x \in \div(G) \cup \div(s)$, let $D_{x, \epsilon}$ be a small disk around $x$ of radius $\epsilon > 0$ and $C_{x, \epsilon}$ be a positively oriented circle around $x$ of radius $\epsilon > 0$. We take $\epsilon$ small enough so that these disks are disjoint. Let 
$$Y_{\epsilon} = C(\overline{F_v}) - \bigcup_{x \in \div(G) \cup \div(s)} D_{x, \epsilon},$$ 
and let 
$$\omega = \log |G| \cdot \frac{\overline{\partial}}{\pi i} \log \|s\| + \log \|s\| \cdot \frac{\partial}{\pi i} \log |G|$$
be a 1-form on $Y_{\epsilon}$. As $G$ is meromorphic, $\log |G|$ is harmonic on $Y_{\epsilon}$, hence
\begin{align*}
d \omega = & \log |G| \cdot \frac{\partial \overline{\partial}}{\pi i} \log \|s\| + \log \|s\| \cdot \frac{\overline{\partial} \partial}{\pi i} \log |G| \\
= & \log |G| \cdot c_1(\CL)
\end{align*}
on $Y_{\epsilon}$. By Stokes' theorem,
$$\int_{Y_{\epsilon}} \log |G|_v c_1(\CL) = - \sum_{x \in \div(G) \cup \div(s)} \int_{C_{x, \epsilon}} \omega = - \sum_{\gamma \in \div(G)} \int_{C_{\gamma, \epsilon}} \omega - \sum_{\eta \in \div(s)} \int_{C_{\eta, \epsilon}} \omega$$
(the minus sign comes from the fact that applying Stokes' theorem to the outside of $C_{x, \epsilon}$, so that the orientation is reversed). For $\gamma \in \div(G)$, 
$$\int_{C_{\gamma, \epsilon}} \log |G| \cdot \frac{\overline{\partial}}{\pi i} \log \|s\| = O(\epsilon \log \epsilon),$$
and
\begin{align*}
\int_{C_{\gamma, \epsilon}} \log \|s\| \cdot \frac{\partial}{\pi i} \log |G| = & \, \frac{1}{2 \pi i} \int_{C_{\gamma, \epsilon}} \log \|s\| \partial \log (G \overline{G}) \\
= & \, \frac{1}{2 \pi i} \int_{C_{\gamma, \epsilon}} \log \|s\| \frac{G'(z)}{G(z)} dz \\
= & \, m_{\gamma} \log \|s(\gamma)\|
\end{align*}
by residue theorem. Hence
$$\lim_{\epsilon \to 0} \int_{C_{\gamma, \epsilon}} \omega = m_{\gamma} \log \|s(\gamma)\|.$$
Similarly for $\eta \in \div(s)$,
$$\lim_{\epsilon \to 0} \int_{C_{\eta, \epsilon}} \omega = -n_{\eta} \log |G(\eta)|.$$
Hence,
\begin{align*}
\int_{C(\overline{F_v})} \log |G| c_1(\CL) 
= & \, \lim_{\epsilon \to 0} \int_{Y_{\epsilon}} \log |G| c_1(\CL)  \\
= & \, \lim_{\epsilon \to 0} \left(- \sum_{\gamma \in \div(G)} \int_{C_{\gamma, \epsilon}} \omega - \sum_{\eta \in \div(s)} \int_{C_{\eta, \epsilon}} \omega \right) \\
= & \, - \sum_{\gamma \in \div(G)} m_{\gamma} \log \|s(\gamma)\| + \sum_{\eta \in \div(s)} n_{\eta} \log |G(\eta)| \\
= & \, \log M_{v, \CL, s}(G)
\end{align*}
as desired.

Part (b) follows by taking a section $s$ with $\div(s)$ disjoint from both $\div(G_1)$ and $\div(G_2)$, and the definition of Mahler measure.

For part (c), take a section $s$ of $\CL$ with $\div(s)$ disjoint from $\div(\phi_*(G'))$. Then $\phi^* s$ is a section of $\phi^*\CL$ and $\div(\phi^* s) = \phi^* \div(s)$ has disjoint support from $\div(G')$. By definition,
$$
M_{v, \phi^*\CL} (G')
= \prod_{\gamma' \in \supp(G')} \|s (\phi(\gamma'))\|^{- m_{\gamma'}} \prod_{\eta' \in \supp(\phi^* s)} |G'(\eta'))|^{- n_{\eta'}}.
$$
Note that $\div(\phi_* (G')) = \phi_* \div(G')$, and for $\eta \in C(\overline{F_v})$ with $\phi^*([\eta]) = \sum_{\eta'} n_{\eta'} \eta'$, we have \cite[Exercise 2.10]{SilvermanEllipticCurve}
$$\phi_*(G')(\eta) = \prod_{\phi(\eta') = \eta} G'(\eta')^{n_{\eta'}}.$$
Hence,
$$M_{v, \phi^*\CL}(G') 
= \prod_{\gamma \in \supp(\phi_*(G'))} \|s (\gamma)\|^{- m_{\gamma}} \prod_{\eta \in \supp(s)} |(\phi_* G')(\eta))|^{- n_{\eta}}
= M_{v, \CL} \left(\phi_*(G')\right)$$
as desired.

For part (d), we apply $G' = \phi^*G$ to part (c). Then
$$M_{v, \phi^* \CL} (\phi^*G) = M_{v, \CL}(\phi_*\phi^*G) = M_{v, \CL}(G^{\deg(\phi)}).$$
Using property (b), we see that $M_{v, \CL}(G^{\deg(\phi)}) = M_{v, \CL}(G)^{\deg(\phi)}$. Hence,
$$M_{v, \phi^* \CL} (\phi^*G) = M_{v, \CL}(G)^{\deg(\phi)}$$
as desired.
\end{proof}
\begin{example}
Consider $C = \BP^1$, and $\CL = \mathbf{O(1)}$ on $\BP^1$ with standard metric. If $G: \BP^1 \to \BP^1$ is a polynomial, i.e.
$$G([x, y]) = [a(x - \alpha_1 y) \cdots (x - \alpha_k y), y^k]$$ where $a \neq 0$, we claim that
$$M_{v, \mathbf{O(1)}}(G) = |a|_v \prod_{i=1}^n \max\{1, |\alpha_i|_v\}$$
recovers the classical Mahler measure of a polynomial.

The reasoning is as follows. Note that $G$ has a pole at $[1, 0]$ of order $k$. For small $\epsilon > 0$, one consider the section $s_{\epsilon}$, where
$$s_{\epsilon} ([x_0, x_1]) = \frac{\epsilon x_0 + x_1}{x_i}$$
on the open set $x_i \neq 0$; this is a section with a simple zero at $[1, -\epsilon]$ and no poles. For most $\epsilon$, $\div(s_{\epsilon})$ and $\div(G)$ are disjoint. Moreover, under the standard norm on $\mathbf{O(1)}$,
$$\|s_{\epsilon} ([x_0, x_1]) \| = \frac{|\epsilon x_0 + x_1|_v}{\max\{|x_0|_v, |x_1|_v\}}$$

By definition, we have
\begin{align*}
M_{v, \mathbf{O(1)}, s_{\epsilon}}(G) = & \, \|s_{\epsilon}([1, 0])\|^{k} \cdot \prod_{i=1}^k \|s_{\epsilon}([\alpha_i, 1])\|^{-1} \cdot |G([1, -\epsilon])|_v \\
= & \, |\epsilon^k G([1, -\epsilon])|_v \cdot \prod_{i=1}^k \frac{\max\{1, |\alpha_i|\}}{|1 + \epsilon \alpha_i|_v} \\
\to & \, |a|_v \prod_{i=1}^k \max\{1, |\alpha_i|_v\}
\end{align*}
as $\epsilon \to 0$. Since $M_{v, \mathbf{O(1)}}(G)$ does not depend on the choice of section $s_{\epsilon}$, we have 
$$M_{v, \mathbf{O(1)}}(G) = |a|_v \prod_{i=1}^k \max\{1, |\alpha_i|_v\}$$
as desired.
\end{example}
\begin{cor}
Back to our set up, where $X(\overline{F_v})$ is an elliptic curve, and $f: X(\overline{F_v}) \to \BP^1(\overline{F_v})$ is a morphism of degree $k$. Then
$$\int_{X(\overline{F_v})} \log |g_v| d\mu_v = \frac{1}{k} \log M_{v, f^*\mathbf{O(1)}}(g_v).$$
If we define the rational function $G_v: \BP^1(\overline{F_v}) \to \BP^1(\overline{F_v})$ via the norm map
$$G_v \circ f = N_{\BC(X(\overline{F_v})) / \BC(f)}(g_v),$$
then
$$\int_{X(\overline{F_v})} \log |g_v| d\mu_v = \frac{1}{k} \log M_{v, \mathbf{O(1)}}(G_v).$$
\end{cor}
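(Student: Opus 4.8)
The plan is to obtain both identities as a direct consequence of Proposition \ref{Proposition_Properties_Mahler_Measure}, combined with the normalization of $d\mu_v$ recorded in Example \ref{Example_Curvature_EC}. First I would recall that $d\mu_v := \frac{1}{k} c_1(\overline{f^* \mathbf{O(1)}_v})$ by definition, so that for $g_v \in (H^0(E, O_E(R+O)) \otimes_F F_v) - 0$, viewed as a rational function on $X(\overline{F_v})$,
$$\int_{X(\overline{F_v})} \log|g_v|_v\, d\mu_v = \frac{1}{k}\int_{X(\overline{F_v})} \log|g_v|_v\, c_1(f^*\mathbf{O(1)}).$$
The metric on $f^*\mathbf{O(1)}$ is the pullback of the standard metric on $\mathbf{O(1)}$, which is of class (S) (as noted in the example preceding Example \ref{Example_Curvature_EC}), and hence $f^*\mathbf{O(1)}$ is of class (S) as well, so Proposition \ref{Proposition_Properties_Mahler_Measure} applies to it. Choosing any nonzero meromorphic section $s$ of $f^*\mathbf{O(1)}$ whose divisor is disjoint from $\div(g_v)$ --- for instance the pullback of a generic hyperplane section of $O(1)$ --- Proposition \ref{Proposition_Properties_Mahler_Measure}(a) gives $\int_{X(\overline{F_v})} \log|g_v|_v\, c_1(f^*\mathbf{O(1)}) = \log M_{v, f^*\mathbf{O(1)}}(g_v)$. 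Dividing by $k$ yields the first asserted equality.

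For the second equality I would invoke the pushforward property, Proposition \ref{Proposition_Properties_Mahler_Measure}(c), applied to the morphism $\phi = f : X(\overline{F_v}) \to \BP^1(\overline{F_v})$, the metrized line bundle $\mathbf{O(1)}$ on $\BP^1(\overline{F_v})$, and the rational function $G' = g_v$. This gives $M_{v, f^*\mathbf{O(1)}}(g_v) = M_{v, \mathbf{O(1)}}(f_*(g_v))$, where $f_* = N_{\BC(X(\overline{F_v}))/\BC(f)}$ is the norm map on function fields. By the definition of $G_v$, namely $G_v \circ f = N_{\BC(X(\overline{F_v}))/\BC(f)}(g_v)$, the element $f_*(g_v)$ of $\BC(f) \cong \BC(\BP^1(\overline{F_v}))$ is precisely $G_v$. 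Substituting into the first equality gives $\int_{X(\overline{F_v})} \log|g_v|_v\, d\mu_v = \frac{1}{k}\log M_{v, \mathbf{O(1)}}(G_v)$.

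Neither step presents a real obstacle; the only points requiring care are matching conventions --- ensuring the norm map $\phi_*$ appearing in Proposition \ref{Proposition_Properties_Mahler_Measure}(c) is the same map used to define $G_v$ (both are $N_{\BC(X(\overline{F_v}))/\BC(f)}$, so this is immediate) --- and checking that the class (S) hypothesis needed for part (a) is inherited by $f^*\mathbf{O(1)}$. With these in place the corollary follows in two lines.
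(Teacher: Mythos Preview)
Your proposal is correct and follows essentially the same approach as the paper: both invoke the definition $d\mu_v = \frac{1}{k} c_1(f^*\mathbf{O(1)})$ together with Proposition~\ref{Proposition_Properties_Mahler_Measure}(a) for the first identity, and then Proposition~\ref{Proposition_Properties_Mahler_Measure}(c) for the second. The paper's proof is a terse two-line version of exactly what you wrote; your added remarks on class~(S) inheritance and the convention-matching for $f_*$ are reasonable elaborations but not substantively different.
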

\begin{proof}
By definition, $d\mu_v = \frac{1}{k} c_1(f^*\mathbf{O(1)})$. Hence, by Proposition \ref{Proposition_Properties_Mahler_Measure}(a), 
$$\int_{X(\overline{F_v})} \log |g_v| d\mu_v = \frac{1}{k} \log M_{v, f^*\mathbf{O(1)}}(g_v).$$
The second equality follows from Proposition \ref{Proposition_Properties_Mahler_Measure}(c).
\end{proof}

\subsubsection{Continuity of $\int_{X(\overline{F_v})} \log |g_v|_v d\mu_v$ and consequences}
\label{Section_H0E_Locally_Free_In_R}

The vector space $H^0(E, O_E(R + O))$ vary nicely as $R$ varies in $E$; they are the fibers of a vector bundle $\CN$ over $E$, defined over $F$. (Rigorously, this follows from the existence of Poincare bundle on $E \times \Pic^2(E)$.) Base change to $\overline{F_v}$ gives us the vector bundle $\CN \times_F \overline{F_v}$ over $E(\overline{F_v})$, whose fiber over $R \in E(\overline{F})$ is $H^0(E(\overline{F_v}), O_E(R + O))$.

Each fiber $H^0(E(\overline{F_v}), O_E(R + O))$ is equipped with the $L^2$-metric $\|\cdot\|_{R, 2}$, defined using the Arakelov-Green function (see Definition \ref{Definition_L2_metric_H0X}; when the point $R$ is clear, we may denote the metric as $\|\cdot\|_2$). The smoothness of Arakelov-Green function implies that this $L^2$-metric glues to a metric on $\CN$. We can thus form the unit sphere bundle $M$ over $E(\overline{F_v})$. As both the base $E(\overline{F_v})$ and the fiber are compact, $M$ is compact.

\begin{lemma}
\label{Lemma_Log_Integral_Uniformly_Bounded}
For $g_{v, R} \neq 0 \in H^0(E(\overline{F_v}), O_E(R + O))$, we have an error bound
$$\left|\int_{X(\overline{F_v})} \log |g_{v, R}|_v d\mu_v - \log \|g_{v, R}\|_{2}\right| = O_{E, v}(1),$$
uniform over $g_{v, R}$, and $R \in E(\overline{F_v})$.
\end{lemma}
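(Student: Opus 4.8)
The plan is to trade the awkward comparison of the $\log$-average against the $L^2$-norm for an explicit description of the pointwise norm of a section of $O_E(R+O)$, and then to reduce the remaining estimate to the potential-theoretic properties of the Arakelov--Green function already recalled in \S\ref{Subsection_Admissible_Metric_EC}. First I would use the normalization built into the metric $\|\cdot\|_v$ of Definition \ref{Definition_L2_metric_H0X}: writing $g_{v,R}$ as the section $s := g_{v,R}\cdot 1_{R+O}$ of $O_E(R+O)$ we have $\log\|s\|_v = \log|g_{v,R}|_v + \log\|1_{R+O}\|_v$, and since $\|\cdot\|_v$ is normalized so that $\int_{E(\overline{F_v})}\log\|1_{R+O}\|_v\,d\mu_v = 0$, integrating against the probability measure $d\mu_v$ gives
$$\int_{X(\overline{F_v})}\log|g_{v,R}|_v\,d\mu_v = \int_{E(\overline{F_v})}\log\|s\|_v\,d\mu_v,\qquad \|g_{v,R}\|_2^2 = \int_{E(\overline{F_v})}\|s\|_v^2\,d\mu_v.$$
So it suffices to show $\bigl|\int\log\|s\|_v\,d\mu_v-\tfrac12\log\int\|s\|_v^2\,d\mu_v\bigr| = O_{E,v}(1)$, uniformly over nonzero $s\in H^0(E(\overline{F_v}),O_E(R+O))$ and over $R$.

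Next I would pin down the shape of $\|s\|_v$. Let $D_+ = P_1+P_2$ be the (degree-$2$, effective) divisor of zeros of $s$. Then $\sigma := s\otimes 1_{D_+}^{-1}$ is a nowhere-vanishing holomorphic section of the degree-$0$ line bundle $O_E(R+O-D_+)$; equipping the latter with the tensor-product of the metrics $\|\cdot\|_v$, its first Chern form is $\deg(O_E(R+O-D_+))\cdot(\text{prob.\ measure}) = 0$, so $\log\|\sigma\|_v$ is harmonic on the compact Riemann surface $E(\overline{F_v})$, hence equal to a constant $\log c_s$. By the tensor-product compatibility of the canonical metrics (the analogue, for the curvature form $d\mu_v$, of the Arakelov--Green identity $\|1_D\|(P)=\prod_i G(P,Q_i)^{n_i}$ recalled in \S\ref{Subsection_Admissible_Metric_EC}), this yields
$$\|s\|_v(P) = c_s\cdot g_\mu(P,P_1)\,g_\mu(P,P_2),\qquad g_\mu(P,Q):=\|1_Q\|_v(P),$$
where $g_\mu$ is the Green function attached to $d\mu_v$, normalized by $\int\log g_\mu(\cdot,Q)\,d\mu_v = 0$. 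Hence $\int\log\|s\|_v\,d\mu_v = \log c_s$ and $\int\|s\|_v^2\,d\mu_v = c_s^2\cdot I(P_1,P_2)$ with $I(P_1,P_2):=\int_{E(\overline{F_v})} g_\mu(P,P_1)^2 g_\mu(P,P_2)^2\,d\mu_v(P)$, so the quantity to be bounded is exactly $-\tfrac12\log I(P_1,P_2)$ and the unknown constant $c_s$ drops out entirely.

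It remains to bound $I(P_1,P_2)$ above and below by positive constants depending only on $E,v$, uniformly in $P_1,P_2\in E(\overline{F_v})$. For the upper bound it is enough that $g_\mu$ is bounded above on $E(\overline{F_v})^2$; I would deduce this from the corresponding property of the classical Arakelov--Green function $G$ (bounded above, vanishing only on the diagonal, \S\ref{Subsection_Admissible_Metric_EC}) by noting that $\log g_\mu - \log G$ differs, up to an additive constant controlled by the potential $Q\mapsto\int\log G(P,Q)\,d\mu_v(P)$ (bounded since it solves $\tfrac{\partial\overline\partial}{\pi i}(\cdot)=\nu_{can}$), from a single fixed continuous function. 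For the lower bound $I(P_1,P_2)\ge c>0$ I would argue by compactness: if $I(P_1^n,P_2^n)\to 0$ then, passing to a convergent subsequence $P_i^n\to P_i^\ast$ and applying Fatou's lemma (using continuity of $g_\mu$ off the diagonal and that $d\mu_v$, being $\tfrac1k c_1$ of a metric of class (S) and supported on $f^{-1}\{|z|=1\}$, is atomless), one forces $g_\mu(\cdot,P_1^\ast)\,g_\mu(\cdot,P_2^\ast)=0$ $d\mu_v$-a.e., i.e.\ $d\mu_v$ supported on $\{P_1^\ast,P_2^\ast\}$ — a contradiction. The main difficulty is exactly this potential-theoretic bookkeeping for the singular measure $d\mu_v$ (boundedness and continuity properties of $g_\mu$, atomlessness of $d\mu_v$); the cleanest route, as above, is to reduce each such statement to the already-quoted facts about the classical Arakelov--Green function rather than to redevelop the theory for $d\mu_v$. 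I note that the compactness of the sphere bundle $M$ set up just before the lemma gives an alternative proof, via continuity of $(R,g)\mapsto\int\log|g|_v\,d\mu_v$ on $M$, but the structural identity above sidesteps having to establish that continuity directly.
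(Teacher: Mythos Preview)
Your proposal is correct and the structural identity you extract---that the quantity to be bounded equals $-\tfrac12\log I(P_1,P_2)$ with $I(P_1,P_2)=\int g_\mu(\cdot,P_1)^2 g_\mu(\cdot,P_2)^2\,d\mu_v$, depending only on the divisor of zeros of $s$---is genuinely informative. The paper's argument is exactly the ``alternative'' you mention in your last sentence: it normalizes to $\|g_{v,R}\|_2=1$, observes that $(R,g)\mapsto\int\log|g|_v\,d\mu_v$ is continuous on the compact sphere bundle $M$ (this continuity being read off directly from the Mahler-measure expression $\frac{1}{k}\log M_{v,\mathbf{O(1)}}(f_*g)$ established just before), and concludes boundedness in two lines. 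So the paper trades your potential-theoretic bookkeeping (boundedness and off-diagonal continuity of $g_\mu$, atomlessness of $d\mu_v$) for the single continuity assertion, which it gets essentially for free from the preceding subsection. Your route, by contrast, gives an explicit formula that makes transparent \emph{why} the bound is uniform in $R$ (everything depends only on $(P_1,P_2)\in E(\overline{F_v})^2$), at the cost of having to verify several properties of the Green function $g_\mu$ for the singular measure $d\mu_v$; one small slip is that the potential $u(P)=\int\log G(P,Q)\,d\mu_v(Q)$ solves $\tfrac{\partial\bar\partial}{\pi i}u=\nu_{can}-d\mu_v$, not $\nu_{can}$, and its boundedness really comes from $d\mu_v$ being supported on a $1$-dimensional set (so the $\log$-singularity of $G$ is integrable), not from the PDE alone.
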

\begin{proof} 
For $g_{v, R} \in H^0(E(\overline{F_v}), O_E(R + O))$ with $\|g_{v, R}\|_2 = 1$, note that the map
$$g_{v, R} \to \int_{X(\overline{F_v})} \log |g_{v, R}|_v d\mu_v$$
is continuous on the compact space $M$ (This is clear using the expression as Mahler measure). Hence $\int_{X(\overline{F_v})} \log |g_{v, R}|_v d\mu_v$ is bounded uniformly over $R \in E(\overline{F_v})$ and $\|g_{v, R}\|_2 = 1$. 

For arbitrary $g_{v, R} \neq 0 \in H^0(E(\overline{F_v}), O_E(R + O))$, by normalizing $g_{v, R}$ to have norm 1 we have
$$\left|\int_{X(\overline{F_v})} \log |g_{v, R}|_v d\mu_v - \log \|g_{v, R}\|_{2}\right| = \left|\int_{X(\overline{F_v})} \log \frac{|g_{v, R}|_v}{\|g_{v, R}\|_{2}} d\mu_v \right| = O_{E, v}(1),$$
as desired.
\end{proof}

\subsection{Lipschitz parametrizability of $\partial S_{\Omega, R}(1)$}
In this section, we will show that the $\partial S_{\Omega, R}(1)$ is Lipschitz parametrizable uniformly over $R \in E(F)$ (Proposition \ref{Proposition_Uniformity_R}). This will be used to control the error term in geometry of numbers argument in next section.

We start with the definition of Lipschitz parametrizability, and its basic properties.
\begin{defn}[Lipschitz parametrizability]
A set $S \subset \BR^n$ is \textbf{Lipschitz parametrizable of codimension $k$}, if there are $M$ maps $\phi_1, \cdots, \phi_M: [0, 1]^{n-k} \to \BR^n$ satisfying the Lipschitz condition
$$\|\phi_i(\mathbf{x}) - \phi_i(\mathbf{y})\|_2 \leq L\|\mathbf{x}-\mathbf{y}\|_2$$
for each $i = 1, \cdots, M$, such that $S$ is covered by the image of $\phi_i$.
\end{defn}

\begin{defn}[Uniform Lipschitz parametrizability]
Let $D$ be a set, and suppose we have a family of sets $S_R \subset \BR^n$ parametrized by $R \in D$. 

We say that the family is \textbf{uniformly Lipschitz parametrizable of codimension $k$ over $D$}, if each $S_R$ is Lipschitz parametrizable of codimension $k$, and the constants $M_R$ (number of parametrizations), $L_R$ (Lipschitz constant) for each $S_R$ can be chosen independent of $R$.
\end{defn}

Our main tool of showing (uniform) Lipschitz parametrizability is Gromov's algebraic lemma \cite{Gromov}; we will use the version stated in \cite{Burguet}. This lemma was first used in \cite{Barroero_Widmer} in similar problems.

Recall that a semi-algebraic set of $\BR^n$ is a finite union of sets defined by polynomial equalities or inequalities. The degree of a semi-algebraic set is the smallest sum of degree of polynomials appearing in a complete description of the set.

\begin{thm}[Gromov's algebraic lemma, {\cite[Theorem 1]{Burguet}}]
For all integers $r \ge 1$, $m \ge 0$, $E \ge 0$, there exists $M(m, r, E) < \infty$ with the following properties.

For any compact semialgebraic subset $A \subset [0, 1]^m$, of dimension $n$, and degree at most $E$, there exists an integer $N \leq M(m, r, E)$ and $C^r$-maps $\phi_1, \cdots, \phi_N: [0,1]^n \to [0, 1]^m$ such that 
\begin{itemize}
    \item $\bigcup_{i=1}^N \phi_i ([0, 1]^n) = A$, and
    \item $\|\phi_{i}\|_{r} := \max_{\beta: |\beta| \leq r} \|\partial^{\beta} \phi_i\| \leq 1$.
\end{itemize}
\end{thm}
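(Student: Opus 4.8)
The plan is to prove the statement by induction on the dimension $n$ of the semialgebraic set, following the classical Yomdin--Gromov reparametrization argument (in the form reworked by Pila--Wilkie and by Burguet). The two ingredients are: (i) a uniform $C^r$-cell decomposition that reduces the problem to reparametrizing graphs of semialgebraic $C^r$ maps over open cubes, and (ii) a one-variable ``oscillation control'' lemma, which is then promoted to the parametric setting so that the induction on $n$ closes with constants depending only on $(m,r,E)$.

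First I would establish the one-variable core. Let $f\colon (0,1)\to[-1,1]$ be semialgebraic and $C^r$, with complexity at most $E$. The derivatives $f',\dots,f^{(r)}$ are again semialgebraic of complexity bounded in terms of $E$ and $r$, so by Milnor--Thom / Khovanskii-type bounds on the number of connected components of a semialgebraic set, each $f^{(j)}$ has at most $D=D(E,r)$ zeros. Subdividing $(0,1)$ at all these zeros produces at most $rD+1$ open subintervals on each of which every $f^{(j)}$ has constant sign and is monotone. On such a subinterval $I$ one argues by a further induction on $r$: after the affine change of variable $\psi\colon(0,1)\to I$, the $j$-th derivative scales by $|I|^{j}$, and using $\int_I|f'|\le 2$ (because $|f|\le 1$) together with monotonicity of $f'$ one can split $I$ into a bounded number of pieces on which $|(f\circ\psi)'|\le 1$; an analogous rescaling applied to $f'$ in place of $f$, using that the higher derivatives are already monotone, then brings $(f\circ\psi)^{(j)}$ below $1$ for all $j\le r$ at the cost of another bounded subdivision. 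This yields a bound $N_1(E,r)$ on the number of affine reparametrizations needed; the identity coordinate and the passage to the closed cube $[0,1]$ are harmless.

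Next I would set up the induction on $n$. A semialgebraic set $A\subset[0,1]^m$ of dimension $n$ and degree at most $E$ admits a $C^r$ cylindrical decomposition into finitely many cells, the number of cells and the degrees of the defining polynomials bounded in terms of $m,n,E,r$; each top-dimensional cell is the graph of a semialgebraic $C^r$ map $f\colon(0,1)^n\to\mathbb{R}^{m-n}$ of bounded complexity, and the cells of dimension $<n$ are handled directly by the inductive hypothesis after an affine normalization into $[0,1]$. It thus remains to reparametrize a single such graph. Viewing $f$ as a family $\{f(\cdot,t)\}_{t\in(0,1)^{n-1}}$ in its last variable, I would run the one-variable core uniformly in $t$: the construction above is definable, so the subdivision points and the boundedly many affine reparametrizations can be taken to be semialgebraic functions of $t$ of bounded complexity. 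Re-coordinatizing, this expresses the graph as a bounded union of graphs of semialgebraic $C^r$ maps $(0,1)^{n-1}\times(0,1)\to\mathbb{R}^{m-n}$ whose derivative in the last variable is already $\le 1$; applying the inductive hypothesis in the first $n-1$ variables and composing produces the desired $\phi_i$, with a bookkeeping argument to keep the last-variable derivatives controlled under the composition. The total count multiplies a bounded number of bounded numbers, giving $N\le M(m,r,E)$, and a final fixed rescaling of the target normalizes all derivative bounds to $1$.

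The main obstacle is exactly the parametric step: one must carry out the one-variable reparametrization so that it depends \emph{definably and with uniformly bounded complexity} on the remaining coordinates, and one must control how the derivatives in the already-treated variables behave under the new reparametrization in the last variable. Tracking complexity through the cell decomposition and through the zero-counting for derivatives, and isolating the loci where the $C^r$-cell structure degenerates (which are lower-dimensional and absorbed by the dimension induction), is where the real work lies; everything else is affine rescaling and bookkeeping.
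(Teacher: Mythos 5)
The paper does not prove this statement; it is imported verbatim as a black box from Burguet \cite{Burguet} (stated as a cited theorem), so there is no ``paper's own proof'' to compare against. What you have written is a sketch of the Yomdin--Gromov--Burguet argument itself. The overall architecture (cell decomposition, reduce to graphs over open cubes, one-variable reparametrization lemma, induction on dimension, complexity bookkeeping) is correct, but the one-variable core, as you have described it, is wrong in a way that kills the argument.

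Concretely, you claim that on a subinterval $I$ where $f^{(1)},\dots,f^{(r)}$ have constant sign and are monotone, one can combine the affine change of variable $\psi\colon(0,1)\to I$ with the bound $\int_I|f'|\le 2$ and ``split $I$ into a bounded number of pieces on which $|(f\circ\psi)'|\le 1$,'' and then repeat. But affine subdivisions alone cannot achieve this. Take $f(x)=x^{1/2}$ on $(0,1)$, which is semialgebraic of tiny complexity with $|f|\le 1$ and all derivatives monotone of constant sign. Any finite collection of affine maps $\psi_i\colon(0,1)\to(0,1)$ whose images cover $(0,1)$ must include one with image $(0,b)$, say $\psi_i(t)=bt$; then $(f\circ\psi_i)'(t)=\tfrac12 b^{1/2}t^{-1/2}$, which is unbounded on $(0,1)$. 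So no bounded family of \emph{affine} pieces tames even the first derivative, let alone the higher ones. The essential ingredient you are missing is a \emph{nonlinear} (polynomial) reparametrization near the bad endpoint --- here $\psi(t)=t^2$ turns $f\circ\psi$ into $t\mapsto t$ --- and the actual Gromov/Burguet argument is built around exactly such quadratic-type substitutions, applied iteratively and tracked through the complexity bounds. Your final paragraph correctly flags the parametric step as the hard part, but the failure comes earlier: the one-variable lemma itself needs the nonlinear substitution, and once that is in place the parametric/definable uniformity has to be re-done with those nonlinear maps rather than affine ones, which is where Burguet's induction and bookkeeping actually live.
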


\begin{cor}
\label{Corollary_Gromov_Lemma}
Let $\BF = \BR$ or $\BC$. Suppose $A_R \subset \BF^m$ is a family of compact semialgebraic subsets parametrized by $R \in D$, such that 
\begin{itemize}
    \item Each $A_R$ lies inside a ball of diameter $l$, independent of $R \in D$.
    \item Each $A_R$ is of dimension $n$, with degree at most $E$, both of which independent of $R \in D$.
\end{itemize}
Then $A_R$ is uniformly Lipschitz parametrizable over $R \in D$.
\end{cor}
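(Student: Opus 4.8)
The plan is to deduce Corollary~\ref{Corollary_Gromov_Lemma} from Gromov's algebraic lemma essentially by rescaling into the unit cube and unwinding the definition of uniform Lipschitz parametrizability. First I would reduce the complex case to the real case: if $\BF = \BC$, identify $\BC^m \cong \BR^{2m}$ in the usual way, observing that this identification sends a semialgebraic set of dimension $n$ and degree $\le E$ to a semialgebraic set of dimension $n$ and degree $\le E'$, where $E'$ depends only on $E$ (one just rewrites each complex polynomial equation as two real polynomial equations in twice as many variables of the same degree, which at most doubles the degree count); and it sends a ball of diameter $l$ to a ball of diameter $l$. Thus it suffices to treat $\BF = \BR$, with $m$ replaced by $2m$ and $E$ by $E'$ in the complex case.

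Next, for $\BF = \BR$, I would translate and rescale. Since each $A_R$ lies in a ball of diameter $l$ independent of $R$, after translating by the center of that ball (a translation does not change dimension or degree, and a translation is a Lipschitz map with constant $1$, so its effect on the final Lipschitz constants is harmless, costing only a bounded additive shift that can be absorbed) we may assume $A_R \subset [-l/2, l/2]^m$. Then the affine map $\psi: \BR^m \to \BR^m$, $\mathbf{x} \mapsto \frac{1}{l}\mathbf{x} + (\tfrac12,\dots,\tfrac12)$ sends $[-l/2,l/2]^m$ into $[0,1]^m$, preserves dimension and degree (up to a factor depending only on $E$ and $m$, again from clearing denominators), and is bi-Lipschitz with constants depending only on $l$. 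So $\psi(A_R) \subset [0,1]^m$ is a compact semialgebraic set of dimension $n$ and degree $\le E''$ with $E''$ independent of $R$. Applying Gromov's algebraic lemma (with $r = 1$, say, which suffices since $C^1$ with bounded first derivatives implies Lipschitz on the compact domain $[0,1]^n$) to $\psi(A_R)$ yields an integer $N_R \le M(m, 1, E'')$ and maps $\phi^{(R)}_1, \dots, \phi^{(R)}_{N_R}: [0,1]^n \to [0,1]^m$ covering $\psi(A_R)$ with $\|\phi^{(R)}_i\|_1 \le 1$; in particular each $\phi^{(R)}_i$ is $1$-Lipschitz with respect to $\|\cdot\|_2$ up to a dimensional constant (bounding the operator norm of the Jacobian by a fixed multiple of $\|\cdot\|_1$). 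Composing with $\psi^{-1}$ gives maps $\psi^{-1}\circ \phi^{(R)}_i: [0,1]^n \to A_R$ covering $A_R$, whose number is $\le M(m,1,E'')$ and whose Lipschitz constants are $\le L\,\mathrm{Lip}(\psi^{-1})$ for the dimensional constant $L$; crucially $M(m,1,E'')$ and $\mathrm{Lip}(\psi^{-1})$ do not depend on $R$. This is exactly the statement that $\{A_R\}_{R\in D}$ is uniformly Lipschitz parametrizable of codimension $m - n$ over $D$.

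There is essentially no serious obstacle here; the proof is bookkeeping. The one point that needs a little care is tracking how the degree of a semialgebraic set behaves under (i) the real-to-complex identification and (ii) the affine rescaling $\psi$ — in both cases one clears denominators and possibly splits a complex equation into real and imaginary parts, and one must check that the resulting degree bound depends only on the original degree $E$ and the ambient dimension $m$, not on $R$ or the numerical value $l$. Since these operations are uniform in $R$ by hypothesis ($l$ and $E$ are fixed across the family), the uniformity of the output constants follows. A secondary technical point is that Gromov's lemma as quoted produces $C^r$ maps on $[0,1]^n$ with $\|\phi_i\|_r \le 1$; one should note that for our purposes codimension $k = m-n$ and a Lipschitz (rather than $C^\infty$) parametrization is all that is needed, so taking $r = 1$ and invoking the mean value theorem on the compact cube $[0,1]^n$ converts the derivative bound into a genuine Lipschitz bound with a constant depending only on $n$.
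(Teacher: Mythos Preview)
Your proposal is correct and follows essentially the same approach as the paper: translate and rescale each $A_R$ into the unit cube, apply Gromov's algebraic lemma with $r=1$, then scale back, noting that the resulting number of maps and Lipschitz constants depend only on $m$, $E$, and $l$. You are in fact more careful than the paper, which does not explicitly address the reduction from $\BC$ to $\BR$ and does not comment on degree under affine maps (though note that affine coordinate changes preserve polynomial degree exactly, so your concern there is unnecessary).
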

\begin{proof}
Translating if necessary, suppose $A_R \subset [0, l]^d$. Apply Gromov's algebraic lemma with $r = 1$ to $\frac{1}{l} A_R \subset [0, 1]^d$, we find $\phi_1, \cdots, \phi_N: [0,1]^n \to \frac{1}{l}A_R$ with first derivative bounded by 1; hence $\phi_i$ are Lipschitz with Lipschitz constant $\sqrt{d}$. With our assumptions, $N$ does not depend on $R \in D$.

Consider the maps $l \phi_1, \cdots, l \phi_N: [0,1]^n \to A_R$. Then there are $N$ Lipschitz maps of Lipschitz constants $l\sqrt{d}$ that covers $A_R$ for each $R \in D$. Moreover, the number of maps/Lipschitz constants are independent of $R \in D$. Hence we proved the desired uniform Lipschitz parametrizability.
\end{proof}

Recall Definition \ref{Definition_S_Omega_R}, where for $R \in E(F)$ we defined
\begin{align*}
S_{\Omega, R}(1) = &  \left\{(g_{v, R}) \in \prod_{v \in M_{F, \infty}} H^0(E, O_E(R + O)) \otimes_F F_v: \left(\int_{X(\overline{F_v})} \log |g_{v, R}|_v d\mu_v\right) \in \Omega + (-\infty, 0]\delta\right\} \\
= & \left\{(g_{v, R}) \in \prod_{v \in M_{F, \infty}} H^0(E(F_v), O_E(R + O)): \left(\int_{X(\overline{F_v})} \log |g_{v, R}|_v d\mu_v\right) \in \Omega + (-\infty, 0]\delta\right\} 
\end{align*}

The main result in this section is as follows.
\begin{prop}
\label{Proposition_Uniformity_R}
For each $R \in E(F)$, $\partial S_{\Omega, R}(1)$ can be covered by $W$ maps $\phi: [0, 1]^{2d-1} \to \prod_{v \in M_{F, \infty}} H^0(E(F_v), O_E(R + O))$ satisfying 
$$\|\phi(\mathbf{x_1}) - \phi(\mathbf{x_2})\|_2 \leq L\|\mathbf{x_1} - \mathbf{x_2}\|_2.$$
Moreover, we can take $W, L = O_{E, f, F}(1)$ independent of $R$. In other words, $\partial S_{\Omega, R}(1)$ is uniformly Lipschitz parametrizable of codimension 1 over $R \in E(F)$.
\end{prop}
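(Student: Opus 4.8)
The strategy is to build up $S_{\Omega, R}(1)$ as a pullback, under fiberwise-continuous data, of a fixed semialgebraic region, and then apply Gromov's algebraic lemma in the uniform form of Corollary \ref{Corollary_Gromov_Lemma}. First I would pass from the intrinsic description of $S_{\Omega, R}(1)$ to a coordinate description using the vector bundle $\CN$ over $E$ whose fiber over $R$ is $H^0(E, O_E(R+O))$ (Section \ref{Section_H0E_Locally_Free_In_R}). Over a suitable Zariski-open $U \subset E$ one trivializes $\CN$, so that for $R \in U(F)$ the ambient space $\prod_{v \in M_{F,\infty}} H^0(E(F_v), O_E(R+O))$ is identified with a fixed $\BR^{2d}$; since $E(F)$ is finite (as a set of points to be summed over, but here we need all of $E(F)$), I would instead cover $E$ by finitely many such trivializing opens, handle each piece, and note that only finitely many $R \in E(F)$ lie outside any fixed one, so those contribute an $O_{E,f,F}(1)$ that is harmless. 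On each trivializing chart the key point is that the defining function $(g_v) \mapsto \left(\int_{X(\overline{F_v})} \log|g_v|_v\, d\mu_v\right)_v$ expressed in the chart coordinates is, by the Mahler-measure interpretation (Corollary after Proposition \ref{Proposition_Properties_Mahler_Measure}), a ratio/product of absolute values of polynomials in the coordinates of $g_v$ and in the coordinates of $R$ — i.e. it is given by semialgebraic data whose \emph{degree} is bounded independently of $R$, with $R$ entering only as parameters.

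The second step is to reduce to a bounded region. The set $S_{\Omega,R}(1)$ is not bounded — $\Omega + (-\infty,0]\delta$ is a half-infinite slab — but $\partial S_{\Omega,R}(1)$ decomposes into the ``side'' boundary coming from $\partial\Omega$ (a fundamental domain of the unit lattice, hence a fixed bounded polyhedral object independent of $R$) and the ``cap'' where $\sum_v \int \log|g_v|_v\, d\mu_v = 0$. Using Lemma \ref{Lemma_Log_Integral_Uniformly_Bounded}, $\int_{X(\overline{F_v})} \log|g_{v,R}|_v\, d\mu_v = \log\|g_{v,R}\|_2 + O_{E,v}(1)$ uniformly in $R$, so on $\partial S_{\Omega,R}(1)$ each $\|g_{v,R}\|_2$ lies between two absolute constants (the slab $\Omega$ forces the $\ev_v$ to differ from each other by $O(1)$, and their sum is $\le 0$ with equality on the cap; combined with $\Omega$ bounded this pins each $\|g_{v,R}\|_2 \asymp_{E,f,F} 1$). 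Hence $\partial S_{\Omega,R}(1)$ is contained in a ball of radius $O_{E,f,F}(1)$ in the fixed chart coordinates, independently of $R$. Now $\partial S_{\Omega,R}(1)$ is a compact semialgebraic subset of $\BR^{2d}$ of dimension $2d-1$, of degree bounded uniformly in $R$, lying in a uniformly bounded ball; Corollary \ref{Corollary_Gromov_Lemma} applied with $m = 2d$, $n = 2d-1$ then produces $W = O_{E,f,F}(1)$ Lipschitz maps $\phi_i : [0,1]^{2d-1} \to \BR^{2d}$ with Lipschitz constant $L = O_{E,f,F}(1)$ covering it, which is exactly the claim.

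One technical wrinkle to address carefully: to invoke Gromov's lemma I need $\partial S_{\Omega,R}(1)$ to be genuinely semialgebraic, not merely ``cut out by $|$polynomial$|$'' conditions, and the archimedean places include complex ones where $H^0 \otimes_F F_v$ is a $\BC$-vector space; there $\int \log|g_v|_v\, d\mu_v$ as a function of the real and imaginary parts of the coordinates is real-analytic but a priori only semialgebraic after one checks that $M_{v,\mathbf{O(1)}}(G_v)$ is a semialgebraic function of the coefficients of $G_v$ — this is true because $\log\max\{1,|\alpha|\}$ summed over roots of a polynomial is a semialgebraic function of the coefficients (it equals $\tfrac12\log$ of a semialgebraic function, e.g. via the resultant/companion-matrix description), and products and the ``$\ev_v$ lands in the polyhedron $\Omega + (-\infty,0]\delta$'' condition preserve semialgebraicity. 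I expect \emph{this verification — that the cap $\{\sum_v \ev_v = 0\}$ and the slab conditions together define a semialgebraic set of $R$-uniformly bounded degree} — to be the main obstacle, since it requires unwinding the Mahler-measure formula into an explicit semialgebraic presentation and checking the degree bound is uniform in the parameter $R$. Everything else (the trivialization of $\CN$, the reduction to finitely many charts, the boundedness via Lemma \ref{Lemma_Log_Integral_Uniformly_Bounded}, and the final application of Corollary \ref{Corollary_Gromov_Lemma}) is routine once that presentation is in hand. A cleaner alternative that sidesteps part of the difficulty: parametrize not $S_{\Omega,R}(1)$ itself but the ``radial'' description — write each $g_v = \rho_v u_v$ with $u_v$ on the fixed unit sphere bundle $M$ (which is compact and semialgebraic, independent of $R$ only after trivializing) and $\rho_v > 0$; then the boundary condition becomes a condition on $(\log\rho_v, u_v)$ where the $\log\rho_v$-part is polyhedral and the $u_v$-dependence factors through the fixed compact base, making the uniformity in $R$ more transparent.
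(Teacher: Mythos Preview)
Your plan is essentially the paper's: decompose $\partial S_{\Omega,R}(1)$ into the cap $\{\sum_v\ev_v=0\}$ and the sides coming from $\partial\Omega$, use Lemma \ref{Lemma_Log_Integral_Uniformly_Bounded} for uniform boundedness, pass to Mahler measures via Proposition \ref{Proposition_Properties_Mahler_Measure}, and invoke Corollary \ref{Corollary_Gromov_Lemma}. The obstacle you flag is exactly where the paper spends its effort. Rather than appeal to Tarski--Seidenberg abstractly, the paper (Proposition \ref{Proposition_Mahler_measure_as_coordinate}) carries out the quantifier elimination by hand on the degree-$2$ polynomial $E_{v,R_v}(z)$ obtained from $N_{v,f}(a_v g_{v,R_v,1}+b_v g_{v,R_v,2})$ after clearing the fixed poles at $f(R_v),f(O)$. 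One stratifies by which roots lie in the unit disk: in the all-inside, all-outside, and degree-$\le 1$ strata one has $M_v=|H_{v,R_v,j}(a_v,b_v)|$ for some coefficient $H_j$; in the mixed stratum one introduces the auxiliary variable $u=H_{v,R_v,2}\cdot\alpha_1$ (the large root scaled), which satisfies the explicit quadratic $u^2+H_{v,R_v,1}u+H_{v,R_v,0}H_{v,R_v,2}=0$ and gives $M_v=|u|$. This produces $O_{k,\delta}(1)$ polynomial charts of degree $\ll_k 1$, with $R$ entering only through coefficients, on each of which $M_v$ is literally the absolute value of a polynomial; the cap condition $\prod_v M_v=\mathrm{const}$ is then genuinely algebraic and Gromov applies cleanly. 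Your resultant/companion-matrix sketch would eventually reproduce this, but the explicit stratification is what makes the uniform degree bound transparent.

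One correction to your write-up: your assertion that $\partial S_{\Omega,R}(1)$ is semialgebraic is not literally true for the side piece $T_{R,2}$ when $r_1+r_2>1$, because the faces of $\Omega$ are cut by \emph{real} linear forms in the $\ev_v$ (coming from log-units), giving conditions of the shape $\prod_v M_v^{d_v}=1$ with typically irrational $d_v$. The fix --- which your radial alternative is groping toward and which the paper uses implicitly --- is to separate variables: once each $M_v$ has been promoted to a polynomial coordinate via the stratification above, the $\Omega$-constraint acts only on the tuple $(M_v)_v$ and defines a \emph{fixed} bounded region independent of $R$, hence Lipschitz-parametrizable once and for all with $O_F(1)$ maps; the $R$-dependence lives entirely in the semialgebraic fibers over each such tuple, where Gromov gives uniform constants. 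Composing the two layers yields the uniform Lipschitz parametrization.
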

The rest of the section is devoted to proving this result.
\begin{remark}
Strictly speaking, we need to fix an isomorphism of vector spaces
$$\prod_{v \in M_{F, \infty}} H^0(E(F_v), O_E(R + O)) \cong \BR^{2d}$$
before discussing Lipschitz parametrizability. We will fix such an isomorphism in the course of proof.
\end{remark}
By definition,
\begin{align*}
&\partial S_{\Omega, R}(1) \\
\subset & \left\{(g_{v, R}) \in \prod_{v \in M_{F, \infty}} H^0(E(F_v), O_E(R + O)): \left(\int_{X(\overline{F_v})} \log |g_{v, R}|_v d\mu_v\right) \in \partial\left(\Omega + (-\infty, 0] \delta\right)\right\} \\
= & \left\{(g_{v, R}) \in \prod_{v \in M_{F, \infty}} H^0(E(F_v), O_E(R + O)): \left(\int_{X(\overline{F_v})} \log |g_{v, R}|_v d\mu_v\right) \in \Omega \cup \left(\partial \Omega + (-\infty, 0] \delta\right)\right\} \\
= & \left\{(g_{v, R}) \in \prod_{v \in M_{F, \infty}} H^0(E(F_v), O_E(R + O)): \left(\int_{X(\overline{F_v})} \log |g_{v, R}|_v d\mu_v\right) \in \Omega \right\} \\
& \bigcup \left\{(g_{v, R}) \in \prod_{v \in M_{F, \infty}} H^0(E(F_v), O_E(R + O)): \left(\int_{X(\overline{F_v})} \log |g_{v, R}|_v d\mu_v\right) \in \partial \Omega + (-\infty, 0] \delta \right\}
\end{align*}
Let
$$T_{R, 1} := \left\{(g_{v, R}) \in \prod_{v \in M_{F, \infty}} H^0(E(F_v), O_E(R + O)): \left(\int_{X(\overline{F_v})} \log |g_{v, R}|_v d\mu_v\right) \in \Omega \right\},$$
$$T_{R, 2} := \left\{(g_{v, R}) \in \prod_{v \in M_{F, \infty}} H^0(E(F_v), O_E(R + O)): \left(\int_{X(\overline{F_v})} \log |g_{v, R}|_v d\mu_v\right) \in \partial \Omega + (-\infty, 0] \delta \right\}.$$
We will focus on proving the Lipschitz parametrizability of $T_{R, 1}$. The second set can be handled similarly, and we will sketch the proof at the end of this section.

We will now fix an isomorphism of vector spaces
$$\prod_{v \in M_{F, \infty}} H^0(E(F_v), O_E(R + O)) \cong \BR^{2d}.$$
For each $v \in M_{F, \infty}$, locally around $R_v \in E(F_v)$ we consider a basis $g_{v, R_v, 1}, g_{v, R_v, 2}$ of $H^0(E(F_v), O_E(R_v + O))$ that varies smoothly with $R_v \in E(F_v)$ (Section \ref{Section_H0E_Locally_Free_In_R}). For $g_{v, R_v} \in H^0(E(F_v), O_E(R_v + O))$, we may then write
$$g_{v, R_v} = a_v g_{v, R_v, 1} + b_v g_{v, R_v, 2}$$
for some $a_v, b_v \in F_v$. This gives us an isomorphism of vector spaces
$$H^0(E(F_v), O_E(R_v + O)) \cong F_v^2$$
defined by $g_{v, R_v} \to (a_v, b_v)$. Since $E(F_v)$ is compact, we only need to consider a finite open cover of $E(F_v)$, each of which has coordinates $g_{v, R_v, 1}, g_{v, R_v, 2}$ for $H^0(E(F_v), O_E(R_v + O))$ locally. 

In the following, we use $|\cdot|$ to represent the Euclidean norm on $F_v$, and $|\cdot|_v$ to represent the normalized absolute value on $F_v$. In particular when $F_v \cong \BC$, $|\cdot|$ would mean the Euclidean norm on $\BC$, while $|\cdot|_v$ would mean Euclidean norm squared.

\begin{prop}
\label{Proposition_TRd1_Lipschitz_parametrizable_Implies_TR1}
To show that $T_{R, 1}$ is uniformly Lipschitz parametrizable of codimension 1 over $R \in E(F)$, it suffices to show that for $\delta > 0$,
$$T_{(R_v), \delta, 1}' := \left\{((a_v, b_v)) \in \prod_{v \in M_{F, \infty}} F_v^2: 
\begin{array}{l}
\prod_{v \in M_{F, \infty}} M_{v, \mathbf{O(1)}}\left(N_{v, f}(a_v g_{v, R_v, 1} + b_vg_{v, R_v, 2})\right) = 1, \\
\delta \leq |a_v|^2 + |b_v|^2 \leq \delta^{-1} \text{ for each $v \in M_{F, \infty}$.}
\end{array}
\right\}$$
is uniformly Lipschitz parametrizable of codimension 1 over $(R_v) \in \prod_{v \in M_{F, \infty}} E(F_v)$. Here $N_{v, f} := N_{\BC(E(\overline{F_v})) / f^*\BC(\BP^1(\overline{F_v}))}$ is the norm map on function fields, induced by $f: E \to \BP^1$.
\end{prop}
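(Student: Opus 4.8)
The idea is to reduce the Lipschitz parametrizability of $T_{R,1}$ to that of the more concrete set $T_{(R_v),\delta,1}'$ by a sequence of reductions, each of which either discards a region that is automatically Lipschitz parametrizable or rewrites the defining condition in polynomial/semialgebraic terms so that Gromov's algebraic lemma (Corollary \ref{Corollary_Gromov_Lemma}) applies. First I would use the homogeneity of $T_{R,1}$ under scaling: if $(g_{v,R})\in T_{R,1}$ then $(\lambda g_{v,R})\in T_{R,1}$ precisely when $\sum_v \log|\lambda|_v = 0$, i.e. along the $(r_1+r_2-1)$-dimensional torus corresponding to $O_F^\times$ acting through $\Omega$; and the $|\mu_F|$-fold cover and the $\Omega$-fundamental domain structure means that $T_{R,1}$ is, up to a compact piece, a cone-like set over a compact ``unit'' slice. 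Concretely, $\int_{X(\overline{F_v})}\log|g_{v,R}|_v\,d\mu_v \in \Omega$ forces, via Lemma \ref{Lemma_Log_Integral_Uniformly_Bounded}, that $\log\|g_{v,R}\|_{2,R}$ lies in a bounded translate of $\Omega$, hence $(g_{v,R})$ lies in an annular region $\delta \le \|g_{v,R}\|_{2,R}^2 \le \delta^{-1}$ for each $v$, with $\delta = \delta(E,f,F) > 0$ independent of $R$. This is where the uniform error bound in Lemma \ref{Lemma_Log_Integral_Uniformly_Bounded} is essential: it makes $\delta$ uniform in $R$.

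Next I would pass to the coordinates $(a_v,b_v)$ introduced before Proposition \ref{Proposition_TRd1_Lipschitz_parametrizable_Implies_TR1}, using a finite cover of each $E(F_v)$ by charts on which the smoothly-varying basis $g_{v,R_v,1}, g_{v,R_v,2}$ exists (finiteness of the cover is fine, since the number of charts is $O_{E,f,F}(1)$). Under these coordinates the condition $(g_{v,R})\in T_{R,1}$ becomes: $\sum_v \int_{X(\overline{F_v})}\log|a_v g_{v,R_v,1}+b_v g_{v,R_v,2}|_v\,d\mu_v \in \Omega$, together with the annular bounds $\delta \le |a_v|^2+|b_v|^2\le\delta^{-1}$. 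Rewriting the integral as a Mahler measure via the corollary after Proposition \ref{Proposition_Properties_Mahler_Measure} — $\int_{X(\overline{F_v})}\log|g_v|_v\,d\mu_v = \frac1k\log M_{v,\mathbf{O(1)}}(N_{v,f}(g_v))$ — turns the ``$\in\Omega$'' condition into a system of polynomial (in)equalities in the real and imaginary parts of the $(a_v,b_v)$ and in the coefficients of $N_{v,f}(a_v g_{v,R_v,1}+b_v g_{v,R_v,2})$, because $M_{v,\mathbf{O(1)}}$ of a polynomial is a product of $\max\{1,|\alpha_i|_v\}$ over its roots, which is semialgebraic of bounded degree. The ``$\in\Omega$'' condition cuts out a face/boundary piece; the key point is that $\Omega$ itself is a fixed polytope (a fundamental domain for the unit lattice), hence semialgebraic of bounded complexity, and the full defining data of $T_{R,1}$ in these coordinates has dimension $2d$ minus the one torus direction we have not yet quotiented by. Collecting the pieces: $T_{R,1}$ is, after the above reductions, a bounded semialgebraic set of dimension $2d-1$ whose degree is bounded independently of $R$ (the only $R$-dependence enters through the smoothly varying coefficients of $g_{v,R_v,i}$ and $N_{v,f}$, which lie in a compact family and contribute bounded degree), sitting inside a ball of radius $\delta^{-1/2}$ independent of $R$. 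Corollary \ref{Corollary_Gromov_Lemma} then yields uniform Lipschitz parametrizability of codimension $1$, which is exactly the content of Proposition \ref{Proposition_TRd1_Lipschitz_parametrizable_Implies_TR1} (and the reduction statement itself is what I would prove first, then invoke it).

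For $T_{R,2}$, the argument is the same except that one replaces $\Omega$ by $\partial\Omega + (-\infty,0]\delta$; since $\partial\Omega$ is a union of faces of the fixed polytope $\Omega$ and the extra $(-\infty,0]\delta$ direction is one-dimensional, the resulting semialgebraic set again has dimension $2d-1$ and bounded degree, so Gromov's lemma applies verbatim. Combining the parametrizations of $T_{R,1}$ and $T_{R,2}$ (a bounded number of maps each) covers $\partial S_{\Omega,R}(1)$, proving Proposition \ref{Proposition_Uniformity_R}.

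\textbf{Main obstacle.} The delicate point is the \emph{uniformity in $R$} of the degree and of the ambient ball — i.e. checking that when we express the Mahler-measure conditions as semialgebraic relations, the polynomials involved (coefficients of $N_{v,f}(a_v g_{v,R_v,1}+b_v g_{v,R_v,2})$ as functions of $a_v,b_v$, and the auxiliary variables encoding the roots and the $\max\{1,|\cdot|_v\}$ cutoffs) have degree bounded independently of $R$ and that their coefficients stay in a compact set. This requires knowing that the basis sections $g_{v,R_v,i}$ vary algebraically (or at least in a compact smooth family) with $R_v$ over a finite chart cover of the compact curve $E(F_v)$ — which is why the Poincaré-bundle / vector-bundle $\mathcal N$ over $E$ was set up in Section \ref{Section_H0E_Locally_Free_In_R} — and that the norm map $N_{v,f}$ has bounded degree $k$. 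Once those structural facts are in hand, the application of Gromov's lemma is routine; the real work is organizing the reductions so that no hidden $R$-dependence sneaks into the complexity bounds.
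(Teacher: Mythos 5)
The reduction in Proposition \ref{Proposition_TRd1_Lipschitz_parametrizable_Implies_TR1} is a pure containment argument: after passing to $(a_v,b_v)$-coordinates on a finite chart cover, the paper shows that the set corresponding to $T_{R,1}$ is a \emph{subset} of $T'_{(R_v),\delta,1}$ for some fixed $\delta > 0$, and then observes that uniform Lipschitz parametrizability passes to subsets (you simply reuse the covering maps). The containment has two halves, and your proposal gets only one of them. The half you have is the annular constraint $\delta \le |a_v|^2 + |b_v|^2 \le \delta^{-1}$, obtained exactly as you say from Lemma \ref{Lemma_Log_Integral_Uniformly_Bounded} plus compactness of $\Omega$ (the uniformity of $\delta$ in $R$ is the whole point). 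The half you miss is the constraint
\[
\prod_{v\in M_{F,\infty}} M_{v,\mathbf{O(1)}}\bigl(N_{v,f}(a_v g_{v,R_v,1}+b_v g_{v,R_v,2})\bigr)=1.
\]
This comes from the elementary but decisive observation that $\Omega$ lies inside the hyperplane $\{x_1+\cdots+x_{r_1+r_2}=0\}$, so $(\int_{X(\overline{F_v})}\log|g_{v}|_v\,d\mu_v)_v\in\Omega$ implies $\sum_v \int\log|g_v|_v\,d\mu_v=0$; exponentiating and rewriting each integral as a Mahler measure via Proposition \ref{Proposition_Properties_Mahler_Measure}(a),(c) produces exactly the displayed product condition. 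Without extracting this, you have not shown $T_{R,1}$ lands inside $T'_{(R_v),\delta,1}$, and the reduction is unproved.

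A second, smaller concern: your opening paragraph describes $T_{R,1}$ as a cone-like set with a scaling symmetry under $(\lambda_v)$ with $\sum_v\log|\lambda_v|_v=0$. That framing is not quite right here: scaling translates the tuple of log-integrals within the hyperplane, but $\Omega$ is a bounded fundamental domain, so generic translations leave it. In fact $T_{R,1}$ is compact (which is what the annular bound says); the cone-like structure belongs to $T_{R,2}$, where the extra $(-\infty,0]\delta$ direction appears. Also worth flagging: the paper's proof explicitly enlarges the parameter from the diagonal $R\in E(F)$ to arbitrary tuples $(R_v)\in\prod_v E(F_v)$ (``We will show more generally that\ldots''), and this is what makes the compactness argument for uniformity go through; your write-up passes to $R_v$ coordinates but does not call out this enlargement, which is the logical reason the target set $T'_{(R_v),\delta,1}$ is parametrized by tuples rather than diagonal points. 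Finally, most of your second and third paragraphs (semialgebraic structure, degree bounds, Gromov's algebraic lemma, the $T_{R,2}$ discussion) belongs to the subsequent Propositions \ref{Proposition_Mahler_measure_as_coordinate}--\ref{Proposition_TR1_Lipschitz_parametrizable} and \ref{Proposition_Uniformity_R}, not to this reduction; proving those directly would indeed make the present proposition vacuously true, but then you are doing more work than the statement requires while still leaving the actual reduction unaddressed.
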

\begin{proof}
Using $(a_v,b_v)$ coordinates, the Lipschitz parametrizability of $T_{R, 1}$ is equivalent to that of 
\begin{equation*}
\left\{((a_v, b_v)) \in \prod_{v \in M_{F, \infty}} F_v^2: \left(\int_{X(\overline{F_v})} \log |a_v g_{v, R, 1} + b_v g_{v, R, 2}|_v d\mu_v\right) \in \Omega\right\}
\end{equation*}
for $R \in E(F)$, embedded diagonally inside $\prod_{v \in M_{F, \infty}} E(F_v)$. We will show more generally that
\begin{equation}
\label{Set_Arakelov_Coeff}
\left\{((a_v, b_v)) \in \prod_{v \in M_{F, \infty}} F_v^2: \left(\int_{X(\overline{F_v})} \log |a_v g_{v, R_v, 1} + b_v g_{v, R_v, 2}|_v d\mu_v\right) \in \Omega\right\}
\end{equation}
is uniformly Lipschitz parametrizable over $(R_v) \in \prod_{v \in M_{F, \infty}} E(F_v)$.

Hence consider $R_v \in E(F_v)$, and let 
$$g_{v, R_v} = a_v g_{v, R_v, 1} + b_v g_{v, R_v, 2}$$ for some $a_v, b_v \in F_v$. One has
$$\frac{\|a_v g_{v, R_v, 1} + b_v g_{v, R_v, 2}\|_2}{(|a_v|^2 + |b_v|^2)^{1/2}} \asymp_{R_v} 1$$
for each $R_v \in E(F_v)$, since norms on Euclidean space are equivalent. Using compactness of $E(F_v)$ and Lemma \ref{Lemma_Log_Integral_Uniformly_Bounded}, we have
\begin{equation}
\label{Equation_Log_Integral_Uniformly_Bounded}
\left|\int_{X(\overline{F_v})} \log |a_v g_{v, R_v, 1} + b_v g_{v, R_v, 2}|_v d\mu_v - \frac{1}{2}\log(|a_v|^2 + |b_v|^2)\right| = O_{E, v}(1)
\end{equation}
independent of $R_v \in E(F_v)$.

Note that 
\begin{itemize}
    \item By Equation \ref{Equation_Log_Integral_Uniformly_Bounded} and the compactness of $\Omega$, we see that $((a_v, b_v))$ lies in the set \ref{Set_Arakelov_Coeff} would imply that 
    $$\delta \leq |a_v|^2 + |b_v|^2 \leq \delta^{-1},$$
    for some $\delta > 0$, uniformly over $R_v \in E(F_v)$.
    \item By definition, $\Omega$ has sum of coordinates equal 0. So $((a_v, b_v))$ lies in the set \ref{Set_Arakelov_Coeff} would imply that 
    $$\sum_{v \in M_{F, \infty}} \int_{X(\overline{F_v})} \log |a_v g_{v, R_v, 1} + b_v g_{v, R_v, 2}|_v d\mu_v = 0.$$
    By property (a) and (c) of Mahler measure (Proposition \ref{Proposition_Properties_Mahler_Measure}),
    \begin{align*}
    \int_{X(\overline{F_v})}\log |a_v g_{v, R_v, 1} + b_v g_{v, R_v, 2}|_v d\mu_v = & \, \log M_{v, f^*\mathbf{O(1)}} \left(a_v g_{v, R_v, 1} + b_v g_{v, R_v, 2}\right)\\
    = & \, \log M_{v, \mathbf{O(1)}} \left(N_{v, f} (a_v g_{v, R_v, 1} + b_v g_{v, R_v, 2})\right).
    \end{align*}
    Hence
    $$\sum_{v \in M_{F, \infty}} \int_{X(\overline{F_v})} \log |a_v g_{v, R_v, 1} + b_v g_{v, R_v, 2}|_v d\mu_v = 0$$
    is equivalent to
    $$\sum_{v \in M_{F, \infty}} \log M_{v, \mathbf{O(1)}} \left(N_{v, f} (a_v g_{v, R_v, 1} + b_v g_{v, R_v, 2})\right) = 0.$$
    In other words,
    $$\prod_{v \in M_{F, \infty}} M_{v, \mathbf{O(1)}} \left(N_{v, f} (a_v g_{v, R_v, 1} + b_v g_{v, R_v, 2})\right) = 1.$$
\end{itemize}
Therefore the set \ref{Set_Arakelov_Coeff} is a subset of $T_{(R_v), \delta, 1}'$, and thus the Lipschitz parametrizability of $T_{(R_v), \delta, 1}'$ would imply that of the set \ref{Set_Arakelov_Coeff}, hence that of $T_{R, 1}$.
\end{proof}

Let $G_{v, R_v}$ be the rational function on $\BP^1$ defined by the norm map of function fields
$$G_{v, R_v} \circ f = N_{v, f}(a_vg_{v, R_v, 1} + b_v g_{v, R_v, 2}).$$
Since $g_{v, R_v, 1}, g_{v, R_v, 2}$ is of degree $\leq 2$ and has at worst a pole at $R_v$ and $O$, we see that $G_{v, R_v}$ is also degree $\leq 2$, with at worst a pole at $f(R_v)$ and $f(O)$. Moreover, as $f$ has degree $k$, we can write
$$G_{v, R_v} = a_v^k G_{v, R_v, 0} + a_v^{k-1} b_v G_{v, R_v, 1} + \cdots + b_v^k G_{v, R_v, k}$$
where $G_{v, R_v, 0}, \cdots, G_{v, R_v, k}$ are degree $\leq 2$ rational functions on $\BP^1$, with at worst a pole at $f(R_v)$ and $f(O)$, that varies holomorphically with $R_v$. We can thus write
$$G_{v, R_v} = \frac{E_{v, R_v}}{(z - f(R_v))(z - f(O))}$$
for some polynomials $E_{v, R_v}$ of degree $\leq 2$, where
$$E_{v, R_v}(z) = H_{v, R_v, 0}(a_v, b_v) + H_{v, R_v, 1}(a_v, b_v) z + H_{v, R_v, 2}(a_v, b_v) z^2$$
has coefficients $H_{v, R_v, 0}, H_{v, R_v, 1}, H_{v, R_v, 2}$, each of which a homogeneous degree $k$ polynomial in $F_v[a_v, b_v]$, whose coefficient varies smoothly in $R_v$. Shrinking the neighborhood for local basis if needed, we can assume that coefficients of $H_{v, R_v, 0}, H_{v, R_v, 1}, H_{v, R_v, 2} \ll 1$ uniformly over $R_v \in E(F_v)$, since $E(F_v)$ is compact. 

With this framing,
\begin{align}
\label{Equation_T_R_d_1_Alternate_Form}
T_{(R_v), \delta, 1}' = & \, \left\{((a_v, b_v)) \in \prod_{v \in M_{F, \infty}} F_v^2: 
\begin{array}{l}
\prod_{v \in M_{F, \infty}} M_{v, \mathbf{O(1)}}\left(G_{v, R_v})\right) = 1, \\
\delta \leq |a_v|^2 + |b_v|^2 \leq \delta^{-1} \text{ for each $v \in M_{F, \infty}$.}
\end{array}
\right\}
\nonumber\\
= & \, \left\{((a_v, b_v)) \in \prod_{v \in M_{F, \infty}} F_v^2: 
\begin{array}{l}
\prod_{v \in M_{F, \infty}} M_{v, \mathbf{O(1)}}\left(E_{v, R_v})\right) \\ 
= \prod_{v \in M_{F, \infty}} M_{v, \mathbf{O(1)}}\left((z - f(R_v))(z - f(O))\right), \\
\delta \leq |a_v|^2 + |b_v|^2 \leq \delta^{-1} \text{ for each $v \in M_{F, \infty}$.}
\end{array}
\right\},
\end{align}
by the multiplicativity of Mahler measure (Proposition \ref{Proposition_Properties_Mahler_Measure}(b)).

\begin{lemma}
\label{Lemma_Mahler_Measure_Roughly_Constant_Size}
If $((a_v, b_v)) \in T_{(R_v), \delta, 1}'$, then for each $v \in M_{F, \infty}$ we have
$$M_{v, \mathbf{O(1)}}(E_{v, R_v}) \asymp_{k, \delta} 1,$$
independent of $R_v \in E(F_v)$. 
\end{lemma}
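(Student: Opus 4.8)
The plan is to prove Lemma~\ref{Lemma_Mahler_Measure_Roughly_Constant_Size} by replacing the Mahler measure $M_{v,\mathbf{O(1)}}(E_{v,R_v})$ with the largest (normalized) absolute value among the coefficients $H_{v,R_v,0}(a_v,b_v),H_{v,R_v,1}(a_v,b_v),H_{v,R_v,2}(a_v,b_v)$ of the degree $\le 2$ polynomial $E_{v,R_v}(z)$, and then bounding this maximum above and below uniformly in $R_v$. For the replacement I would combine the explicit formula for $M_{v,\mathbf{O(1)}}$ established in the example after Proposition~\ref{Proposition_Properties_Mahler_Measure} (namely $M_{v,\mathbf{O(1)}}(P)=|c|_v\prod_j\max\{1,|\alpha_j|_v\}$ for $P=c\prod_j(z-\alpha_j)$) with the classical Mahler/Jensen inequalities for polynomials of degree at most $2$, obtaining $M_{v,\mathbf{O(1)}}(E_{v,R_v})\asymp\max_i|H_{v,R_v,i}(a_v,b_v)|_v$ with absolute implied constants; the only subtlety is that when $F_v\cong\BC$ the normalized absolute value is the square of the Euclidean one, which simply squares both sides and preserves $\asymp$. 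It then remains to show $\max_i|H_{v,R_v,i}(a_v,b_v)|_v\asymp_{k,\delta}1$ uniformly over $R_v\in E(F_v)$ and over $(a_v,b_v)$ with $\delta\le|a_v|^2+|b_v|^2\le\delta^{-1}$. Note that only the annulus constraint in the definition of $T_{(R_v),\delta,1}'$ is used here, not the constraint on the global product of Mahler measures.

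The upper bound is essentially immediate. Each $H_{v,R_v,i}$ is homogeneous of degree $k$ in $(a_v,b_v)$ with coefficients $\ll 1$ uniformly over $R_v\in E(F_v)$ (as arranged just before the lemma, using compactness of $E(F_v)$), so $|H_{v,R_v,i}(a_v,b_v)|\ll_k(|a_v|^2+|b_v|^2)^{k/2}\le\delta^{-k/2}$, whence $|H_{v,R_v,i}(a_v,b_v)|_v\ll_{k,\delta}1$ as well, and the same bound holds for the maximum.

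The lower bound is the hard part, and I would obtain it by a compactness argument. The local basis $g_{v,R_v,1},g_{v,R_v,2}$ of $H^0(E(F_v),O_E(R_v+O))$ was chosen over each member of a finite open cover of $E(F_v)$, varying smoothly with $R_v$, so on each chart the coefficients of the polynomials $H_{v,R_v,i}$ depend continuously on $R_v$; hence $(R_v,(a_v,b_v))\mapsto\max_i|H_{v,R_v,i}(a_v,b_v)|_v$ is continuous on the compact set $\{R_v\text{ in a fixed compact sub-chart}\}\times\{\delta\le|a_v|^2+|b_v|^2\le\delta^{-1}\}$. The key claim is that this function is strictly positive there: if $\delta\le|a_v|^2+|b_v|^2$ then $(a_v,b_v)\neq(0,0)$, so $g:=a_vg_{v,R_v,1}+b_vg_{v,R_v,2}$ is a nonzero element of $H^0(E(F_v),O_E(R_v+O))$; being the field norm of a nonzero element, $N_{v,f}(g)$ is a nonzero rational function, so $G_{v,R_v}$ is nonzero, and therefore its numerator $E_{v,R_v}(z)=\sum_iH_{v,R_v,i}(a_v,b_v)z^i$ is not the zero polynomial, i.e.\ $\max_i|H_{v,R_v,i}(a_v,b_v)|_v>0$. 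A continuous strictly positive function on a compact set is bounded below by a positive constant; taking the minimum of the finitely many constants arising from the finitely many charts gives a uniform bound $\max_i|H_{v,R_v,i}(a_v,b_v)|_v\gg_{k,\delta}1$. Combining the two bounds with the comparison of the first paragraph yields $M_{v,\mathbf{O(1)}}(E_{v,R_v})\asymp_{k,\delta}1$ uniformly over $R_v\in E(F_v)$. The delicate points, and where one must be careful, are exactly the continuity of the above map (which rests on the smoothness in $R_v$ of the chosen local bases) and the strict positivity claim (which rests on the injectivity of the norm map on nonzero sections and on $E_{v,R_v}$ being genuinely the numerator of $G_{v,R_v}$, working in the chart where $f(R_v),f(O)\neq\infty$).
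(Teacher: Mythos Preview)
Your proof is correct, but the lower bound is obtained by a genuinely different route from the paper's. For the upper bound you and the paper argue the same way (Mahler measure bounded by the maximum coefficient, coefficients bounded from the annulus constraint and from the uniform bound on the coefficients of the $H_{v,R_v,i}$). For the lower bound, however, the paper does \emph{not} use your compactness/non-vanishing argument; instead it exploits the global product condition built into the definition of $T'_{(R_v),\delta,1}$: since $\prod_{w}M_{w,\mathbf{O(1)}}(G_{w,R_w})=1$ and each $M_{w,\mathbf{O(1)}}(G_{w,R_w})\le M_{w,\mathbf{O(1)}}(E_{w,R_w})\ll_{k,\delta}1$, one gets $M_{v,\mathbf{O(1)}}(G_{v,R_v})\gg_{k,\delta}1$ for every $v$, and hence $M_{v,\mathbf{O(1)}}(E_{v,R_v})\ge M_{v,\mathbf{O(1)}}(G_{v,R_v})\gg_{k,\delta}1$.

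Your approach buys a slightly stronger statement: the conclusion $M_{v,\mathbf{O(1)}}(E_{v,R_v})\asymp_{k,\delta}1$ holds for \emph{every} $(a_v,b_v)$ in the annulus $\delta\le|a_v|^2+|b_v|^2\le\delta^{-1}$, not only those also satisfying the product constraint. The paper's approach is shorter and avoids the chart-by-chart compactness bookkeeping, at the cost of using the extra hypothesis. Both are valid; your argument is self-contained at each place, whereas the paper's couples the places through the product condition. The delicate points you flag (continuity across the finitely many compact sub-charts, and the genuine non-vanishing of $E_{v,R_v}$ via injectivity of the norm) are exactly the ones that need care, and they go through as you describe.
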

\begin{proof}
Suppose 
$$E_{v, R_v}(z) = H_{v, R_v, 0}(a_v, b_v) + H_{v, R_v, 1}(a_v, b_v) z + H_{v, R_v, 2}(a_v, b_v) z^2$$
has two roots $\alpha$ and $\beta$. By definition of Mahler measure, we always have 
\begin{align*}
M_{v, \mathbf{O(1)}}(E_{v, R_v}) = & \, |H_{v, R_v, 2}(a_v, b_v)|_v \max\{1, |\alpha|_v\} \max\{1, |\beta|_v\} \\
\leq & \, |H_{v, R_v, 2}(a_v, b_v)|_v |\alpha|_v |\beta|_v \\
\leq & \, |H_{v, R_v, 0}(a_v, b_v)|_v.
\end{align*}
Since $H_{v, R_v, 0}(a_v, b_v)$ is homogeneous of degree $k$, $|a_v|^2 + |b_v|^2 \ll_{\delta} 1$, and the coefficients of $H_{v, R_v, 0}$ are $\ll 1$, we have 
$$M_{v, \mathbf{O(1)}}(E_{v, R_v}) \leq |H_{v, R_v, 0}(a_v, b_v)|_v \ll_{k, \delta} 1,$$
uniformly over $R_v \in E(F_v)$.

For any given place $v \in M_{F, \infty}$, apply the forementioned upper bound of $M_{w, \mathbf{O(1)}}(E_{w, R_v})$ for all $w \neq v$ to the condition 
$$\prod_{v \in M_{F, \infty}} M_{v, \mathbf{O(1)}}\left(N_f(a_v g_{v, R_v, 1} + b_vg_{v, R_v, 2})\right) = 1,$$
we see that the other inequality $M_{v, \mathbf{O(1)}}(E_{v, R_v}) \gg_{k, \delta} 1$ is also true.
\end{proof}

\begin{prop}
\label{Proposition_Mahler_measure_as_coordinate}
For each archimedean place $v \in M_{F, \infty}$, the set 
$$T_v := \left\{(a_v, b_v) \in F_v^2: \delta \leq |a_v|^2 + |b_v|^2 \leq \delta^{-1}, M_{v, \mathbf{O(1)}}(E_{v, R_v}) \asymp_{k, \delta} 1\right\}$$ 
can be covered by the image of $\ll_{k, \delta} 1$ maps $\phi$, which satisfies:
\begin{itemize}
    \item Domain of $\phi$ is a compact semi-algebraic set with degree $\ll_{k, \delta} 1$, lying inside a ball of diameter $\ll_{k, \delta} 1$, with ambient space having dimension over $\BR$ $\ll 1$.
    \item The map $\phi$ is a polynomial map, which is Lipschitz on the domain with Lipschitz constant $\ll 1$.
    \item Each point $(a_v, b_v)$ lies in the image of some map $\phi$, say $\phi(\vec{x}) = (a_v, b_v)$, such that 
    $$M_{v, \mathbf{O(1)}}(E_{v, R_v}) \circ \phi (\vec{x}) = |M_{\phi}(\vec{x})|$$ 
    for a polynomial map $M_{\phi}$ of degree $\ll_k 1$.
\end{itemize}
Moreover, all the constants above are independent of $R_v \in E(F_v)$.
\end{prop}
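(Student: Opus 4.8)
The plan is to reduce the statement about the region $T_v$ (defined by the Mahler-measure condition on the quadratic $E_{v,R_v}$) to a purely semi-algebraic statement, and then apply Gromov's algebraic lemma via Corollary \ref{Corollary_Gromov_Lemma}. First I would unwind the Mahler measure of a degree-$\leq 2$ polynomial: writing $E_{v,R_v}(z) = H_2 z^2 + H_1 z + H_0$ with $H_i = H_{v,R_v,i}(a_v,b_v)$, we have $M_{v,\mathbf{O(1)}}(E_{v,R_v}) = |H_2|_v \max\{1,|\alpha|_v\}\max\{1,|\beta|_v\}$ where $\alpha,\beta$ are the roots. The key observation is that $\max\{1,|\alpha|_v\}\max\{1,|\beta|_v\}$ can be expanded into finitely many cases depending on which of $|\alpha|_v, |\beta|_v$ exceeds $1$: these cases are cut out by polynomial inequalities in the coefficients $H_0,H_1,H_2$ (equivalently in $a_v,b_v$), using that $\alpha\beta = H_0/H_2$, $\alpha+\beta = -H_1/H_2$, and that $|\alpha|_v > 1$ etc. can be tested by sign conditions on resultants/discriminants. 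In each such case, $M_{v,\mathbf{O(1)}}(E_{v,R_v})$ equals $|P(a_v,b_v)|$ for an explicit polynomial $P$ (e.g. $|H_2|_v = |H_2|^{e_v}$ with $e_v = 1$ or $2$ according as $F_v\cong\BR$ or $\BC$, times $|H_0/H_2|_v$ or $1$, etc.), with $\deg P \ll_k 1$; one treats the $\BR$ and $\BC$ cases separately but identically in spirit, identifying $\BC \cong \BR^2$.

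Next I would note that $T_v$ is the intersection of the annulus $\delta \le |a_v|^2 + |b_v|^2 \le \delta^{-1}$ with finitely many sets of the form $\{c_1 \le |P(a_v,b_v)| \le c_2\}$ (one for each case above, with $c_1,c_2 \asymp_{k,\delta} 1$ the implied constants from Lemma \ref{Lemma_Mahler_Measure_Roughly_Constant_Size}), so $T_v$ is a finite union of compact semi-algebraic sets, each of dimension $\ll 1$, degree $\ll_{k,\delta} 1$, and — crucially — contained in a ball of radius $\delta^{-1/2} \ll_{k,\delta} 1$. The degree and the diameter are manifestly independent of $R_v$, because the only $R_v$-dependence is in the coefficients of $H_{v,R_v,i}$, which after shrinking the finite open cover satisfy $\ll 1$ uniformly (as recorded just before Equation \ref{Equation_T_R_d_1_Alternate_Form}); changing the coefficients of a bounded-degree polynomial by a bounded amount does not change its degree, so the semi-algebraic degree is bounded uniformly. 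Applying Corollary \ref{Corollary_Gromov_Lemma} with $D = E(F_v)$ (or rather the relevant chart in the finite cover) then yields $\ll_{k,\delta} 1$ polynomial maps $\phi$ from $[0,1]^{\dim}$, each Lipschitz with constant $\ll_{k,\delta} 1$, whose images cover $T_v$, with all constants uniform over $R_v$.

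Finally, for the third bullet — that on the image of each $\phi$ one has $M_{v,\mathbf{O(1)}}(E_{v,R_v})\circ\phi(\vec x) = |M_\phi(\vec x)|$ for a polynomial $M_\phi$ of degree $\ll_k 1$ — I would simply refine the covering: when applying Gromov's lemma, apply it not to $T_v$ itself but to each of the finitely many semi-algebraic pieces on which $M_{v,\mathbf{O(1)}}(E_{v,R_v}) = |P(a_v,b_v)|$ for a single fixed polynomial $P$. Then on (the image of a parametrization of) such a piece, $M_{v,\mathbf{O(1)}}(E_{v,R_v})\circ\phi = |P\circ\phi|$, and $M_\phi := P\circ\phi$ is a composition of a degree-$\ll_k 1$ polynomial with the polynomial map $\phi$; the only subtlety is that Gromov gives a $C^1$ (not polynomial) parametrization, so I would instead invoke the polynomial form of the parametrization: either use that the pieces here have dimension $\ll 1$ and can be explicitly rationally parametrized, or cite the version of Gromov's lemma producing semi-algebraic (hence, on a finite stratification, polynomial) maps. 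I expect the main obstacle to be this last point — bookkeeping the case analysis for the Mahler measure of a quadratic over $\BR$ and over $\BC$ so that in every stratum $M_{v,\mathbf{O(1)}}(E_{v,R_v})$ is genuinely the absolute value of a single polynomial (as opposed to a piecewise expression), and making sure the parametrizing maps can be taken polynomial rather than merely Lipschitz; everything else is a direct application of Corollary \ref{Corollary_Gromov_Lemma} together with the uniform coefficient bounds already in hand.
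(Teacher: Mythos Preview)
Your overall plan (case analysis on the roots of $E_{v,R_v}$, then parametrize each piece) matches the paper's, but there is a genuine gap in the ``mixed'' case that the examples in your first paragraph do not cover. When $\deg E_{v,R_v}=2$ with roots $\alpha_1,\alpha_2$ satisfying $|\alpha_1|_v\geq 1\geq |\alpha_2|_v$, the Mahler measure is $|H_2\,\alpha_1|_v$, and $H_2\alpha_1=\tfrac{1}{2}\bigl(-H_1+\sqrt{H_1^2-4H_0H_2}\bigr)$ is \emph{not} a polynomial in $(a_v,b_v)$. So the assertion that ``in each such case $M_{v,\mathbf{O(1)}}(E_{v,R_v})=|P(a_v,b_v)|$ for an explicit polynomial $P$'' is false exactly here, and this is not a bookkeeping issue. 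The paper's fix is to pass to the graph: introduce an auxiliary variable $u=H_2\alpha_1$, which satisfies the polynomial relation
\[
Q_{R_v}(a_v,b_v,u):=H_0H_2+H_1u+u^2=0,
\]
take $\phi(a_v,b_v,u)=(a_v,b_v)$ with domain the slice of this variety in the box $\delta\le|a_v|^2+|b_v|^2\le\delta^{-1}$, $|u|_v\asymp_{k,\delta}1$, and set $M_\phi=u$ (or $u^2$ when $F_v\cong\BC$). This is the one idea your proposal is missing.

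A secondary point: the paper does not invoke Gromov's lemma in this proposition at all. In every case the map $\phi$ is explicit --- either the identity on the annulus (cases where the Mahler measure equals $|H_i|_v$ for some $i$) or the coordinate projection from the auxiliary-variable variety just described --- so $\phi$ is manifestly polynomial and Lipschitz with constant $1$, and the domains are visibly semi-algebraic of degree $\ll_k 1$. Your route through Gromov's lemma would, as you note, only give $C^1$ maps and force you to argue separately that $M_\phi$ can be taken polynomial; the paper avoids this entirely by keeping the parametrizations explicit and saving Gromov's lemma for the next step (Proposition~\ref{Proposition_TR1_Lipschitz_parametrizable}), where the constraint $\prod_v|M_{\phi_v}|=\text{const}$ must be carved out.
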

\begin{proof}
We will stratify 
$$\{(a_v, b_v) \in F_v^2: \delta \leq |a_v|^2 + |b_v|^2 \leq \delta^{-1}\}$$ 
for each $R_v \in E(F_v)$, based on the degree of $E_{v, R_v}$, and whether the two roots of $E_{v, R_v}(a_v, b_v)$ has norm less than or greater than 1. 

\textbf{Case 1: $\deg(E_{v, R_v}) = 0$.} In this case, $E_{v, R_v}(z) \equiv H_{v, R_v, 0}(a_v, b_v)$, and $M_v(E_{v, R_v}) = |H_{v, R_v, 0}(a_v, b_v)|_v$. This can be covered by $\phi \equiv id$ on the domain
$$\{a_v, b_v) \in F_v^2: \delta \leq |a_v|^2 + |b_v|^2 \leq \delta^{-1}, |H_{v, R_v, 0}(a_v, b_v)|_v \asymp_{k, \delta} 1\}.$$
We verify this map satisfies the conditions we need.
\begin{itemize}
    \item Recall that $H_{v, R_v, 0}$ is a homogeneous polynomial of degree $k$; varying $R_v$ affects the coefficients but not the degree. Squaring if necessary, the condition $|H_{v, R_v, 0}(a_v, b_v)|_v \asymp_{k, \delta} 1$ can be defined by two inequalities of a polynomial of degree $2k$. Hence the domain of $\phi$ is semi-algebraic with degree $\ll_{k} 1$.
    \item The domain lies inside $\delta \leq |a_v|^2 + |b_v|^2 \leq \delta^{-1}$, so it lies inside a ball of diameter $\ll_{\delta} 1$.
    \item The domain lies inside $F_v^2$, with dimension over $\BR$ at most 4.
    \item $\phi = id$ is clearly a polynomial map, and is Lipschitz with Lipschitz constant 1.
    \item $(a_v, b_v)$ covered in this case satisfies
    $$M_v(E_{v, R_v}) = |H_{v, R_v, 0}(a_v, b_v)|_v,$$
    so we may take $M_{\phi} = H_{v, R_v, 0}$ (when $F_v \cong \BR$), or $H_{v, R_v, 0}^2$ (when $F_v \cong \BC$). In either case, $M_{\phi}$ has degree $\ll_k 1$.
\end{itemize}

\textbf{Case 2: $\deg(E_{v, R_v}) = 1$.} In this case, 
$$E_{v, R_v}(z) = H_{v, R_v, 0}(a_v, b_v) + H_{v, R_v, 1}(a_v, b_v)z$$
has a unique root $\alpha = - \frac{H_{v, R_v, 0}(a_v, b_v)}{H_{v, R_v, 1}(a_v, b_v)}$. In this case, Mahler measure equals
$$M_{v, \mathbf{O(1)}}(E_{v, R_v}) = |H_{v, R_v, 1}(a_v, b_v)|_v \max\{1, |\alpha|_v\}.$$

\textbf{Case 2(a): $\deg(E_{v, R_v}) = 1$, and the root $\alpha$ has norm $\leq 1$.} In this case,
$$M_{v, \mathbf{O(1)}}(E_{v, R_v}) = |H_{v, R_v, 1}(a_v, b_v)|_v.$$
As in case 1, this can be covered by $\phi \equiv id$ on the domain
$$\{a_v, b_v) \in F_v^2: \delta \leq |a_v|^2 + |b_v|^2 \leq \delta^{-1}, |H_{v, R_v, 1}(a_v, b_v)|_v \asymp_{k, \delta} 1\}.$$

\textbf{Case 2(b): $\deg(E_{v, R_v}) = 1$, and the root $\alpha$ has norm $\geq 1$.} In this case,
$$M_{v, \mathbf{O(1)}}(E_{v, R_v}) = |H_{v, R_v, 0}(a_v, b_v)|_v.$$
As in case 1, this can be covered by $\phi \equiv id$ on the domain
$$\{a_v, b_v) \in F_v^2: \delta \leq |a_v|^2 + |b_v|^2 \leq \delta^{-1}, |H_{v, R_v, 0}(a_v, b_v)|_v \asymp_{k, \delta} 1\}.$$

\textbf{Case 3: $\deg(E_{v, R_v}) = 2$.} In that case, 
$$E_{v, R_v}(z) = H_{v, R_v, 0}(a_v, b_v) + H_{v, R_v, 1}(a_v, b_v)z + H_{v, R_v, 2}(a_v, b_v) z^2$$
has two roots $\alpha_1, \alpha_2$. In this case, Mahler measure equals
$$M_{v, \mathbf{O(1)}}(E_{v, R_v}) = |H_{v, R_v, 2}(a_v, b_v)|_v \max\{1, |\alpha_1|_v\} \max\{1, |\alpha_2|_v\}.$$

\textbf{Case 3(a): $\deg(E_{v, R_v}) = 2$, and both $\alpha_1, \alpha_2$ has norm $\leq 1$.} In this case,
$$M_{v, \mathbf{O(1)}}(E_{v, R_v}) = |H_{v, R_v, 2}(a_v, b_v)|_v.$$ 
As in case 1, this can be covered by $\phi \equiv id$ on the domain
$$\{a_v, b_v) \in F_v^2: \delta \leq |a_v|^2 + |b_v|^2 \leq \delta^{-1}, |H_{v, R_v, 2}(a_v, b_v)|_v \asymp_{k, \delta} 1\}.$$

\textbf{Case 3(b): $\deg(E_{v, R_v}) = 2$, and both $\alpha_1, \alpha_2$ has norm $\geq 1$.} In this case,
$$M_{v, \mathbf{O(1)}}(E_{v, R_v}) = |H_{v, R_v, 0}(a_v, b_v)|_v.$$ 
As in case 1, this can be covered by $\phi \equiv id$ on the domain
$$\{a_v, b_v) \in F_v^2: \delta \leq |a_v|^2 + |b_v|^2 \leq \delta^{-1}, |H_{v, R_v, 0}(a_v, b_v)|_v \asymp_{k, \delta} 1\}.$$

\textbf{Case 3(c): $\deg(E_{v, R_v}) = 2$, $|\alpha_1|_v \ge 1 \ge |\alpha_2|_v$.}
In this case,
$$M_{v, \mathbf{O(1)}}(E_{v, R_v}) = |H_{v, R_v, 2}(a_v, b_v) \alpha_1|_v.$$
Let $u = H_{v, R_v, 2}(a_v, b_v) \alpha_1$. Then
$$|u|_v = M_{v, \mathbf{O(1)}}(E_{v, R_v}) \asymp_{k, \delta} 1,$$
and as $\alpha_1$ is a root of $E_{v, R_v}$, $u$ satisfies the quadratic equation
$$Q_{R_v}(a_v, b_v, u) := H_{v, R_v, 0}(a_v, b_v) H_{v, R_v, 2}(a_v, b_v) + H_{v, R_v, 1}(a_v, b_v) u + u^2 = 0.$$
Define $\phi(a_v, b_v, u) := (a_v, b_v)$ from the domain
$$\left\{
(a_v, b_v, u) \in F_v^2: \delta \leq |a_v|^2 + |b_v|^2 \leq \delta^{-1}, |u|_v \asymp_{k, \delta} 1, Q_{R_v}(a_v, b_v, u) = 0
\right\}.$$
We verify this map satisfies the conditions we need.
\begin{itemize}
    \item Recall that $Q_{R_v}$ is a polynomial of degree $2k$; varying $R_v$ affects the coefficients but not the degree. So the domain of $\phi$ is semi-algebraic with degree $\ll_{k} 1$.
    \item The domain lies inside $\delta \leq |a_v|^2 + |b_v|^2 \leq \delta^{-1}, |u| \asymp_{k, \delta} 1$, so it lies inside a ball of diameter $\ll_{k, \delta} 1$.
    \item The domain lies inside $F_v^3$, with dimension over $\BR$ at most 6.
    \item $\phi(a_v, b_v, u) = (a_v, b_v)$ is clearly a polynomial map, and is Lipschitz with Lipschitz constant 1.
    \item $(a_v, b_v)$ covered in this case satisfies
    $$M_v(E_{v, R_v}) = |u|_v,$$
    so we may take $M_{\phi} = u$ (when $F_v \cong \BR)$ or $u^2$ (when $F_v \cong \BC$). In either case, $M_{\phi}$ has degree $\ll_k 1$.
\end{itemize}
\end{proof}

\begin{prop}
\label{Proposition_TR1_Lipschitz_parametrizable}
$T_{(R_v), \delta, 1}'$ is uniformly Lipschitz parametrizable of codimension 1 over $(R_v) \in \prod_{v \in M_{F, \infty}} E(F_v)$. Hence by Proposition \ref{Proposition_TRd1_Lipschitz_parametrizable_Implies_TR1}, $T_{R, 1}$ is uniformly Lipschitz parametrizable over $R \in E(F)$.
\end{prop}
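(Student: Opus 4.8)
\textbf{Proof plan for Proposition \ref{Proposition_TR1_Lipschitz_parametrizable}.} The plan is to assemble the pieces from Propositions \ref{Proposition_Mahler_measure_as_coordinate} and \ref{Proposition_TRd1_Lipschitz_parametrizable_Implies_TR1}, reducing the Lipschitz parametrizability of $T_{(R_v), \delta, 1}'$ to an application of Gromov's algebraic lemma in the form of Corollary \ref{Corollary_Gromov_Lemma}. By Proposition \ref{Proposition_TRd1_Lipschitz_parametrizable_Implies_TR1} it suffices to handle $T_{(R_v), \delta, 1}'$, and by Lemma \ref{Lemma_Mahler_Measure_Roughly_Constant_Size} every point of $T_{(R_v), \delta, 1}'$ lies in $\prod_v T_v$ with $T_v$ as in Proposition \ref{Proposition_Mahler_measure_as_coordinate}. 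So the first step is to pull back the whole set $T_{(R_v), \delta, 1}'$ along the finitely many polynomial parametrizations $\phi = \prod_v \phi_v$ supplied by Proposition \ref{Proposition_Mahler_measure_as_coordinate} (one $\phi_v$ per place, chosen over all combinations of the finitely many cases there); the number of such product maps is $\ll_{k, \delta, F} 1$ and independent of $(R_v)$.

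The second step is to observe that on the domain of each such $\phi$, the defining equation $\prod_{v \in M_{F, \infty}} M_{v, \mathbf{O(1)}}(E_{v, R_v}) = \prod_{v \in M_{F, \infty}} M_{v, \mathbf{O(1)}}((z - f(R_v))(z - f(O)))$ (using the form \ref{Equation_T_R_d_1_Alternate_Form} of $T_{(R_v), \delta, 1}'$) becomes, after substituting $M_{v, \mathbf{O(1)}}(E_{v, R_v}) \circ \phi_v = |M_{\phi_v}|$ with $M_{\phi_v}$ a polynomial of degree $\ll_k 1$, an equation of the shape $\prod_v |M_{\phi_v}(\vec{x}_v)| = c_{(R_v)}$ for a constant $c_{(R_v)}$ which by Lemma \ref{Lemma_Mahler_Measure_Roughly_Constant_Size} is $\asymp_{k, \delta} 1$ and depends only on $(R_v)$. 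Squaring to remove absolute values, this is a single polynomial equation of degree $\ll_{k,F} 1$ in the source coordinates; intersecting with the domain of $\phi$ (a compact semialgebraic set of degree $\ll_{k,\delta} 1$ lying in a ball of diameter $\ll_{k,\delta} 1$, by Proposition \ref{Proposition_Mahler_measure_as_coordinate}) yields a compact semialgebraic set $A_{(R_v)}$ whose dimension, degree, and bounding-ball diameter are all $\ll_{k,\delta,F} 1$ uniformly in $(R_v)$. One checks the dimension is $2d - 1$: the ambient real dimension is $2d$ and the single nontrivial polynomial equation cuts it down by one (for parametrizations coming from Case 3(c) one has extra auxiliary coordinates $u_v$, but these are cut out by the additional quadratic relations $Q_{R_v} = 0$, so the net dimension is still $2d-1$).

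The third step is to apply Corollary \ref{Corollary_Gromov_Lemma} to the family $\{A_{(R_v)}\}$: it produces $\ll_{k,\delta,F} 1$ Lipschitz maps $[0,1]^{2d-1} \to A_{(R_v)}$ with Lipschitz constants $\ll_{k,\delta,F} 1$, uniformly over $(R_v)$. Composing these with the polynomial maps $\phi$ (which are themselves Lipschitz with constant $\ll 1$ on their compact domains) gives, for each $(R_v)$, a family of $\ll_{E,f,F} 1$ maps $[0,1]^{2d-1} \to \prod_v F_v^2$ covering $T_{(R_v), \delta, 1}'$, with Lipschitz constants $\ll_{E,f,F} 1$, all independent of $(R_v)$. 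This is exactly uniform Lipschitz parametrizability of codimension $1$. Finally one notes, as promised, that $T_{R,2}$ is handled identically: replacing $\Omega$ by $\partial\Omega + (-\infty,0]\delta$ changes the real-linear conditions on the vector $(\int \log|g_{v,R_v}|_v \, d\mu_v)$ but keeps them semialgebraic of bounded degree, and the codimension-$1$ count still works because $\partial\Omega$ has codimension $1$ in $\Sigma$ while the extra $(-\infty,0]\delta$ direction is one-dimensional; the same Gromov-lemma argument applies. Since $\partial S_{\Omega,R}(1) \subset T_{R,1} \cup T_{R,2}$, this completes the proof of Proposition \ref{Proposition_Uniformity_R}. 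The main obstacle is bookkeeping: ensuring that every constant (number of cases in Proposition \ref{Proposition_Mahler_measure_as_coordinate}, number of Gromov maps, all Lipschitz constants, the degree and diameter bounds on $A_{(R_v)}$) is genuinely independent of $(R_v)$, which rests on the compactness of $\prod_v E(F_v)$ and the smooth variation of the local bases $g_{v,R_v,i}$ and hence of the coefficient polynomials $H_{v,R_v,i}$ — all of which was arranged in the preceding subsections.
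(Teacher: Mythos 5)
Your proposal is correct and follows essentially the same route as the paper: reduce to $T'_{(R_v),\delta,1}$ via Proposition \ref{Proposition_TRd1_Lipschitz_parametrizable_Implies_TR1}, cover by the product parametrizations $\Phi=\prod_v\phi_v$ from Proposition \ref{Proposition_Mahler_measure_as_coordinate} (using Lemma \ref{Lemma_Mahler_Measure_Roughly_Constant_Size} to confine to $\prod_v T_v$), rewrite the product Mahler-measure constraint as a single polynomial equation $\bigl|\prod_v M_{\phi_v}(\vec{x})\bigr|=c_{(R_v)}$ on the source coordinates, then invoke Corollary \ref{Corollary_Gromov_Lemma} (Gromov's algebraic lemma), noting that $(R_v)$ affects only coefficients and not degrees, and finally compose with the Lipschitz maps $\phi_v$. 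Your explicit dimension bookkeeping for the Case 3(c) auxiliary coordinates and the closing treatment of $T_{R,2}$ are consistent with (and a useful elaboration of) what the paper does in Propositions \ref{Proposition_TR1_Lipschitz_parametrizable} and \ref{Proposition_Uniformity_R}.
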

\begin{proof}
Recall from equation \ref{Equation_T_R_d_1_Alternate_Form} that
\begin{equation*}
T_{(R_v), \delta, 1}' = \left\{((a_v, b_v)) \in \prod_{v \in M_{F, \infty}} F_v^2: 
\begin{array}{l}
\prod_{v \in M_{F, \infty}} M_{v, \mathbf{O(1)}}\left(E_{v, R_v})\right) \\ 
= \prod_{v \in M_{F, \infty}} M_{v, \mathbf{O(1)}}\left((z - f(R_v))(z - f(O))\right), \\
\delta \leq |a_v|^2 + |b_v|^2 \leq \delta^{-1} \text{ for each $v \in M_{F, \infty}$.}
\end{array}
\right\}.
\end{equation*}
By Lemma \ref{Lemma_Mahler_Measure_Roughly_Constant_Size} and Proposition \ref{Proposition_Mahler_measure_as_coordinate}, one can cover $T_{(R_v), \delta, 1}'$ with $\ll_{k, \delta, F} 1$ maps of the shape $\Phi := \prod_{v \in M_{F, \infty}} \phi_v$, that satisfies:
\begin{itemize}
    \item Domain of $\Phi$ is a compact semi-algebraic set with degree $\ll_{k, \delta, F} 1$, lying inside a ball of diameter $\ll_{k, \delta, F} 1$, with ambient space having dimension over $\BR \ll_F 1$.
    \item The map $\phi$ is a polynomial map, and is Lipschitz with Lipschitz constant $\ll_F 1$.
    \item Each point $((a_v, b_v))$ lies in the image of some map $\Phi$, say $\Phi(\vec{x}) = ((a_v, b_v))$, such that 
    $$\prod_{v \in M_{F, \infty}} M_{v, \mathbf{O(1)}}(E_{v, R_v}) \circ \Phi(\vec{x}) = \left|\prod_{v \in M_{F, \infty}} M_{\phi_v}(\vec{x})\right|$$ 
    for a polynomial map $\prod_{v \in M_{F, \infty}} M_{\phi_v}$ of degree $\ll_{k, F} 1$.
    \item Moreover, the constants here are all independent of $(R_v) \in \prod_{v \in M_{F, \infty}} E(F_v)$.
\end{itemize}
This is a set of one dimension higher than what we want, because so far we only utilized the constraint $M_{v, \mathbf{O(1)}}\left(E_{v, R_v})\right) \asymp_{k, \delta} 1$. We now try to carve out the condition
$$
\prod_{v \in M_{F, \infty}} M_{v, \mathbf{O(1)}}\left(E_{v, R_v})\right)
= \prod_{v \in M_{F, \infty}} M_{v, \mathbf{O(1)}}\left((z - f(R_v))(z - f(O))\right)
$$
as well. For a parametrization $\Phi = \prod_{v \in M_{F, \infty}} \phi_v$, the condition becomes
$$
\left|\prod_{v \in M_{F, \infty}} M_{\phi_v}(\vec{x})\right|
= \prod_{v \in M_{F, \infty}} M_{v, \mathbf{O(1)}}\left((z - f(R_v))(z - f(O))\right).
$$
It now suffices to show that
$$\left\{
\vec{x} \in \prod_{v \in M_{F, \infty}} Domain(\phi_v):
\begin{array}{l}
\left|\prod_{v \in M_{F, \infty}} M_{\phi_v}(\vec{x})\right|
= \prod_{v \in M_{F, \infty}} M_{v, \mathbf{O(1)}}\left((z - f(R_v))(z - f(O))\right), \\
\delta \leq |\phi_v(\vec{x})|^2 \leq \delta^{-1} \text{ for each $v \in M_{F, \infty}$.}
\end{array}
\right\}
$$
is uniformly Lipschitz parametrizable of codimension 1 over $(R_v) \in \prod_{v \in M_{F, \infty}} E(F_v)$. Since 
$$\left|\prod_{v \in M_{F, \infty}} M_{\phi_v}(\vec{x})\right|
= \prod_{v \in M_{F, \infty}} M_{v, \mathbf{O(1)}}\left((z - f(R_v))(z - f(O))\right)$$
is an algebraic condition on $\vec{x}$ (after squaring), and that $R_v$ only ever affects the coefficients (but not the degree) of the involved polynomial, the uniform Lipschitz parametrizability follows from Corollary \ref{Corollary_Gromov_Lemma} to Gromov's algebraic lemma.
\end{proof}

\begin{proof}[Proof of Proposition \ref{Proposition_Uniformity_R}]
Recall the discussion right after Proposition \ref{Proposition_Uniformity_R}, where we defined $T_{R, 1}$ and $T_{R, 2}$ and showed that 
$$\partial S_{\Omega, R}(1) \subset T_{R, 1} \cup T_{R, 2}.$$
To show uniform Lipschitz parametrizability of $\partial S_{\Omega, R}(1)$, it suffices to show the that for $T_{R, 1}$ and $T_{R, 2}$.

We just showed the uniform Lipschitz parametrizability of $T_{R, 1}$ in Proposition \ref{Proposition_TR1_Lipschitz_parametrizable}. One can show that $T_{R, 2}$ is uniformly Lipschitz parametrizable in a similar way; we sketch the argument as follows.

Recall that
$$T_{R, 2} := \left\{(g_{v, R}) \in \prod_{v \in M_{F, \infty}} H^0(E(F_v), O_E(R + O)): \left(\int_{X(\overline{F_v})} \log |g_{v, R}|_v d\mu_v\right) \in \partial \Omega + (-\infty, 0] \delta \right\}.$$
If there is only one infinite place (i.e. $r_1 + r_2 = 1$), then $\Omega$ is a point, and $T_{R, 2}$ is empty; the Lipschitz parametrizability of $T_{R, 2}$ is then vacuously true. Henceforth we will assume there are at least two infinite places.

Let
$$T'_{R, 2} = \left\{(g_{v, R}) \in \prod_{v \in M_{F, \infty}} H^0(E(F_v), O_E(R + O)): \left(\int_{X(\overline{F_v})} \log |g_{v, R}|_v d\mu_v\right) \in \partial\Omega\right\}.$$  
We first show that $T'_{R, 2}$ is uniformly Lipschitz parametrizable of codimension 2 over $R \in E(F)$. 
\begin{itemize}
    \item Since $\Omega$ is a parallelepiped in $\BR^{r_1 + r_2 - 1}$, we see that $\partial \Omega$ is the union of $2(r_1 + r_2 - 1) \ll_F 1$ parallelepiped. It suffices to show that for each parallelepiped $P$, the set
    $$T'_{R, 2, P} = \left\{(g_{v, R}) \in \prod_{v \in M_{F, \infty}} H^0(E(F_v), O_E(R + O)): \left(\int_{X(\overline{F_v})} \log |g_{v, R}|_v d\mu_v\right) \in P\right\}$$
    is uniformly Lipschitz parametrizable over $R \in E(F)$.

    As in Proposition \ref{Proposition_TRd1_Lipschitz_parametrizable_Implies_TR1} we strive to prove instead the larger set
    $$T'_{(R_v), 2, P} = \left\{(g_{v, R_v}) \in \prod_{v \in M_{F, \infty}} H^0(E(F_v), O_E(R_v + O)): \left(\int_{X(\overline{F_v})} \log |g_{v, R_v}|_v d\mu_v\right) \in P\right\}$$
    is uniformly Lipschitz parametrizable over $(R_v) \in \prod_{v \in M_{F, \infty}} E(F_v)$.
    \item For each parallelepiped $P$, similar to Proposition \ref{Proposition_TRd1_Lipschitz_parametrizable_Implies_TR1}, the condition
    $$\left(\int_{X(\overline{F_v})} \log |g_{v, R_v}|_v d\mu_v\right) \in P$$
    is covered by
    $$\delta \leq |a_v|^2 + |b_v|^2 \leq \delta^{-1} \text{ for each $v \in M_{F, \infty}$},$$
    and two conditions on Mahler measures:
    $$\prod_{v \in M_{F, \infty}} M_{v, \mathbf{O(1)}}\left(N_{v, f}(a_v g_{v, R_v, 1} + b_vg_{v, R_v, 2})\right) = 1$$
    and
    $$\prod_{v \in M_{F, \infty}} M_{v, \mathbf{O(1)}}\left(N_{v, f}(a_v g_{v, R_v, 1} + b_vg_{v, R_v, 2})\right)^{d_v} = 1$$
    for some $(d_v) \in \BR^{r_1 + r_2}$ linearly independent with $(1, \cdots, 1)$.
    \item Proposition \ref{Proposition_Mahler_measure_as_coordinate} still applies. Using those parametrizations there, we need to show uniform Lipschitz parametrizability of a codimension 2 set, similar to Proposition \ref{Proposition_TR1_Lipschitz_parametrizable}. 
    
    As before, varying $(R_v)$ only affects the coefficients of polynomials defining the semi-algebraic set, but not the degrees. Hence uniform Lipschitz parametrizability follows from Corollary \ref{Corollary_Gromov_Lemma} to Gromov's algebraic lemma.
\end{itemize} 
Finally, if $\psi: [0,1]^{r_1 + r_2 - 2} \to T'_{R, 2}$ is a Lipschitz map, then the map $\widetilde{\psi}: [0,1]^{r_1 + r_2 - 2} \times (0,1] \to T_{R, 2}$
defined by
$$\widetilde{\psi}(\mathbf{x}, u) = u \cdot \psi(\mathbf{x})$$
is clearly Lipschitz. By Lipschitz continuity, one can extend $\widetilde{\psi}$ to $[0,1]^{r_1 + r_2 - 1}$. The image of $\psi$'s covers $T_{R, 2}$, and the number of maps/Lipschitz constant are still uniform over $R \in E(F)$. This shows that $T_{R, 2}$ is uniformly Lipschitz parametrizable of codimension 1 over $R \in E(F)$, and wraps up the proof of our main result Proposition \ref{Proposition_Uniformity_R}.
\end{proof}

\subsection{Point counting the lattice $H^0(X, O_X(D_R + D_O))$}
Recall that after Corollary \ref{Corollary_Step_3_Reframe_Cohom_X}, we want to count points of the lattice
$$H^0(X, O_X(D_R + D_O)) \into H^0(X, O_X(D_R + D_O)) \otimes_{\BZ} \BR \cong \prod_{v \in M_{F, \infty}} H^0(E, O_E(R + O)) \otimes_F F_v,$$
lying inside the homogeneous region 
$$\exp\left(\frac{1}{k}\left(\log B - \log H(f(R)) - \log H(f(O))\right) - \frac{1}{d} \log |N_{F/\BQ}(a)|\right) S_{\Omega, R}(1),$$
as $B \to \infty$, for each $R \in E(F)$. The main goal of this section is Corollary \ref{Corollary_Counting_Rational_Functions_Asymptotics}, which provides an asymptotic formula for this point count.

We start with a version of Lipschitz principle, as stated in Masser-Vaaler \cite{MasserVaaler}.
\begin{lemma}[{\cite[Lemma 2]{MasserVaaler}}]
\label{Lemma_Geometry_Of_Numbers}
Let $S \subset \BR^D$ be a bounded set whose boundary $\partial S$ can be covered by the image of at most $W$ maps from $[0,1]^{D-1}$ to $\BR^D$ satisfying Lipschitz conditions
$$\|\phi(x_1) - \phi(x_2)\|_2 \leq L\|x_1 - x_2\|_2.$$
Then $S$ is Lebesgue measurable. Further, let $\Lambda$ in $\BR^D$ be a lattice with first successive minimum $\lambda_1(\Lambda)$. Then the number $Z$ of points in $S \cap \Lambda$ satisfies
$$\left|Z - \frac{\vol(S)}{\covol(\Lambda)} \right| \leq c W \left(\frac{L}{\lambda_1(\Lambda)} + 1\right)^{D-1}$$
for some $c = c(D)$ depending only on $D$.
\end{lemma}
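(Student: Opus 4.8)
This is the classical ``Lipschitz principle'' of Davenport, Schmidt and Masser--Vaaler, and in the paper one simply cites \cite[Lemma 2]{MasserVaaler}; for completeness I describe the shape of a proof. There are two independent parts: the measurability claim and the counting estimate. For measurability, observe that $\partial S$ is contained in the union of the $W$ sets $\phi_i([0,1]^{D-1})$. A Lipschitz map does not increase Hausdorff dimension, so each $\phi_i([0,1]^{D-1})$ has finite $(D-1)$-dimensional Hausdorff measure and hence $D$-dimensional Lebesgue measure zero; thus $\partial S$ is Lebesgue-null, and the bounded set $S$ is Jordan measurable.

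The counting estimate I would first reduce to the case $\Lambda=\ZZ^D$. Choose a Minkowski-reduced basis $v_1,\dots,v_D$ of $\Lambda$, so that $\|v_i\|\asymp_D\lambda_i(\Lambda)$, and let $B=(v_1\mid\cdots\mid v_D)$ and $T=B^{-1}$, so that $T\Lambda=\ZZ^D$. The decisive point — and the reason the final bound features $\lambda_1(\Lambda)$ rather than the operator norm of an arbitrary basis inverse — is that a reduced basis matrix satisfies $\sigma_{\min}(B)\gg_D\lambda_1(\Lambda)$, hence $\|T\|_{op}=\sigma_{\min}(B)^{-1}\ll_D\lambda_1(\Lambda)^{-1}$. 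Then $T S$ is bounded, $|T S\cap\ZZ^D|=|S\cap\Lambda|=Z$, $\vol(T S)=\vol(S)/|\!\det B|=\vol(S)/\covol(\Lambda)$, and $\partial(T S)=T(\partial S)$ is covered by the $W$ maps $T\circ\phi_i$, each Lipschitz with constant at most $\|T\|_{op}L\ll_D L/\lambda_1(\Lambda)$. So it suffices to prove: for a bounded set $S'\subset\BR^D$ whose boundary is covered by $W$ Lipschitz maps of constant $L'$, one has $\bigl|\,|S'\cap\ZZ^D|-\vol(S')\bigr|\le c_D\,W\,(L'+1)^{D-1}$.

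For this last estimate, tile $\BR^D$ by the half-open unit cells $n+[0,1)^D$ with $n\in\ZZ^D$. Each cell is convex and contains exactly one lattice point; therefore a cell that meets $S'$ but is not contained in $S'$ must meet $\partial S'$, whence $\bigl|\,|S'\cap\ZZ^D|-\vol(S')\bigr|$ is bounded by the number $\mathcal N$ of cells meeting $\partial S'$. To bound $\mathcal N$, take one Lipschitz piece $\phi:[0,1]^{D-1}\to\BR^D$ of constant $L'$, subdivide $[0,1]^{D-1}$ into $(\lceil L'\rceil+1)^{D-1}$ subcubes of side at most $1/(\lceil L'\rceil+1)$, so that the $\phi$-image of each subcube has diameter at most $L'\sqrt{D-1}/(\lceil L'\rceil+1)\le\sqrt{D-1}$ and hence meets $O_D(1)$ unit cells. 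Summing over the subcubes and over the $W$ pieces gives $\mathcal N\ll_D W(\lceil L'\rceil+1)^{D-1}\ll_D W(L'+1)^{D-1}$. Tracing back through the reduction step yields the asserted inequality with a constant $c=c(D)$.

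The only genuinely nonroutine ingredient is the reduction step, namely the fact from the geometry of numbers that $\Lambda$ admits a basis whose matrix has smallest singular value $\asymp_D\lambda_1(\Lambda)$; everything else is bookkeeping. Since the statement coincides verbatim with \cite[Lemma 2]{MasserVaaler}, in the body of the paper I would invoke that reference and not reproduce the argument.
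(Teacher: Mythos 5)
The paper does not prove this lemma at all—it simply quotes it as \cite[Lemma 2]{MasserVaaler}, so there is no ``paper's own proof'' to compare against. Your sketch is, however, a correct derivation of the statement as quoted, and the key observations (measurability via Lipschitz images being null, counting via unit cells meeting $\partial S'$, and reducing to $\ZZ^D$ by a reduced-basis change of coordinates) are all sound. Two remarks worth keeping in mind.

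First, the one genuinely nontrivial ingredient you flag—$\sigma_{\min}(B)\gg_D\lambda_1(\Lambda)$ for a reduced basis $B$—is correct, but it is cleaner to argue via the dual basis of an LLL-reduced basis than via Minkowski reduction. If $v_1,\dots,v_D$ is LLL-reduced then $\|v_i^*\|\gg_D\lambda_1$ for all $i$ (the Gram--Schmidt norms cannot drop by more than a bounded factor), the reversed dual basis $w_D,\dots,w_1$ of $\Lambda^*$ is again LLL-reduced and has $\|w_i\|\asymp_D\|v_i^*\|^{-1}\ll_D\lambda_1^{-1}$, and then $\sigma_{\min}(B)^{-1}=\|B^{-1}\|_{op}\le\bigl(\sum_i\|w_i\|^2\bigr)^{1/2}\ll_D\lambda_1^{-1}$. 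Asserting the same for Minkowski-reduced bases is true but less standard, and an unwary reader might think $\sigma_{\min}(B)\ge\min_i\|v_i^*\|$ for upper-triangular factors, which is false without the size-reduction condition.

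Second, your naive transport to $\ZZ^D$ only yields the bound $cW(L/\lambda_1+1)^{D-1}$ appearing in the paper, which is strictly weaker than Masser--Vaaler's actual Lemma 2: their bound is $cW\max_{0\le i<D}L^i/(\lambda_1\cdots\lambda_i)$, proved by scaling the $D$ coordinate directions anisotropically according to all the successive minima rather than isotropically by $\lambda_1$. Scaling everything by a single factor throws away the information in $\lambda_2,\dots,\lambda_D$. This is harmless here because the paper has deliberately stated the weaker form (it is all that is needed in Section 6 and Appendix B), but if you ever want the sharper version you will need to follow Masser--Vaaler's argument directly rather than the cube-scaling shortcut.
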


\begin{prop}
\label{Proposition_Counting_Rational_Functions_Asymptotics}
For each $R \in E(F)$, if $T \ll_{E, f, F} (H(f(R)) H(f(O)))^{-1/2k}$ with a sufficiently small big-$O$ constant, then
$$\#\left\{g_X \in H^0(X, O_X(D_R + D_O)) - 0: (g_X)_{v \in M_{F, \infty}} \in T S_{\Omega, R}(1)\right\} = 0.$$
On the other hand, if $T \gg_{E, f, F} (H(f(R)) H(f(O)))^{-1/k}$, then
\begin{align*}
& \#\left\{g_X \in H^0(X, O_X(D_R + D_O)) - 0: (g_X)_{v \in M_{F, \infty}} \in T S_{\Omega, R}(1)\right\} \\
= & \frac{\vol_{Fal}(S_{\Omega, R}(1))}{\covol_{Fal}(H^0(X, O_X(D_R + D_O)))} T^{2d} + O_{E, f, F}\left(\left(H(f(R)) H(f(O))\right)^{(2d-1)/k} T^{2d-1}\right).
\end{align*}
as $T \to \infty$. 
\end{prop}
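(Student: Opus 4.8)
The strategy is to apply the Lipschitz principle of Lemma \ref{Lemma_Geometry_Of_Numbers} to the lattice $\Lambda := H^0(X, O_X(D_R + D_O))$ sitting inside the $2d$-dimensional real vector space $H^0(X, O_X(D_R + D_O)) \otimes_{\BZ} \BR \cong \prod_{v \in M_{F, \infty}} H^0(E, O_E(R + O)) \otimes_F F_v$, and the homogeneous region $T \, S_{\Omega, R}(1)$. The vanishing statement for small $T$ is immediate from Proposition \ref{Proposition_Counting_Rational_Functions_Truncation}, so the content is the asymptotic formula for large $T$. Since $S_{\Omega, R}(T) = T \, S_{\Omega, R}(1)$ scales homogeneously of degree $1$ in a $2d$-dimensional space, we have $\vol_{Fal}(T \, S_{\Omega, R}(1)) = T^{2d} \vol_{Fal}(S_{\Omega, R}(1))$. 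Lemma \ref{Lemma_Geometry_Of_Numbers} with $D = 2d$ then gives
$$
\left| \#\{g_X \in \Lambda - 0 : (g_X)_{v} \in T S_{\Omega, R}(1)\} - \frac{\vol_{Fal}(S_{\Omega, R}(1))}{\covol_{Fal}(\Lambda)} T^{2d} \right| \leq c(2d) \, W_R \left( \frac{L_R}{\lambda_1(\Lambda)} + 1 \right)^{2d - 1},
$$
where $W_R, L_R$ are the Lipschitz data for $\partial(T S_{\Omega, R}(1)) = T \cdot \partial S_{\Omega, R}(1)$. (The subtraction of the origin accounts for the $-0$ in the count and contributes only $O(1)$, absorbed into the error.)

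The main point is to bound this error term uniformly in $R$. By Proposition \ref{Proposition_Uniformity_R}, $\partial S_{\Omega, R}(1)$ is uniformly Lipschitz parametrizable of codimension $1$ over $R \in E(F)$, so we may take $W_R \ll_{E, f, F} 1$ and the Lipschitz constant of each parametrization of $\partial S_{\Omega, R}(1)$ bounded by some $L \ll_{E, f, F} 1$; scaling by $T$, the parametrizations of $\partial(T S_{\Omega, R}(1))$ have Lipschitz constant $\ll_{E, f, F} T$. Meanwhile, by Proposition \ref{Proposition_First_Successive_Minimum_H0}, $\lambda_1(\Lambda) \gg_{E, f, F} (H(f(R)) H(f(O)))^{-1/k}$. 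Hence in the regime $T \gg_{E, f, F} (H(f(R)) H(f(O)))^{-1/k}$ we have $T / \lambda_1(\Lambda) \gg_{E, f, F} 1$, so the $+1$ in the Lipschitz principle is absorbed and
$$
W_R \left( \frac{L T}{\lambda_1(\Lambda)} + 1 \right)^{2d-1} \ll_{E, f, F} \left( \frac{T}{(H(f(R)) H(f(O)))^{-1/k}} \right)^{2d-1} = \left( H(f(R)) H(f(O)) \right)^{(2d-1)/k} T^{2d-1},
$$
which is exactly the claimed error term.

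It remains to check measurability and to pin down the leading constant. Measurability of $S_{\Omega, R}(1)$ follows from the same Lemma \ref{Lemma_Geometry_Of_Numbers} (its boundary has finite Lipschitz-parametrizable codimension-$1$ cover, hence measure zero). For the leading term, the ratio $\vol_{Fal}(S_{\Omega, R}(1)) / \covol_{Fal}(H^0(X, O_X(D_R + D_O)))$ is independent of the choice of Haar measure on the ambient space (numerator and denominator scale the same way), so using the Faltings volume $\vol_{Fal}$ is legitimate and is the natural normalization for the next section, where $\covol_{Fal}$ is computed explicitly via Proposition \ref{Proposition_Covolume_H0X}. The only subtlety I anticipate — and the step that needs care — is verifying that the Lipschitz data from Proposition \ref{Proposition_Uniformity_R}, which is stated for $\partial S_{\Omega, R}(1)$ after a fixed choice of real-coordinate identification $\prod_v H^0(E, O_E(R+O)) \otimes_F F_v \cong \BR^{2d}$, transports correctly to the Faltings-metric setup: changing the linear identification alters Lipschitz constants only by a factor bounded uniformly in $R$ (again by compactness of $E(F_v)$ and the smooth variation of the local bases $g_{v, R_v, i}$), so this is harmless and the uniform bound survives. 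Assembling the vanishing case, the measurability, the main term, and the uniform error bound completes the proof.
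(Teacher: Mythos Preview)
Your proposal is correct and follows essentially the same route as the paper: invoke Proposition \ref{Proposition_Counting_Rational_Functions_Truncation} for the vanishing, then apply Lemma \ref{Lemma_Geometry_Of_Numbers} with $D=2d$, using the scaling $L(T)=TL(1)$, the uniform Lipschitz data from Proposition \ref{Proposition_Uniformity_R}, and the successive-minimum bound from Proposition \ref{Proposition_First_Successive_Minimum_H0} to control the error, finally noting that the ratio $\vol/\covol$ is Haar-measure independent. Your extra remarks on the $-0$ and the coordinate identification are fine and do not diverge from the paper's argument.
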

\begin{proof}
The first part follows from Proposition \ref{Proposition_Counting_Rational_Functions_Truncation}.

For the second part, we apply Lemma \ref{Lemma_Geometry_Of_Numbers} to $S = S(T) = T S_{\Omega, R}(1)$, $\Lambda = H^0(X, O_X(D_R + D_O))$, $D = 2d$. As $T$ varies, note that 
$$S(T) = T S(1), \, \vol(S(T)) = T^{2d} \vol(S(1)), \, W(T) = W(1), \, L(T) = T L(1).$$
Moreover, Proposition \ref{Proposition_First_Successive_Minimum_H0} gives us a lower bound
$$\lambda_1(\Lambda) \gg_{E,f, F} (H(f(R)) H(f(O)))^{-1/k},$$
hence the error term in Lemma \ref{Lemma_Geometry_Of_Numbers} contributes
$$O_{E, f, F}\left(W \left(L T (H(f(R)) H(f(O)))^{1/k}+ 1\right)^{2d-1}\right).$$
Here we used the uniformity of $W$ and $L$ from Proposition \ref{Proposition_Uniformity_R}, so that $W, L$ are independent of $R \in E(F)$.

The condition $T \gg_{E, f, F} (H(f(R)) H(f(O)))^{-1/k}$ guarantees that 
$$L T (H(f(R)) H(f(O)))^{1/k} \gg_{E, f, F} 1.$$ 
Hence the error term contributes
$$O_{E, f, F}\left(W \left(L T (H(f(R)) H(f(O)))^{1/k}\right)^{2d-1}\right) = O_{E, f, F}\left(\left(H(f(R)) H(f(O))\right)^{(2d-1)/k} T^{2d-1}\right).$$

Finally, note that any Haar measure on $\prod_{v \in M_{F, \infty}} H^0(E, O_E(R + O)) \otimes_F F_v$ gives the same main term; in particular, we can use Faltings volume as desired.
\end{proof}

We next evaluate $\vol_{Fal}(S_{\Omega, R}(1))$ that appears in the numerator of main term.
\begin{lemma}
\label{Lemma_Faltings_Volume_S_Omega}
\begin{align*}
&\vol_{Fal}(S_{\Omega, R}(1)) \\
= & 2^{r_1 + r_2 - 1} Reg_F \prod_{v \in M_{F, \infty}} \vol_{Fal}\left(
\left\{g_v \in H^0(X, O_X(D_R + D_O)) \otimes_{O_F} F_v: \int_{X(\overline{F_v})} \log |g_v|_v d\mu_v \leq 0\right\}
\right).
\end{align*}
\end{lemma}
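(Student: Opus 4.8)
The plan is to unwind the definition of $S_{\Omega, R}(1)$ and exploit the structure of $\Omega$ as a fundamental domain for the unit lattice, following the change-of-variables scheme of Masser-Vaaler. Recall that
\begin{align*}
S_{\Omega, R}(1) = & \left\{(g_v) \in \prod_{v \in M_{F, \infty}} H^0(E, O_E(R + O)) \otimes_F F_v: \left(\ev_v(g_v)\right)_{v} \in \Omega + (-\infty, 0]\delta\right\},
\end{align*}
where $\ev_v(g_v) = \int_{X(\overline{F_v})} \log |g_v|_v d\mu_v$. First I would fix, for each $v \in M_{F, \infty}$, "polar-type" coordinates on $H^0(E, O_E(R+O)) \otimes_F F_v$: write $g_v = \rho_v \cdot u_v$ where $\rho_v > 0$ records the scale (so that $\ev_v(g_v) = \log \rho_v + \ev_v(u_v)$ using the homogeneity $\ev_v(\lambda g_v) = \log|\lambda|_v + \ev_v(g_v)$ for $\lambda \in F_v^\times$ — here the archimedean normalization means $\log|\lambda|_v = \dim_{\mathbb{R}} F_v \cdot \log|\lambda|$, which is exactly where the weights $\delta = (1,\ldots,1,2,\ldots,2)$ enter) and $u_v$ ranges over a slice with $\ev_v$ controlled. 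The Faltings volume $\vol_{Fal}$ factors as a product of the local Faltings volumes over $v \in M_{F,\infty}$, and in the $\rho_v$-direction it is a scaling measure; I would compute the Jacobian of $(g_v) \mapsto (\ell_v := \ev_v(g_v))_v$ together with the "angular" variables, which produces a factor of $2^{\dim_{\mathbb R} F_v} = 2$ for complex $v$ versus $2$ for real $v$ (Lebesgue measure pushes forward to $2^{r_1+r_2}$ times the measure on the $\ell_v$-space, but one dimension is consumed by the constraint; this will land the $2^{r_1+r_2-1}$).

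Next I would separate the "radial" constraint region $\Omega + (-\infty,0]\delta \subset \mathbb{R}^{r_1+r_2}$ into the direction along $\delta$ and the complementary directions living in the hyperplane $\Sigma = \{x_1 + \cdots + x_{r_1+r_2} = 0\}$. Since $\Omega$ is a fundamental domain for the image of the log-embedding of $O_F^\times$ in $\Sigma$, integrating the constant function over $\Omega$ gives the covolume of that lattice, which is (up to the standard normalization) the regulator $Reg_F$; this is the source of the $Reg_F$ factor. The remaining $(-\infty, 0]\delta$ direction together with the product-over-$v$ structure of the per-place constraint $\{g_v : \ev_v(g_v) \leq 0\}$ is precisely what assembles into $\prod_{v \in M_{F,\infty}} \vol_{Fal}(\{g_v : \ev_v(g_v) \leq 0\})$: one writes the total-scale condition $\sum_v \ev_v(g_v) \leq 0$ relative to the $\Omega$-translate, and each place contributes its own truncated cone independently once the unit-group quotient (giving $Reg_F$) and the scaling symmetry have been accounted for. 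I would carry this out by an explicit Fubini decomposition of $\vol_{Fal}$ over the coordinates $(\rho_v)_v$, $(\text{slice variables})_v$, and the $\Sigma$/$\delta$ splitting of the log-image.

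The main obstacle I anticipate is bookkeeping the normalizations correctly: matching the Faltings volume normalization on each $H^0(E, O_E(R+O)) \otimes_F F_v$ (where for complex places the convention $dx_v = 2 \times$ Lebesgue is in force, cf. the Haar measure conventions in Section 3.2) against the $2^{r_1+r_2-1}$ factor, and confirming that the unit-lattice integral yields exactly $Reg_F$ with no spurious powers of $2$ or of $r_1+r_2$. A secondary subtlety is that $\Omega$ has a genuine boundary, so the region $\Omega + (-\infty,0]\delta$ is not literally a half-space; but since we are only computing its volume and the decomposition above is a measure-theoretic identity (the boundary has measure zero), this causes no trouble. I would also note that $\vol_{Fal}(S_{\Omega,R}(1))$ is finite: the per-place cones $\{g_v : \ev_v(g_v) \leq 0\}$ have finite Faltings volume (interpreting $\ev_v$ as a log-Mahler measure, this is comparable to $\{\|g_v\|_2 \leq C\}$ by Lemma \ref{Lemma_Log_Integral_Uniformly_Bounded}), and the $\Omega$-factor is bounded. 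Everything else is a routine Jacobian computation, so I would keep that portion terse.
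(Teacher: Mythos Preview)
Your approach is correct and is essentially the same as the paper's: the paper simply invokes \cite[Lemma 4]{MasserVaaler} to obtain $\vol_{Fal}(S_{\Omega,R}(1)) = 2^{r_1+r_2-1}\frac{\vol_{\mathrm{Lebesgue}}(\Omega)}{(r_1+r_2)^{1/2}}\prod_v \vol_{Fal}(\{g_v:\ev_v(g_v)\le 0\})$ and then identifies $\vol_{\mathrm{Lebesgue}}(\Omega)/(r_1+r_2)^{1/2}=Reg_F$, whereas you are unpacking that citation via the polar/log change of variables. One small caution on your normalization bookkeeping: your remark ``$2^{\dim_{\mathbb R}F_v}=2$ for complex $v$'' is inconsistent (that would be $4$), and the factor $(r_1+r_2)^{-1/2}$ relating the Lebesgue volume of $\Omega$ in $\Sigma$ to the regulator is exactly the ``spurious power of $r_1+r_2$'' you flagged, so make sure that cancellation is explicit when you write it out.
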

\begin{proof}
The same argument in \cite[Lemma 4]{MasserVaaler} gives
\begin{align*}
& \vol_{Fal}(S_{\Omega, R}(1)) \\
= & 2^{r_1 + r_2 - 1} \frac{\vol_{Lebesgue}(\Omega)}{(r_1 + r_2)^{1/2}} \prod_{v \in M_{F, \infty}} \vol_{Fal}\left(\left\{g_v \in H^0(X, O_X(D_R + D_O)) \otimes_{O_F} F_v: \right.\right.\\
& \,\,\,\,\,\left.\left.\int_{X(\overline{F_v})} \log |g_v|_v d\mu_v \leq 0\right\}\right).
\end{align*}
Here recall that $\Omega \subset \{x_1 + \cdots + x_{r_1 + r_2} = 0\} \subset \BR^{r_1 + r_2}$ is a fundamental domain for the log embedding for $O_F^{\times}$.

Finally, note that the regulator of $F$ equals
$$Reg_F = \frac{\vol_{Lebesgue}(\Omega)}{(r_1 + r_2)^{1/2}}.$$
This is because the regulator is the absolute value of any maximal subdeterminant of the matrix with $r_1 + r_2 - 1$ rows and $r_1 + r_2$ columns for the log embedding of units, while the volume $\vol(\Omega)$ is the square root of the sum of squares of these same subdeterminants. Since all such subdeterminants are the same, the formula for regulator of $F$ follows, and this completes the proof.
\end{proof}

Substituting the calculated volumes (Lemma \ref{Lemma_Faltings_Volume_S_Omega}, Proposition \ref{Proposition_Covolume_H0X}) into Proposition \ref{Proposition_Counting_Rational_Functions_Asymptotics}, we get

\begin{cor}
\label{Corollary_Counting_Rational_Functions_Asymptotics}
For each $R \in E(F)$, if $T \ll_{E, f, F} (H(f(R)) H(f(O)))^{-1/2k}$ with a sufficiently small big-$O$ constant, then
$$\#\left\{g_X \in H^0(X, O_X(D_R + D_O)) - 0: (g_X)_{v \in M_{F, \infty}} \in T S_{\Omega, R}(1)\right\} = 0.$$
On the other hand, if $T \gg_{E, f, F} (H(f(R)) H(f(O)))^{-1/k}$, then
\begin{align*}
& \#\left\{g_X \in H^0(X, O_X(D_R + D_O)) - 0: (g_X)_{v \in M_{F, \infty}} \in T S_{\Omega, R}(1)\right\} \\
= & T^{2d} \cdot \frac{2^{r_1 + r_2 - 1}Reg_F}{|\disc(O_F)|} \cdot \frac{\exp\left(d \cdot h_{Faltings}(E)\right)}{|N_{F/\BQ}(\Delta_{E/F})|^{1/4}} \cdot H_{NT}(R)^d \cdot \\
&\,\,\, \prod_{v \in M_{F, \infty}}\vol_{Fal}\left(g_v \in H^0(X, O_X(D_R + D_O)) \otimes_F F_v: \int_{X(\overline{F_v})} \log |g_v|_v d\mu_v\leq 0\right) \\
&\,\,\, + O_{E, f, F}\left(T^{2d-1}(H(f(R))H(f(O)))^{(2d-1)/k}\right),
\end{align*}
as $T \to \infty$. 
\end{cor}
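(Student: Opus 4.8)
The plan is to assemble Corollary \ref{Corollary_Counting_Rational_Functions_Asymptotics} by simply substituting the volume computations of Lemma \ref{Lemma_Faltings_Volume_S_Omega} and Proposition \ref{Proposition_Covolume_H0X} into the asymptotic formula of Proposition \ref{Proposition_Counting_Rational_Functions_Asymptotics}. There is essentially no new mathematical content; the work is careful bookkeeping of the constants. The first (vanishing) assertion is literally the first clause of Proposition \ref{Proposition_Counting_Rational_Functions_Asymptotics}, so nothing needs to be done there.

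For the second assertion, I would start from the main term of Proposition \ref{Proposition_Counting_Rational_Functions_Asymptotics},
$$\frac{\vol_{Fal}(S_{\Omega, R}(1))}{\covol_{Fal}(H^0(X, O_X(D_R + D_O)))} \, T^{2d},$$
and compute the ratio in closed form. By Lemma \ref{Lemma_Faltings_Volume_S_Omega},
$$\vol_{Fal}(S_{\Omega, R}(1)) = 2^{r_1 + r_2 - 1} Reg_F \prod_{v \in M_{F, \infty}} \vol_{Fal}\!\left(\left\{g_v: \int_{X(\overline{F_v})} \log |g_v|_v \, d\mu_v \leq 0\right\}\right),$$
while by Proposition \ref{Proposition_Covolume_H0X},
$$\covol_{Fal}\!\left(H^0(X, O_X(D_R + D_O))\right) = \frac{|N_{F/\BQ}(\Delta_{E/F})|^{1/4} \cdot |\disc(O_F)|}{\exp(d \cdot h_{Faltings}(E))} \cdot \frac{1}{H_{NT}(R)^d}.$$
Dividing, the covolume appears in the denominator, so $H_{NT}(R)^d$ and $\exp(d \cdot h_{Faltings}(E))$ migrate to the numerator while $|N_{F/\BQ}(\Delta_{E/F})|^{1/4}$ and $|\disc(O_F)|$ land in the denominator, giving exactly the stated main coefficient
$$\frac{2^{r_1 + r_2 - 1} Reg_F}{|\disc(O_F)|} \cdot \frac{\exp(d \cdot h_{Faltings}(E))}{|N_{F/\BQ}(\Delta_{E/F})|^{1/4}} \cdot H_{NT}(R)^d \cdot \prod_{v \in M_{F, \infty}} \vol_{Fal}(\cdots).$$
The error term $O_{E, f, F}((H(f(R))H(f(O)))^{(2d-1)/k} T^{2d-1})$ is copied verbatim from Proposition \ref{Proposition_Counting_Rational_Functions_Asymptotics} — the Faltings volume used to express the main term does not alter the error term, since the Lipschitz-principle bound of Lemma \ref{Lemma_Geometry_Of_Numbers} controls the discrepancy independently of which Haar measure normalizes the main term, as already remarked in the proof of Proposition \ref{Proposition_Counting_Rational_Functions_Asymptotics}.

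Since this is a pure substitution, there is no genuine obstacle; the only thing to be slightly careful about is that the hypotheses on $T$ must match — the regime $T \gg_{E,f,F} (H(f(R))H(f(O)))^{-1/k}$ in Proposition \ref{Proposition_Counting_Rational_Functions_Asymptotics} is exactly the regime stated in the corollary, and the small-$T$ regime $T \ll_{E,f,F} (H(f(R))H(f(O)))^{-1/(2k)}$ likewise carries over unchanged. I would therefore present the proof as a short paragraph: invoke Proposition \ref{Proposition_Counting_Rational_Functions_Asymptotics} for both the vanishing statement and the asymptotic, then plug in Lemma \ref{Lemma_Faltings_Volume_S_Omega} and Proposition \ref{Proposition_Covolume_H0X} and simplify the fraction.
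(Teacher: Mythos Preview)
Your proposal is correct and matches the paper's approach exactly: the paper states the corollary with the single sentence ``Substituting the calculated volumes (Lemma \ref{Lemma_Faltings_Volume_S_Omega}, Proposition \ref{Proposition_Covolume_H0X}) into Proposition \ref{Proposition_Counting_Rational_Functions_Asymptotics}, we get'' and nothing more. Your bookkeeping of how the constants combine is accurate, and your remarks about the hypotheses on $T$ and the error term carrying over unchanged are well-placed but not strictly needed for the write-up.
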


For later purpose, we prove an upper bound of $\vol_{Fal}(S_{\Omega, R}(1))$, uniform over $R \in E(F)$.

\begin{prop}
\label{Proposition_Faltings_Volume_S_Omega_Uniformly_Bounded}
We have a uniform bound over $R \in E(F)$,
$$\vol_{Fal}(S_{\Omega, R}(1)) \ll_{E, F} 1.$$
\end{prop}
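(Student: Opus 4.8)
The plan is to reduce the statement $\vol_{Fal}(S_{\Omega, R}(1)) \ll_{E, F} 1$ to the uniform bound on Faltings volumes of $L^2$-unit balls that we already established. Start from the factorization in Lemma \ref{Lemma_Faltings_Volume_S_Omega}:
$$\vol_{Fal}(S_{\Omega, R}(1)) = 2^{r_1 + r_2 - 1} Reg_F \prod_{v \in M_{F, \infty}} \vol_{Fal}\left(\left\{g_v \in H^0(X, O_X(D_R + D_O)) \otimes_{O_F} F_v: \int_{X(\overline{F_v})} \log |g_v|_v d\mu_v \leq 0\right\}\right).$$
The prefactor $2^{r_1 + r_2 - 1} Reg_F$ depends only on $F$, so it suffices to bound each local factor $\vol_{Fal}(\{g_v: \int \log |g_v|_v d\mu_v \leq 0\})$ uniformly over $R \in E(F)$ (equivalently over $R_v \in E(\overline{F_v})$, since only the point $R$ matters, not its field of definition).

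Next I would compare the region $\{g_v : \int_{X(\overline{F_v})} \log|g_v|_v\, d\mu_v \leq 0\}$ with the $L^2$-unit ball $\{g_v : \|g_v\|_{R,2} \leq 1\}$, which by Proposition \ref{Proposition_Faltings_Volume_Unit_ball_Uniformly_Bounded} has Faltings volume bounded uniformly over $R$. The key input is Lemma \ref{Lemma_Log_Integral_Uniformly_Bounded}, which gives a constant $C = C(E, v)$, independent of $R$ and of $g_v \neq 0$, with $\left|\int_{X(\overline{F_v})} \log |g_v|_v d\mu_v - \log \|g_v\|_{R, 2}\right| \leq C$. Hence $\int \log |g_v|_v d\mu_v \leq 0$ forces $\log \|g_v\|_{R, 2} \leq C$, i.e. $\|g_v\|_{R, 2} \leq e^{C}$. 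Therefore
$$\left\{g_v : \int_{X(\overline{F_v})} \log |g_v|_v d\mu_v \leq 0\right\} \subseteq \left\{g_v : \|g_v\|_{R, 2} \leq e^{C}\right\} = e^{C} \cdot \left\{g_v : \|g_v\|_{R, 2} \leq 1\right\}.$$
Since the real vector space $H^0(E, O_E(R+O)) \otimes_F F_v$ has dimension $2\dim_{F_v} H^0 \cdot [\text{something}]$ — concretely dimension $2 \cdot 2 = 4$ over $\BR$ when $F_v \cong \BC$ and $2$ when $F_v \cong \BR$, in any case a dimension $\ll 1$ depending only on nothing — scaling by $e^C$ multiplies the Faltings volume by a bounded factor $e^{C \cdot \dim}$. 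Combining with Proposition \ref{Proposition_Faltings_Volume_Unit_ball_Uniformly_Bounded}, each local factor is $\ll_{E, v} 1$, uniformly over $R$.

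Finally, multiply the finitely many local bounds (there are $r_1 + r_2 \ll_F 1$ archimedean places) together with the $F$-dependent prefactor $2^{r_1 + r_2 - 1} Reg_F$ to obtain $\vol_{Fal}(S_{\Omega, R}(1)) \ll_{E, F} 1$, as desired. I do not expect a serious obstacle here: the only subtlety is making sure the constant in Lemma \ref{Lemma_Log_Integral_Uniformly_Bounded} is genuinely uniform in both $g_v$ and $R_v$ — which was already arranged via the compactness of the unit sphere bundle $M$ over $E(\overline{F_v})$ in Section \ref{Section_H0E_Locally_Free_In_R} — and noting that Proposition \ref{Proposition_Faltings_Volume_Unit_ball_Uniformly_Bounded} as stated uses the canonical volume form $\nu_{can}$ rather than the measure $d\mu_v$, so one should check that the two $L^2$-norms differ by a bounded factor (again via compactness, since $d\mu_v$ is a fixed continuous measure of class (S) and $\nu_{can}$ is the canonical volume form, both with total mass $1$), or simply re-run the proof of Proposition \ref{Proposition_Faltings_Volume_Unit_ball_Uniformly_Bounded} verbatim with $d\mu_v$ in place of $\nu_{can}$ since only the admissibility and normalization of the metric were used.
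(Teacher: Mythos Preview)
Your proposal is correct and follows essentially the same approach as the paper: factorize via Lemma \ref{Lemma_Faltings_Volume_S_Omega}, use Lemma \ref{Lemma_Log_Integral_Uniformly_Bounded} to trap each local region in a bounded-radius $L^2$-ball, and invoke Proposition \ref{Proposition_Faltings_Volume_Unit_ball_Uniformly_Bounded} to bound the Faltings volume of that ball uniformly in $R$. You also correctly flag the $\nu_{can}$ versus $d\mu_v$ discrepancy between the $L^2$-norm of Definition \ref{Definition_L2_metric_H0X} and the one in Proposition \ref{Proposition_Faltings_Volume_Unit_ball_Uniformly_Bounded}; the paper's own proof glosses over this point, and your suggested fixes (comparing the two norms by compactness, or re-running the argument with $d\mu_v$) are both valid.
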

\begin{proof}
By Lemma \ref{Lemma_Faltings_Volume_S_Omega},
\begin{align*}
&\vol_{Fal}(S_{\Omega, R}(1)) \\
\ll_F & \prod_{v \in M_{F, \infty}} \vol_{Fal}\left(g_v \in H^0(X, O_X(D_R + D_O)) \otimes_{O_F} F_v: \int_{X(\overline{F_v})} \log |g_v|_v d\mu_v \leq 0\right)
\end{align*}
Now for each archimedean place $v$, by Lemma \ref{Lemma_Log_Integral_Uniformly_Bounded}, the set
$$\left\{g_v \in H^0(X, O_X(D_R + D_O)) \otimes_{O_F} F_v: \int_{X(\overline{F_v})} \log |g_v|_v d\mu_v \leq 0\right\}$$
lies in a ball $B_{R, v, r} \subset H^0(X, O_X(D_R + D_O)) \otimes_{O_F} F_v$, with radius $r \ll_{E,v} 1$, bounded uniformly over $R \in E(F)$. Hence 
$$\vol_{Fal}\left(\left\{g_v \in H^0(X, O_X(D_R + D_O)) \otimes_{O_F} F_v: \int_{X(\overline{F_v})} \log |g_v|_v d\mu_v \leq 0\right\}\right) 
\ll_{E, v} \vol_{Fal} (B_{R, v, 1}).$$
But by Proposition \ref{Proposition_Faltings_Volume_Unit_ball_Uniformly_Bounded}, we saw that the volume of unit ball $\vol_{Fal}(B_{R, v, 1})$ is bounded above,  uniformly over $R \in E(F)$. Thus for each non-archimedean place $v$, we have
$$\vol_{Fal}\left(\left\{g_v \in H^0(X, O_X(D_R + D_O)) \otimes_{O_F} F_v: \int_{X(\overline{F_v})} \log |g_v|_v d\mu_v \leq 0\right\}\right) 
\ll_{E, v} 1.
$$
Multiplying the bound over all archimedean places $v$, we get the desired result.
\end{proof}

\subsection{Proof of Proposition \ref{Theorem_Main_Degree2_CountingResult_Sym2_EC}}
We now have all the ingredients to prove Proposition \ref{Theorem_Main_Degree2_CountingResult_Sym2_EC}.
\begin{proof}
By Proposition \ref{Proposition_Step_1_Rewrite_Counting_Rational_Functions}, Corollary \ref{Corollary_Step_2_Remove_Fcross_Action} and Corollary \ref{Corollary_Step_3_Reframe_Cohom_X}, we reframed the problem of counting points on $\Sym^2 E(F)$ to counting rational functions on $X$ with prescribed poles as follows.
\begin{align*}
& \#\{x \in (\Sym^2 E)(F): H_{\CL}(x) \leq B\} \\
= & \frac{1}{|\mu_F|} \sum_{R \in E(F)} \sum_{(a) \subset O_F} \mu((a)) \#\left\{g_X \in H^0(X, O_X(D_R + D_O)) - 0: \right.\\
& \left.\,\,\, (g_X)_{v \in M_{F, \infty}} \in \exp\left(\frac{1}{k}\left(\log B - \log H(f(R)) - \log H(f(O))\right) - \frac{1}{d} \log |N_{F/\BQ}(a)|\right) S_{\Omega, R}(1)\right\}
\end{align*}
By Corollary \ref{Corollary_Counting_Rational_Functions_Asymptotics}, this is really a finite sum, where we only count $a$'s satisfying
$$\exp\left(\frac{1}{k}\left(\log B - \log H(f(R)) - \log H(f(O))\right) - \frac{1}{d} \log |N_{F/\BQ}(a)|\right) \gg_{E, f, F} (H(f(R))H(f(O)))^{-1/(2k)}.$$
This is equivalent to 
$$|N_{F/\BQ}(a)|^{1/d} (H(f(R)) H(f(O)))^{1/(2k)} \ll_{E, f, F} B^{1/k}.$$
In particular, these implies that $|N_{F/\BQ}(a)| \ll_{E, f, F} B^{d/k}$, and $H(f(R)) \ll_{E, f, F} B^2$.

For the $a$'s that show up in the sum, Corollary \ref{Corollary_Counting_Rational_Functions_Asymptotics} applied to 
$$T = \exp\left(\frac{1}{k}\left(\log B - \log H(f(R)) - \log H(f(O))\right) - \frac{1}{d} \log |N_{F/\BQ}(a)|\right)$$
shows that
\begin{align*}
& \#\{x \in (\Sym^2 E)(F): H_{\CL}(x) \leq B\} \\
= & \frac{1}{|\mu_F|} \sum_{\stackrel{R \in E(F)}{H(f(R)) \ll B^2}} \sum_{\stackrel{(a) \subset O_F}{|N_{F/\BQ}(a)| \ll B^{d/k}}} \mu((a)) \\
&\,\,\, \left(\frac{2^{r_1 + r_2 - 1}Reg_F}{|\disc(O_F)|} \cdot \frac{\exp\left(d \cdot h_{Faltings}(E)\right)}{|N_{F/\BQ}(\Delta_{E/F})|^{1/4}} \cdot \right. H_{NT}(R)^d  \cdot \\
&\,\,\, \prod_{v \in M_{F, \infty}} \vol_{Fal}\left(g_v \in H^0(X, O_X(D_R + D_O)) \otimes_{O_F} F_v: \int_{X(\overline{F_v})} \log |g_v|_v d\mu_v \leq 0\right) \cdot \\
&\,\,\,\left(\frac{B^{1/k}}{(H(f(R))H(f(O)))^{1/k} |N_{F/\BQ}(a)|^{1/d}}\right)^{2d} \\
&\,\,\, \left.+ O_F\left((H(f(R))H(f(O)))^{(2d-1)/k} \left(\frac{B^{1/k}}{(H(f(R))H(f(O)))^{1/k} |N_{F/\BQ}(a)|^{1/d}}\right)^{2d-1}\right)\right)
\end{align*}
The error term simplifies to
\begin{align*}
& O_F \left(\frac{B^{(2d-1)/k}}{|\mu_F|} \left(\sum_{\stackrel{R \in E(F)}{H(f(R)) \ll B^2}} 1 \right) \left(\sum_{\stackrel{(a) \subset O_F}{|N_{F/\BQ}(a)| \ll B^{d/k}}} \frac{1}{N_{F/\BQ}(a)^{2 - 1/d}}\right)\right)
\end{align*}
By Theorem \ref{Theorem_GeneralCountingResult_AV},
$$\sum_{\stackrel{R \in E(F)}{H(f(R)) \ll B^2}} 1 = O_{E, f, F}((\log B)^{r/2})$$
where $r$ is the rank of Mordell-Weil group $E(F)$. Also, the number of ideals of norm up to $x$ is $\asymp_F x$, since the rightmost pole of $\zeta_F(s)$ is at 1 and is simple. Hence
$$\sum_{\stackrel{(a) \subset O_F}{|N_{F/\BQ}(a)| \ll B^{d/k}}} \frac{1}{N_{F/\BQ}(a)^{2 - 1/d}} = O\left(\sum_{n \ll B^{d/k}} \frac{1}{n^{2 - 1/d}}\right) = O_{E, f, F} \left(\log B \right).$$
Thus the error term contributes
$$
O_{E, f, F} \left(\frac{B^{(2d-1)/k}}{|\mu_F|} \cdot (\log B)^{r/2} \cdot \log B \right)
= O_{f, E, F} \left(B^{(2d-1)/k} (\log B)^{r/2 + 1} \right).
$$
For the main term, it simplifies to
\begin{align*}
& \text{Main term of } \#\{x \in (\Sym^2 E)(F): H_{\CL}(x) \leq B\} \\
= & \frac{2^{r_1 + r_2 - 1} Reg_F}{|\mu_F| |\disc(O_F)|} \cdot \frac{\exp\left(d \cdot h_{Faltings}(E)\right)}{|N_{F/\BQ}(\Delta_{E/F})|^{1/4} \cdot H(f(O))^{2d/k}} B^{2d/k} \\
& \sum_{\stackrel{R \in E(F)}{H(f(R)) \ll B^2}} \left(\frac{H_{NT}(R)}{H(f(R))^{2/k}}\right)^d  \\
& \,\,\, \left(\prod_{v \in M_{F, \infty}} \vol_{Fal}\left(g_v \in H^0(X, O_X(D_R + D_O)) \otimes_{O_F} F_v: \int_{X(\overline{F_v})} \log |g_v|_v d\mu_v \leq 0\right)\right) \\
& \,\,\, \sum_{\stackrel{(a) \subset O_F}{|N_{F/\BQ}(a)| \ll B^{d/k}}} \frac{\mu((a))}{|N_{F/\BQ}(a)|^2}.
\end{align*}
Again as the number of ideals of norm up to $x$ is $\asymp_F x$, we have
$$\sum_{\stackrel{(a) \subset O_F}{|N_{F/\BQ}(a)| \ll B^{d/k}}} \frac{\mu((a))}{|N_{F/\BQ}(a)|^2} = \frac{1}{\zeta_F(2)} + O\left(\sum_{\stackrel{(a) \subset O_F}{|N_{F/\BQ}(a)| \gg B^{d/k}}} \frac{1}{|N_{F/\BQ}(a)|^2}\right) = \frac{1}{\zeta_F(2)} + O_F\left(B^{-d/k}\right).$$
Note also that the series
\begin{align*}
& \sum_{R \in E(F)} \left(\frac{H_{NT}(R)}{H(f(R))^{2/k}}\right)^d \\ 
& \,\,\, \left(\prod_{v \in M_{F, \infty}} \vol_{Fal}\left(g_v \in H^0(X, O_X(D_R + D_O)) \otimes_{O_F} F_v: \int_{X(\overline{F_v})} \log |g_v|_v d\mu_v \leq 0\right)\right)
\end{align*}
converges, because 
$$\frac{H_{NT}(R)}{H(f(R))^{1/k}} \asymp_E 1 $$
uniformly over $R \in E(F)$ (by property of N\'{e}ron-Tate height), hence
$$\left(\frac{H_{NT}(R)}{H(f(R))^{2/k}}\right)^d \asymp_E \frac{1}{H(f(R))^{d/k}}$$
decays exponentially in $R$; and 
$$\prod_{v \in M_{F, \infty}} \vol_{Fal}\left(g_v \in H^0(X, O_X(D_R + D_O)) \otimes_{O_F} F_v: \int_{X(\overline{F_v})} \log |g_v|_v d\mu_v \leq 0\right) \ll_{E, F} 1,$$
by Proposition \ref{Proposition_Faltings_Volume_S_Omega_Uniformly_Bounded}.

Hence the main term simplifies to
\begin{align*}
& \text{Main term of } \#\{x \in (\Sym^2 E)(F): H_{\CL}(x) \leq B\} \\
= & \frac{2^{r_1 + r_2 - 1} Reg_F}{|\mu_F| |\disc(O_F)|} \cdot \frac{1}{\zeta_F(2)} \cdot \frac{\exp\left(d \cdot h_{Faltings}(E)\right)}{|N_{F/\BQ}(\Delta_{E/F})|^{1/4} \cdot H(f(O))^{2d/k}} B^{2d/k} \\
& \sum_{R \in E(F)} \left(\frac{H_{NT}(R)}{H(f(R))^{2/k}}\right)^d  \\
& \,\,\, \left(\prod_{v \in M_{F, \infty}} \vol_{Fal}\left(g_v \in H^0(X, O_X(D_R + D_O)) \otimes_{O_F} F_v: \int_{X(\overline{F_v})} \log |g_v|_v d\mu_v \leq 0\right)\right) \\
& \,\,\, + O_{E, f, F} (B^{d/k}).
\end{align*}
Putting both main term and error term together, we get
\begin{align*}
& \#\{x \in (\Sym^2 E)(F): H_{\CL}(x) \leq B\} \\
= & \frac{2^{r_1 + r_2 - 1} Reg_F}{|\mu_F| |\disc(O_F)|} \cdot \frac{1}{\zeta_F(2)} \cdot \frac{\exp\left(d \cdot h_{Faltings}(E)\right)}{|N_{F/\BQ}(\Delta_{E/F})|^{1/4} \cdot H(f(O))^{2d/k}} B^{2d/k} \\
& \sum_{R \in E(F)} \left(\frac{H_{NT}(R)}{H(f(R))^{2/k}}\right)^d  \\
& \,\,\, \left(\prod_{v \in M_{F, \infty}} \vol_{Fal}\left(g_v \in H^0(X, O_X(D_R + D_O)) \otimes_{O_F} F_v: \int_{X(\overline{F_v})} \log |g_v|_v d\mu_v \leq 0\right)\right) \\
& \,\,\, + O_{E, f, F} (B^{(2d-1)/k} (\log B)^{r/2 + 1}).
\end{align*}
as desired (By Assumption \ref{Assumption_Sym2_EC}, $F$ has class number 1; which is why $|Cl(F)|$ shows up in the proposition, but not here).
\end{proof}

\begin{remark}
We briefly mention changes needed without Assumption \ref{Assumption_Sym2_EC}.
\begin{itemize}
    \item On recasting Proposition \ref{Theorem_Main_Degree2_CountingResult_Sym2_EC} into a lattice point counting problem.
    \begin{itemize}
        \item Proposition \ref{Proposition_Step_1_Rewrite_Counting_Rational_Functions} generalizes: for each finite place $v$, consider the irreducible components $C_{v, i}$ of $X_v$, which corresponds to discrete absolute value $|\cdot|_{C_{v,i}}$ on $K(X)$. Instead of $\log |g_X|_{X_v}$, we consider $\log |g_X|_{C_{v, i}}$, weighted by the number of times a section of $f^*O(1)$ goes through $C_{v, i}$ divided by $k$.
        \item Corollary \ref{Corollary_Step_2_Remove_Fcross_Action} generalizes: we can require $\sum_{v \in M_{F, finite}} \log |g_X|_{X_v}$ to be equal to a fixed representative of $\dfrac{\text{Vertical divisors of $X$}}{\text{Principal divisors of $F$}}$. To study these representatives, we use the exact sequence
        $$0 \to Cl(F) \to \frac{\text{Vertical divisors of $X$}}{\text{Principal divisors of $F$}} \to \frac{\text{Vertical divisors of $X$}}{\text{Vertical fibers of $X$}} \to 0.$$
        This is how class number of $F$ shows up in the main term at the end.
        \item Corollary \ref{Corollary_Step_3_Reframe_Cohom_X} generalizes: instead of $D_R + D_O$, we would need to handle $D_R + D_O + V$ for some vertical divisor $V$.
    \end{itemize}
    \item On the first successive minimum and covolume of lattice
    \begin{itemize}
    \item Proposition \ref{Proposition_First_Successive_Minimum_H0} and \ref{Proposition_Counting_Rational_Functions_Truncation} generalizes directly.
    \item Proposition \ref{Proposition_Covolume_H0X} generalizes. For $\covol_{Fal}(H^0(X, O_X(D_R + D_O))$,
    \begin{itemize}
        \item $D_R - D_O$ is no longer perpendicular to all vertical divisors; hence one needs to add an auxiliary vertical divisor to fix this issue, before Faltings-Hriljac theorem can be applied (see \cite[Section 7]{Zhang}). This produces an extra multiple of the shape $\exp(\langle V_0, V_0 \rangle)$ for some vertical divisor $V_0 = V_0(R)$ in the final expression.
        \item Szpiro's theorem (Proposition \ref{Proposition_RobindeJong}) does not apply when $E$ is not semi-stable, and we need to keep the $\langle O_X(D_O), O_X(D_O) \rangle$ without simplifying it further.
    \end{itemize}
    
    We would also need to handle $\covol_{Fal}(H^0(X, O_X(D_R + D_O + V)))$ for vertical divisors $V$. 
    \begin{itemize}
        \item The intersection product of $V$ with $D_R$, $\omega_X$ and $V$ itself can be handled, so the Faltings-Riemann-Roch calculation generalizes. 

        A particularly nice case is when $V$ is a sum of vertical fibers: if $I = \sum_v c_v v$ is an integral ideal of $F$, and $V = \sum_v c_v V_v$ is the sum of corresponding vertical fibers, then
        $$\covol_{Fal}(H^0(X, O_X(D_R + D_O + V))) = \covol_{Fal}(H^0(X, O_X(D_R + D_O))) + 2 \log |N(I)|.$$
    \end{itemize}
    Corollary \ref{Corollary_Counting_Rational_Functions_Asymptotics} would generalize after the modifications above.
    \end{itemize}

    \item The final main term would be different from our simplified case in the following ways: in the sum over $R \in E(F)$,
    \begin{itemize}
        \item There is an extra multiple of shape $\exp(\langle V_0, V_0 \rangle)$, for some vertical divisor $V_0$ depending on $R$, needed for the application of Faltings-Hriljac theorem.
        \item The $|N_{F/\BQ}(\Delta_{E/F})|^{-1/4}$ term becomes $\exp\left(3 \langle O_X(D_O), O_X(D_O) \rangle\right)$, since Szpiro's theorem (Proposition \ref{Proposition_RobindeJong}) does not apply when $E$ is not semi-stable.
        \item There is an extra infinite sum over $V \in \dfrac{\text{Vertical divisors of $X$}}{\text{Vertical fibers of $X$}}$, where we effectively sum over $\dfrac{\covol_{Fal}(H^0(X, O_X(D_R + D_O))}{\covol_{Fal}(H^0(X, O_X(D_R + D_O + V))}$. The sum would be independent over $R$, and can be broken into a product over finite places of bad reduction.
    \end{itemize}
\end{itemize}
\end{remark}

\bibliographystyle{plain}
\bibliography{refs}

\end{document}